\renewcommand{\AA}{\mathbb{A}}
\newcommand{\et}{{\acute{e}t}}
\newcommand{\UBD}{\text{UBD}}
\newcommand{\CC}{\mathbb{C}}
\newcommand{\EE}{\mathbb{E}} 
\newcommand{\FF}{\mathbb{F}}
\newcommand{\LL}{\mathbb{L}}
\newcommand{\NN}{\mathbb{N}}
\newcommand{\PP}{\mathbb{P}}
\newcommand{\QQ}{\mathbb{Q}}
\newcommand{\XX}{\mathbb{X}}
\newcommand{\ZZ}{\mathbb{Z}}
\newcommand{\Qbar}{\overline{\mathbb{Q}}}
\newcommand{\Zbar}{\overline{\mathbb{Z}}}
\newcommand{\bB}{\mathcal{B}}
\newcommand{\cC}{\mathcal{C}}
\newcommand{\eE}{\mathcal{E}}
\newcommand{\fF}{\mathcal{F}}
\newcommand{\gG}{\mathcal{G}}
\newcommand{\hH}{\mathcal{H}}
\newcommand{\mM}{\mathcal{M}}
\newcommand{\nN}{\mathcal{N}}
\newcommand{\pP}{\mathcal{P}}
\newcommand{\xX}{\mathcal{X}}
\newcommand{\yY}{\mathcal{Y}}
\newcommand{\Zhat}{\widehat{\mathbb{Z}}}
\newcommand{\Fhat}{\widehat{F_2}}
\newcommand{\G}{\textbf{G}}
\newcommand{\mf}[1]{\mathfrak{#1}} 
\newcommand{\fs}{\!\!\fatslash}
\newcommand{\lcm}{\textrm{lcm}}
\newcommand{\Aut}{\textrm{Aut}}
\newcommand{\Inn}{\textrm{Inn}}
\newcommand{\Out}{\text{Out}}
\newcommand {\spmatrix}[4]{\left[\begin{smallmatrix}#1&#2\\#3&#4\end{smallmatrix}\right]}
\newcommand{\ch}{\textrm{char}}
\newcommand{\Frac}{\textrm{Frac}}
\newcommand{\Gal}{\textrm{Gal}}
\newcommand{\id}{\textrm{id}}
\newcommand{\Hom}{\textrm{Hom}}
\newcommand{\hHom}{\mathcal{H}om}
\newcommand{\Surj}{\text{Surj}}
\newcommand{\GL}{\textrm{GL}}
\newcommand{\SL}{\textrm{SL}}
\newcommand{\PSL}{\textrm{PSL}}
\newcommand{\PSU}{\text{PSU}}
\newcommand{\Sz}{\text{Sz}}
\newcommand{\Stab}{\textrm{Stab}}
\newcommand{\ab}{\text{ab}}
\newcommand{\sep}{\textrm{sep}}
\newcommand{\Inv}{\text{Inv}}
\newcommand{\rightiso}{\stackrel{\sim}{\longrightarrow}}
\newcommand{\ol}[1]{\overline{#1}}
\newcommand{\an}{\textrm{an}}
\newcommand{\h}{\text{h}} 
\newcommand{\sh}{\text{sh}} 
\newcommand{\Spec}{\textrm{Spec }}
\newcommand{\prim}{\textrm{prim }}
\newcommand{\ext}{\text{ext}}
\newcommand{\sur}{\text{sur}}
\newcommand{\surext}{\text{sur-ext}}
\newcommand{\ls}[1]{(\!(#1)\!)}
\newcommand{\ps}[1]{[\![#1]\!]}
\newcommand{\tp}{\text{top}}
\newcommand{\can}{\text{can}}
\newcommand{\Tate}{\text{Tate}}
\newcommand{\Sch}{\underline{\textbf{Sch}}}
\newcommand{\Sets}{\underline{\textbf{Sets}}}
\newcommand{\FiniteSets}{\underline{\textbf{FiniteSets}}}
\newcommand{\ProfiniteSets}{\underline{\textbf{ProfiniteSets}}}
\newcommand{\Top}{\underline{\textbf{Top}}}
\newcommand{\Pro}{\text{Pro-}}
\newcommand{\FEt}{\text{FEt}}
\newcommand{\PFEt}{\text{PFEt}}
\newcommand{\Ob}{\text{Ob}}
\newcommand{\gap}{\vspace{0.4cm}}
\newcommand{\sgap}{\vspace{0.2cm}}
\theoremstyle{definition}\newtheorem{defn}{Definition}[subsection]
\theoremstyle{remark}
\theoremstyle{remark}
\theoremstyle{remark}
\theoremstyle{remark}\newtheorem{remark}[defn]{Remark}
\theoremstyle{remark}\newtheorem*{remark*}{Remark}
\theoremstyle{remark}
\theoremstyle{remark}\newtheorem{pt}[defn]{}
\theoremstyle{remark}\newtheorem{example}[defn]{Example}
\theoremstyle{plain}\newtheorem{prop}[defn]{Proposition}
\theoremstyle{plain}\newtheorem{thm}[defn]{Theorem}
\theoremstyle{plain}\newtheorem*{thm*}{Theorem}
\theoremstyle{plain}\newtheorem{lemma}[defn]{Lemma}
\theoremstyle{plain}\newtheorem{cor}[defn]{Corollary}
\theoremstyle{plain}\newtheorem{conj}[defn]{Conjecture}
\theoremstyle{plain}\newtheorem*{conj*}{Conjecture}
\theoremstyle{plain}\newtheorem*{prop*}{Proposition}
\title{Moduli Interpretations for Noncongruence Modular Curves}
\author{William Yun Chen}
\begin{document}
\maketitle 
\begin{abstract}
We consider the moduli of elliptic curves with $G$-structures, where $G$ is a finite 2-generated group. When $G$ is abelian, a $G$-structure is the same as a classical congruence level structure. There is a natural action of $\SL_2(\ZZ)$ on these level structures. If $\Gamma$ is a stabilizer of this action, then the quotient of the upper half plane by $\Gamma$ parametrizes isomorphism classes of elliptic curves equipped with $G$-structures. When $G$ is sufficiently nonabelian, the stabilizers $\Gamma$ are noncongruence. Using this, we obtain arithmetic models of noncongruence modular curves as moduli spaces of elliptic curves equipped with nonabelian $G$-structures. As applications we describe a link to the Inverse Galois Problem, and show how our moduli interpretations explains the bad primes for the Unbounded Denominators Conjecture, and allows us to translate the conjecture into the language of geometry and Galois theory.

\end{abstract}

\tableofcontents

\section{Introduction}
\subsection{Background and motivation} Historically, congruence subgroups $\Gamma\le\SL_2(\ZZ)$ have been extensively studied, and through them extraordinary connections between geometry, modular forms, and number theory have been found. On the other hand, the current state of understanding of noncongruence subgroups is primitive by comparison. The first systematic investigation into noncongruence subgroups was conducted by Atkin and Swinnerton-Dyer \cite{ASD71} in 1971, who, using extensive computational data, observed that the Fourier coefficients of many noncongruence cusp forms exhibit intricate arithmetic properties analogous to those enjoyed by congruence cusp forms. Their work lit the way for a number of results, including the proof of certain ASD-type congruences and the construction of Galois representations attached to noncongruence cuspforms by Scholl in \cite{Sch85}, the triviality of Hecke operators for noncongruence subgroups by Berger in \cite{Berg94}, the nonuniversality of ASD congruences by Kibelbek in \cite{Kib14}, and the settling of a case of the unbounded denominators conjecture by Li and Long in \cite{LL11}. Their work mapped out the similarities and differences between the congruence and noncongruence worlds.
For more information on their work, see the survey article \cite{Li12}.

\sgap

In this paper\footnote{The research contained in this paper is supported in part by NSF grants DMS-1101368, DMS-1414219, and DMS-1128155. Any opinions, findings and conclusions or recommendations expressed in this paper are those of the author and do not necessarily reflect the views of the National Science Foundation.} we contribute another chapter to this story. Since congruence modular curves parametrize isomorphism classes of elliptic curves equipped with certain level structures (cf. \cite{DR72,KM85}), it is natural to ask:
$$\emph{Do noncongruence modular curves also have a moduli interpretation?}$$
The central result in this paper is that the answer is a resounding yes. We achieve this by considering \emph{Teichm\"{u}ller structures of level $G$} on elliptic curves (or simply $G$-structures), where $G$ is a finite 2-generated group. Roughly speaking, if $E$ is an elliptic curve with origin $O$, then a $G$-structure on $E$ is an unramified $G$-Galois cover of $E^\circ := E - O$, which should be viewed as a $G$-Galois cover of $E$, possibly ramified above $O$. When $G$ is abelian, we recover the classical congruence level structures. If $G$ is sufficiently nonabelian, then the moduli space of elliptic curves equipped with such $G$-structures is a noncongruence modular curve. Using the \emph{congruence subgroup property} for $\Aut(F_2)$, we find that every modular curve has a moduli interpretation in this way.

\sgap

As an application, in \S5 we discuss connections with the \emph{Unbounded Denominators Conjecture} (UBD): The classical theory of modular forms tells us that the Fourier coefficients of congruence modular forms with algebraic Fourier coefficients have bounded denominators - that is, an integral multiple of the form has integral Fourier coefficients. On the other hand, it was observed in \cite{ASD71} that all explicit examples of genuinely noncongruence modular forms with algebraic coefficients have unbounded denominators. UBD asserts that in fact \emph{every} genuinely noncongruence modular form with algebraic coefficients has this property. This is confirmed in \cite{LL11} for the case where the space of cusp forms is spanned by a form with $\QQ$-rational Fourier coefficients. If true, this would imply a conjecture in the theory of vertex operator algebras which is both known and believed by physicists, which says that the graded dimension of any $C_2$-cofinite, rational vertex operator algebra over $\CC$ is a congruence modular function (c.f. \cite{Zhu96,CG99,DLM00}). In \S5, we use our perspective of moduli interpretations to explain the bad primes for UBD, and to reformulate the conjecture in terms of the arithmetic geometry of the Tate curve.

\sgap

Since punctured elliptic curves are hyperbolic, the theory of nonabelian $G$-structures lies in the intersection of Grothendieck's anabelian geometry (c.f. \cite{Pop97}) and the classical theory of elliptic curves as abelian varieties. Our results can be viewed as saying that whereas congruence subgroups capture the structure of elliptic curves as abelian varieties, noncongruence subgroups capture their structure as anabelian/hyperbolic curves (upon removing the origin).

\sgap


Since the inclusion $E^\circ\hookrightarrow E$ induces the abelianization map $F_2\stackrel{\ab}{\longrightarrow}\ZZ^2$ at the level of fundamental groups, the moduli interpretations of noncongruence modular curves gives a direct link between the anabelian and abelian sides of arithmetic geometry. Furthermore, since much of the basic language from the classical theory of modular forms, modular curves, and Galois representations remain intact, many questions originally asked about congruence objects, Galois representations, and how they interact may again be asked in this noncongruence context. 



\sgap

\subsection{Overview}
\subsubsection{Classical Congruence Level Structures}
Recall that a $\Gamma(n)$-structure on an elliptic curve $E$ is a pair $(P,Q)$ of points of order $n$ which generate $E[n]$ (c.f. \cite{KM85} \S3). Such a pair $(P,Q)$ determines an \'{e}tale $(\ZZ/n\ZZ)^2$-torsor $E\rightarrow E/E[n]$. Here, if $G$ is a finite constant group scheme, then a (connected) $G$-torsor is just a Galois cover together with the data of an isomorphism between $G$ and the Galois group. Since $E/E[n]\cong E$, this determines a $(\ZZ/n\ZZ)^2$-torsor on $E$, so we have a bijection
$$\{\Gamma(n)\text{-structures on }E \}/\cong\;\;\;\stackrel{\sim}{\leftarrow\!\rightarrow}\;\;\;\{\text{$(\ZZ/n\ZZ)^2$-torsors $X\rightarrow E$}\}/\cong$$

A $\Gamma_1(n)$-structure on $E$ is a single point $P\in E[n]$, and determines a $\ZZ/n\ZZ$-torsor $E\rightarrow E/\langle P\rangle$. By the Weil pairing, the dual isogeny $E/\langle P\rangle\rightarrow E$ is a $\mu_n$-torsor, and thus we have a bijection
$$\{\Gamma_1(n)\text{-structures on }E\}/\cong\;\;\;\stackrel{\sim}{\leftarrow\!\rightarrow}\;\;\;\{\text{$\mu_n$-torsors $X\rightarrow E$}\}/\cong$$

Under these bijections, we may think of classical congruence level structures as (connected) $G$-torsors, where $G$ is a commutative finite \'{e}tale group scheme. Thus, it is natural to generalize this to consider $G$-torsors on elliptic curves for noncommutative group schemes $G$, though in the noncommutative setting we will restrict ourselves to the case where $G$ is finite constant. We do this via the notion of \emph{Teichm\"{u}ller level structures}, which most directly parallel the classical $\Gamma_1(n),\Gamma(n)$-structures. The resulting moduli spaces are essentially a type of Hurwitz space (c.f. \cite{BR11}, \cite{Rom03}, \cite{ACV03}) parametrizing covers of elliptic curves ramified at most above one point, and are also closely related to \emph{origami curves} as studied by Lochak, Herrlich, Schmith\"{u}sen and others (c.f. \cite{Loc03}, \cite{HS09}, \cite{Her12}).


\subsubsection{Teichm\"{u}ller Level Structures ($G$-structures)}\label{summary_TLS}
Let $G$ be a finite 2-generated group and $E$ an elliptic curve over a scheme $S$, where $|G|$ is invertible on $S$. Let $E^\circ := E - \{O\}$, where $O$ is the origin of its group law, then in \S\ref{section_setup} we will construct \emph{Teichm\"{u}ller structures of level $G$} (or simply \emph{$G$-structures}) on $E/S$. These structures were first considered by Deligne and Mumford in \cite{DM69}, and then later by Pikaart and de Jong \cite{PJ08}, Abramovich-Corti-Vistoli \cite{ACV03}, and Bertin-Romagny (\cite{BR11},\cite{Rom03}) in the case of proper curves. We will show that if $S = \Spec K$ is a field, then $G$-structures on $E/K$ correspond to isomorphism classes of geometrically connected $G$-torsors $X_{\ol{K}}^\circ$ over $E_{\ol{K}}^\circ$ whose field of moduli is contained in $K$ --- that is to say, for any $\sigma\in\Gal(\ol{K}/K)$, $(X_{\ol{K}}^\circ)^\sigma\cong X_{\ol{K}}^\circ$ over $E^\circ$. If $G$ has trivial center, then the field of moduli is a field of definition, and hence $G$-structures in this case correspond to isomorphism classes of $G$-torsors $X^\circ$ over $E^\circ$. If $G$ is abelian, then we recover classical congruence level structures (\ref{prop_abelian_is_congruence}).

\gap

Combinatorially, by the Galois correspondence, a $G$-structure is represented by an \emph{exterior surjection} --- that is, an element of
$$\Hom^\surext(F_2,G) := \Surj(F_2,G)/\Inn(G)$$
Here, $F_2$ is the free group on two generators, which should be viewed as the topological fundamental group of a punctured elliptic curve over $\CC$.


\sgap

The category of elliptic curves equipped with a $G$-structure forms a stack $\mM(G)$ over $\ZZ[1/|G|]$. 
Let $\mM(1)$ be the moduli stack of elliptic curves. Forgetting the level structure gives a morphism
$$p : \mM(G)\longrightarrow \mM(1)_{\ZZ[1/|G|]}$$
which is finite \'{e}tale. Thus $\mM(G)$ is a separated Deligne-Mumford stack and admits a smooth coarse moduli scheme $M(G)$ over $\ZZ[1/|G|]$. An elliptic curve $E_0/\Qbar$ determines a geometric point $x_0\in \mM(1)_{\Qbar}$, and the fiber $p^{-1}(x_0)$ is precisely the set $\Hom^\surext(F_2,G)$. By a result of Oda \cite{Oda97}, the fundamental group of $\mM(1)_{\Qbar}$ is $\widehat{\SL_2(\ZZ)}$, and so we have a monodromy action of $\pi_1(\mM(1)_{\Qbar})\cong\widehat{\SL_2(\ZZ)}$ on $p^{-1}(x_0) = \Hom^\surext(F_2,G)$.

\sgap

Since $\Out(F_2)\cong\GL_2(\ZZ)$, there is a natural action of $\SL_2(\ZZ)\subset\Out(F_2)$ on $p^{-1}(x_0) = \Hom^\surext(F_2,G)$ via its action on $F_2$. Indeed, this is precisely the action of $\pi_1(\mM(1)_{\Qbar})$ on $p^{-1}(x_0)$, and thus the Galois correspondence, combined with the Riemann existence theorem for stacks (c.f. \cite{Noo05}) tells us the following

\begin{thm*}[\ref{cor_G_to_Gamma}] With notation as above,
\begin{itemize}
\item[1.] The connected components of $\mM(G)_{\Qbar}$ are in bijection with the orbits of $\SL_2(\ZZ)$ on the set $\Hom^\surext(F_2,G)$.
\item[2.] For $[\varphi]\in\Hom^\surext(F_2,G)$ the component $Y(\Gamma_{[\varphi]})_{\Qbar}$ of $M(G)_{\Qbar}$ containing $[\varphi]$ is isomorphic over $\CC$ to the modular curve $\hH/\Gamma_{[\varphi]}$, where $\Gamma_{[\varphi]} := \Stab_{\SL_2(\ZZ)}([\varphi])$.
\end{itemize}
\end{thm*}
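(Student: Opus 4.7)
Since $p : \mM(G) \to \mM(1)_{\ZZ[1/|G|]}$ has already been shown to be finite \'etale, base changing to $\Qbar$ yields a finite \'etale cover of the Deligne--Mumford stack $\mM(1)_{\Qbar}$. By Grothendieck's Galois correspondence (applied in the stacky setting via the fiber functor at the geometric point $x_0$), finite \'etale covers of $\mM(1)_{\Qbar}$ correspond to finite continuous $\pi_1(\mM(1)_{\Qbar}, x_0)$-sets, and connected components of a cover correspond to orbits on the fiber. By Oda's theorem we have $\pi_1(\mM(1)_{\Qbar}, x_0) \cong \widehat{\SL_2(\ZZ)}$, and the induced monodromy action on $p^{-1}(x_0) = \Hom^\surext(F_2,G)$ has been identified with the natural action of $\SL_2(\ZZ) \subset \Out(F_2)$, continuously extended. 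Because $\Hom^\surext(F_2,G)$ is a finite set and $\SL_2(\ZZ)$ is dense in $\widehat{\SL_2(\ZZ)}$, orbits of the two groups coincide, giving part (1).

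For part (2), fix $[\varphi] \in \Hom^\surext(F_2,G)$ with stabilizer $\Gamma := \Gamma_{[\varphi]}$. Under Grothendieck's correspondence, the connected component of $\mM(G)_{\Qbar}$ through $[\varphi]$ is the finite \'etale cover of $\mM(1)_{\Qbar}$ classified by the open subgroup $\widehat{\Gamma} \subset \widehat{\SL_2(\ZZ)}$ (the closure of $\Gamma$), since its fiber over $x_0$ is the orbit $\SL_2(\ZZ)\cdot[\varphi] \cong \SL_2(\ZZ)/\Gamma$ as a $\widehat{\SL_2(\ZZ)}$-set.

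Next I would apply the Riemann existence theorem for Deligne--Mumford stacks (Noohi, \cite{Noo05}): together with the invariance of finite \'etale covers under the extension $\Qbar \hookrightarrow \CC$, this gives an equivalence between the category of finite \'etale covers of $\mM(1)_{\Qbar}$ and the category of finite topological covers of the complex-analytic stack $\mM(1)_\CC^{\an}$. The topological universal cover of $\mM(1)_\CC^{\an}$ is the upper half plane $\hH$, with deck group $\SL_2(\ZZ)$, so the topological cover corresponding to $\Gamma \subset \SL_2(\ZZ)$ is the quotient stack $[\hH/\Gamma]$, whose coarse moduli scheme is $\hH/\Gamma$. Since the component of $\mM(G)_{\Qbar}$ through $[\varphi]$ and $[\hH/\Gamma]$ both correspond to the same open subgroup, they are isomorphic as covers of $\mM(1)_\CC$; passing to coarse spaces yields the desired identification $Y(\Gamma_{[\varphi]})_\CC \cong \hH/\Gamma_{[\varphi]}$.

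The main obstacle is carrying out Grothendieck's Galois correspondence and the topological/algebraic comparison in the stacky rather than the scheme-theoretic setting, which requires Noohi's formulation of Riemann existence for Deligne--Mumford stacks and the identification (via Oda) of $\pi_1(\mM(1)_{\Qbar})$ together with the explicit description of its action on $\Hom^\surext(F_2,G)$; everything else is formal manipulation of the resulting dictionary between open subgroups of $\SL_2(\ZZ)$ and finite \'etale covers.
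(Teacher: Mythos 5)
Your proposal follows precisely the route taken in the paper: Grothendieck's Galois correspondence in the stack setting (Noohi), Oda's identification of $\pi_1(\mM(1)_{\Qbar})$ with $\widehat{\SL_2(\ZZ)}$, the computation of the monodromy action as the natural $\SL_2(\ZZ)\subset\Out(F_2)$ action, and then the Riemann existence theorem for Deligne--Mumford stacks to pass to $[\hH/\Gamma]$ and its coarse moduli scheme $\hH/\Gamma$. This matches Corollary~\ref{cor_G_to_Gamma} together with Proposition~\ref{prop_cms_are_modular_curves} essentially step for step, including the observation that density of $\SL_2(\ZZ)$ in its profinite completion identifies the two orbit sets.
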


Roughly speaking, this says that upon fixing a 2-generated group $G$, the moduli space of elliptic curves equipped with a $G$-structure may not be connected. Combinatorially the connected components are in bijection with the orbits of $\SL_2(\ZZ)$ on $\Hom^\surext(F_2,G)$. Geometrically, the components will correspond to various discrete invariants associated to $G$-Galois covers of elliptic curves ramified above $O$ - for example, the genus of the cover (equivalently the ramification index above $O$, or more finely the conjugacy class of the inertia subgroup at $O$). Unfortunately we do not know of a list of invariants that together completely classify the components of $\mM(G)_{\Qbar}$ for any 2-generated group $G$.

\sgap

While this paper owes its existence to the failure of the congruence subgroup property for $\SL_2(\ZZ)$, it is strengthened by Asada's result establishing the \emph{congruence subgroup property for} $\Aut(F_2)$ (c.f. \cite{Asa01}), which immediately implies the following result.

\begin{thm*}[\ref{cor_CSP}] Every modular curve --- that is, $\hH/\Gamma$ where $\Gamma\le\SL_2(\ZZ)$ is finite index, is a quotient of a component $Y(\Gamma_{[\varphi]})_\CC$ of some $M(G)_{\CC}$.
\end{thm*}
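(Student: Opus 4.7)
The plan is to deduce this immediately from Asada's congruence subgroup property (CSP) for $\Aut(F_2)$, which states that every finite-index subgroup of $\Aut(F_2)$ contains a \emph{congruence kernel} $K_N := \Ker(\Aut(F_2) \to \Aut(F_2/N))$ for some characteristic subgroup $N\le F_2$ of finite index. The idea is that for any given $\Gamma$, some such $G = F_2/N$ will have its tautological surjection $\varphi : F_2\twoheadrightarrow G$ yield a stabilizer $\Gamma_{[\varphi]}\subseteq\Gamma$.

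\sgap

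First I would lift $\Gamma \le \SL_2(\ZZ)\subset\Out(F_2)\cong\GL_2(\ZZ)$ to its full preimage $\tilde\Gamma := \pi^{-1}(\Gamma)\le\Aut(F_2)$ under the natural surjection $\pi : \Aut(F_2)\twoheadrightarrow\Out(F_2)$. By construction $\tilde\Gamma$ is finite-index in $\Aut(F_2)$ and automatically contains $\Inn(F_2) = \ker\pi$. Asada's CSP then furnishes a characteristic finite-index $N\le F_2$ with $K_N\subseteq\tilde\Gamma$; set $G := F_2/N$, which is a finite 2-generated group, and let $\varphi : F_2\twoheadrightarrow G$ be the quotient, producing a class $[\varphi]\in\Hom^\surext(F_2,G)$.

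\sgap

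The heart of the argument is the identification of $\Gamma_{[\varphi]}$. Let $\rho : \Aut(F_2)\to\Aut(G)$ be the reduction-mod-$N$ map; an element $\alpha\in\Aut(F_2)$ fixes $[\varphi]$ precisely when $\varphi\circ\alpha^{-1} = c_g\circ\varphi$ for some $g\in G$, which happens iff $\rho(\alpha)\in\Inn(G)$. The key observation---and the only step where care is needed---is that because $\varphi$ is surjective, if $\rho(\alpha) = c_g$ with $g = \varphi(x)$, then $\rho(c_x^{-1}\alpha) = 1$, so $\alpha \in \Inn(F_2)\cdot K_N$. This gives
$$\Stab_{\Aut(F_2)}([\varphi]) \;=\; \rho^{-1}(\Inn(G)) \;=\; K_N\cdot\Inn(F_2) \;\subseteq\; \tilde\Gamma,$$
the final inclusion holding because both factors lie in $\tilde\Gamma$. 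Applying $\pi$ then yields $\Gamma_{[\varphi]}\subseteq\pi(\tilde\Gamma) = \Gamma$.

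\sgap

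Combining with Theorem~\ref{cor_G_to_Gamma}, the component of $M(G)_\CC$ through $[\varphi]$ is precisely $Y(\Gamma_{[\varphi]})_\CC\cong\hH/\Gamma_{[\varphi]}$, and the inclusion $\Gamma_{[\varphi]}\subseteq\Gamma$ produces the desired finite quotient map $\hH/\Gamma_{[\varphi]}\twoheadrightarrow\hH/\Gamma$. I do not expect any serious obstacle in this approach: once CSP is known, the only real work is the bookkeeping that matches the $\Inn(G)$-ambiguity defining exterior surjections with the $\Inn(F_2)$-ambiguity defining outer automorphisms, which is exactly what surjectivity of $\varphi$ supplies.
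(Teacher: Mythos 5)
Your approach is correct and genuinely different from the paper's, and in one respect cleaner. The paper argues in two stages: it first invokes CSP (Theorem~\ref{thm_CSP}) to obtain some $[\varphi]$ with $\Gamma_{[\varphi]}\subseteq\Gamma$, and then --- because that containment has no reason to be normal --- separately manufactures a Galois closure by taking the full $\SL_2(\ZZ)$-orbit $\{[\varphi_1],\ldots,[\varphi_n]\}$ of $[\varphi]$, forming $\prod_i\varphi_i:F_2\to G^n$, and replacing $G$ by the image $H$ of this diagonal map. Your proof collapses these two stages into one by demanding that the kernel $N$ be \emph{characteristic} from the outset (which is legitimate: any finite-index normal $K\lhd F_2$ contains the finite-index characteristic subgroup $\bigcap_{\sigma\in\Aut(F_2)}\sigma(K)$, and $\Gamma[K']\subseteq\Gamma[K]$ whenever $K'\subseteq K$, so the characteristic kernels are cofinal in Theorem~\ref{thm_CSP}). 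With $N$ characteristic the map $\rho:\Aut(F_2)\to\Aut(F_2/N)$ is everywhere defined, your identity $\Stab_{\Aut(F_2)}([\varphi])=\rho^{-1}(\Inn(G))=K_N\cdot\Inn(F_2)$ exhibits the stabilizer as the preimage of the normal subgroup $\Inn(G)\lhd\Aut(G)$, and this stabilizer is therefore normal in $\Aut(F_2)$; pushing along $\pi$ gives $\Gamma_{[\varphi]}\lhd\SL_2(\ZZ)$, so in particular $\Gamma_{[\varphi]}\lhd\Gamma$. You thus never need the Galois-closure construction, and in fact prove the sharper statement that $\Gamma_{[\varphi]}$ can be taken normal in all of $\SL_2(\ZZ)$.

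The one thing your write-up omits is precisely this last observation, and it is not optional. You close by asserting that ``the inclusion $\Gamma_{[\varphi]}\subseteq\Gamma$ produces the desired finite quotient map,'' but a mere containment of finite-index subgroups gives only a covering $\hH/\Gamma_{[\varphi]}\to\hH/\Gamma$, not a quotient by a group action; the paper explicitly distinguishes ``can be covered by'' from ``is a quotient of'' and spends the entire Galois-closure step bridging that gap. Your argument \emph{does} bridge it --- normality of $\Gamma_{[\varphi]}$ in $\Gamma$ falls out of $\rho^{-1}(\Inn(G))$ being normal in $\Aut(F_2)$, as just noted --- but you must say so before the word ``quotient'' is earned. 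With that sentence added the proof is complete.
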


The fact that every modular curve is a quotient of some component of $\mM(G)$ implies that every modular curve, congruence or noncongruence has a moduli interpretation parametrizing elliptic curves equipped with equivalence classes of $\Gamma_{[\varphi]}$-structures for some suitable $[\varphi]$. In particular, this allows us to speak of $\Gamma$-structures, where $\Gamma\le\SL_2(\ZZ)$ is any subgroup of finite index. This result is an analog of the main result of Diaz, Donagi, and Harbater in \cite{DDH89}, \emph{Every Curve is a Hurwitz Space}, and also a result of Ellenberg and McReynolds in \cite{EM11}. 

\sgap

Since by Belyi's theorem, every smooth projective curve over a number field can be realized as a modular curve, this implies that in some sense every arithmetic question about curves can be phrased in terms of $G$-structures on elliptic curves.

\sgap

It's worth remarking that given a subgroup $\Gamma\le\SL_2(\ZZ)$, there may be (possibly infinitely) many finite groups $G$ for which $\hH/\Gamma$ appears as a component of $M(G)_\CC$ - ie, $\hH/\Gamma$ may have many moduli interpretations, each of which corresponding to an arithmetic model of $\hH/\Gamma$ over a number field. It turns out these are all compatible, in a certain sense:

\begin{thm*}[\ref{cor_nice}] For any finite index $\Gamma\le\SL_2(\ZZ)$, there exists a number field $K$ such that all moduli-theoretic models of $\hH/\Gamma$ can be defined over $K$, and over $K$ are all isomorphic to $\Spec M_0(\Gamma,K)$, where $M_0(\Gamma,K)$ is the $K$-algebra of weight 0 modular forms for $\Gamma$, holomorphic on $\hH$, possibly meromorphic at the cusps, with Fourier coefficients in $K$.
\end{thm*}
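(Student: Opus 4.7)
Let $\Gamma\le\SL_2(\ZZ)$ have finite index and let $n$ denote the least common multiple of the cusp widths of $\Gamma$. The plan is to take $K$ to be any number field containing $\QQ(\zeta_n)$, and to realize each moduli-theoretic model $Y(\Gamma_{[\varphi]})_K$ as $\Spec M_0(\Gamma,K)$ by matching its ring of regular functions with the ring of Fourier expansions at a chosen cusp, using the Tate curve as the universal family at that cusp.

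More precisely, fix $[\varphi]\in\Hom^\surext(F_2,G)$ with $\Stab_{\SL_2(\ZZ)}([\varphi])=\Gamma$. The $\langle T\rangle$-orbit of $[\varphi]$ determines a cusp of $Y(\Gamma_{[\varphi]})$ whose width divides $n$. Over $\Spec K((q^{1/n}))$ the Tate curve $\Tate$ is an elliptic curve whose punctured geometric fiber has fundamental group $\widehat{F_2}$, and the tame inertia at $q=0$ acts on $\widehat{F_2}$ by the Dehn twist along the vanishing cycle. Under the identification $\SL_2(\ZZ)\subset\Out(F_2)$ this Dehn twist corresponds to $T$, so its $n$-th power corresponds to $T^n\in\Gamma=\Stab_{\SL_2(\ZZ)}([\varphi])$ and therefore fixes $[\varphi]$. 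Consequently $\Tate$ acquires a $G$-structure of class $[\varphi]$ over $K((q^{1/n}))$, producing a dominant morphism $\alpha:\Spec K((q^{1/n}))\to Y(\Gamma_{[\varphi]})_K$ (dominant because the $j$-invariant of $\Tate$ is transcendental over $K$). Pullback along $\alpha$ gives an injective $K$-algebra map $\alpha^{*}:\oO(Y(\Gamma_{[\varphi]})_K)\to K((q^{1/n}))$ which, under the analytic identification $q=e^{2\pi i\tau}$, coincides with the classical Fourier expansion at the chosen cusp.

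It remains to identify the image of $\alpha^{*}$ with $M_0(\Gamma,K)$ viewed inside $K((q^{1/n}))$ via Fourier expansion. The containment $\subseteq$ is clear, since the image has $K$-rational coefficients and its $\CC$-analytification is a $\Gamma$-invariant meromorphic-at-cusps function on $\hH$. For the reverse, any $f\in M_0(\Gamma,K)$ analytifies to a regular function $\tilde{f}_{\CC}$ on $Y(\Gamma_{[\varphi]})_{\CC}$, and the $K$-rationality of its Fourier coefficients, together with a Galois descent argument against the $K$-model structure on $Y(\Gamma_{[\varphi]})$, forces $\tilde{f}_{\CC}$ to descend to $Y(\Gamma_{[\varphi]})_K$. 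Since every step depends only on $\Gamma$ and $K$, and not on $G$ or the particular $[\varphi]$, the same field $K$ works uniformly for all moduli-theoretic models of $\hH/\Gamma$.

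The principal obstacle is this Galois-descent step, namely establishing a $q$-expansion principle for the noncongruence moduli of $G$-structures analogous to Katz's principle in the congruence case; this is expected to follow by the same strategy, using the Tate curve over $\ZZ[1/|G|]((q))$ in place of its congruence incarnation. A secondary subtlety is verifying that although $|G|$ may far exceed $n$, the roots of unity actually needed at the chosen cusp are only those of order $n$, because the inertia image in $G$ is cyclic of order the cusp width, which divides $n$.
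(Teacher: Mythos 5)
Your overall architecture — producing a $\Gamma$-structure on the Tate curve, identifying the resulting map to the moduli model with $q$-expansion, and then matching rings of functions — matches the paper's (Proposition \ref{prop_level_structures_are_qexps} and Corollary \ref{cor_nice}). However, the step you flag as the ``principal obstacle'' is resolved in the paper by a much more elementary argument than a Katz-style $q$-expansion principle or Galois descent, and your choice of $K$ is insufficient.

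On the reverse inclusion $M_0(\Gamma,K)\subseteq B := \oO\bigl(Y(\Gamma)_K\bigr)$: the paper does \emph{not} perform any Galois descent of $\tilde f_\CC$ against a $K$-model. Instead it tensors up to $\CC$ and uses a direct faithfully-flat argument. In the paper's proof, one embeds $B$ into $\Omega = K\ls{q^{1/\infty}}$ via $q$-expansion, observes that coarse base change gives $B\otimes_K\CC\rightiso M_0(\Gamma,\CC)$, and therefore the natural map $M_0(\Gamma,K)\otimes_K\CC\to M_0(\Gamma,\CC)$ is surjective (since $B\subseteq M_0(\Gamma,K)$). It is also injective because it restricts from the \emph{visibly} injective map $K\ls{q^{1/\infty}}\otimes_K\CC\hookrightarrow\CC\ls{q^{1/\infty}}$. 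Hence it is an isomorphism, and faithful flatness of $\CC$ over $K$ applied to the two-term complex $0\to B\to M_0(\Gamma,K)\to 0$ forces $B = M_0(\Gamma,K)$. This bypasses entirely the machinery you anticipate needing; it buys a very short proof that depends only on the injectivity of the $q$-expansion map and ordinary flat base change, not on any deep rationality criterion for modular forms.

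On the choice of $K$: taking $K$ to contain only $\QQ(\zeta_n)$ with $n = \lcm$ of cusp widths is not enough. The field of definition of the relevant component $Y([\varphi])_{\Qbar}\subset M(G)_{\Qbar}$ is not determined by the cusp widths or by $\Gamma$ alone, and can be considerably larger. The dihedral case in \S\ref{ss_dihedral_structure} shows this sharply: for $G = D_{2k}$ with $k$ odd, every component is isomorphic to $\hH/\Gamma_1(2)$ (so $n = 2$, $\QQ(\zeta_n) = \QQ$), yet the components of $\mM(D_{2k})_{\Qbar}$ are only defined over $\QQ(\zeta_k + \zeta_k^{-1})$, which can be arbitrarily large. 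So you must additionally require $K$ to contain the field of moduli of the geometric component (equivalently, to split the $G_\QQ$-orbit of $[\varphi]$ on $\pi_0(\mM(G)_{\Qbar})$), and this is precisely the data encoded in the paper's definition of a $(G,[\varphi],K)$-interpretation. The ``secondary subtlety'' you raise about inertia only generating roots of unity of order dividing the cusp width is correct but orthogonal to this issue: it governs which cyclotomic field the $q$-series land in, not over which field the coarse model is geometrically connected.
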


Here, the field $K$ is essentially a small extension of the field of definition of the cusp $i\infty$ of any particular moduli-theoretic model of $\hH/\Gamma$. One can interpret the above result as saying that these moduli interpretations are compatible with the arithmetic properties of noncongruence modular forms, and hence are likely the ``right models'' for studying arithmetic questions about noncongruence objects.

\sgap

As in the classical case, a moduli theoretic model of $\hH/\Gamma$ is a \emph{fine moduli scheme} if and only if $\Gamma$ is torsion free:


\begin{thm*}[\ref{thm_torsion_free_implies_representable}] A modular curve $Y(\Gamma_{[\varphi]})$ over $\ZZ[1/|G|]$ is a \emph{fine moduli scheme}, and hence admits a universal family of elliptic curves with $G$-structure, if and only if $\Gamma_{[\varphi]}$ is torsion-free.
\end{thm*}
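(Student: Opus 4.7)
The plan is to reduce the question of representability to the triviality of the inertia of the Deligne-Mumford stack $\mM(G)$ restricted to the component corresponding to $Y(\Gamma_{[\varphi]})$, and then to translate this into the stated torsion-free condition on $\Gamma_{[\varphi]}$. A separated Deligne-Mumford stack with smooth coarse moduli represents its moduli functor precisely when its inertia is trivial, so it suffices to show that at every geometric point $(E,\alpha)$ of the component, the group $\Aut_{\mM(G)}(E,\alpha) = \Stab_{\Aut(E)}(\alpha)$ is trivial if and only if $\Gamma_{[\varphi]}$ is torsion-free. Here $\Aut(E)$ acts on $G$-structures through the natural representation $\rho\colon \Aut(E)\to\Out(\pi_1(E^\circ))\cong\GL_2(\ZZ)$, whose image lies in $\SL_2(\ZZ)$ since automorphisms of elliptic curves preserve orientation.

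Working first over $\CC$: combining Theorem \ref{cor_G_to_Gamma} with the étale cover $p\colon \mM(G)\to\mM(1)_{\ZZ[1/|G|]}$ identifies the analytic stack underlying the component as the quotient stack $[\hH/\Gamma_{[\varphi]}]$, whose stacky structure is governed by the kernel $\{\pm I\}$ of the $\SL_2(\ZZ)$-action on $\hH$ together with the elliptic fixed points of non-central torsion elements. The automorphism group at the point over $\tau\in\hH$ is thus precisely $\Stab_{\Gamma_{[\varphi]}}(\tau) = \Gamma_{[\varphi]} \cap \Stab_{\SL_2(\ZZ)}(\tau)$. Since the only torsion orders in $\SL_2(\ZZ)$ are $2,3,4,6$, the torsion-free condition forces every such stabilizer to be trivial. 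Conversely, any non-trivial torsion element $\gamma$ of $\Gamma_{[\varphi]}$ produces a non-trivial stabilizer: if $\gamma=-I$, every $\tau$ yields a $\ZZ/2$-stabilizer arising from $[-1]\in\Aut(E_\tau)$, while a non-central $\gamma$ of order $3,4$, or $6$ yields a non-trivial stabilizer at the appropriate CM elliptic curve with $j=0$ or $j=1728$.

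To extend the equivalence to $\ZZ[1/|G|]$, I would use that $p$ is representable and finite étale, so $\Aut_{\mM(G)}(E,\alpha)$ is a finite étale closed subgroup scheme of $\Aut_{\mM(1)}(E)$, and triviality of such a group scheme is locally constant on its base. Thus the triviality established on the geometric generic fiber propagates to an open-and-closed locus in $\mM(G)_{[\varphi]}$, while the converse direction is immediate from the characteristic-zero analysis. The main obstacle is the careful treatment of residue characteristics $2$ and $3$, where supersingular elliptic curves may acquire larger automorphism groups that threaten to introduce stabilizers not visible in characteristic zero; the saving grace is that since $|G|$ is invertible, the action of $\Aut(E)$ on $G$-structures factors through its image in the prime-to-residue-characteristic quotient of $\Out(\pi_1(E^\circ))$, an image still conjugate inside $\SL_2(\ZZ)$ to a subgroup realized in characteristic zero. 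Hence the torsion-free condition on $\Gamma_{[\varphi]}$ continues to characterize triviality of the inertia at every geometric point over the full base.
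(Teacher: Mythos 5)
The characteristic-zero half of your argument is sound and close in spirit to the paper's: representability of $\mM_{\Qbar}$ reduces to triviality of geometric stabilizers, which you correctly translate into the condition that $\ol{\Gamma_{[\varphi]}}\cong\pi_1(\mM_{\Qbar})$ has no torsion, and this follows from torsion-freeness of $\Gamma_{[\varphi]}$ via Lemma \ref{lemma_closure_of_TF_is_TF}. The gap is in your extension to positive characteristic. You appeal to two claims, both of which fail. First, you say triviality of the inertia ``is locally constant on its base'': but $\Aut_{\mM(1)}(E)$ is a finite group scheme over the base that is \emph{not} étale (its fiber rank jumps at $j=0$, $j=1728$, and at supersingular points), so your claimed closed subgroup scheme $\Aut_{\mM(G)}(E,\alpha)$ is also not obviously flat, and semicontinuity of the fiber rank only gives you that triviality on the generic fiber propagates to an \emph{open} locus, not an open-and-closed one. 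Second, and more fatally, your ``saving grace'' asserts that the image of $\Aut(E_0)$ in the prime-to-$p$ quotient of $\Out(\pi_1(E_0^\circ))$ is ``conjugate inside $\SL_2(\ZZ)$ to a subgroup realized in characteristic zero.'' This is false: for the supersingular curve in characteristic $2$ (or $3$), $\Aut(E_0)$ has order $24$ (or $12$), and acts faithfully on $T_\ell(E_0)$ for $\ell\neq p$, hence faithfully on $\Out(\widehat{F_2}^{\LL})$; no such group embeds in $\SL_2(\ZZ)$, whose finite subgroups are cyclic of order dividing $4$ or $6$. Note that the theorem is over $\ZZ[1/|G|]$, so characteristics $2$ and $3$ do occur unless $6\mid |G|$, and these are exactly the problematic cases you wave away.

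What is actually needed — and what the paper supplies — is not a structural statement about the ambient group $\Aut(E_0)$, but a way to show that any \emph{single} automorphism $\alpha_0$ of $E_0$ fixing $[\varphi_0]$ must already be visible in characteristic zero. The paper does this with Deuring's lifting theorem: lift the pair $(E_0,\alpha_0)$ to an elliptic curve with automorphism over a mixed-characteristic DVR $A$, extend the $G$-structure along $\Spec A$ (which is possible because $\pi_1(\Spec A)$ is trivial when the residue field is algebraically closed), and then use the prime-to-$p$ specialization isomorphism to produce a characteristic-zero $G$-structure $[\varphi_{\ol K}]$ in the \emph{same connected component} that is fixed by $\alpha_{\ol K}$; representability of $\mM_{\Qbar}$ then forces $\alpha_{\ol K}=\id$, and the rigidity lemma (Lemma \ref{lemma_rigidity}(1)) descends this to $\alpha_0=\id$. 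Your proposal is missing this lifting step, which cannot be replaced by the conjugacy claim you make.
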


These results are collected and carefully stated in \S\ref{ss_basic_properties}. The combinatorial nature of $G$-structures means the geometry of the associated moduli spaces is determined by the structure of finite groups, and is thus readily accessible to computation. In \S\ref{section_examples} we carefully analyze the components of $M(G)$ for a few select groups $G$ and make some observations. In particular, in \S\ref{ss_dihedral_structure}, we use the ``branch cycle lemma'' (\ref{lemma_BCL}) to give a complete description of the structure of $\mM(D_{2k})_\QQ$, where $D_{2k}$ is the dihedral group of order $2k$. The main result of \S\ref{ss_dihedral_structure} is:

\begin{thm*}[\ref{thm_dihedral_geometric_structure}, \ref{thm_dihedral_galois_structure}] Let $k\ge 3$ be an integer, and $\mu_k^\prim$ be the set of primitive $k$th roots of unity. Let $E$ be an elliptic curve over a number field $K$, and $G_K := \Gal(\Qbar/K)$. Let $\Gamma := \Gamma_1(2)$ if $k$ is odd, otherwise $\Gamma := \Gamma(2)$ if $k$ is even. Then there is a $G_K$-equivariant bijection:
$$\{D_{2k}-\text{structures on $E/K$}\}\rightiso\{\Gamma-\text{structures on $E/K$}\}\times\left(\mu_k^\prim\right)^{\pm1}$$
In particular, there is a $G_\QQ$-equivariant bijection:
$$\{\text{Components of $\mM(D_{2k})_{\Qbar}$}\}\rightiso \left(\mu_k^\prim\right)^{\pm1}$$
Each component of $\mM(D_{2k})$ is thus defined over $\QQ(\zeta_k + \zeta_k^{-1})$, and over $\CC$ is isomorphic to $\hH/\Gamma_1(2)$ or $\hH/\Gamma(2)$ according to if $k$ is odd or even.
\end{thm*}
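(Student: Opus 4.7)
The plan is to construct the claimed bijection by extracting two canonical invariants from a $D_{2k}$-structure --- an underlying abelian level structure and a ``rotation character'' in $(\mu_k^\prim)^{\pm 1}$ --- and then to verify $G_K$-equivariance using the branch cycle lemma.

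Since $D_{2k}^\ab\cong\ZZ/2$ for odd $k$ and $(\ZZ/2)^2$ for even $k$, postcomposition with the abelianization $p:D_{2k}\to D_{2k}^\ab$, combined with (\ref{prop_abelian_is_congruence}), yields a morphism of stacks $p_*:\mM(D_{2k})\to\mM(\Gamma)$ supplying the projection onto the $\Gamma$-structure factor. To extract the $(\mu_k^\prim)^{\pm 1}$-valued invariant, I use the cyclic normal subgroup $\langle r\rangle\le D_{2k}$: given $\varphi:F_2\to D_{2k}$, the preimage $N:=\varphi^{-1}(\langle r\rangle)$ is a canonical index-$2$ subgroup of $F_2$, the restriction $\chi:=\varphi|_N:N\to\langle r\rangle$ is surjective, and $\Inn(D_{2k})$ acts on $\langle r\rangle$ via $D_{2k}/\langle r\rangle\cong\ZZ/2$ by inversion, so $\chi$ is well-defined up to $\pm 1$. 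Concretely, for odd $k$ with $\psi(a)=0,\psi(b)=1$, every $\Inn(D_{2k})$-orbit of lifts of $\psi$ contains a unique representative $\varphi(a)=r^j,\varphi(b)=s$ with $j\in(\ZZ/k)^\times/\pm 1$; for even $k$ with $\psi(a)=(1,0),\psi(b)=(0,1)$, representatives are $\varphi(a)=r^{2i+1},\varphi(b)=s$ with $2i+1\in(\ZZ/k)^\times/\pm 1$. In either case there are exactly $\phi(k)/2=|(\mu_k^\prim)^{\pm 1}|$ orbits above each fixed $\Gamma$-structure, and the invariant $[j]$ (respectively $[2i+1]$) corresponds to an element of $(\mu_k^\prim)^{\pm 1}$ via the identification $r\leftrightarrow\zeta_k$.

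For $G_K$-equivariance, I apply the branch cycle lemma (\ref{lemma_BCL}) to the $D_{2k}$-Galois cover $X\to E$: the inertia class at $O$ lies in $[D_{2k},D_{2k}]\subseteq\langle r\rangle$, with $\sigma\cdot[I]=[I^{\chi_\cyc(\sigma)}]$. For odd $k$ the inertia $\varphi([a,b])=r^{2j}$ has full order $k$, so BCL directly identifies the $G_K$-action on the invariant with the mod-$k$ cyclotomic character. For even $k$ the inertia $r^{2(2i+1)}\in\langle r^2\rangle$ has order only $k/2$, so BCL alone controls the $G_K$-action only modulo $k/2$; the missing mod-$2$ refinement must come from analyzing the Galois action on the $\mu_k$-torsor structure of the cyclic $\langle r\rangle$-subcover $X\to X/\langle r\rangle$ via Kummer theory, combined with the mod-$2$ Tate-module data provided by the $\Gamma(2)$-structure. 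The main obstacle will be this final step: promoting the BCL-derived mod-$k/2$ action to the full mod-$k$ action for even $k$.

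The ``In particular'' assertion then follows from (\ref{cor_G_to_Gamma}): $\SL_2(\ZZ)\subset\Out(F_2)$ acts transitively on $\Gamma$-structures through the surjection $\SL_2(\ZZ)\to\SL_2(\ZZ/2)$, and (as one verifies on standard matrix generators of $\Gamma(2)$ or the analogue for $\Gamma_1(2)$) trivially on the fiber $(\mu_k^\prim)^{\pm 1}$; therefore components of $\mM(D_{2k})_\Qbar$ correspond to $(\mu_k^\prim)^{\pm 1}$, with $G_\QQ$-stabilizer $\Gal(\Qbar/\QQ(\zeta_k+\zeta_k^{-1}))$. Finally, the restriction of $p_*$ to each component is an isomorphism onto $\mM(\Gamma)$, yielding the identification with $\hH/\Gamma_1(2)$ or $\hH/\Gamma(2)$ over $\CC$.
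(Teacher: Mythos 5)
Your overall plan is the same as the paper's: split a $D_{2k}$-structure into its abelianization (the $\Gamma$-structure) and a rotation invariant in $(\mu_k^\prim)^{\pm1}$, prove $G_K$-equivariance via the branch cycle lemma, and deduce the component count from (\ref{cor_G_to_Gamma}). For odd $k$ your argument is complete and matches the paper: the inertia $\varphi([x,y])=r^{\pm 2j}$ has full order $k$ and since $2$ is invertible mod $k$, BCL determines the action on $j$ directly.

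However, for even $k$ there is a genuine gap, and you have candidly flagged it yourself. The inertia $r^{\pm 2j}$ determines $2j$ mod $k$, hence $j$ only mod $k/2$, so BCL controls the Galois action on the invariant only up to this ambiguity. The scope of the problem is actually narrower than you state: when $k\equiv 2\pmod 4$ the residue $k/2$ is odd while any two valid invariants $j,j'$ are both odd, so $j-j'$ is even and cannot equal $k/2$; the ambiguity disappears. The real obstruction is only when $k\equiv 0\pmod 4$. Your proposed fix --- Kummer theory on the $\langle r\rangle$-subcover plus mod-$2$ Tate-module data --- is offered only as a sketch, and you do not carry it out; as stated it does not constitute a proof. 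The paper resolves this point with a much cleaner trick: lift $[\varphi]$ to a $D_{4k}$-structure $[\tilde\varphi]$ via the standard surjection $D_{4k}\twoheadrightarrow D_{2k}$ (possible by Gasch\"utz), apply BCL at level $D_{4k}$ to obtain $2I([\tilde\varphi\circ\sigma^{-1}])=2\chi(\sigma)I([\tilde\varphi])$ in $\pm(\ZZ/2k\ZZ)$, which pins down $I([\tilde\varphi\circ\sigma^{-1}])$ modulo $k$ up to an additive ambiguity of $k$; that ambiguity then vanishes upon reduction from $\ZZ/2k\ZZ$ to $\ZZ/k\ZZ$, giving the desired $I([\varphi\circ\sigma^{-1}])=\chi(\sigma)I([\varphi])$. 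You should either adopt this lifting argument or actually execute your Kummer-theoretic alternative before the proof can be considered complete.
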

In particular, this says that an elliptic curve $E$ over a number field $K$ admits a $D_{2k}$-structure if and only if it both admits a $\Gamma$ structure and $\zeta_k+\zeta_k^{-1}\in K$.

\sgap

At the end of \S\ref{ss_three_simple_groups}, we describe the components of $\mM(\Sz(8))$, where $\Sz(8)$ is the Suzuki group of order 29120, a finite simple nonabelian group, and is the second smallest simple group which is not known to be a Galois group over $\QQ$. We show that in this case there is a representable component of $\mM(\Sz(8))_\QQ$ which is isomorphic to $Y := \PP^1_\QQ-\text{cusps}$. Thus, there is a universal elliptic curve $\EE$ over $Y$, equipped with an $\Sz(8)$-torsor $\XX\rightarrow\EE$. It seems to be generally believed that there exist $y\in Y(\QQ)$ such that $\EE_y$ over $\QQ$ has positive rank. If such a specialization exists, then the finiteness of rational points on curves of genus $\ge 2$ would imply the existence of infinitely many connected specializations of the $\Sz(8)$-torsor $\XX_y\rightarrow \EE_y$ at some $\QQ$-point of $\EE_y$, which would give many realizations of $\Sz(8)$ as a Galois group over $\QQ$.

\sgap


When $G$ is abelian, the corresponding modular curves are congruence modular curves, and so we say that abelian groups $G$ are \emph{congruence}. On the other hand, as can be seen from the above result about $\mM(D_{2k})$, nonabelian groups $G$ may still yield congruence moduli schemes. However, in all known examples, the only nonabelian groups $G$ which are congruence, are also solvable. Thus, in accordance with the philosophy that noncongruence modular curves should parametrize elliptic curves with nonabelian level structures, we conjecture:

\begin{conj*}[\ref{conj_nonsolvable_implies_noncongruence}] If $G$ is nonsolvable, then for every surjection $\varphi : F_2\twoheadrightarrow G$, $\Gamma_{[\varphi]} := \Stab_{\SL_2(\ZZ)}([\varphi])$ is noncongruence.
\end{conj*}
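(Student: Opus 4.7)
The strategy is to argue by contradiction. Suppose $G$ is nonsolvable and $\varphi : F_2 \twoheadrightarrow G$ is a surjection with $\Gamma_{[\varphi]} \supseteq \Gamma(n)$ for some $n \ge 1$; write $N = \ker\varphi \trianglelefteq F_2$. The hypothesis $\Gamma(n) \subseteq \Gamma_{[\varphi]}$ means that every $\gamma \in \Gamma(n)$, viewed in $\Out(F_2) \supseteq \SL_2(\ZZ)$, preserves the conjugacy class of $N$; equivalently, any lift to $\Aut(F_2)$ descends to an automorphism of $G = F_2/N$, and the class of this automorphism modulo $\Inn(G)$ yields a homomorphism $\rho : \Gamma(n) \to \Out(G)$. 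The goal is to produce an element of $\Gamma(n)$ whose induced map on $F_2$ does not preserve $N$ at all, contradicting the very existence of $\rho$.

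The first step is to describe $\Gamma(n)$ concretely enough to probe its action on $F_2$. Identifying $F_2 = \langle x,y \rangle$ with $\pi_1(E^\circ)$, the generator $T \in \SL_2(\ZZ)$ acts through $\Out(F_2)$ as the Nielsen transformation $x \mapsto x,\ y \mapsto xy$, and $\Gamma(n)$ is the normal closure in $\SL_2(\ZZ)$ of $T^n$ together with $S^{-1}T^n S$. The element $T^n$ itself sends $\varphi$ to the surjection $x \mapsto \varphi(x),\ y \mapsto \varphi(x)^n\varphi(y)$, which equals $\varphi$ as soon as $n$ is a multiple of $\exp(G)$, so these first-order generators pose no obstruction. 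However, $\Gamma(n)$ also contains iterated commutators such as $[T^n, S^{-1}T^n S]$ and deeper nested expressions, and their action on $F_2$ can be unwound into Nielsen-type moves involving words lying in successively deeper terms of the lower central series of $F_2$. Under $\varphi$ these land in the corresponding terms of the lower central series of $G$.

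The second step is to exploit nonsolvability. Since $G$ is nonsolvable, its derived series stabilizes at a nontrivial perfect subgroup $G^{(\infty)}$. The plan is to show, by a Johnson-filtration-style calculus for $\SL_2(\ZZ) \subset \Out(F_2)$, that taking iterated commutators of the generators of $\Gamma(n)$ produces elements whose effect on $\varphi$ detects arbitrarily deep terms of the derived series of $G$, and in particular forces a modification of $\varphi(y)$ by an element of $G^{(\infty)}$ that is \emph{not} of the form $[g, \varphi(y)]$ for $g$ commuting with $\varphi(x)$. Such an element fails to stabilize $N$ even up to conjugation, contradicting $\Gamma(n) \subseteq \Gamma_{[\varphi]}$. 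The main obstacle I anticipate is precisely the bookkeeping here: one must select iterated commutators within the restricted subgroup $\SL_2(\ZZ) \subset \Out(F_2)$ (not all of $\Out(F_2)$) and verify that their image really escapes the inner automorphisms of $G$ --- a condition that is sensitive to the specific $\varphi$ and not just to the isomorphism type of $G$.

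A complementary and perhaps more tractable approach is the moduli-theoretic one. The inclusion $\Gamma(n) \subseteq \Gamma_{[\varphi]}$ yields a morphism $M(n) \to M(G)$ into the component $Y(\Gamma_{[\varphi]})$, and hence an $\SL_2(\ZZ/n)$-equivariant family of $G$-torsors $\XX \to \EE^\circ$ on the universal elliptic curve over $M(n)$. Over $M(n)$ the $n$-torsion $\EE[n]$ is canonically trivialized, and one expects any $G$-torsor built $\SL_2(\ZZ/n)$-equivariantly from this abelian datum --- via iterated fiber products, quotients, and Weil-pairing constructions --- to land inside the solvable radical of $G$, since each step of such a construction can only extend by an abelian piece controlled by $\EE[n]$. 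Turning this heuristic into a proof via the anabelian geometry of $\EE^\circ / M(n)$ and a descent argument from $M(1)$ is, I expect, where the essential work lies; it seems to require tools beyond those developed in this paper, which is presumably why the statement is left as a conjecture.
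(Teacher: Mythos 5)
This statement is stated as a \emph{conjecture} in the paper (Conjecture~\ref{conj_nonsolvable_implies_noncongruence}), so there is no proof in the paper to compare against. You acknowledge this yourself in the final sentence of your proposal, and indeed neither of your two sketches constitutes a proof; both contain genuine gaps which you have flagged honestly. Let me make those gaps precise.

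In your first approach, the central assertion --- that one can, ``by a Johnson-filtration-style calculus,'' produce iterated commutators in $\Gamma(n)\le\SL_2(\ZZ)\subset\Out(F_2)$ whose effect on $\varphi$ detects arbitrarily deep terms of the derived series of $G$ and escapes $\Inn(G)$ --- is exactly the missing content, not a step. There is no mechanism proposed for why the commutator calculus of $\Gamma(n)$ inside $\SL_2(\ZZ)$ should be compatible with the derived series of $F_2$, let alone with that of $G$; and since the congruence kernel $\bigcap_n\ol{\Gamma(n)}$ is a free profinite group of infinite rank (Mel'nikov, cited in the paper), there is no cheap structural reason that $\Gamma(n)$ should fail to stabilize a given nonsolvable quotient. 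This is precisely the hard direction. Note also that the normal closure of $\{T^n,\,S^{-1}T^nS\}$ need not exhaust $\Gamma(n)$ in a way that is easy to describe generator-by-generator once $n>2$, so even the first step of your bookkeeping is nontrivial.

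In your second approach, the key claim --- that any $\SL_2(\ZZ/n)$-equivariant $G$-torsor on $\EE^\circ/M(n)$ must be built from the $n$-torsion $\EE[n]$ ``via iterated fiber products, quotients, and Weil-pairing constructions,'' and hence land in a solvable group --- is a heuristic with no precise content. The trivialization of $\EE[n]$ gives control of the abelian piece of $\pi_1(\EE^\circ)$ over $M(n)$, but there is no argument that an arbitrary equivariant $G$-torsor is ``built from'' this datum or that its geometric monodromy must be solvable. This is, again, exactly what would have to be proved.

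For context, the partial results the paper does prove (Theorem~\ref{thm_simple_nc_criteria} and Corollary~\ref{cor_noncongruence_summary}) take a quite different and much more down-to-earth route: they exhibit, for each group in a specific family (symmetric, alternating, $\PSL_2(\FF_p)$, minimal nonabelian simple groups via Thompson), a surjection $\varphi$ for which a finite computation of the image of $\Gamma_{[\varphi]}$ in $\SL_2(\ZZ/l\ZZ)$ (where $l$ is the geometric level) shows the congruence index must be strictly smaller than the actual index. This is an unconditional lower bound on the ``congruence deficiency'' for particular $\varphi$, not a general argument for all $\varphi$ and all nonsolvable $G$, and it neither contradicts nor proves the conjecture.
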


Towards this, we prove that if $G$ is a group admitting a generating pair $a,b$ such that the orders of $|a|,|b|,|ab|$ are pairwise coprime, then the stabilizer $\Gamma_{[\varphi]}$ of the surjection $\varphi : F_2\twoheadrightarrow G$ mapping the generators of $F_2$ to $a,b$ is noncongruence. In particular we show

\begin{thm*}[\ref{cor_noncongruence_summary}] If $G$ is an extension of $S_n (n\ge 4)$, $A_n (n\ge 5)$, $\PSL_2(\FF_p)\;(p\ge 5)$, or a minimal finite simple nonabelian group, then there is a surjection $\varphi : F_2\twoheadrightarrow G$ such that $\Gamma_{[\varphi]}$ is noncongruence.
\end{thm*}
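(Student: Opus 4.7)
The plan is to reduce each case to the sufficient condition stated immediately before the corollary: if a 2-generated finite group $H$ admits a generating pair $a, b$ with $|a|$, $|b|$, $|ab|$ pairwise coprime, then the associated surjection $\varphi : F_2 \twoheadrightarrow H$ (sending the standard generators to $a, b$) has $\Gamma_{[\varphi]}$ noncongruence. The proof then splits into two tasks: exhibit such a pair in each listed base group, and propagate noncongruence from a base group $H$ to any 2-generated extension $G$ of $H$.

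For the base cases, I would verify the criterion directly. For $A_5$, the pair $a = (1,2)(3,4)$, $b = (1,3,5)$ generates with $(|a|, |b|, |ab|) = (2, 3, 5)$; classical $(2, 3, r)$-generation results for $A_n$ ($n \ge 5$), with $r$ chosen coprime to $6$, provide analogous pairs in general. For $\PSL_2(\FF_p)$ with $p \ge 5$, the standard modular generators $S = \spmatrix{0}{-1}{1}{0}$ and $T = \spmatrix{1}{1}{0}{1}$ reduce modulo $p$ to a generating pair with $(|S|, |ST|, |T|) = (2, 3, p)$, which is pairwise coprime precisely because $p \ge 5$. For the minimal finite simple nonabelian groups --- which by Thompson's classification are $A_5$, certain $\PSL_2(\FF_q)$'s, and the Suzuki groups $\Sz(2^p)$ for odd prime $p$ --- one treats each family: for $\Sz(2^p)$, one uses elements of orders $4$, $5$, and a prime divisor of $2^p \pm 2^{(p+1)/2} + 1$, which are pairwise coprime. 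The $S_n$ case presents a genuine obstacle: a parity argument shows that no generating pair of $S_n$ can have $|a|$, $|b|$, $|ab|$ all pairwise coprime and $\ge 2$, since odd permutations have even order and generating $S_n$ forces at least one of $a, b$ to be odd, producing two orders divisible by $2$ among $|a|, |b|, |ab|$. This case therefore requires either a strengthened sufficient criterion (e.g., allowing $\gcd(|a|, |b|)$ nontrivial provided both are coprime to $|ab|$) or a direct argument exploiting the extension $1 \to A_n \to S_n \to \ZZ/2 \to 1$ and the $A_n$ result.

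For the extension step, recall that all groups $G$ under consideration in this paper are 2-generated. Given a surjection $G \twoheadrightarrow H$ onto one of the base groups, Gasch\"utz's lemma lifts any 2-element generating set of $H$ to a 2-element generating set of $G$, producing $\varphi : F_2 \twoheadrightarrow G$ whose composition $\bar\varphi$ with $G \twoheadrightarrow H$ is the base surjection. The induced map $\Hom^\surext(F_2, G) \to \Hom^\surext(F_2, H)$ is $\SL_2(\ZZ)$-equivariant, so $\Gamma_{[\varphi]} \subseteq \Gamma_{[\bar\varphi]}$. Since any finite-index subgroup contained in a noncongruence subgroup is itself noncongruence --- containment of a principal congruence subgroup $\Gamma(N)$ propagates upward to any supergroup --- $\Gamma_{[\varphi]}$ is noncongruence.

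The main obstacle is the $S_n$ case, where parity rules out pairwise coprime generating triples outright and an alternative criterion or construction is needed. The Suzuki family also demands nontrivial group-theoretic input, namely the classification of maximal subgroups of $\Sz(2^p)$, to confirm that the proposed elements actually generate rather than lie in a proper subgroup. Once these two difficulties are resolved, the remaining cases --- $A_n$ for $n \ge 5$, $\PSL_2(\FF_p)$, the other minimal simple families, and the lifting to extensions via Gasch\"utz --- are essentially routine applications of the main sufficient condition.
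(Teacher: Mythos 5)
Your overall strategy --- verify the pairwise-coprime criterion for the base groups, then propagate noncongruence up to extensions via Gasch\"{u}tz and the inclusion $\Gamma_{[\varphi]}\subseteq\Gamma_{[f\circ\varphi]}$ --- is exactly the paper's, and your treatment of $\PSL_2(\FF_p)$, the minimal simple groups via Thompson, and the extension step are all correct. But the two base cases you flag as ``difficult'' contain genuine errors, not just loose ends. For $A_n$, the $(2,3,r)$-generation approach with $r$ coprime to $6$ fails already at $n=6$: the only admissible $r$ would be $5$, but any $(2,3,5)$-generated group is a quotient of the spherical $(2,3,5)$-triangle group $A_5$, so $A_6$ is not $(2,3,5)$-generated. (One could patch this case with the triple $(3,4,5)$, but ``classical $(2,3,r)$-generation results'' is the wrong citation and would need case-by-case repair.) The paper does not treat $A_n$ via the coprime criterion at all; Theorem~\ref{thm_A_n_noncongruence} is proved by a direct Schmith\"{u}sen-style index computation parallel to the $S_n$ argument, splitting into residue classes of $n\bmod 6$.

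Your fallback for $S_n$ is also backwards. Extension stability (Proposition~\ref{prop_descent_of_groups}) propagates noncongruence from a \emph{quotient} $H$ of $G$ to $G$ itself; in the extension $1\to A_n\to S_n\to\ZZ/2\to 1$, the group $A_n$ is a normal subgroup of $S_n$, and the quotient $S_n/A_n\cong\ZZ/2$ is congruence. So the $A_n$ result says nothing about $S_n$ via this mechanism. The paper's Theorem~\ref{thm_S_n_is_NC} instead handles $S_n$ directly, in fact \emph{before} $A_n$: it uses the conjugate surjections $\varphi_1, \varphi_2$ (sending $y$ to an $n$-cycle versus an $(n-1)$-cycle) to bound the index $e$ of the congruence image of $\pm\Gamma_1$ in $\SL_2(\ZZ/l\ZZ)$ by $[\SL_2(\ZZ):\Gamma(2,1)]=3$ (accounting for the shared factor of $2$ between $|\varphi_i(x)|=2$ and $|\varphi_i(y)|$), and then exhibits four generating pairs $\psi_1,\ldots,\psi_4$ in the same $\SL_2(\ZZ)$-orbit that are inequivalent up to $\pm I$, giving $d\ge 4>e$. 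So $S_n$ is where the real argument lives, and $A_n$ follows by adapting it; neither is deduced from the other.
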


Here, a minimal finite simple nonabelian group is a finite simple nonabelian group for which every proper subgroup is solvable (c.f. Corollary \ref{cor_MFSNG}).

\sgap

The examples discussed in \S4 are taken from large tables of computational data which can be found in Appendix \ref{appendix_tables}.

\subsubsection{Application to the Arithmetic of Noncongruence Modular Forms}\label{summary_UBD}

A $q$-expansion of a modular form $f$ for a finite index subgroup $\Gamma\le\SL_2(\ZZ)$ is said to have bounded denominators if its Fourier coefficients lie in $\Qbar$ and $c f$ has integral coefficients for some integer $c\ne 0$. While this is true of all congruence modular forms, it was first observed in \cite{ASD71} that it is no longer true if $f$ is not a congruence modular form.

\sgap

For a subgroup $\Gamma\le\SL_2(\ZZ)$, let $M_k(\Gamma,\Qbar)$ be the $\Qbar$-algebra of meromorphic modular forms of weight $k$, holomorphic on $\hH$, with Fourier coefficients in $\Qbar$. Let $M_*(\Gamma,\Qbar) := \bigoplus_{k\in\ZZ} M_k(\Gamma,\Qbar)$. We will say that a modular form $f\in M_*(\Gamma,\Qbar)$ is \emph{primitive} (for $\Gamma$) if it is not a modular form for any strictly larger subgroup $\Gamma'\supsetneq \Gamma$.

\sgap

\begin{conj*}[Unbounded Denominators Conjecture, c.f. \S\ref{conj_UBD}, and \cite{LL11,KL07,Bir94}] A modular form $f\in M_*(\Gamma,\Qbar)$ has bounded denominators at some cusp if and only if $f$ is a congruence modular form.
\end{conj*}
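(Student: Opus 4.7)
The conjecture splits into two directions of very different character, and the plan is to exploit the moduli framework of this paper in both.

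The forward direction (congruence $\Rightarrow$ bounded denominators) is classical, but can be recovered cleanly from the moduli interpretation developed here. If $\Gamma \supseteq \Gamma(n)$, then any $f \in M_*(\Gamma,\Qbar)$ lifts to a section of a suitable power of the Hodge bundle on the fine moduli scheme $Y(n)$, which is a component of $\mM((\ZZ/n\ZZ)^2)$ and is defined over $\ZZ[1/n,\zeta_n]$. Identifying the formal neighborhood of $i\infty$ with the Tate curve over $\ZZ[1/n,\zeta_n]\ls{q^{1/n}}$ and applying the $q$-expansion principle yields the bounded denominators conclusion (after clearing a fixed integer).

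For the hard direction, the plan is to reverse this argument. Let $f \in M_*(\Gamma,\Qbar)$ be primitive with bounded denominators at $i\infty$ (a Galois conjugation of cusps reduces the ``some cusp'' hypothesis to this one). By Theorem \ref{cor_CSP}, choose $\varphi : F_2 \twoheadrightarrow G$ with $\Gamma \supseteq \Gamma_{[\varphi]}$, and by Theorem \ref{cor_nice} obtain an integral model of $Y(\Gamma_{[\varphi]})$ over a ring of $S$-integers $\oO_{K,S}$, where $S$ contains the primes dividing $|G|$. A $q$-expansion principle for this integral model should upgrade the bounded denominator hypothesis to the statement that $f$ extends to a section of the Hodge bundle over a formal neighborhood of $i\infty$ in $\mM(G)_{\oO_{K,S}}$. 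By the moduli interpretation, such a formal neighborhood represents the functor of $G$-structures on the Tate curve $\Tate(q^{1/e})$ over $\oO_{K,S}\ps{q^{1/e}}$, where $e$ is the ramification index of $\Gamma_{[\varphi]}$ at $i\infty$. The conjecture thereby translates into the arithmetic-geometric question: for which $G$ does the punctured Tate curve admit $G$-structures over such integral rings, and are these forced to come from classical congruence (essentially abelian) level structures?

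The heart of the matter, and the main obstacle, is to show that when $\Gamma_{[\varphi]}$ is noncongruence no such integral $G$-structure exists --- equivalently, that any $G$-cover of the punctured Tate curve realizing $\varphi$ must acquire genuine denominators at some prime $\mathfrak{p} \notin S$. A natural route is reduction mod $\mathfrak{p}$: the tame fundamental group of the punctured Tate curve over $\kappa(\mathfrak{p})\ls{q^{1/e}}$ is an extension of $\widehat{\ZZ}$ by a pro-cyclic inertia group, so any $G$-cover that reduces well mod $\mathfrak{p}$ must factor through a quotient of $F_2$ built out of pro-cyclic pieces. Combined with the fact that the abelianization $F_2 \twoheadrightarrow \ZZ^2$ detects precisely the congruence level structures, this heuristic suggests that the persistence of a nonabelian $G$-structure under integral reduction forces abelianness, contradicting the hypothesis of Conjecture \ref{conj_nonsolvable_implies_noncongruence} (or its refinements). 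Making this precise is delicate: one must simultaneously control the cuspidal inertia, the choice of integral model, the freedom to twist by $\Inn(G)$, and the possibility that $f$ descends to a strictly larger algebra $M_*(\Gamma',\Qbar)$ with $\Gamma' \supsetneq \Gamma$. These arithmetic subtleties are exactly the phenomena that distinguish the congruence and noncongruence worlds, and are where the essential obstruction to a complete proof resides.
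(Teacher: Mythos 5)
This statement is a \emph{conjecture}, not a theorem: the paper does not prove it, and it was open at the time of writing. The paper's contribution in \S\ref{section_UBD} is to \emph{reformulate} the conjecture geometrically (Theorem \ref{thm_level_structures_and_UBD} and Theorem \ref{thm_UBDB=UBD}) and to prove a partial integrality result (Theorem \ref{thm_bad_primes}), but the hard implication is left entirely open. Your proposal correctly reconstructs the reformulation --- the translation ``bounded denominators $\Leftrightarrow$ $\Gamma$-structure on $\Tate(q)$ over $B(\Qbar,q)$'' is essentially Proposition \ref{prop_level_structures_are_qexps} together with Theorem \ref{thm_level_structures_and_UBD} (though the paper routes through the integrally closed ring $B(\Qbar,q)$ and the congruence closure $\Gamma^c$ rather than through Hodge bundles, and has to handle the $\pm I$ issue and torsion via an auxiliary $\Gamma_1(p)$). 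But after that translation you and the paper are in the same place: an open problem.

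The concrete gap in your final heuristic is that it misidentifies the relevant fundamental group. You argue that reduction mod $\mathfrak{p}$ should force a $G$-cover of the punctured Tate curve to ``factor through pro-cyclic pieces'' because of the tame inertia structure of $\kappa(\mathfrak{p})\ls{q^{1/e}}$, and hence force abelianness. This conflates two different groups. A $G$-structure on $\Tate(q)$ over a ring $R$ is a $\pi_1(\Spec R)$-invariant point of $\Hom^\surext(\widehat{F_2},G)$, where the $\widehat{F_2}$ is the \emph{geometric} fundamental group of a punctured fiber. Over $\overline{\kappa(\mathfrak{p})}\ls{q}^{\text{sep}}$ that geometric group is the full prime-to-$p$ completion $(\widehat{F_2})^{(p')}$, which is free pro-$(p')$ of rank 2 and surjects onto every nonabelian 2-generated group of order prime to $p$. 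So nonabelian $G$-covers of punctured elliptic curves in characteristic $p$ exist in abundance; nothing about positive characteristic forces abelianness. Indeed Theorem \ref{thm_bad_primes} already shows that \emph{every} $G$-structure (abelian or not) does survive integrally away from the primes dividing $|G|$. The real question --- and the one the paper leaves open in the form of Conjecture \ref{conj_geometric_UBD_B} --- is whether the image $T := t_*(\pi_1(\Spec B(\Qbar,q)))$ in $\widehat{\SL_2(\ZZ)}$ equals the congruence kernel or only sits inside it, which is a question about the \emph{base} $\Spec B(\Qbar,q)$, not about the Tate curve's own fibers, and is not accessible by the local argument you sketch.
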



\sgap

By Proposition 5 of \cite{KL07} (also see Proposition \ref{prop_reduce_to_weight_0}), proving UBD for modular functions (modular forms of weight 0) would prove UBD in general, so it suffices to restrict our attention to (meromorphic) modular functions which are holomorphic on $\hH$.

\sgap

It was observed in \cite{ASD71} that there seem to be only finitely many bad primes at which one can have unbounded denominators. The fact that our moduli stacks are finite \'{e}tale over $\mM(1)_{\ZZ[1/N]}$ proves a somewhat more precise result:

\begin{thm*}[\ref{thm_bad_primes}] For a finite index $\Gamma\le\SL_2(\ZZ)$, let $\varphi : F_2\twoheadrightarrow G$ be such that $\Gamma_{[\varphi]}\lhd\Gamma$, then a modular form $f\in M_*(\Gamma,\Qbar)$ must have bounded denominators at any prime not dividing $|G|$.
\end{thm*}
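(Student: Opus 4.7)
The plan is to combine a reduction to weight $0$ with the smooth integral model of $Y(\Gamma_{[\varphi]})$ over $\ZZ[1/|G|]$ provided by Theorem \ref{cor_G_to_Gamma}, concluding via a Tate-curve computation of the formal neighborhood of the cusp $i\infty$.

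First I would reduce to weight $0$ via Proposition \ref{prop_reduce_to_weight_0}. Concretely, given $f\in M_k(\Gamma,\Qbar)$, form $F := f^{12}/\Delta^k\in M_0(\Gamma,\Qbar)$, where $\Delta$ is the weight-$12$ discriminant for $\SL_2(\ZZ)$. Since $\Delta$ has $q$-expansion $q\prod_{n\ge 1}(1-q^n)^{24}\in\ZZ\ps{q}$ with unit leading coefficient after dividing by $q$, the Gauss-norm identity $v_p(F) = 12\,v_p(f)$ holds for every prime $p$, so $f$ has bounded denominators at $p$ if and only if $F$ does. The hypothesis $\Gamma_{[\varphi]}\lhd\Gamma$ is preserved since $\Gamma$ is unchanged, so it suffices to treat weight-$0$ modular functions.

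Since $\Gamma_{[\varphi]}\lhd\Gamma$, pulling $F$ back along $Y(\Gamma_{[\varphi]})\to Y(\Gamma)$ gives a $\Gamma/\Gamma_{[\varphi]}$-invariant regular function on $Y(\Gamma_{[\varphi]})$ (regular because $F$ is holomorphic on $\hH$). By Theorem \ref{cor_G_to_Gamma}, $Y(\Gamma_{[\varphi]})_\CC$ is a connected component of $M(G)_\CC$, and since $\mM(G)$ is finite \'{e}tale over $\mM(1)_{\ZZ[1/|G|]}$, the coarse space $M(G)$ is smooth over $\ZZ[1/|G|]$. After extending scalars to a sufficiently large number field $K$ (containing the Fourier coefficients of $F$, isolating the chosen geometric component, and rendering the cusp $i\infty$ rational), we obtain a smooth affine scheme $\yY = \Spec A$ over $R := \oO_K[1/|G|]$ with $\yY_K = Y(\Gamma_{[\varphi]})_K$. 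From $F\in H^0(\yY_K,\oO_{\yY_K}) = A\otimes_R K$ we can write $F = a/s$ with $a\in A$ and $s\in R\setminus\{0\}$, so $sF\in A$.

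To extract the $q$-expansion at $i\infty$, I would compute the formal completion of $\xX := \yY\cup\{\text{cusps}\}$ along $i\infty$. The formal neighborhood of the cusp in $\ol{\mM(1)}$ is $\Spec\ZZ\ps{q}$ carrying the Tate curve $\Tate(q)$; since $\ol{\mM(G)}\to\ol{\mM(1)}$ is tamely ramified at the cusps with ramification index $N$ equal to the order in $G$ of $\varphi([x,y])$ (the image of the boundary loop around $O\in E$), and $N\mid|G|$, the formal completion of $\xX$ along $i\infty$ is $\Spec R\ps{q^{1/N}}$ after possibly adjoining $\zeta_N$ to $K$ (keeping $R$ a $\ZZ[1/|G|]$-algebra since $N\mid|G|$). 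Pulling back $sF$ along the punctured formal neighborhood $\Spec R\ls{q^{1/N}}\to\yY$ produces the Laurent expansion of $sF$ in $R\ls{q^{1/N}}$, so the Fourier coefficients of $sF$ lie in $R = \oO_K[1/|G|]$ and have nonnegative $p$-adic valuation for every prime $p\nmid|G|$. Since $s\in R$ also has nonnegative $p$-adic valuation at any such $p$, the Fourier coefficients of $F$ are bounded below by $-v_p(s) > -\infty$ at each $p\nmid|G|$, giving the claimed bounded denominators.

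The principal technical obstacle is verifying that the formal completion at $i\infty$ has the asserted form $\Spec R\ps{q^{1/N}}$ with $N\mid|G|$. This requires unwinding the moduli interpretation of $G$-structures on the Tate curve: such a structure corresponds to a $G$-Galois cover of the nodal punctured elliptic curve, whose inertia at the node is generated by $\varphi([x,y])$, and the cuspidal ramification index is the order of this element. Once that identification is in place, the compatibility of the Tate curve with $q$-expansions (classical in the congruence setting, c.f. \cite{KM85,DR72}) together with the \'{e}taleness of $\mM(G)$ over $\mM(1)_{\ZZ[1/|G|]}$ yield the desired integral expansion.
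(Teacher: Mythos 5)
Your overall strategy is the same as the paper's: exploit that $\mM(G)$ is finite \'{e}tale over $\mM(1)_{\ZZ[1/|G|]}$ to get a smooth integral model, and use tameness/Abhyankar near the cusp. But there are a few concrete problems.

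First, you misidentify the cuspidal ramification index. You take $N$ to be the order of $\varphi([x,y])$ in $G$, describing it as ``the image of the boundary loop around $O\in E$.'' That quantity is the ramification index of the $G$-cover $\ol{X}\to\ol{E}$ over $O$ (the column ``$e$'' in the paper's tables). The ramification of $\ol{Y(\Gamma_{[\varphi]})}\to\ol{M(1)}$ at a cusp is instead the \emph{cusp width}, i.e.\ the cycle length of $\gamma_T = \spmatrix{1}{1}{0}{1}$ on the $\SL_2(\ZZ)$-orbit of $[\varphi]$ (Proposition \ref{prop_level}), which is controlled by the order of $\varphi(x)$ for a particular generator $x$ of $F_2$, not by $\varphi([x,y])$. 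The error is benign here only because both quantities divide $|G|$ (Corollary \ref{cor_level_divides_N}), so tameness still holds; but as written the identification is wrong.

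Second, and more structurally: you compactify and work with $\xX = \yY\cup\{\text{cusps}\}$ over $R = \ZZ_K[1/|G|]$. No such integral compactification has been constructed. Producing a smooth proper model of $Y(\Gamma_{[\varphi]})$ over $R$ whose formal completion at $i\infty$ has the asserted shape requires either extending the moduli problem to Deligne--Rapoport generalized elliptic curves with $G$-structure, or normalizing and then controlling the resulting singularities at the boundary; you do neither. The paper avoids the compactification entirely: it works with the punctured formal disk $\ZZ_\mf{p}^\h\ls{q}$ carrying the Tate curve, pulls back the finite \'{e}tale scheme of $\Gamma$-structures, and classifies finite \'{e}tale covers of $R\ls{q}$ via Corollary \ref{cor_abhyankar} (which packages Abhyankar's lemma for the two-dimensional base $R\ps{q}$). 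This is a cleaner route precisely because it needs no boundary.

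Third, you state that compatibility of the Tate curve with $q$-expansions is ``classical'' and cite the congruence references. For noncongruence $\Gamma$ this is exactly what must be established, and it is the content of Proposition \ref{prop_level_structures_are_qexps} and Corollary \ref{cor_nice}: that the map $\Spec\Omega\to Y(\Gamma)_R$ induced by a $\Gamma$-structure on $\Tate(q)/\Omega$ is $q$-expansion, and that $Y(\Gamma)_K = \Spec M_0(\Gamma,K)$ for a suitable $K$. Without these, the step from ``$sF$ extends to the integral model'' to ``the Fourier coefficients of $sF$ lie in $R$'' is not justified. Finally, the claim about the formal completion should be made after Henselizing or completing at a specific prime $\mf{p}\nmid|G|$; over the global ring $\ZZ_K[1/|G|]$ Abhyankar's lemma does not pin down the formal disk so cleanly, and the paper is careful to pass to $\ZZ_\mf{p}^\h$ before invoking Corollary \ref{cor_abhyankar}.

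Your weight-$0$ reduction $F = f^{12}/\Delta^k$ is correct and in fact a cleaner device for this particular statement than the paper's Proposition \ref{prop_reduce_to_weight_0} (which is phrased in terms of UBD and primitivity), since multiplicativity of the Gauss norm and invertibility of $\Delta/q$ give the equivalence of bounded denominators at every prime.
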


For a number field $K$, let $\ZZ_K$ be its ring of integers. Then we define:
$$B(\Qbar,q) :=  \varinjlim_{n,K}\ZZ_K\ls{q^{1/n}}\otimes_{\ZZ_K}K$$
where the limit ranges over all integers $n\ge 1$ and number fields $K$. We show that if a modular function with Fourier coefficients in $\Qbar$ has bounded denominators, then its $q$-expansion lies in $B(\Qbar,q)$. If the Tate curve $\Tate(q)$ over $B(\Qbar,q)$ admits a $\Gamma$-structure, we would get a map
$$\Spec B(\Qbar,q) \longrightarrow\hH/\Gamma$$
which at the level of affine rings is given by $q$-expansions (c.f. Proposition \ref{prop_level_structures_are_qexps}), and hence the existence of noncongruence $\Gamma$-structures on $\Tate(q)$ over $B(\Qbar,q)$ would disprove UBD. The second main result of \S\ref{section_UBD} is that the converse is also true.

\begin{thm*}[\ref{thm_level_structures_and_UBD}] For any finite index $\Gamma\le\SL_2(\ZZ)$ containing $-I$, all modular forms in $\bigoplus_{k\in 2\ZZ}M_{k}(\Gamma,\Qbar)$ have bounded denominators at some cusp if and only if there exists a $\Gamma$-structure on $\Tate(q)$ over $B(\Qbar,q)$.
\end{thm*}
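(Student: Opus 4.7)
The plan is to translate both sides of the biconditional into the existence of a morphism $\Spec B(\Qbar,q) \to \mM(\Gamma)_K$ for an appropriate number field $K$, matching the two via the $q$-expansion principle. As preparation, I would use Theorem \ref{cor_CSP} together with Theorem \ref{cor_nice} to fix $K$ and an arithmetic model of $Y(\Gamma) = \hH/\Gamma$ as $\Spec M_0(\Gamma,K)$, coming from the moduli stack $\mM(\Gamma)_K$ of elliptic curves with $\Gamma$-structure. The hypothesis $-I \in \Gamma$ is used to ensure that modular forms of even weight descend to sections of $\omega^{\otimes 2k}$ on this model, so that $\bigoplus_{k \in 2\ZZ} M_k(\Gamma,K)$ is the graded ring of functions on the $\GG_m$-bundle $\omega^{\otimes 2} \setminus \{0\} \to Y(\Gamma,K)$.

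For the forward direction, a $\Gamma$-structure on $\Tate(q)/B(\Qbar,q)$ is, by the moduli interpretation, exactly a classifying morphism $\Spec B(\Qbar,q) \to \mM(\Gamma)_K$. Pulling back a weight $2k$ modular form $f$, viewed as a section of $\omega^{\otimes 2k}$, and trivializing via the canonical differential $\omega_{\can} = dt/t$ on $\Tate(q)$, I obtain a scalar in $B(\Qbar,q)$ which, by the classical Tate curve formalism (as in Proposition \ref{prop_level_structures_are_qexps}), is precisely the $q$-expansion of $f$ at $i\infty$. Hence every such $q$-expansion lies in $B(\Qbar,q)$, i.e.\ has bounded denominators at $i\infty$.

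For the reverse direction, assume (without loss of generality by Galois conjugation on cusps) bounded denominators at $i\infty$. The $q$-expansion map then defines a graded $K$-algebra homomorphism from $\bigoplus_{k \in 2\ZZ} M_k(\Gamma,K)$ into the appropriate graded ring built from $B(\Qbar,q)$ (with an auxiliary degree-$2$ variable tracking weight). Geometrically, this is a morphism $\Spec B(\Qbar,q) \to Y(\Gamma,K)$ together with a trivialization of $\omega^{\otimes 2}$ on the pullback. By the moduli interpretation, this classifies an elliptic curve $E/B(\Qbar,q)$ with a $\Gamma$-structure and a chosen trivialization of $\omega_E^{\otimes 2}$, and the remaining step is to identify $(E, \omega_E^{\otimes 2})$ with $(\Tate(q), \omega_{\can}^{\otimes 2})$ compatibly with the $\Gamma$-structure. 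The pullback of the $j$-function is the known Laurent series $j(\Tate(q))$, so $E$ and $\Tate(q)$ differ only by a twist; matching the trivializations of $\omega^{\otimes 2}$ then pins down the twist, and the henselian structure of $B(\Qbar,q)$ along $q = 0$ (it is a filtered union of formal Laurent series rings) rigidifies the resulting isomorphism.

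The main technical obstacle I anticipate is this final identification $E \cong \Tate(q)$ in the reverse direction. Since $B(\Qbar,q)$ is not local but involves every $q^{1/n}$, one must argue carefully that no nontrivial twist of $\Tate(q)$ can share both the $j$-invariant and the trivialization of $\omega^{\otimes 2}$ over such a ring; this is precisely where the use of \emph{higher} even weight forms, rather than only modular functions, is essential. The hypothesis $-I \in \Gamma$ is exactly what matches the $\{\pm 1\}$-ambiguity in $\omega_E$ arising from the $[-1]$ automorphism of $E$, so that rigidifying $\omega^{\otimes 2}$ (and not $\omega$ itself) supplies the correct amount of data to recover the Tate curve on the nose.
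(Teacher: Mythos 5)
You have correctly located the crux — the possibility that the family of elliptic curves produced by the $q$-expansion map is a twist of $\Tate(q)$ rather than $\Tate(q)$ itself — but the repair you propose does not work. Trivializing $\omega^{\otimes 2}$ cannot detect a quadratic twist, because $[-1]\in\Aut(E)$ acts trivially on $\omega_E^{\otimes 2}$: if $E'$ is the quadratic twist of $\Tate(q)$ by some $\chi\in H^1(B(\Qbar,q),\{\pm1\})$, then $E'$ inherits both a $\Gamma$-structure (since $-I\in\Gamma$ makes $\Gamma$-structures $[-1]$-invariant) and a trivialization of $\omega_{E'}^{\otimes2}$ from those on $\Tate(q)$, and this data maps to the \emph{same} graded homomorphism you constructed. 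Moreover $B(\Qbar,q)$ has plenty of non-square units — e.g.\ $1+q$, since $\sqrt{1+q}$ has $2$-adically unbounded denominators — so nontrivial quadratic twists genuinely exist, and the ``Henselian structure'' of $B(\Qbar,q)$ does not rigidify them away. So the final sentence of your proposal, that rigidifying $\omega^{\otimes 2}$ (rather than $\omega$) ``supplies the correct amount of data to recover the Tate curve on the nose,'' is exactly backwards; it is the $\omega^{\otimes1}$-rigidification that would kill the $\pm1$-twist ambiguity, but the even-weight ring only sees $\omega^{\otimes2}$. There is also a secondary gap: when $\Gamma$ has elliptic fixed points (which is not excluded by $-I\in\Gamma$), $\omega^{\otimes2}$ does not descend from the stack $\yY(\Gamma)$ to the coarse scheme $Y(\Gamma)$, so the assertion that $\bigoplus_{k\in2\ZZ}M_k(\Gamma,K)$ is the function ring of a $\GG_m$-bundle over $Y(\Gamma,K)$ fails, and a morphism $\Spec B(\Qbar,q)\to Y(\Gamma,K)$ plus a ``trivialization of $\omega^{\otimes2}$'' does not directly classify an elliptic curve with $\Gamma$-structure.

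The paper handles the twist/representability issues differently, and without ever leaving weight $0$. It first uses Proposition \ref{prop_reduce_to_weight_0} and Corollary \ref{cor_primitive_bounded_generates_bounded} to reduce the boundedness hypothesis to all of $M_0(\Gamma,\Qbar)$. It then intersects $\pm\Gamma$ with a torsion-free congruence subgroup $\Gamma'$ (Corollary \ref{cor_bounded_generates_bounded} propagates boundedness to $M_0(\pm\Gamma\cap\Gamma',\Qbar)$) so that $Y(\pm\Gamma\cap\Gamma')_{\Qbar}$ is a \emph{fine} moduli scheme. The twist ambiguity is then resolved by Lemma \ref{lemma_structures_on_twists}, a degree-counting argument: because all $\Gamma'=\Gamma(n)$-structures on $\Tate(q)$ are already known to be defined over $B(\Qbar,q)$ (Corollary \ref{cor_tate_level_N}, via explicit torsion parametrization of the Tate curve) and their number equals the generic degree of $Y(\Gamma')\to Y(1)$, every map $\Spec B(\Qbar,q)\to Y(\Gamma')$ over $Y(1)$ is forced to come from $\Tate(q)$ itself and not a twist; then the fine moduli property of $Y(\pm\Gamma\cap\Gamma')$ lifts this to a $\pm\Gamma\cap\Gamma'$-structure on $\Tate(q)$, which pushes forward to the desired $\Gamma$-structure. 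If you want to salvage your approach, you would need to replace the $\omega^{\otimes2}$ trivialization by some auxiliary rigidifying level structure on $\Tate(q)$ which you independently know exists over $B(\Qbar,q)$ — which is essentially what the paper's intersection with $\Gamma'$ accomplishes.
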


\sgap

In the last section of the paper (\S\ref{ss_geometric_UBD}) we reinterpret the statement of UBD geometrically. First, if we assume Conjecture \ref{conj_nonsolvable_implies_noncongruence}, then using the above result, a positive answer to the unbounded denominators conjecture would imply:
$$\textit{Every Galois cover of $\Tate(q)/B(\Qbar,q)$ unramified away from the origin is solvable.}$$
Viewed another way, the Tate curve $\Tate(q)/B(\Qbar,q)$ determines a map $\Spec B(\Qbar,q)\longrightarrow\mM(1)_{\Qbar}$, and hence a map of fundamental groups
$$\pi_1(\Spec B(\Qbar,q))\stackrel{\Tate(q)_*}{\longrightarrow}\pi_1(\mM(1)_{\Qbar}) = \widehat{\SL_2(\ZZ)}$$
From the classical theory, or alternatively through an explicit parametrization of the torsion points on $\Tate(q)$ (c.f. Corollary \ref{cor_tate_level_N}), we find that all $\Gamma(N)$-structures are defined on $\Tate(q)$ over $B(\Qbar,q)$. This implies that the image of $\pi_1(\Spec B(\Qbar,q))$ lies inside the congruence kernel $\bigcap_{N\ge 1}\ol{\Gamma(N)}$, which fits inside an exact sequence:
$$1\longrightarrow\bigcap_{N\ge 1}\ol{\Gamma(N)}\longrightarrow\widehat{\SL_2(\ZZ)}\longrightarrow\SL_2(\widehat{\ZZ})\longrightarrow 1$$
In the discussion preceding Theorem \ref{thm_UBDB=UBD}, we prove that the UBD conjecture is equivalent to the image of $\pi_1(\Spec B(\Qbar,q))$ being equal to the congruence kernel. In other words,
\begin{thm*}[\ref{thm_UBDB=UBD}] The UBD conjecture is equivalent to the exactness of the sequence
$$\pi_1(\Spec B(\Qbar,q)) \stackrel{\Tate(q)_*}{\longrightarrow}\widehat{\SL_2(\ZZ)}\longrightarrow\SL_2(\widehat{\ZZ})\longrightarrow 1$$
\end{thm*}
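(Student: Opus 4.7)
The approach is to reformulate the asserted exactness as a condition on the image of $\pi_1(\Spec B(\Qbar,q))$ in $\widehat{\SL_2(\ZZ)}$, and then to invoke the equivalence between this condition and UBD which was already established in the discussion preceding the theorem.

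First, exactness at $\SL_2(\widehat{\ZZ})$ is automatic: $\SL_2(\widehat{\ZZ})$ is the profinite inverse limit $\varprojlim_N\SL_2(\ZZ/N\ZZ)$, each reduction $\SL_2(\ZZ) \twoheadrightarrow \SL_2(\ZZ/N\ZZ)$ is surjective, and passing to the profinite completion and taking the inverse limit over $N$ yields the surjection $\widehat{\SL_2(\ZZ)} \twoheadrightarrow \SL_2(\widehat{\ZZ})$. Its kernel is, by construction, the congruence kernel $K := \bigcap_{N\ge 1}\ol{\Gamma(N)}$. Consequently, the only substantive exactness condition is at $\widehat{\SL_2(\ZZ)}$, which asserts that the image of $\pi_1(\Spec B(\Qbar,q))$ in $\widehat{\SL_2(\ZZ)}$ equals $K$.

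One containment, ``image $\subseteq K$,'' is already on hand via Corollary \ref{cor_tate_level_N}: since $\Tate(q)/B(\Qbar,q)$ admits all $\Gamma(N)$-structures, the Galois correspondence shows the image of $\pi_1$ lies inside each $\ol{\Gamma(N)}$, and hence in $K$. The reverse containment is equivalent to the statement that every finite-index $\Gamma\le\SL_2(\ZZ)$ with $\ol{\Gamma}$ containing the image of $\pi_1$ must have $\ol{\Gamma} \supseteq K$, i.e., $\Gamma$ must be congruence. Using the Galois correspondence again, $\ol{\Gamma}$ contains the image precisely when $\Tate(q)/B(\Qbar,q)$ admits a $\Gamma$-structure; and via Theorem \ref{thm_level_structures_and_UBD} --- after the weight-zero reduction of Proposition \ref{prop_reduce_to_weight_0} and the observation that weight-zero forms for $\Gamma$ and for $\Gamma\cdot\{\pm I\}$ coincide --- this is equivalent to all forms for $\Gamma$ with $\Qbar$-coefficients having bounded denominators at some cusp. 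Therefore ``image$=K$'' is the statement that a finite-index $\Gamma$ admits only forms with bounded denominators exactly when it is congruence, which is UBD.

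The main obstacle is of course the preceding equivalence between UBD and ``image$=K$,'' whose proof rests on the nontrivial content of Theorem \ref{thm_level_structures_and_UBD} (itself the analytic heart of \S\ref{section_UBD}). Once that equivalence is in hand, the present theorem is essentially a formal consequence of the profinite exact sequence
$$1 \longrightarrow K \longrightarrow \widehat{\SL_2(\ZZ)} \longrightarrow \SL_2(\widehat{\ZZ}) \longrightarrow 1$$
together with the a priori containment of the image of $\pi_1(\Spec B(\Qbar,q))$ in $K$.
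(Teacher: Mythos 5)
Your proposal is correct and follows essentially the same route as the paper: reduce exactness at $\widehat{\SL_2(\ZZ)}$ to the equality $T = K$ (the paper's notation for the image of $\pi_1(\Spec B(\Qbar,q))$ and the congruence kernel), use Corollary \ref{cor_tate_level_N} for the automatic containment $T\subseteq K$, translate the reverse containment into nonexistence of noncongruence $\Gamma$-structures on $\Tate(q)/B(\Qbar,q)$ via the Galois correspondence between finite-index subgroups and $\Gamma$-structures, and finally invoke Theorem \ref{thm_level_structures_and_UBD} together with Proposition \ref{prop_reduce_to_weight_0}. This is precisely the content of the discussion the paper places immediately before the theorem, and the paper's stated proof is just a pointer back to that discussion plus the same two citations you use, so the argument is the same in substance.
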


We remark that since the congruence kernel $\bigcap_{N\ge 1}\ol{\Gamma(N)}$ is a free profinite group of countable rank (c.f. \cite{Mel76}), the UBD conjecture would imply the Inverse Galois Problem for the field $\Frac\; B(\Qbar,q)$, but this is actually already known by a result of Harbater (\cite{Har84}).


\subsection{Acknowledgements} The author is grateful to Pierre Deligne, Winnie Li, John Voight, Jordan Ellenberg, Ching-Li Chai, and Christelle Vincent for their generous comments and helpful suggestions while revising this paper. He would also like to thank Hilaf Hasson and Jeff Yelton for many enlightening discussions where much confusion was both generated and dispersed. The paper was partially written during the author's stay at the National Center for Theoretical Sciences (NCTS) in Taiwan, and revised during the author's visit at ICERM, and postdoc at the IAS. He would like to thank NCTS, ICERM, and the IAS for their support and hospitality.


\section{Setup}\label{section_setup}
In this section we give a careful construction of Teichm\"{u}ller level structures on punctured elliptic curves, though we note that the constructions here also work in greater generality.

\sgap
Throughout the paper, the letter $G$ will generally be used to refer to either a finite group of order $N$, or the associated constant group scheme (over some understood base). The one exception is in \S\ref{ss_TLS}, where we may use $G$ to refer to a nonconstant commutative finite \'{e}tale group scheme. A group is \emph{2-generated} if it can be generated by two elements.


\sgap

By default, for a scheme $X$, the notation $\pi_1(X)$ will refer to the \'{e}tale fundamental group. If $X$ is a scheme over $\CC$, its topological fundamental group will be denoted by $\pi_1^\tp(X)$.

\sgap

For a morphism $T\rightarrow S$, and a scheme $X$ over $S$, we will use $X_T$ to denote the pullback $X\times_S T$. For schemes $X,T$ over $S$, we will use $X(T) := \Hom_S(T,X)$, the ``$T$-valued points'' of $X$.

\sgap

For us, a modular curve is by definition the quotient of the upper half plane $\hH$ by a finite index subgroup $\Gamma\subset\SL_2(\ZZ)$. We will reserve the letter $Y$ (resp. $\yY$) to refer to a geometrically connected modular curve (resp. geometrically connected component of a moduli stack).

\subsection{The relative fundamental group}\label{ss_RFG}
Let $f : E\rightarrow S$ be an elliptic curve over a connected scheme $S$ with identity section $e : S\rightarrow E$. Let $E^\circ := E - e(S)$, and let $s$ be a geometric point of $S$. Let $\LL$ be the set of primes invertible on $S$. For a profinite group $\pi$, let $\pi^\LL$ denote its maximal pro-$\LL$ quotient. Let $F_2$ denote the free group of rank 2. We record here the following important fact.

\begin{thm}\label{finite_generation}(\!\cite{SGA1}, \S XIII, Cor. 2.12) Let $k = k^\sep$ be a separably closed field. Let $\LL$ be a set of primes not containing $\ch(k)$ and let $E/k$ be an elliptic curve over $k$, then $\pi_1^\LL(E)\cong(\widehat{\ZZ}^2)^\LL$ and $\pi_1^\LL(E^\circ)\cong(\widehat{F_2})^\LL$. 
\end{thm}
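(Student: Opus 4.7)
The plan is to treat the two assertions separately, starting with the simpler closed case.

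For $\pi_1^\LL(E)$, I would use the rigidity of étale covers of abelian varieties: any connected finite étale cover $X \to E$ admits the structure of an abelian variety such that the map is an isogeny (this is essentially Serre--Lang; lift the origin and use the connected pointed cover tower to transport the group law). Since we restrict to covers of degree with prime factors in $\LL$, and no prime in $\LL$ equals $\ch(k)$, the isogenies of this type are exactly the multiplication-by-$n$ maps for $\LL$-integers $n$, up to an automorphism of $E$. Hence
\[
\pi_1^\LL(E) \;=\; \varprojlim_{n} E[n](k) \;\cong\; \prod_{\ell \in \LL} T_\ell(E) \;\cong\; (\widehat{\ZZ}^2)^\LL,
\]
using $E[n] \cong (\ZZ/n\ZZ)^2$ for $n$ invertible in $k$.

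For $\pi_1^\LL(E^\circ)$, I would handle the two characteristics in turn. If $\ch(k)=0$, the étale fundamental group of a smooth separated scheme of finite type over a separably closed field is insensitive to enlarging the base field (SGA1, X, Cor.~1.8), so I may pass to $k=\CC$. Then the Riemann existence theorem identifies $\pi_1(E^\circ)$ with the profinite completion of the topological fundamental group of a punctured complex torus, which is $F_2$. Taking the pro-$\LL$ quotient gives $(\widehat{F_2})^\LL$.

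If $\ch(k)=p>0$, I would lift to characteristic $0$ and specialize. Choose a complete DVR $R$ with residue field $k$ and fraction field $K$ of characteristic $0$, and lift $E$ to an elliptic curve $\widetilde{E}/R$ (possible since the moduli of elliptic curves is smooth over $\ZZ$). The identity section extends to a section $\widetilde{e}\colon \Spec R \to \widetilde{E}$, and setting $\widetilde{E}^\circ := \widetilde{E} - \widetilde{e}(\Spec R)$ gives a smooth relative curve over $R$ equipped with the smooth relative compactification $\widetilde{E}$ whose boundary is a single section. In this situation the specialization theorem for tame fundamental groups (SGA1, XIII, Thm.~2.10, the result cited in the theorem statement) yields a surjection $\pi_1^{\tame}(\widetilde{E}^\circ_{\bar{K}}) \twoheadrightarrow \pi_1^{\tame}(E^\circ)$ which is an isomorphism on pro-prime-to-$p$ quotients. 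Since any finite étale cover of a smooth curve of degree prime to $p$ is automatically tame at every boundary point, this prime-to-$p$ tame fundamental group coincides with the prime-to-$p$ étale fundamental group. Combining with the characteristic $0$ case applied to $\widetilde{E}^\circ_{\bar K}$ gives $\pi_1^\LL(E^\circ) \cong (\widehat{F_2})^\LL$ as required.

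The main technical input is the tame specialization theorem of Grothendieck, which is precisely the content of the cited SGA1 reference; modulo this, the only point to verify carefully is that every prime-to-$p$ cover of $E^\circ$ (resp.\ of the lifted family) is tame along the puncture, but this is automatic for curves since prime-to-$p$ inertia groups at a point of a smooth curve are cyclic of order prime to $p$, hence tame.
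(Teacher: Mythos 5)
The paper does not prove this theorem: it is cited as a black-box reference to \cite{SGA1}, \S XIII, Cor.\ 2.12, and no proof is supplied in the text. Your proposal reconstructs the standard argument, which is in fact essentially the route taken in SGA1 itself: Serre--Lang/isogeny classification to handle $\pi_1^\LL(E)$, comparison with $\CC$ via invariance under algebraically closed base change plus Riemann existence in characteristic zero, and Grothendieck's tame specialization theorem to descend to positive characteristic. Since the punctured locus is the complement of a single section (a relative NC divisor) and prime-to-$p$ inertia along a smooth curve is automatically cyclic of prime-to-$p$ order and hence tame, the passage from tame to full prime-to-$p$ fundamental group is legitimate. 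So the approach is correct and is not ``genuinely different'' from the source the paper is relying on.

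One small imprecision worth flagging: in the closed-curve step you assert that every finite \'etale cover of $E$ with degree supported on $\LL$ \emph{is} a multiplication-by-$n$ map up to automorphism of $E$. This is false in general --- there are isogenies $E' \to E$ of degree $m$ with $E' \not\cong E$, and these are certainly not $[n]$ composed with an automorphism. What is true, and what the computation $\pi_1^\LL(E) = \varprojlim_n E[n]$ actually needs, is that the $[n]$-towers are \emph{cofinal} among all finite \'etale covers: every connected \'etale cover of degree dividing an $\LL$-integer $n$ is dominated by $[n]\colon E \to E$ (this follows from the dual-isogeny relation $\hat\phi\circ\phi = [\deg\phi]$). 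With cofinality in place of the too-strong classification claim, the identification $\pi_1^\LL(E) \cong \prod_{\ell\in\LL} T_\ell(E) \cong (\widehat{\ZZ}^2)^\LL$ goes through as you state.
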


We begin by considering the unpunctured curve $E$. Let $i_s : E_s\hookrightarrow E$ be the inclusion, then by \cite{SGA1}, \S XIII, Prop 4.3 and Exemples 4.4, we have a homotopy exact sequence:
\begin{equation}\label{abHES} 
1\longrightarrow\pi_1^\LL(E_s,e(s))\stackrel{(i_s)_*}{\longrightarrow}\pi_1'(E,e(s))\stackrel{f_*}{\longrightarrow}\pi_1(S,s)\longrightarrow 1
\end{equation}
where $\pi_1'(E) := \pi_1(E)/M$, and $M$ is defined to be the smallest subgroup of $K := \ker\left(\pi_1(E)\rightarrow\pi_1(S)\right)$ such that $K/M$ is pro-$\LL$. Note that $M$ is a characteristic subgroup of $K$, hence normal in $\pi_1(E)$, and the kernel of $f_*$ above is precisely $K^\LL$. The section $e : S\rightarrow E$ induces a splitting of the above sequence $e_* : \pi_1(S,s)\rightarrow \pi_1'(E,e(s))$.  Thus, we get an action of $\pi_1(S,s)$ on $\pi_1^\LL(E_s,e(s))$ by conjugation inside $\pi_1'(E,e(s))$. By the Galois correspondence, this determines a pro-object 
$$\pi_1^\LL(E/S,e,s)$$
of the category of finite \'{e}tale group schemes over $S$ (c.f. \cite{SGA1}, \S V, Prop 5.2), which is called the \emph{relative fundamental group} for $E/S$. More concretely, for every finite characteristic quotient $K_i\;(i\in I)$ of $\pi_1^\LL(E_s,e(s))$, the $\pi_1(S,s)$ action on $\pi_1^\LL(E_s,e(s))$  descends to an action on $K_i$. By the usual Galois correspondence, each such $K_i$ corresponds to a finite \'{e}tale group scheme $K_i(E/S)$ over $S$. Then $\pi_1^\LL(E/S,e,s)$ is the inverse system of the finite \'{e}tale group schemes $\{K_i(E/S)\}_{i\in I}$.

\sgap

\begin{remark} If $S$ is Noetherian, then for our purposes it will be harmless to work instead with the limit $\varprojlim\pi_1^\LL(E/S,e,s) = \varprojlim_K K(E/S)$ of the inverse system described above.

\end{remark}

\begin{lemma}\label{lemma_RFG_is_Tate_module} We have an isomorphism of pro-objects
$$\pi_1^\LL(E/S,e,s)\cong \{E[n]\}_{n}$$
where $n$ ranges over all positive integers which are only divisible by primes in $\LL$.
\end{lemma}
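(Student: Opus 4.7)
The plan is to match the two pro-systems term by term. Since $\pi_1^\LL(E_s, e(s)) \cong (\widehat{\ZZ}^2)^\LL$ is abelian by Theorem \ref{finite_generation}, every finite quotient is automatically characteristic, and a cofinal system of such quotients is $\{(\ZZ/n\ZZ)^2\}$ indexed (by divisibility) by positive integers $n$ whose prime factors lie in $\LL$. So it suffices to identify, for each such $n$, the finite \'etale $S$-group scheme $K_n(E/S)$ coming out of the relative fundamental group construction with $E[n]$.

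Next I would identify the covers. The surjection $\pi_1^\LL(E_s, e(s)) \twoheadrightarrow (\ZZ/n\ZZ)^2$ is, up to the choice of basis of $E_s[n]$ implicit in the isomorphism of Theorem \ref{finite_generation}, exactly the quotient associated to the multiplication-by-$n$ isogeny $[n]_s : E_s \to E_s$, which is a connected finite \'etale Galois cover with group $E_s[n]$. Crucially, because $n$ is invertible on $S$, the isogeny $[n] : E \to E$ is already defined on all of $S$ and is a finite \'etale torsor under the finite \'etale $S$-group scheme $E[n]$; its restriction to the fiber at $s$ recovers the cover above, and its geometric fiber over $e(s)$ is naturally $E_s[n]$.

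The remaining step is to match the two $\pi_1(S,s)$-actions. On one side, the construction of the relative fundamental group endows $(\ZZ/n\ZZ)^2$ with the action obtained by conjugating $\pi_1^\LL(E_s, e(s))$ by the image of $\pi_1(S,s)$ under $e_*$ inside $\pi_1'(E, e(s))$. On the other, the finite \'etale $S$-group scheme $E[n]$ corresponds, via the Galois correspondence, to its geometric fiber $E_s[n] \cong (\ZZ/n\ZZ)^2$ with its natural monodromy action. These two actions coincide: the quotient of $\pi_1'(E)$ describing the cover $[n] : E \to E$ is $\pi_1(S,s)$-stable for the section-conjugation action \emph{precisely because} the cover is defined on $S$ (rather than merely on $E_s$), and the $\pi_1(S,s)$-module obtained on the fiber by this descent is, by functoriality of the Galois correspondence applied to the split homotopy exact sequence \eqref{abHES}, the monodromy module of $E[n] \to S$ at $s$.

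I expect the last step to be the only nontrivial point: one must verify carefully that the section-conjugation action on a quotient of $\pi_1^\LL(E_s)$ that descends to an $S$-cover agrees with the monodromy action on the fiber. This, however, is a general feature of homotopy exact sequences with section and is not special to elliptic curves, so no real geometric input is required beyond what is already in \eqref{abHES}. Stringing the three steps together identifies $K_n(E/S)$ with $E[n]$ compatibly in $n$, and this gives the claimed isomorphism of pro-objects $\pi_1^\LL(E/S,e,s) \cong \{E[n]\}_n$.
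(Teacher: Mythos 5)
Your approach is sound and lands on the same crux as the paper — identifying the monodromy action on $E_s[n]$ with the section-conjugation action on the characteristic quotient $(\ZZ/n\ZZ)^2$ — but gets there by a somewhat different path. The paper works ``blind'': it takes the surjection $\pi_1^\LL(E_s,e(s))\twoheadrightarrow(\ZZ/n\ZZ)^2$, forms the permutation representation $\rho$, obtains an abstract degree-$n^2$ \'etale cover $X\to E$ with genus-one fibers, restricts to the identity section to get the group scheme $X_{e(S)}$, uses it to make $X\to E$ an isogeny, and only \emph{then} identifies $X$ with $E$ via the canonical isomorphism $E\cong X/X[n]$ (a quotient of an elliptic curve by its own full $n$-torsion). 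You instead recognize from the start that the cover of $E_s$ classified by the characteristic quotient must be $[n]_s:E_s\to E_s$ (the unique pointed connected $(\ZZ/n\ZZ)^2$-cover), observe that $[n]$ is already an $S$-morphism whose fiber over $e(S)$ is $E[n]$, and then match the descent data. This skips the ``build $X$, then recognize $X\cong E$'' step and is genuinely more direct; in exchange you are invoking a uniqueness-of-descent statement (a pointed connected cover of $E_s$ together with $\pi_1(S,s)$-invariance of its kernel determines the $S$-cover) that the paper avoids by exhibiting the $S$-cover on its own terms. Your final paragraph is the load-bearing one, and it is correct: the section $e$ fixes the origin, so the lift of $e_*(q)$ along $[n]$ starting at $e(s)$ returns to $e(s)$, which shows the monodromy of $E[n]\to S$ on the fiber is exactly the conjugation action — precisely the general homotopy-exact-sequence-with-section fact you appeal to.

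One slip to fix: it is not true that ``every finite quotient is automatically characteristic'' because $\pi_1^\LL(E_s,e(s))$ is abelian — for instance, $\widehat\ZZ^2\twoheadrightarrow\ZZ/2\ZZ$ killing one coordinate has non-characteristic kernel. What is true, and what your argument actually uses, is that the \emph{characteristic} quotients $(\ZZ/n\ZZ)^2$ (kernels $n\cdot\pi_1^\LL(E_s,e(s))$) form a cofinal system among finite quotients, so only these need to be matched with the group schemes $E[n]$.
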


In other words, if $S = \Spec k$ for a field $k$, then $\pi_1(S,s) = G_k := \Gal(k^\sep/k)$ and $\pi_1^\LL(E_s,e(s))$ is isomorphic as a $G_k$-module to the $\LL$-adic Tate module $\prod_{\ell\in\LL}T_\ell(E)$.

\begin{proof} Note that $\pi_1^\LL(E_s,e(s))\cong(\Zhat^2)^\LL$ admits $(\ZZ/n\ZZ)^2$ as a characteristic quotient, and thus the $\pi_1(S,s)$ action on $\pi_1^\LL(E_s,e(s))$ induces an action on $(\ZZ/n\ZZ)^2$. By the construction of $\pi_1^\LL(E/S,e,s)$, it suffices to show that the induced $\pi_1(S,s)$-action on $(\ZZ/n\ZZ)^2$ correspond (via the Galois correspondence) to precisely the group schemes $E[n]$ over $S$, together with the compatibilities between $E[n]$ and $E[m]$ for $n\mid m$. We'll prove the first statement and leave the checking of compatibilities to the reader.

\sgap

The group $\pi_1'(E,e(s))$ acts on the normal subgroup $\pi_1^\LL(E_s,e(s))$ by conjugation, and hence induces an action on the characteristic quotient $(\ZZ/n\ZZ)^2$. In a diagram, we have:

$$\xymatrix{
\pi_1^\LL(E_s,e(s))\ar@{^{(}->}[d]\ar[r] & (\ZZ/n\ZZ)^2\ar[d] \\
\pi_1'(E,e(s))\ar[r]^{\rho} & S_{(\ZZ/n\ZZ)^2}
}$$

where the right vertical arrow is via the left regular representation, and $S_{(\ZZ/n\ZZ)^2}$ is the symmetric group on the set $(\ZZ/n\ZZ)^2$. The arrow $\rho$ corresponds to a finite \'{e}tale morphism $X\rightarrow E$ of degree $n^2$, and thus by Riemann-Hurwitz, the fibers of $X$ over $S$ are genus 1. The composition
$$\pi_1(S,s)\stackrel{e_*}{\longrightarrow}\pi_1'(E,e(s))\stackrel{\rho}{\longrightarrow} S_{(\ZZ/n\ZZ)^2}$$
corresponds to the restriction $X_{e(S)}$ of $X\rightarrow E$ to the identity section $e(S)\subset E$, and since its image is contained in $\Aut((\ZZ/n\ZZ)^2)\subset S_{(\ZZ/n\ZZ)^2}$, $X_{e(S)}$ is a finite \'{e}tale group scheme with geometric fiber $(\ZZ/n\ZZ)^2$. In particular, we may declare its identity section to be the identity section of $X$, thus making $X\rightarrow E$ into an isogeny of elliptic curves with kernel $X_{e(S)}$. Since the geometric fiber of $X_{e(S)}$ is $(\ZZ/n\ZZ)^2$, we must have $X_{e(S)} = X[n]$, and thus the map $X\rightarrow E$ makes $E\cong X/X[n]$. However any elliptic curve is canonically isomorphic to its quotient by its own $n$-torsion, so $E$ is canonically isomorphic to $X$, and so $X_{e(S)}\cong X[n]\cong E[n]$ as desired.

\end{proof}

We now consider the case of punctured elliptic curves. 
\begin{prop}\label{HES} Let $E^\circ := E - e(S)$. With notation as above, suppose either that there is a prime $p$ which is invertible on $S$, or that $E^\circ/S$ admits a section $g : S\rightarrow E^\circ$. Then, the following sequence (choosing appropriate base points) is exact
\begin{equation}\label{eq_HES}\xymatrix{1\ar[r] & \pi_1^\LL(E^\circ_s)\ar[r]^{(i_s)_*} & \pi_1'(E^\circ)\ar[r]^{f_*} & \pi_1(S)\ar[r] & 1}\end{equation}
\end{prop}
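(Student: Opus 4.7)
The plan is to parallel the proper case \pref{abHES} (SGA1 XIII 4.3), while handling the non-properness of $E^\circ/S$ by exploiting the tameness of pro-$\LL$ covers. Since $\LL$ consists of primes invertible on $S$ and $E^\circ$ is the complement of the regular divisor $e(S)$ in the regular scheme $E$, any pro-$\LL$ cover of $E^\circ$ should extend uniquely and tamely across $e(S)$ to a cover of $E$ by Abhyankar-style tame purity; this allows most steps to be reduced to the sequence already known for the proper family $E/S$.

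First I would dispatch the easy parts. The composite $\pi_1^\LL(E_s^\circ) \to \pi_1(S)$ is trivial by functoriality, as $E_s^\circ \to S$ factors through the geometric point $s$. Surjectivity of $f_* : \pi_1'(E^\circ) \to \pi_1(S)$ follows from the geometric connectedness of the fibers $E_s^\circ$, which are non-empty open subsets of the integral curves $E_s$, via the standard criterion in SGA1 IX 5.1; this already yields surjectivity on $\pi_1(E^\circ)$, hence a fortiori on the quotient $\pi_1'(E^\circ)$.

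The substantive steps are middle exactness and injectivity of $\pi_1^\LL(E^\circ_s) \to \pi_1'(E^\circ)$. For middle exactness, I would take a connected finite Galois cover $Y \to E^\circ$ whose Galois group $G$ is a finite $\LL$-group, and suppose its pullback to $E_s^\circ$ is trivial. Since $|G|$ is invertible on $S$, $Y$ extends uniquely to a cover $\bar Y \to E$ tamely ramified along $e(S)$. The restriction $\bar Y_s \to E_s$ is then tame at $O$ and trivial away from it, forcing $\bar Y_s$ to be an étale-trivial cover of $E_s$. Applying \pref{abHES} to $\bar Y$ then shows that $\bar Y$ (and consequently $Y$) is pulled back from a cover of $S$. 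For injectivity, given a finite pro-$\LL$ connected Galois cover of $E^\circ_s$, I would extend it tamely across $O$ to a cover of $E_s$, then use Grothendieck's specialization theorem for fundamental groups of smooth proper curves (SGA1 X 3.8) to spread this out to a cover of $E$ tame along $e(S)$; restriction to $E^\circ$ then witnesses the desired preimage.

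The main technical difficulty is ensuring that these extension and specialization arguments function uniformly over $S$, and this is exactly what the hypothesis provides. A section $g : S \to E^\circ$ supplies a consistent geometric base point and a splitting of the sequence along the lines of SGA1 XIII, Prop. 4.3, while the invertibility of some prime $p$ on $S$ guarantees that the specialization maps of SGA1 X are isomorphisms on the pro-$p'$ quotients that encompass our pro-$\LL$ covers. Under either assumption, the proper-case sequence \pref{abHES} transfers to give the claimed exact sequence for $E^\circ$.
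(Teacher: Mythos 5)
Your strategy of reducing the punctured curve to the proper case via tame extensions is genuinely different from the paper's and does not go through as stated, for two reasons.

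\textbf{Tame purity requires regularity you don't have.} You write that ``any pro-$\LL$ cover of $E^\circ$ should extend uniquely and tamely across $e(S)$ to a cover of $E$ by Abhyankar-style tame purity.'' But Abhyankar's lemma and the uniqueness of tame extensions across a divisor require the ambient scheme to be regular (this is exactly the hypothesis in Proposition~\ref{lemma_abhyankar} later in the paper). Here $S$ is an arbitrary connected scheme, so even though $E \to S$ is smooth, $E$ need not be regular. Your middle-exactness argument -- extend $Y \to E^\circ$ to $\bar Y \to E$ tame along $e(S)$, observe $\bar Y_s$ is \'etale, conclude $\bar Y/E$ is \'etale, apply \pref{abHES} -- therefore hangs on an extension step you can't justify in this generality. (The invocation of SGA1 X~3.8 in your injectivity step has a similar mismatch of hypotheses: that specialization theorem is over a complete DVR or henselian local ring, not an arbitrary base $S$, so it doesn't directly let you ``spread out'' a cover of $E_s$ to a cover of $E$.)

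\textbf{The paper substitutes descent for purity.} Rather than trying to move covers across $e(S)$, the paper's unsplit argument constructs a finite \'etale cover $H'\to S$ from the Galois closure of a non-identity component of $E[p]$ (this is where the prime $p$ invertible on $S$ is used). Over $H'$, the scheme $E^\circ$ acquires a section, so SGA1 XIII Prop.~4.3/4.4 gives an exact sequence for $E^\circ_{H'}/H'$. Then SGA1 XIII Prop.~4.1 lets one compare this with the (not-yet-exact) sequence for $E^\circ/S$: since $H'\to S$ is finite \'etale, $\pi_1(E^\circ_{H'})\to\pi_1(E^\circ)$ and $\pi_1(H')\to\pi_1(S)$ are inclusions of open subgroups of the same index, hence the induced map on kernels is an isomorphism, and injectivity of $(i_s)_*$ for $E^\circ_{H'}/H'$ transfers to $E^\circ/S$. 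This entirely sidesteps any regularity or purity considerations, and it is this construction -- not a tameness argument -- that makes essential use of the hypothesis ``some prime $p$ invertible on $S$.'' If you want to salvage your approach, you would at minimum need to first reduce to the case of a regular base, e.g.\ by a Noetherian approximation argument, before invoking tame purity on $E$.
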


\begin{proof} If $E\stackrel{f}{\longrightarrow} S$ admits a section, then this follows from \cite{SGA1}, \S XIII, Prop 4.3 and Exemples 4.4. Otherwise, suppose $p$ is invertible on $S$, so that the subgroup scheme $E[p]\subset E$ is finite \'{e}tale over $S$
. Let $H$ be a non-identity connected component of $E[p]$, so that $H\subset E^\circ$, and let $H'$ be the Galois closure of $H$ as a cover of $S$. Base changing by the natural map $H' \stackrel{p}{\rightarrow} S$, we find that $H\times_S H'$ is completely decomposed over $H'$, and hence $E_{H'}^\circ/H'$ admits a section. Thus, letting $h\in H'$ be a geometric point over $s$, applying \cite{SGA1} \S XIII Prop 4.3 to $E^\circ_{H'}/H'$, and applying \cite{SGA1} \S XIII Prop 4.1 to $E^\circ/S$, we get a commutative diagram
$$\xymatrix{
 & \pi_1^\LL(E^\circ_s)\ar[r]^{(i_s)_*} & \pi_1'(E^\circ)\ar[r]^{f_*} & \pi_1(S)\ar[r] & 1 \\
 1\ar[r] & \pi_1^\LL(E^\circ_h)\ar[r]^{(i_h)_*}\ar[u]^\cong & \pi_1'(E_{H'}^\circ)\ar[r]^{(f_{H'})_*}\ar[u]^{\tilde{p}_*'} & \pi_1(H')\ar[u]^{p_*}\ar[r] & 1
}$$
Since $H'\rightarrow S$ is finite \'{e}tale, $p_*$ and $\tilde{p}_* : \pi_1(E^\circ_{H'})\rightarrow \pi_1(E^\circ)$ are injective with open image of the same index. Thus, they induce an isomorphism on the kernels to $\pi_1(H')$ and $\pi_1(S)$. This implies that $\tilde{p}_*' : \pi_1'(E^\circ_{H'})\rightarrow \pi_1'(E^\circ)$ induces an isomorphism $\ker(f_{H'})_*\rightiso \ker f_*$, and hence $\tilde{p}_*'$ is an injection, which implies that $(i_s)_*$ is injective as well.
\end{proof}

\sgap

\begin{pt}\label{pt_RFG} Suppose now that we have a section $g : S\rightarrow E^\circ$. This induces a section $g_* : \pi_1(S)\rightarrow\pi_1'(E^\circ)$ splitting the exact sequence (\ref{eq_HES}). As in (\ref{abHES}), this data determines a pro-object
$$\pi_1^\LL(E^\circ/S,g,s)$$
in the category of finite \'{e}tale group schemes  - the \emph{relative fundamental group} for $E^\circ/S$.
\end{pt}

\sgap

\begin{prop}\label{prop_basic_results_RFG} \label{prop_inner_unique}  (\cite{SGA1}, Expos\'{e} XIII, 4.5.2, 4.5.3) We have
\begin{itemize}
\item[1.] The formation of $\pi_1^\LL(E^\circ/S,g,s)$ commutes with arbitrary base change. If $h : S'\rightarrow S$ is a connected $S$-scheme, and $s'$ a geometric point of $S'$, then we have a canonical isomorphism
$$h^*\pi_1^\LL(E^\circ/S,g,s)\cong\pi_1^\LL(h^*E^\circ/S',h^*g,s')$$
\item[2.] For any geometric point $\xi$ of $S$, there is a canonical isomorphism
$$\pi_1^\LL(E^\circ/S,g,s)_\xi\cong\pi_1^\LL(E^\circ_\xi,g(\xi))$$
\item[3.] If $s'\in S$ is another geometric point of $S$, then the pro-objects $\pi_1^\LL(E^\circ/S,g,s)$ and $\pi_1^\LL(E^\circ/S,g,s')$ are canonically isomorphic.
\end{itemize}
\end{prop}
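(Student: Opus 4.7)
The plan is to deduce all three assertions from the compatibility of the homotopy exact sequence (\ref{eq_HES}) with base change, combined with the functoriality of the Galois correspondence between characteristic quotients of $\pi_1^\LL(E_s^\circ)$ equipped with a $\pi_1(S)$-action and finite \'{e}tale group schemes over $S$. The key structural input is that the whole construction of $\pi_1^\LL(E^\circ/S,g,s)$ from \ref{pt_RFG} is natural in the triple $(E^\circ/S, g, s)$, so the main task is to check that each pulls back correctly.

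For (1), I would base-change (\ref{eq_HES}) along $h : S'\to S$ and compare it to the sequence produced by applying Proposition \ref{HES} directly to $(h^*E^\circ/S', h^*g)$. The natural maps $h_* : \pi_1(S')\to \pi_1(S)$ and $\tilde{h}_* : \pi_1'(h^*E^\circ)\to \pi_1'(E^\circ)$ fit into a commutative ladder between the two sequences, and the vertical map on the geometric-fiber kernel $\pi_1^\LL(E_{s'}^\circ) \to \pi_1^\LL(E_s^\circ)$ is an isomorphism (both groups compute the pro-$\LL$ fundamental group of the same geometric punctured elliptic curve, once $s'$ is identified with its image in $S$). Since $h^*g$ pulls back to $g$ on geometric fibers, the two splittings are compatible, hence so are the induced conjugation actions. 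Passing to any characteristic quotient $K_i$ of $\pi_1^\LL(E_s^\circ)$ and applying the Galois correspondence over $S$ and $S'$ yields the canonical isomorphism $h^*K_i(E/S)\cong K_i(h^*E/S')$ at every finite level, i.e.\ the desired isomorphism of pro-objects. Assertion (2) then follows immediately from (1) applied to the geometric point $\xi\hookrightarrow S$: since $\pi_1(\xi)=1$, the relative fundamental group over $\xi$ collapses to the absolute one, so the fiber is precisely $\pi_1^\LL(E_\xi^\circ, g(\xi))$ as a constant pro-finite group scheme.

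For (3), the observation is that a characteristic quotient $K_i$ of $\pi_1^\LL(E_s^\circ)$ is canonically identified with the corresponding characteristic quotient of $\pi_1^\LL(E_{s'}^\circ)$: any \'{e}tale path from $s$ to $s'$ induces an isomorphism between the two fundamental groups, and two such paths differ by conjugation, which acts trivially on a characteristic quotient. Moreover this identification intertwines the $\pi_1(S,s)$- and $\pi_1(S,s')$-actions (again up to the inner ambiguity that vanishes on characteristic quotients), so the resulting finite \'{e}tale group schemes $K_i(E/S)$ produced from the two basepoints are canonically isomorphic over $S$, which is exactly the claim.

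The one genuine obstacle is bookkeeping: tracking basepoints, splittings, and inner automorphisms carefully enough to see that the isomorphisms produced at each stage are genuinely canonical rather than merely existent, and that they are compatible along the pro-system as $K_i$ varies. All the technical ingredients are present in SGA1 XIII \S 4 (in particular Propositions 4.1 and 4.3 used in the proof of \ref{HES}); the content of the proposition is simply to package them compatibly in the pointed setting of a punctured elliptic curve equipped with a section.
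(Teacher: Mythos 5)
The paper itself does not prove this Proposition; it simply cites \cite{SGA1}, Expos\'{e} XIII, 4.5.2--4.5.3. So you are actually arguing more than the paper does, and the overall architecture of your sketch --- deduce everything from the base-change behavior of the homotopy exact sequence \pref{eq_HES} together with the Galois correspondence applied levelwise to characteristic quotients --- is the right one and matches what is in SGA~1. Your treatment of (1) and your deduction of (2) from (1) by taking $S'$ to be a geometric point are both correct.

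However, your argument for (3) contains a genuine error. You assert that two \'{e}tale paths from $s$ to $s'$ ``differ by conjugation, which acts trivially on a characteristic quotient,'' and again that ``the inner ambiguity \ldots{} vanishes on characteristic quotients.'' This is false: a characteristic subgroup is merely invariant under automorphisms, so an inner automorphism of $\pi_1^\LL(E^\circ_s,g(s))$ (or, more to the point here, the action of a loop $\sigma\in\pi_1(S,s)$, which is the actual ambiguity between two paths in $S$ lifted via $g$) descends to an automorphism of the characteristic quotient $K_i$ that is nontrivial whenever $K_i$ is noncommutative --- and noncommutative $K_i$ are exactly the case of interest in this paper. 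What is true, and what actually proves (3), is that the ambiguity between two paths is the monodromy action of some $\sigma\in\pi_1(S,s)$ on $K_i$, and this is precisely the same datum used by the Galois correspondence to build the group scheme $K_i(E^\circ/S)$; so while the two path-induced bijections of geometric fibers differ, the objects of $\FEt_S$ they cut out agree. The cleanest fix is to avoid the fiber-by-fiber comparison altogether: the open normal subgroup $H_\G$ of $\pi_1^\LL(E_s^\circ,g(s))$ is $\pi_1(S,s)$-invariant and hence normal in $\pi_1'(E^\circ,g(s))$, so it corresponds to a finite \'{e}tale cover of $E^\circ$, and $K_\G(E^\circ/S)$ is (up to a short argument) obtained by restricting this cover to the section $g(S)\subset E^\circ$ --- a construction visibly independent of $s$. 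Alternatively one can invoke the standard fact that the Galois category $(\FEt_S,F_s)$ is independent of the choice of fiber functor up to canonical equivalence, but in that case the word ``canonical'' must be unpacked carefully rather than waved at via a false claim about characteristic quotients.
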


\begin{proof} See \cite{SGA1}, Expos\'{e} XIII, 4.5.2, 4.5.3. 
\end{proof}

Thanks to Proposition \ref{prop_basic_results_RFG} (3), we may more simply write $\pi_1^\LL(E^\circ/S,g)$ to denote $\pi_1^\LL(E^\circ/S,g,s)$.

\subsection{Teichm\"{u}ller level structures ($G$-structures)}\label{ss_TLS} In this section we give a careful treatment of Teichm\"{u}ller level structures as first defined by Deligne and Mumford in \cite{DM69} and again studied by Pikaart and de Jong in \cite{PJ08} for proper curves of genus $\ge 2$.

\sgap

As in \ref{HES} and \ref{pt_RFG}, let $f : E\rightarrow S$ be an elliptic curve over a connected scheme $S$, $E^\circ := E - e(S)$, and suppose we have a section $g : S\rightarrow E^\circ$. Let $s\in S$ be a geometric point, and $\LL$ the set of primes invertible on $S$. Then we have the relative fundamental group $\pi_1^\LL(E^\circ/S,g,s)$, which is an inverse system of finite \'{e}tale group schemes over $S$.

\sgap

Let $G$ be a finite \'{e}tale group scheme of order $N$ over scheme $S$, where $N$ is invertible on $S$. Let $\G := G_s$ be its geometric fiber, viewed as an abstract group equipped with a $\pi_1(S,s)$-action. We will focus on the case where $G$ is a constant group scheme.

\sgap

Note that our assumptions imply that if $E$ is an elliptic curve over a separably closed field, then by Theorem \ref{finite_generation}, we have canonical isomorphisms
$$\Hom(\pi_1^\LL(E^\circ),\G) \cong \Hom(\pi_1(E^\circ),\G) \cong \Hom(F_2,\G)$$
A Teichm\"{u}ller level structure will be represented by a surjective homomorphism of this kind. To exclude trivial cases from now on we will assume that $\G$ is a nontrivial 2-generated group.

\sgap

For any finite 2-generated group $\G$ of order $N$ invertible on $S$ as above, let $H_{\G}$ be the intersection of the kernels of \emph{all} homomorphisms $\pi_1^\LL(E_s,g(s))\rightarrow\G$. Then $H_\G$ is an open normal subgroup invariant under $\pi_1(S,s)$, and hence the quotient $K_\G := \pi_1^\LL(E_s,g(s))/H_\G$ inherits the action of $\pi_1(S,s)$, and corresponds to a finite \'{e}tale group scheme $K_\G(E^\circ/S)$. Let $\gG$ be a set of representatives of isomorphism classes of all finite 2-generated groups of order invertible on $S$, then the subgroups $\{H_\G\}_{\G\in\gG}$ are cofinal in the lattice of open subgroups of $\pi_1^\LL(E_s,g(s))$, and hence we have:
$$\pi_1^\LL(E^\circ/S,g) = \{K_\G(E^\circ/S)\}_{\G\in\gG}$$
By the definition of morphisms of pro-objects, we have
$$\Hom_S(\pi_1^\LL(E^\circ/S,g),G) = \varinjlim_{\G\in \gG}\Hom_S(K_\G(E^\circ/S),G)$$

By construction, any homomorphism $\pi_1^\LL(E^\circ/S,g)\rightarrow G$ must factor uniquely through $K_\G(E^\circ/S)$, and hence for any $\G\in\gG$ we have a canonical bijection $\Hom_S(\pi_1^\LL(E^\circ/S,g),G)\cong\Hom_S(K_\G(E^\circ/S),G)$, and hence (by Proposition \ref{prop_basic_results_RFG}(1)) an isomorphism
\begin{equation}\label{eq_characteristic_quotient} \hHom_S(\pi_1^\LL(E^\circ/S,g),G) \cong \hHom_S(K_\G(E^\circ/S),G)\end{equation}
of sheaves of sets on the \'{e}tale site $(\Sch/S)_\et$. 
We denote the subsheaf corresponding to surjective homomorphisms by
$$\hHom^\sur_S(\pi_1^\LL(E^\circ/S,g),G)$$
The isomorphism of (\ref{eq_characteristic_quotient}) implies:




\begin{prop}\label{prop_FLC} The sheaves $\hHom_S(\pi_1^\LL(E^\circ/S,g),G)$ and $\hHom_S^\sur(\pi_1^\LL(E^\circ/S,g),G)$ are finite locally constant on $(\Sch/S)_{\et}$, hence representable by schemes finite \'{e}tale over $S$.
\end{prop}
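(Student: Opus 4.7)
The plan is to use the isomorphism \pref{eq_characteristic_quotient} to reduce the claim about the pro-\'{e}tale hom-sheaf to a claim about the hom-sheaf of two honest finite \'{e}tale group schemes on $S$, and then trivialize everything by passing to a suitable \'{e}tale cover.

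First I would fix some $\G\in\gG$ (with $G_s\cong\G$) and invoke \pref{eq_characteristic_quotient}, which gives an isomorphism of \'{e}tale sheaves
$$\hHom_S(\pi_1^\LL(E^\circ/S,g),G)\cong\hHom_S(K_\G(E^\circ/S),G).$$
So it suffices to prove that for any two finite \'{e}tale group schemes $K$ and $G$ over $S$, the sheaf $\hHom_S(K,G)$ of group-scheme homomorphisms on $(\Sch/S)_\et$ is finite locally constant. Because finite \'{e}tale group schemes are trivialized by some \'{e}tale cover (take a Galois cover splitting the underlying finite \'{e}tale scheme), there exists a connected \'{e}tale surjection $S'\to S$ over which $K$ and $G$ become the constant group schemes $\underline{K_\G}$ and $\underline{\G}$.

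Next, on the \'{e}tale site of a connected scheme $T$, the sheaf $\hHom_T(\underline{A},\underline{B})$ of $T$-group-scheme homomorphisms between two constant finite groups is the constant sheaf $\underline{\Hom(A,B)}$: a homomorphism $\underline{A}_U\to\underline{B}_U$ for $U\to T$ connected \'{e}tale is determined by where it sends each of the finitely many sections of $\underline{A}_U$, each of which is a locally constant function $U\to B$, hence constant on $U$. Applied with $T=S'$, this shows $\hHom_S(\pi_1^\LL(E^\circ/S,g),G)_{S'}$ is the finite constant sheaf associated to $\Hom(K_\G,\G)$, so the sheaf is finite locally constant on $(\Sch/S)_\et$. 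Since finite locally constant sheaves on $(\Sch/S)_\et$ are equivalent to finite \'{e}tale $S$-schemes, representability follows.

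Finally, for the surjective part, I would observe that surjectivity of a homomorphism between two finite \'{e}tale group schemes may be checked on geometric fibers. Thus over the trivializing cover $S'$, the subset of the constant sheaf $\underline{\Hom(K_\G,\G)}$ consisting of surjections is precisely the constant subsheaf $\underline{\Surj(K_\G,\G)}$, which is a direct summand. Descending this decomposition back to $S$ shows that $\hHom_S^\sur(\pi_1^\LL(E^\circ/S,g),G)$ is an open and closed subsheaf of $\hHom_S(\pi_1^\LL(E^\circ/S,g),G)$, hence itself finite locally constant and representable by a finite \'{e}tale $S$-scheme. The only delicate point in the argument is verifying that the natural map from the constant sheaf $\underline{\Hom(A,B)}$ into the hom-sheaf of the constant group schemes is an isomorphism on a connected base, but this reduces immediately to the fact that locally constant functions on a connected scheme valued in a finite set are constant.
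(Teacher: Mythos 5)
Your proposal is correct and follows the same strategy as the paper: reduce via \pref{eq_characteristic_quotient} to a Hom sheaf between two honest finite \'{e}tale group schemes, and observe that such a sheaf becomes finite constant over a common trivializing \'{e}tale cover (with the surjective locus invariant under the descent datum, hence an open and closed subsheaf). The paper compresses this to a one-sentence remark; you have simply unpacked exactly that argument.
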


\begin{proof} This follows from the fact that the Hom sheaf between two finite \'{e}tale group schemes is finite, and becomes constant over a common trivializing \'{e}tale covering. 
\end{proof}

\sgap


There is a natural action of $G$ on $\hHom_S^\sur(\pi_1^\LL(E^\circ/S,g),G)$ by inner automorphisms. Define:
$$\hHom^\surext(\pi_1(E^\circ/S,g),G) := \hHom^\sur_S(\pi_1^\LL(E^\circ/S,g),G)/\Inn(G)$$
Here we are justified in dropping the $\LL$ from the notation since $|G|$ is divisible only by primes in $\LL$. For two different sections $g,h : S\rightarrow E^\circ$, the relative fundamental groups $\pi_1^\LL(E^\circ/S,g),\pi_1^\LL(E^\circ/S,h)$ may \emph{not} be isomorphic. We wish to investigate how/if
$$\hH(g) := \hHom^\surext(\pi_1(E^\circ/S,g),G)$$
depends on the choice of the section $g : S\rightarrow E^\circ$. Clearly its geometric fiber is
$$\Hom^\surext(\pi_1^\LL(E^\circ_s,g(s)),\G) := \Hom^\sur(\pi_1^\LL(E^\circ_s,g(s)),\G)/\Inn(\G)$$
Let $T\rightarrow S$ be a Galois finite \'{e}tale covering which trivializes both $K_\G(E^\circ/S)$ and $G$, then $\hH(g)_T$ is completely decomposed over $T$, and its sections are in bijection with $\Hom^\sur(\pi_1^\LL(E^\circ_s,g(s)),\G)/\Inn(\G)$. Since $\hH(g)$ is a sheaf, the global sections $\hH(g)(S)$ of $\hH(g)$ are identified with the sections $\hH(g)_T(T)$ which are invariant under $\Gal(T/S)$, or equivalently invariant under $\pi_1(S,s)$
. Thus, we have a bijection
\begin{equation}\label{eq_sections_of_H(g)}
\hH(g)(S) = \{[\varphi]\in\Hom^\surext(\pi_1^\LL(E^\circ_s,g(s)),\G) : \text{ for all $\sigma\in\pi_1(S,s)$, we have } [\varphi\circ\sigma] = [\varphi]\}
\end{equation}
where above we view $\sigma$ as an automorphism of $\pi_1^\LL(E^\circ_s,g(s))$. Thus, every section in $\hH(g)(S)$ is represented by the $\Inn(\G)$-class of a surjective homomorphism $\varphi : \pi_1^\LL(E^\circ_s,g(s))\rightarrow\G$ such that

\begin{equation}\label{eq_compatibility}
\forall\sigma\in\pi_1(S,s),\;\exists x \in\G,\;\forall\gamma\in\pi_1^\LL(E^\circ_s,g(s)) :\qquad \varphi(\,^\sigma\gamma) = x\varphi(\gamma) x^{-1}
\end{equation}

We will use this to show:
\begin{prop} Let $g,h : S\rightarrow E^\circ$ be two sections. Then we have a \emph{canonical} isomorphism
$$\hH(g) := \hHom^\surext(\pi_1(E^\circ/S,g),G)\rightiso \hHom^\surext(\pi_1(E^\circ/S,h),G) =: \hH(h)$$
\end{prop}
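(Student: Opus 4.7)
The plan is to work at the level of geometric stalks and descend the resulting bijection back to a morphism of sheaves. By Proposition \ref{prop_FLC}, both $\hH(g)$ and $\hH(h)$ are finite locally constant on $(\Sch/S)_\et$, so by the characterization (\ref{eq_sections_of_H(g)}) it suffices to construct a canonical $\pi_1(S,s)$-equivariant bijection between the stalks $\Hom^\surext(\pi_g,\G)$ and $\Hom^\surext(\pi_h,\G)$, where $\pi_g := \pi_1^\LL(E^\circ_s, g(s))$ and $\pi_h := \pi_1^\LL(E^\circ_s, h(s))$. Functoriality under base change to any connected étale $T\to S$ then assembles these into the desired isomorphism of sheaves, and independence from the choice of base point $s$ follows from Proposition \ref{prop_basic_results_RFG}(3).

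For the stalk bijection, first choose any path $\alpha$ in $E^\circ_s$ from $g(s)$ to $h(s)$. This gives an isomorphism of abstract profinite groups $\tau_\alpha : \pi_g \rightiso \pi_h$. Two such paths differ by a loop at $g(s)$, so $\tau_\alpha$ is canonical up to precomposition with an inner automorphism of $\pi_g$. Since any surjection $\varphi : \pi_h \twoheadrightarrow \G$ intertwines inner automorphisms of $\pi_h$ with inner automorphisms of $\G$, the rule $[\varphi]\mapsto [\varphi\circ\tau_\alpha^{-1}]$ is independent of $\alpha$ and defines a canonical bijection $\Psi_{g,h}: \Hom^\surext(\pi_g,\G) \rightiso \Hom^\surext(\pi_h,\G)$.

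The step I expect to be the main obstacle, and where the passage to exterior quotients is essential, is the equivariance check for the $\pi_1(S,s)$-actions — these actions are defined via conjugation through the two \emph{different} splittings $g_\ast$ and $h_\ast$ of the homotopy exact sequence (\ref{eq_HES}), for two different basepoints of $\pi_1'(E^\circ)$. Here the plan is to extend $\tau_\alpha$ to an isomorphism $\pi_1'(E^\circ,g(s))\rightiso \pi_1'(E^\circ,h(s))$ and compare: for $\sigma\in\pi_1(S,s)$, the transported action on $\pi_h$ is conjugation by $\tau_\alpha(g_\ast(\sigma))$, whereas the intrinsic action is conjugation by $h_\ast(\sigma)$. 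Both elements map to $\sigma$ under $f_\ast$, so their ratio lies in $\ker f_\ast = \pi_h$. Thus the two $\sigma$-actions on $\pi_h$ differ by conjugation by an element of $\pi_h$, and by surjectivity of $\varphi$ this becomes an inner automorphism of $\G$ after composition with any $\varphi$. The two actions therefore agree on $\Hom^\surext(\pi_h,\G)$, establishing equivariance of $\Psi_{g,h}$.

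Finally, the same recipe performed after an étale base change $T\to S$ yields a compatible family of equivariant stalk-bijections, producing the desired morphism of finite locally constant sheaves, which is an isomorphism since each stalk bijection is one. The content of the argument is concentrated in the equivariance computation: heuristically, the difference between the splittings $g_\ast$ and $h_\ast$ is a cocycle valued in the kernel $\pi_h$, which is precisely the ambiguity killed by the quotient $\Hom^\sur/\Inn(\G)$ defining $\Hom^\surext$.
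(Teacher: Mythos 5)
Your proposal takes essentially the same route as the paper's proof: choose a path in the geometric fiber $E^\circ_s$ from one basepoint to the other, transport exterior surjections along the induced isomorphism of $\pi_1^\LL$'s, verify $\pi_1(S,s)$-equivariance by observing that the discrepancy between the two splittings is valued in the kernel $\pi_1^\LL(E^\circ_s)$ (precisely because the path lies inside the fiber, so it projects trivially to $S$, and hence becomes an inner automorphism of $\G$ after composing with any surjection $\varphi$), note that path-independence holds because two paths differ by an inner automorphism, and conclude by base-changing along finite \'{e}tale $T\to S$ and applying Yoneda. The element you call the ``ratio'' is exactly the paper's $\delta_\sigma$, and your heuristic that this ratio is a cocycle killed by passing to $\Hom^\sur/\Inn(\G)$ is the heart of the paper's computation as well.
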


\begin{proof} For a scheme $X$, let $\FEt_X$ be the category of schemes finite \'{e}tale over $X$. Then let $F_{g(s)},F_{h(s)} : \FEt_{E^\circ}\rightarrow\Sets$ be the fiber functors associated to the points $h(s),g(s)\in E^\circ$. Let $F_{s,g(s)},F_{s,h(s)} : \FEt_{E^\circ_s}\rightarrow\Sets$ be the fiber functors associated to $h(s),g(s)\in E^\circ_s$, and let $F_s : \FEt_S\rightarrow\Sets$ be the fiber functor for $s\in S$. Then we have the following com/mutative diagram:


\begin{equation}\label{eq_FEt_diagram}
\raisebox{-0.5\height}{\includegraphics{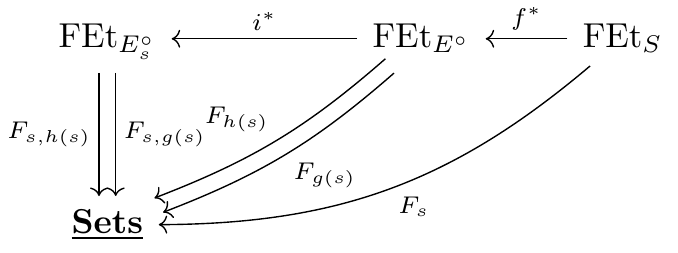}}
\end{equation}

Where ``commutative'' means that the composition of any two paths with the same endpoints which don't involve both $h(s)$ and $g(s)$ are canonically isomorphic. Choose an isomorphism $\alpha_s : F_{s,h(s)}\rightiso F_{s,g(s)}$ \;\footnote{this $\alpha_s$ should be thought of as a ``path'' $h(s)\leadsto g(s)$ in $E^\circ_s$}. Then this induces an isomorphism of groups
\begin{eqnarray*}
\pi_1^\LL(E^\circ_s,h(s)) = \Aut(F_{s,h(s)})^\LL & \stackrel{(\alpha_s)_*}{\longrightarrow} & \Aut(F_{s,g(s)})^\LL = \pi_1^\LL(E^\circ_s,g(s)) \\
\gamma' & \mapsto & \alpha_s\circ\gamma'\circ\alpha_s^{-1}
\end{eqnarray*}
Let $[\varphi]\in\hH(g)$ be a section, then it is represented by a surjection $\varphi : \pi_1^\LL(E^\circ_s,g(s))\rightarrow\G$ satisfying (\ref{eq_compatibility}) above. We'd like to show that $\psi := \varphi\circ(\alpha_s)_*$ also satisfies (\ref{eq_compatibility}), and hence represents a section of $\hH(h) := \hHom^\surext(\pi_1(E^\circ/S,h),G)$. We recall that the action of $\sigma\in\pi_1(S,s)$ on $\gamma\in\pi_1^\LL(E^\circ_s,g(s))$ is via conjugation $\,^\sigma\gamma = g_*(\sigma)i_*(\gamma)g_*(\sigma)^{-1}$ in $\pi_1'(E^\circ,g(s))$, and similarly for $h$. For simplicity of notation we will identify $\pi_1^\LL(E^\circ_s,g(s))$ with its image in $\pi_1'(E^\circ,g(s))$ via $i_*$, and similarly for $h$.

\sgap

Thus, for any $\sigma\in\pi_1(S,s)$, let $\delta = \delta_\sigma := i_*\alpha_s^{-1} \circ g_*(\sigma)^{-1}\circ i_*\alpha_s\circ h_*(\sigma)\in\Aut(F_{h(s)}) = \pi_1(E^\circ,h(s))$, we have (for any $\gamma\in\pi_1^\LL(E_s^\circ,h(s))$)
$$\psi(\,^\sigma\gamma') = \varphi\circ(\alpha_s)_*(\,^\sigma\gamma') = \varphi(\alpha_sh_*(\sigma)\gamma'h_*(\sigma)^{-1}\alpha_s^{-1}) = \varphi(g_*(\sigma)\alpha_s\delta\gamma'\delta^{-1}\alpha_s^{-1} g_*(\sigma)^{-1})$$
Since $\varphi$ represents an element of $\hH(g)$, it satisfies (\ref{eq_compatibility}), so there exists $x\in\gG$ such that
$$\cdots = x\varphi(\alpha_s\delta\gamma'\delta^{-1}\alpha_s^{-1})x^{-1} = x\left( \psi(\delta\gamma'\delta^{-1})\right)x^{-1} = x\psi(\delta)\big(\psi(\gamma')\big)\psi(\delta)^{-1}x^{-1}$$
which shows that $\psi := \varphi\circ(\alpha_s)_*$ also satisfies (\ref{eq_compatibility}). However, the last equality above only makes sense if $\delta\in\pi_1^\LL(E^\circ_s,h(s))$. We explain why this must always be the case. By definition,
$$\delta = \delta_s := i_*\alpha_s^{-1}\circ g_*(\sigma)^{-1}\circ i_*\alpha_s \circ h_*(\sigma)\in\Aut(F_{h(s)}) = \pi_1(E^\circ,h(s))$$
By the diagram (\ref{eq_FEt_diagram}), both $F_{s,h(s)}\circ i^*\circ f^*$ and $F_{s,g(s)}\circ i^*\circ f^*$ are canonically isomorphic to $F_s$. Thus, $\alpha_s : F_{s,h(s)}\rightiso F_{s,g(s)}$ induces an automorphism $f_*i_*\alpha_s\in\Aut(F_s) = \pi_1(S,s)$. Using the fact that $f\circ i : E^\circ_s\rightarrow S$ factors through the geometric point $s\in S$, it is straightforward to check that this induced automorphism $f_*i_*\alpha_s = \id_{F_s}$. Then, we have:
$$f_*\delta = f_*(i_*\alpha_s^{-1}\circ g_*(\sigma)^{-1}\circ i_*\alpha_s\circ h_*(\sigma)) = \id_{F_s}\circ f_*g_*(\sigma)^{-1}\circ \id_{F_s}\circ f_*h_*(\sigma) = \sigma^{-1}\circ\sigma = \id_{F_s}$$
and thus by the exactness of (\ref{eq_HES}), we find that $\delta\in \pi_1^\LL(E^\circ_s,h(s))$ as desired.

\sgap

The above shows that any choice of path $\alpha_s : F_{s,h(s)}\rightiso F_{s,g(s)}$ we obtain a bijection of global sections $\hH(h)(S)\rightiso\hH(g)(S)$. Since any two choices of ``paths'' $F_{s,h(s)}\rightiso F_{s,g(s)}$ differ by an inner automorphism, it is clear from the description of $\hH(g)(S),\hH(h)(S)$ in (\ref{eq_sections_of_H(g)}) that the bijection induced by $\alpha_s$ does not depend on the choice of path. Lastly, by compatibility with base change (c.f. \ref{prop_basic_results_RFG}(1)), the same argument above shows that $\hH(g)(T)$ and $\hH(h)(T)$ are also canonically isomorphic for any $T\rightarrow S$ finite \'{e}tale. Since $\hH(g),\hH(h)$ are representable, by the Yoneda lemma applied to $\FEt_S$, this shows that $\hH(g)$ and $\hH(h)$ are canonically isomorphic as desired.

\end{proof}

Thus, we may more simply write:
$$\hHom^\surext(\pi_1(E^\circ/S),G) := \hHom_S^\sur(\pi_1^\LL(E^\circ/S,g),G)/\Inn(G)$$
We call its sections ``surjective exterior homomorphisms''.





\sgap

Since sections to smooth morphisms always exist \'{e}tale-locally (c.f. \cite{Stacks} tag 055U), in the absence of a section $g$, we may define $\hHom^\surext(\pi_1(E^\circ/S),G)$ by sheafification:

\begin{defn}[$G$-Structures] Let $E/S$ be an elliptic curve with identity section $e$, and $E^\circ := E - e(S)$. Let $G$ be a nontrivial finite \'{e}tale group scheme of order invertible on $S$, whose geometric fiber $\G := G_s$ is a 2-generated group. By stability under base change (Prop \ref{prop_basic_results_RFG}(1)), the following rule defines a presheaf $\fF$ on $(\Sch/S)_\et$:
$$(T\stackrel{\;}{\rightarrow} S)\mapsto\left\{\begin{array}{ll}
\hHom^\surext(\pi_1(E^\circ/S),G)(T) & \text{if $E_T^\circ/T$ admits a section $g$ over $T$}, \\
\emptyset & \text{otherwise.}
\end{array}\right.$$
We will also use $\hHom^\surext(\pi_1(E^\circ/S),G)$ to denote the sheafification of this presheaf $\fF$. We define a Teichm\"{u}ller structure of level $G$ (or simply $G$-structure) on $E/S$ to be a global section of $\hHom^\surext(\pi_1(E^\circ/S),G)$.
\end{defn}

\sgap

Note that whenever $T\rightarrow S$ is such that $E^\circ_T/T$ admits a section, then $\fF|_T$ is just the finite locally constant sheaf $\hHom^\surext(\pi_1^\LL(E^\circ_T/T),G)$, and so $\fF$ is a separated presheaf. Since sections of smooth morphisms exist \'{e}tale-locally, we have

\begin{prop}\label{prop_FLC_2} The sheaf $\hHom^\surext(\pi_1(E^\circ/S),G)$ of $G$-structures on $E^\circ/S$ is finite locally constant, representable by a scheme finite \'{e}tale over $S$. Let $\tilde{s}\in E_s^\circ$ be a geometric point above $s\in S$, then its geometric fiber is the set
$$\hHom^\surext(\pi_1(E^\circ/S),G)_s = \Hom^\surext(\pi_1^\LL(E^\circ_s,\tilde{s}),\G) = \Hom^\surext(\widehat{F_2},\G)$$
\end{prop}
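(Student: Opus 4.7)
My plan is to reduce everything to Proposition \ref{prop_FLC} by working étale-locally on $S$ where sections of $E^\circ/S$ exist, and then glue using the canonical isomorphisms established in the preceding proposition. The main obstruction will simply be the bookkeeping to show the glued object really is a finite locally constant sheaf (rather than only étale-locally such), which follows from the naturality of those canonical isomorphisms.

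First I would address representability on an étale cover. Since $E^\circ \to S$ is smooth, there exists an étale covering $\{U_i \to S\}$ such that each $E^\circ_{U_i} \to U_i$ admits a section $g_i$. On $U_i$, the presheaf $\fF$ restricts to $\hHom^\sur_{U_i}(\pi_1^\LL(E^\circ_{U_i}/U_i, g_i), G)/\Inn(G)$. By Proposition \ref{prop_FLC}, the numerator is finite locally constant on $(\Sch/U_i)_\et$, representable by a finite étale scheme over $U_i$. The natural action of the finite constant group scheme $\Inn(G)$ on a finite locally constant sheaf produces a finite locally constant quotient sheaf (locally a disjoint union of copies of $U_i$ indexed by $\Inn(G)$-orbits), and the quotient scheme is again finite étale over $U_i$. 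Thus $\fF|_{U_i}$ is already a sheaf, finite locally constant and representable.

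Next I would glue: the canonical isomorphism from the previous proposition says that for any two sections $g, h$ of $E^\circ_T/T$ over a connected étale $T \to S$, the sheaves $\hH(g)$ and $\hH(h)$ are identified by a distinguished isomorphism independent of choices. Combined with compatibility under base change (Proposition \ref{prop_basic_results_RFG}(1)), this furnishes a canonical descent datum on the family $\{\fF|_{U_i}\}$. By étale descent for finite étale schemes, these glue to a finite étale scheme over $S$ representing the sheafification $\hHom^\surext(\pi_1(E^\circ/S), G)$, which is therefore finite locally constant on $(\Sch/S)_\et$.

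Finally I would compute the geometric fiber. For a geometric point $s$ of $S$, pick any geometric point $\tilde{s} \in E_s^\circ$; this constitutes a section of $E^\circ_s \to s$, so the fiber is read off from the presheaf description as
\[
\hHom^\surext(\pi_1(E^\circ/S), G)_s \;=\; \Hom^\sur(\pi_1^\LL(E_s^\circ, \tilde{s}), \G)/\Inn(\G) \;=\; \Hom^\surext(\pi_1^\LL(E_s^\circ, \tilde{s}), \G).
\]
By Theorem \ref{finite_generation}, $\pi_1^\LL(E_s^\circ, \tilde{s}) \cong \widehat{F_2}^{\LL}$; since $|\G|$ is divisible only by primes in $\LL$, every homomorphism $\widehat{F_2} \to \G$ factors through $\widehat{F_2}^{\LL}$, giving the desired identification with $\Hom^\surext(\widehat{F_2}, \G)$. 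The only subtle point in the whole argument is that the gluing data really is well-defined, and this is exactly the content of the canonicity clause proved before.
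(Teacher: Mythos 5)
Your argument is correct and matches the paper's approach: the paper observes that $\fF$ is a separated presheaf whose restriction to any $T$ admitting a section is already the finite locally constant sheaf $\hHom^\surext(\pi_1(E^\circ_T/T),G)$, so the sheafification is finite locally constant and agrees with $\fF$ \'{e}tale-locally, then reads off the geometric fiber via Proposition~\ref{prop_basic_results_RFG}(2). Your packaging of the gluing as \'{e}tale descent of finite \'{e}tale schemes, with the cocycle condition supplied by canonicity of the isomorphism $\hH(g)\cong\hH(h)$, is the same content made more explicit.
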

\begin{proof} The statement follows from the discussion above and Prop \ref{prop_basic_results_RFG}(2).
\end{proof}


\begin{pt} The above constructions are also valid with $E^\circ$ replaced by any complement of a normal crossings divisor relative to $S$ inside a proper smooth morphism $X\rightarrow S$ with geometrically connected fibers (\cite{SGA1} \S XIII.4.4). I.e., $X/S$ need not be genus 1, nor even a curve.
\end{pt}

We have the following description of $G$-structures:



\begin{prop}\label{prop_Teichmuller_moduli_2} In the above situation, for a homomorphism $\varphi : \pi_1^\LL(E^\circ_s)\rightarrow \G$, let $[\varphi]$ be its class mod $\Inn(\G)$. Let $\tilde{s}\in E^\circ_s$ be a geometric point above $s$. 
Then
\begin{itemize}
\item[(1)] The following sequence is exact
$$\xymatrix{1\ar[r] & \pi_1^\LL(E^\circ_s,\tilde{s})\ar[r] & \pi_1'(E^\circ,\tilde{s})\ar[r] & \pi_1(S,s)\ar[r] & 1}$$
Hence we obtain an outer representation
$$\bar{\rho} : \pi_1(S,s)\longrightarrow\Out(\pi_1^\LL(E^\circ_s,\tilde{s}))$$
\item[(2)] Let $\tilde{s}\in E^\circ_s$ be a geometric point lying above $s$. The monodromy action of $\pi_1(S,s)$ on the fiber
$$\hHom^\surext(\pi_1(E^\circ/S),G)_s = \Hom^\surext(\pi_1^\LL(E^\circ_s,\tilde{s}),\G)$$
is given by 
$$\,^\sigma[\varphi] = [\sigma_G\circ\varphi]\circ\ol{\rho}(\sigma)^{-1}$$
where $\sigma_G$ is the automorphism of $\G$ induced by the monodromy action of $\pi_1(S,s)$ on $\G := G_s$. Hence, we have a bijection
$$\{\text{$G$-structures on $E/S$}\}\rightiso\{[\varphi] \in\Hom^\surext(\pi_1^\LL(E^\circ_s,\tilde{s}),\G) : \forall\sigma\in\pi_1(S,s),\; \,^\sigma[\varphi] = [\varphi]\}$$
In particular, every $G$-structure can be represented by a surjection $\varphi : F_2\twoheadrightarrow \G$.

\sgap

\item[(3)] Suppose $G$ is a constant group scheme, and suppose either that $E^\circ/S$ admits a section, or that $S = \Spec R$ and $f_*\Omega_{E/S}$ is trivial. Then by possibly making a different choice of base point $\tilde{s}$, the exact sequence of (1) is split, from which we get a representation:
$$\rho : \pi_1(S,s)\longrightarrow\Aut(\pi_1^\LL(E^\circ_s,\tilde{s}))$$
lifting $\bar{\rho}$. In this case there is a natural map $\beta$
$$\{\text{geometrically connected} \text{ $G$-torsors}\footnote{Our $G$-torsors are relative to the \'{e}tale topology, and so a $G$-torsor $X$ over $Y$ is the same as a finite \'{e}tale Galois cover equipped with an isomorphism $G\rightiso\Gal(X/Y)$. Sometimes this is referred to as a $G$-Galois cover.} \text{ on $E^\circ/S$}\}/\cong\quad\stackrel{\beta}{\longrightarrow}\quad\{\text{$G$-structures on $E/S$}\}$$
which is a \emph{bijection} if the center $Z(G)$ of $G$ is trivial.



\end{itemize}
\end{prop}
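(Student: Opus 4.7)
The plan is to tackle the three parts in sequence. For (1), I would invoke Proposition \ref{HES} with the base point $\tilde{s}$: since $|G|$ is invertible on $S$, some prime is invertible on $S$, so the homotopy exact sequence applies without any hypothesis on sections. The outer representation $\bar\rho$ then arises in the standard way --- any lift $\tilde\sigma\in\pi_1'(E^\circ,\tilde{s})$ of $\sigma\in\pi_1(S,s)$ acts on the normal subgroup $\pi_1^\LL(E^\circ_s,\tilde{s})$ by conjugation, and two lifts differ by an element of this normal subgroup, so the resulting automorphism is canonical modulo $\Inn$.

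For (2), I would identify the geometric fiber via Proposition \ref{prop_FLC_2} and unwind the monodromy directly: with $\tilde\sigma$ representing $\bar\rho(\sigma)$, the action on $\Hom(\pi_1^\LL(E^\circ_s,\tilde{s}),\G)$ is $\varphi\mapsto\sigma_G\circ\varphi\circ\bar\rho(\sigma)^{-1}$, which descends modulo $\Inn(\G)$ to the stated formula. Since the sheaf of $G$-structures is finite locally constant (Proposition \ref{prop_FLC_2}), its global sections correspond exactly to $\pi_1(S,s)$-invariants in the geometric fiber, giving the claimed bijection. The final assertion, that every $G$-structure can be represented by a surjection $F_2\twoheadrightarrow\G$, follows from Theorem \ref{finite_generation}, which identifies $\pi_1^\LL(E^\circ_s,\tilde{s})$ with $(\widehat{F_2})^\LL$; since $F_2\to(\widehat{F_2})^\LL$ has dense image, any homomorphism to a finite group of $\LL$-order pulls back to a surjection on $F_2$.

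Part (3) is the heart of the statement. I would first define $\beta$: restricting a geometrically connected $G$-torsor $X^\circ\to E^\circ$ to the geometric fiber and applying the Galois correspondence produces a surjection $\pi_1^\LL(E^\circ_s,\tilde{s})\twoheadrightarrow\G$ whose $\Inn(\G)$-class is independent of choices; this class is automatically $\pi_1(S,s)$-invariant because the torsor is defined over $S$ and not only on the geometric fiber, so by (2) it is a $G$-structure. Next, under either hypothesis I would produce a splitting $\rho$ of the sequence in (1): directly via $g_*$ if a section exists, and via a tangential base point at the origin supplied by a nowhere-vanishing invariant differential when $f_*\Omega_{E/S}$ is trivial. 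Using $\rho$ to identify $\pi_1'(E^\circ,\tilde{s})$ with $\pi_1^\LL(E^\circ_s,\tilde{s})\rtimes\pi_1(S,s)$, a geometrically connected $G$-torsor on $E^\circ$ is the same data (up to conjugacy) as a surjection $\tilde\varphi$ from this semidirect product onto $\G$.

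The main obstacle --- and precisely the point where $Z(\G) = 1$ is used --- is constructing an inverse to $\beta$. Given an invariant class $[\varphi]$ represented by a surjection $\varphi : \pi_1^\LL(E^\circ_s,\tilde{s})\twoheadrightarrow\G$, invariance yields for each $\sigma$ an element $x_\sigma\in\G$ with $\varphi\circ\rho(\sigma) = \text{inn}(x_\sigma)\circ\varphi$; when $\varphi$ is surjective and $Z(\G) = 1$, this $x_\sigma$ is unique, and the same uniqueness applied to $\sigma\tau$ forces $x_{\sigma\tau} = x_\sigma x_\tau$. Setting $\tilde\varphi(\gamma\cdot\sigma) := \varphi(\gamma)\cdot x_\sigma$ then defines a surjection out of the semidirect product that restricts to $\varphi$, producing a $G$-torsor which maps back to $[\varphi]$ under $\beta$. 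Injectivity of $\beta$ reduces to the same uniqueness, completing the bijection.
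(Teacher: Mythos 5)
Your proposal is correct and matches the paper's argument in all essentials: (1) via Proposition \ref{HES}, (2) via finite local constancy and unwinding the monodromy, and (3) via the semidirect-product splitting (section or tangential base point) with $Z(\G)=1$ used twice --- once for uniqueness of the conjugator $x_\sigma$ (giving injectivity of $\beta$) and once to force $\sigma\mapsto x_\sigma$ to be a homomorphism (giving surjectivity). The paper formalizes the last step by citing the Bourbaki description of homomorphisms out of a semidirect product, but your direct construction of $\tilde\varphi(\gamma\cdot\sigma) := \varphi(\gamma)x_\sigma$ is the same computation.
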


Before we give the proof, we make the following remarks:

\begin{pt} In (3), the obstructions to the map $\beta$ being injective or surjective both lie in certain cohomology groups with coefficients in $Z(G)$, which explains the bijectivity in the case $Z(G) = 1$. For the sake of explicitness, in our proof we have translated the cohomology which appears into elementary group theory.

\end{pt}

\begin{pt} If $S = \Spec k$ for some field $k$, we also have that $G$-structures on $E^\circ/S$ correspond precisely to the $G$-torsors on $E_{\bar{k}}^\circ$ whose isomorphism class is fixed by $\Gal(\bar{k}/k)$. In other words, these are precisely the isomorphism classes of connected $G$-torsors over $E_{\bar{k}}^\circ$ whose \emph{field of moduli} (as a $G$-torsor) is contained in $k$. The statement of Proposition \ref{prop_Teichmuller_moduli_2}(3) corresponds to the classical fact that if $Z(G) = 1$, then every $G$-torsor of a curve over an algebraically closed field is defined over its field of moduli. On the other hand, without any restrictions on $G$, the field of moduli $k$ is always a field of definition if one only cares about the cover and not the $G$-action --- ie., if one does not require that the $G$-action be defined over $k$. For more on such subtleties, see \cite{Has13}.



\end{pt}


\begin{pt}\label{pt_coarser_structures} One may also consider variations of $G$-structures. For example, one may consider $\Aut(G)$-classes instead of $\Inn(G)$ classes. These would parametrize Galois covers with Galois group isomorphic to $G$ (ie, $G$-torsors where one forgets the action of $G$). The corresponding moduli spaces that we define in \S\ref{ss_definition_of_M(G)} would then yield \emph{origami curves} as studied in e.g. \cite{Loc03}, \cite{HS09}, and \cite{Her12}. In general, for any subgroup $H\le\Aut(G)$ containing $\Inn(G)$, one may consider equivalence classes of $G$-structures mod $H$. If $G = (\ZZ/n\ZZ)^2$, then $H\le\GL_2(\ZZ/n\ZZ)$, and we recover the ``structures of level $H$'' as defined in \cite{DR72} \S IV.3.

\end{pt}

\begin{pt}Technically, the relative fundamental group requires $S$ to be connected. Since the connected components of locally noetherian schemes are open, we may define it there as a product over the connected components. From there, noetherian approximation arguments allow us to extend our constructions above to arbitrary schemes in a way that is compatible with base change.
\end{pt}

\begin{proof} Since $G$ is nontrivial, $\LL$ is nonempty, so the exact sequence of (1) follows from Prop \ref{HES}. 

\sgap

The statements of (2) follow from standard Galois-categorical yoga and the argument surrounding (\ref{eq_sections_of_H(g)}) as long as $E^\circ$ admits a section $g$. Otherwise, we may find a finite \'{e}tale pointed Galois cover $(T,t)\rightarrow (S,s)$ such that $E^\circ/S$ admits a section over $T$ (for example, set $T = E[p]$, where $p\in\LL$). Then, by definition, $G$-structures on $E/S$ are precisely the $G$-structures on $E_T^\circ/T$ which are invariant under $\Gal(T/S)$. The result then follows from noting that if $[\varphi] : \pi_1^\LL(E^\circ_s,\tilde{s})\rightarrow G$ is invariant under both $\pi_1(T,t)$ and $\Gal(T/S)$, then it is also invariant under $\pi_1(S,s)$.

\sgap

For (3), first, if $E^\circ/S$ admits a section, then the splitting of the sequence is the one induced by the splitting. Otherwise, the exact sequence is split by a \emph{tangential morphism} corresponding to the identity section $e : S\rightarrow E$ (c.f. \S 2.1, \cite{Mat97}).

\sgap

There is a bijection (c.f. \cite{SGA1}, \S XI.5) 
between isomorphism classes of connected $G$-torsors on $E^\circ$ and surjective classes of the group cohomology
$$H^1(\pi_1(E^\circ,\tilde{s}),G) = \Hom^\ext(\pi_1(E^\circ,\tilde{s}),G) = \Hom^\ext(\pi_1'(E^\circ,\tilde{s}),G)$$
The splitting of (1) makes $\pi_1'(E^\circ,\tilde{s})$ into the semidirect product
$$\pi_1'(E^\circ,\tilde{s}) \cong \pi_1^\LL(E^\circ,\tilde{s})\rtimes_\rho \pi_1(S,s)$$
By Prop 27 of \cite{BbkGT}, there is a natural bijection
\begin{multline}
\Hom(\pi_1'(E^\circ,\tilde{s}),G) \rightiso \{(\varphi,\psi)\in\Hom(\pi_1^\LL(E^\circ,\tilde{s}),G)\times\Hom(\pi_1(S,s),G) : \nonumber \\
 \forall \gamma\in\pi_1^\LL(E^\circ,\tilde{s})\text{ and }\sigma\in\pi_1(S,s),\text{ we have }\varphi(\rho(\sigma)\gamma) = \psi(\sigma)\varphi(\gamma)\psi(\sigma)^{-1}\}
\end{multline}
Under the bijection, a pair $(\varphi,\psi)$ uniquely determines the homomorphism
$$\varphi\rtimes\psi : \pi_1'(E^\circ,\tilde{s}) \rightarrow G\quad\text{ given by }\quad(\varphi\rtimes\psi)(\gamma,\sigma) := \varphi(\gamma)\psi(\sigma)$$
Considering the quotient of the above sets mod $\Inn(G)$ and projecting onto the first coordinate ``$[\varphi]$'' gives a map
\begin{align}\label{sdp_compatibility}
\{(\varphi,\psi) : \forall\gamma,\sigma,\quad\varphi(\,^{\rho(\sigma)}\gamma) = \psi(\sigma)\varphi(\gamma)\psi(\sigma)^{-1}\} & \stackrel{\tilde{\beta}}{\longrightarrow}\;\Hom^\ext(\pi_1^\LL(E^\circ,\tilde{s}),G) \tag{$\ast$}\\
(\varphi,\psi) & \;\mapsto \;\; [\varphi]  \nonumber
\end{align}
whose image is a surjective homomorphism precisely if the $G$-torsor was geometrically connected. The compatibility condition (\ref{sdp_compatibility}) on $(\varphi,\psi)$ ensures that $\varphi$ corresponds to a $G$-structure (as in (2)). The map $\beta$ indicated in (3) is just the restriction of $\tilde{\beta}$ to the pairs $(\varphi,\psi)$ corresponding to geometrically connected $G$-torsors.

\sgap

Now suppose $Z(G) = 1$, and suppose we have two geometrically connected $G$-torsors corresponding to pairs $(\varphi,\psi)$ and $(\varphi',\psi')$ such that $[\varphi_1] = [\varphi_1']$. Since $Z(G) = 1$ there is a \emph{unique} element $g\in G$ such that $\varphi = g\varphi' g^{-1}$, so since we're working modulo $\Inn(G)$ we may assume $\varphi = \varphi'$. To show that $\beta$ is injective, it suffices to show that $\psi$ is uniquely determined by $\varphi$. Indeed, suppose we have $\psi,\psi'$ with 
$$\varphi(\,^{\rho(\sigma)}\gamma) = \psi(\sigma)\varphi(\gamma)\psi(\sigma)^{-1} = \psi'(\sigma)\varphi(\gamma)\psi'(\sigma)^{-1}\qquad\forall(\gamma,\sigma)\in\pi_1^\LL(E^\circ_s,\tilde{s})\times\pi_1(S,s)$$
then we find that $\psi(\sigma)^{-1}\psi'(\sigma)\varphi(\gamma)\psi'(\sigma)^{-1}\psi(\sigma) = \varphi(\gamma)$ for all $\sigma,\gamma$. Fixing $\sigma$ and letting $\gamma$ range over all of $\pi_1^\LL(E^\circ_s,\tilde{s})$, we get, thanks to the surjectivity of $\varphi$, that $\psi(\sigma)^{-1}\psi'(\sigma) \in Z(G) = 1$. Since this holds for all $\sigma$, we conclude that $\psi = \psi'$, and hence $\beta$ is injective.

\sgap

To show that $\beta$ is surjective, let $\varphi$ be a representative of a $G$-structure $[\varphi]$, so we know that for every $\sigma\in\pi_1(S,s)$, there exists a uniquely determined $g_\sigma\in G$ such that $\varphi\circ\rho(\sigma) = g_\sigma\varphi g_\sigma^{-1}$. For any $\sigma,\tau\in\pi_1(S,s)$ since $\rho$ is a homomorphism, we have:
$$\varphi = \varphi\circ\rho(\sigma)\circ\rho(\tau)\circ\rho(\sigma\tau)^{-1} = g_\sigma g_\tau g_{\sigma\tau}^{-1}\varphi g_{\sigma\tau}g_\tau^{-1} g_\sigma^{-1}$$
Since $\varphi$ is surjective, this shows $g_\sigma g_\tau g_{\sigma\tau}^{-1}\in Z(G) = 1$, so the association $\sigma\mapsto g_\sigma$ defines a homomorphism $\psi : \pi_1(S,s)\rightarrow G$, which is easily checked to satisfy the compatibility relation (\ref{sdp_compatibility}) with $\varphi$.

\end{proof}

Next we show that when $G$ is abelian, $G$-structures correspond to classical congruence level structures. First, recall that:

\begin{defn} Let $E/S$ be an elliptic curve, and $n\in\ZZ$ be invertible on $S$.
\begin{itemize}
\item A $\Gamma(n)$-structure on $E/S$ is an isomorphism:
$$(\ZZ/n\ZZ)^2_S\rightiso E[n]$$
\item A $\Gamma_1(n)$-structure on $E/S$ is an injective group scheme homomorphism:
$$(\ZZ/n\ZZ)_S\hookrightarrow E[n]$$
\item We define a $\Gamma_1(n)^\vee$-structure on $E/S$ to be a injective group scheme homomorphism:
$$\mu_{n,S}\hookrightarrow E[n]$$
\end{itemize}
\end{defn}

\begin{prop}\label{prop_abelian_is_congruence} With notation as above, there are bijections between
\begin{itemize}
\item[(1)] The set of $(\ZZ/n\ZZ)^2$-structures on $E/S$ and the set of $\Gamma(n)$-structures on $E/S$
\item[(2)] The set of $\ZZ/n\ZZ$-structures on $E/S$ and the set of  $\Gamma_1(n)^\vee$-structures on $E/S$
\item[(3)] The set of $\mu_{n,S}$-structures on $E/S$ and the set of $\Gamma_1(n)$-structures on $E/S$
\end{itemize}
\end{prop}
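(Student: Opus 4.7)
My plan is to handle all three parts uniformly by first reducing any abelian $G$-structure on $E/S$ to a surjective morphism $E[n] \twoheadrightarrow G_S$ of finite \'etale group schemes, and then recognizing each such surjection as the corresponding classical level structure --- by a rank argument in part (1) and by Cartier duality combined with the Weil pairing in parts (2) and (3).

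Since each $G$ in the statement is commutative, $\Inn(G) = 1$ and so a $G$-structure is literally a surjective homomorphism from $\pi_1^\LL(E^\circ/S, g)$ to $G_S$. The first step is to observe that such a homomorphism factors canonically through $\pi_1^\LL(E/S, e)$; equivalently, that the abelianization of $\pi_1^\LL(E^\circ/S,g)$ as a pro-object of finite \'etale group schemes is $\pi_1^\LL(E/S, e)$. On geometric fibers this is the familiar fact that the profinite abelianization of $\pi_1^\LL(E_s^\circ) \cong (\widehat{F_2})^\LL$ is $(\widehat{\ZZ}^2)^\LL \cong \pi_1^\LL(E_s)$ (Theorem \ref{finite_generation}), and the comparison is $\pi_1(S)$-equivariant because the open immersion $i : E^\circ \hookrightarrow E$ induces a morphism between the homotopy exact sequences (\ref{abHES}) and (\ref{eq_HES}); passing to finite characteristic quotients then promotes this to an identification of the pro-systems of finite \'etale group schemes. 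Combined with Lemma \ref{lemma_RFG_is_Tate_module}, a $G$-structure on $E/S$ is thereby identified with a surjective morphism $E[n] \twoheadrightarrow G_S$ for any $n$ killing $G$.

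For part (1), both $E[n]$ and $(\ZZ/n\ZZ)^2_S$ are finite \'etale of rank $n^2$ over $S$, so any surjection between them is automatically an isomorphism; inverting yields the bijection with $\Gamma(n)$-structures. For parts (2) and (3), invoke Cartier duality $(-)^\vee = \hHom(-, \GG_m)$, which on finite \'etale commutative group schemes of order invertible on $S$ is a contravariant self-equivalence exchanging surjections with injections. The Weil pairing furnishes $E[n]^\vee \cong E[n]$, and by definition $(\ZZ/n\ZZ)^\vee_S = \mu_{n,S}$ and $\mu_{n,S}^\vee = (\ZZ/n\ZZ)_S$. Dualizing therefore sends surjections $E[n] \twoheadrightarrow (\ZZ/n\ZZ)_S$ to injections $\mu_{n,S} \hookrightarrow E[n]$, proving (2), and sends surjections $E[n] \twoheadrightarrow \mu_{n,S}$ to injections $(\ZZ/n\ZZ)_S \hookrightarrow E[n]$, proving (3).

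The main technical point, which I expect to be the real work, is the abelianization identification at the level of pro-objects of finite \'etale group schemes with full $\pi_1(S)$-equivariance; the remainder is either formal or standard. When $E^\circ/S$ does not admit a global section $g$, the sheafification formalism used to define $G$-structures reduces the question to an \'etale cover of $S$ admitting such a section (e.g., $E[p] \to S$ for some $p \in \LL$), using that the relevant sheaves are finite locally constant by Proposition \ref{prop_FLC_2}.
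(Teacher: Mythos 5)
Your proof is correct and follows essentially the same route as the paper: factor the surjection through the abelianization of the relative fundamental group (using that the inclusion $E^\circ \hookrightarrow E$ is $\pi_1(S)$-equivariantly abelianization on fibers), pass through Lemma~\ref{lemma_RFG_is_Tate_module} to land on a surjection out of $E[n]$, then conclude by a rank count in (1) and by Cartier duality plus the Weil pairing in (2) and (3). The paper simply states the $\pi_1(S)$-equivariance of the abelianization map and the factorization through $E[n]$ more briefly, without your explicit remark that (\ref{abHES}) and (\ref{eq_HES}) use different sections — an issue which is in any case harmless here since the target is abelian.
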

\begin{proof} First note that the inclusion $E^\circ\subset E$ induces a $\pi_1(S,s)$-equivariant map $\pi_1(E^\circ_s,\tilde{s})\rightarrow\pi_1(E_s,\tilde{s})$ which is abelianization at the level of groups. A $(\ZZ/n\ZZ)^2_S$-structure is given by a $\pi_1(S,s)$-equivariant surjection $\varphi : \pi_1(E^\circ_s,\tilde{s})\longrightarrow (\ZZ/n\ZZ)^2_S$. Since the image is abelian, $\varphi$ factors uniquely as:
$$\pi_1(E^\circ_s,\tilde{s})\stackrel{\ab}{\longrightarrow}\pi_1(E_s,\tilde{s})\longrightarrow (\ZZ/n\ZZ)^2_S$$
Thus any $(\ZZ/n\ZZ)^2_S$-structure uniquely determines a $\pi_1(S,s)$-equivariant surjection $\pi_1(E_s,\tilde{s})\longrightarrow(\ZZ/n\ZZ)^2_S$, whence a surjection of group schemes $\pi_1^\LL(E/S,e,s)\rightarrow (\ZZ/n\ZZ)^2_S$. By Lemma \ref{lemma_RFG_is_Tate_module}, this is a surjection $\varprojlim E[n]_S\rightarrow(\ZZ/n\ZZ)^2_S$, which must factor uniquely through its characteristic $n$-torsion quotient $E[n]$, giving us a surjection $E[n]\rightarrow(\ZZ/n\ZZ)^2_S$, which by comparing degrees must be an isomorphism. This proves (1).

\sgap

For (2) and (3), the same argument shows that a $\ZZ/n\ZZ$-structure is equivalent to a surjection $E[n]\rightarrow(\ZZ/n\ZZ)_S$, which upon taking Cartier duals gives an injection $\mu_{n,S}\hookrightarrow E[n]_S$. Similarly, a $\mu_{n,S}$-structure is equivalent to a surjection $E[n]\rightarrow\mu_{n,S}$, and taking duals yields an injection $(\ZZ/n\ZZ)_S\hookrightarrow E[n]$.

\end{proof}

\section{Stacks of elliptic curves with $G$-structures}\label{section_stacks_TLS}
From now on $G$ will always refer to a finite 2-generated group or its corresponding constant group scheme. In this section we will study the stacks of elliptic curves equipped with $G$-structures. Let $N := |G|$, and let $\LL$ be the set of primes dividing $N$. 

\subsection{The definition of $\mM(G)$}\label{ss_definition_of_M(G)}


\begin{thm}\label{thm_M11_is_DM} The moduli stack $\mM(1)$ of elliptic curves is a connected Noetherian smooth separated Deligne-Mumford stack over $\Spec\ZZ$.
\end{thm}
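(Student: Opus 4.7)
The plan is to invoke the classical results of Deligne--Rapoport \cite{DR72} and Katz--Mazur \cite{KM85}, but let me sketch how I would verify each adjective in the statement.

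First I would exhibit an étale atlas. Over $\ZZ[1/N]$ for $N\ge 3$, the functor parametrizing pairs $(E,\alpha)$ with $E/S$ an elliptic curve and $\alpha : (\ZZ/N\ZZ)^2_S\rightiso E[N]$ a $\Gamma(N)$-structure is known to be representable by a smooth affine scheme $Y(N)$ over $\ZZ[1/N]$; this is the rigidity statement that $(E,\alpha)$ has no nontrivial automorphisms when $N\ge 3$, together with the smoothness of the moduli of elliptic curves in Weierstrass form. The forgetful morphism $Y(N)\to\mM(1)_{\ZZ[1/N]}$ is finite étale and surjective with Galois group $\GL_2(\ZZ/N\ZZ)$. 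Covering $\Spec\ZZ$ by the two open sets $\Spec\ZZ[1/3]$ and $\Spec\ZZ[1/4]$ and taking $Y(3)\sqcup Y(4)\to \mM(1)$ exhibits an étale surjective cover by a Noetherian smooth scheme, which simultaneously shows that $\mM(1)$ is a Deligne--Mumford stack, that it is smooth over $\Spec\ZZ$, and that it is Noetherian.

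To get separatedness I would verify the valuative criterion for the diagonal: given two elliptic curves $E_1,E_2$ over a DVR $R$ with fraction field $K$, any $K$-isomorphism $E_{1,K}\rightiso E_{2,K}$ extends uniquely to an $R$-isomorphism, because elliptic curves are their own Néron models. Equivalently, the sheaf $\iIsom_S(E_1,E_2)$ is finite and unramified over $S$ for any two elliptic curves $E_1,E_2/S$, so the diagonal $\mM(1)\to\mM(1)\times_\ZZ\mM(1)$ is finite, hence proper. Connectedness I would deduce from the $j$-invariant morphism $j : \mM(1)\to\AA^1_\ZZ$: every geometric fiber is nonempty (given any value of $j$ one can write down a Weierstrass curve realizing it), $\AA^1_\ZZ$ is connected, and the fibers are connected as groupoids (elliptic curves with the same $j$-invariant are geometric twists of one another), so the topological space of $\mM(1)$ is connected.

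The only mildly delicate step is the passage from the étale atlases $Y(3)$ and $Y(4)$, which exist only away from $3$ and $2$ respectively, to a uniform statement over all of $\Spec\ZZ$; one must know both that $Y(N)\to\mM(1)_{\ZZ[1/N]}$ really is étale (including above the elliptic points $j=0,1728$, which is where the rigidity $|\Aut(E,\alpha)|=1$ for $N\ge 3$ is used) and that the characteristic $2$ and $3$ fibers are covered by at least one of the two atlases. Everything else is either Weierstrass-equation bookkeeping or a standard invocation of the references above, and I would not grind through the explicit coordinates.
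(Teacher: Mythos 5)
Your proposal is correct and takes essentially the same route as the paper, which simply cites Theorem 2.5 of \S III and Proposition 2.2 of \S IV of \cite{DR72}; you have unpacked the standard argument (étale atlas via $Y(3)\sqcup Y(4)$, separatedness via Néron models/finiteness of $\iIsom$, connectedness via the $j$-line) that lies behind that citation.
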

\begin{proof} See, e.g., Theorem 2.5 in \S III and Proposition 2.2 in \S IV of \cite{DR72}. Here, by Deligne-Mumford stack we mean an ``algebraic stack'' in the sense of Definition 4.6 of \cite{DM69}.
\end{proof} 







\begin{prop}\label{prop_TLS_form_a_sheaf} The functor $\pP_G : \mM(1)_{\ZZ[1/N]}\rightarrow\Sets$ defined by sending
$$(E/S)\mapsto \{\text{global sections of }\hHom^\surext(\pi_1(E^\circ/S),G)\}$$
is a sheaf on $\mM(1)_{\ZZ[1/N]}$.
\end{prop}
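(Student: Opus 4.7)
The plan is to derive this from two facts that have already been established in \S2: the base-change compatibility of the relative fundamental group (Proposition \ref{prop_basic_results_RFG}(1)), which passes to $\hHom^\surext$ after sheafification, and the representability of $\hHom^\surext(\pi_1(E^\circ/S),G)$ by a scheme $H_{E/S}$ finite \'{e}tale over $S$ (Proposition \ref{prop_FLC_2}). Together these immediately reduce the sheaf condition on the stack $\mM(1)_{\ZZ[1/N]}$ to the tautological \'{e}tale-descent property enjoyed by any representable $S$-scheme.

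First I would check that $\pP_G$ is a well-defined presheaf, i.e. that a morphism $(E'/S')\to(E/S)$ in $\mM(1)_{\ZZ[1/N]}$, which by the definition of the moduli stack is the same datum as a Cartesian square with $E' \cong E\times_S S'$, induces a functorial restriction map on global sections. Base-change compatibility gives a canonical isomorphism $H_{E'/S'}\cong H_{E/S}\times_S S'$, so composing a global section $S\to H_{E/S}$ with the morphism $S'\to S$ produces a global section $S'\to H_{E'/S'}$. Functoriality in the composition of morphisms follows from the canonical nature of these base-change isomorphisms.

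Second I would verify the sheaf axiom. A covering family in $\mM(1)_{\ZZ[1/N]}$ is pulled back from an \'{e}tale cover $\{S_i\to S\}$ of the base; concretely, we are given sections $\alpha_i$ of $\pP_G(E_{S_i}/S_i)$, i.e.\ morphisms $S_i\to H_{E/S}\times_S S_i$ over $S_i$, which agree on the double overlaps $S_i\times_S S_j$. Since $H_{E/S}$ is a scheme (in particular an \'{e}tale sheaf on $(\Sch/S)_\et$), these glue uniquely to a morphism $S\to H_{E/S}$, that is, to an element of $\pP_G(E/S)$. The matching condition on $\pP_G$ is exactly the matching condition for the representing scheme, via the base-change isomorphism of the previous paragraph, so no extra identification is required.

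The only subtlety — and this is where I expect the argument to need a brief extra word — is that when $E^\circ/S$ has no section, the sheaf $\hHom^\surext(\pi_1(E^\circ/S),G)$ was defined by sheafifying a presheaf that is only nonempty on \'{e}tale refinements admitting sections. This causes no trouble here, because Proposition \ref{prop_FLC_2} asserts the representability (hence sheafiness on $(\Sch/S)_\et$) of this sheafification in full generality, and the base-change isomorphism $H_{E'/S'}\cong H_{E/S}\times_S S'$ survives sheafification because it holds on the large enough \'{e}tale refinement where sections exist. Thus the descent argument above applies uniformly, and $\pP_G$ is a sheaf on $\mM(1)_{\ZZ[1/N]}$.
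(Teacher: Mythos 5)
Your proposal is correct and takes essentially the same approach as the paper, which simply states that the result ``follows from the construction of $\hHom^\surext(\pi_1(E^\circ/S),G)$ in \S\ref{ss_TLS}.'' You have merely unpacked that terse remark by spelling out the two ingredients implicit in the construction (base-change compatibility of $\hHom^\surext$, coming from Proposition \ref{prop_basic_results_RFG}(1), and representability by a finite \'etale $S$-scheme from Proposition \ref{prop_FLC_2}) and observing that \'etale descent for a representable sheaf then gives the sheaf axiom over $\mM(1)_{\ZZ[1/N]}$.
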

\begin{proof} This follows from the construction of $\hHom^\surext(\pi_1(E^\circ/S),G)$ in  \S\ref{ss_TLS}.
\end{proof}

\begin{remark*} The functor $\pP_G$ defines a finite \'{e}tale, relatively representable moduli problem for elliptic curves in the sense of \cite{KM85}, \S4.
\end{remark*}

\begin{defn}\label{defn_TMS} We define the fibered category $\mM(G)$ over $\Sch/\ZZ[1/N]$ as follows. Its objects over $S\in\Sch/\ZZ[1/N]$ are pairs $(E/S,\alpha)$, where $\alpha\in\pP_G(E/S)$, and a morphism between $(E/S,\alpha)$ and $(E'/S',\alpha')$ is a cartesian diagram
$$\xymatrix{
(E')^\circ\ar[d]\ar[r]^\phi & E^\circ\ar[d] \\
S'\ar[r] & S
}$$
such that $\phi^*\alpha = \alpha'$.
\end{defn}

By Proposition \ref{prop_TLS_form_a_sheaf}, $\mM(G)$ is a stack in sets over $\mM(1)_{\ZZ[1/N]}$, and hence a stack in groupoids over $\ZZ[1/N]$. Forgetting level structures defines a natural map
\begin{eqnarray*}
\mM(G) & \longrightarrow & \mM(1)_{\ZZ[1/N]} \\
(E/S,\alpha) & \mapsto & (E/S)
\end{eqnarray*}
The pullback of this map by any morphism $S\stackrel{E/S}{\longrightarrow}\mM(1)_{\ZZ[1/N]}$ is precisely the scheme $\hHom^\surext(\pi_1(E^\circ/S),G)$, which is finite \'{e}tale over $S$. This implies:

\begin{prop}\label{prop_GM_finite_\'{e}tale_over_M11} $\mM(G)$ is a Deligne-Mumford stack, finite \'{e}tale over $\mM(1)_{\ZZ[1/N]}$, smooth and separated over $\ZZ[1/N]$.
\end{prop}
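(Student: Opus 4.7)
The plan is to bootstrap everything from Theorem \ref{thm_M11_is_DM} (which says $\mM(1)$ is a separated smooth Deligne-Mumford stack over $\Spec\ZZ$) by showing that the forgetful morphism $p : \mM(G) \to \mM(1)_{\ZZ[1/N]}$ is representable by schemes and finite \'{e}tale. Once this is established, standard properties of algebraic stacks will allow us to transfer the Deligne-Mumford property, smoothness, and separatedness from $\mM(1)_{\ZZ[1/N]}$ to $\mM(G)$.

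First, I would verify representability of $p$. Given any scheme $T$ and a morphism $T \to \mM(1)_{\ZZ[1/N]}$ classifying an elliptic curve $E_T/T$, the fiber product $\mM(G) \times_{\mM(1)_{\ZZ[1/N]}} T$ represents the functor on $(\Sch/T)$ sending $T' \to T$ to the set of $G$-structures on $E_{T'}/T'$. By definition of $\mM(G)$ and Proposition \ref{prop_TLS_form_a_sheaf}, this functor is exactly (the functor of points of) the sheaf $\hHom^\surext(\pi_1(E_T^\circ/T),G)$, which by Proposition \ref{prop_FLC_2} is representable by a scheme finite \'{e}tale over $T$. Hence $p$ is representable by schemes and its base change along any $T \to \mM(1)_{\ZZ[1/N]}$ is finite \'{e}tale; this is the definition of $p$ being relatively representable and finite \'{e}tale in the stack sense.

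Next, I would appeal to standard stack-theoretic lemmas (all in SGA or the Stacks Project): if $\xX \to \yY$ is a representable morphism of fibered categories where $\yY$ is a Deligne-Mumford stack and the morphism is, say, finite \'{e}tale (or more generally separated and of finite type with an \'{e}tale cover by a scheme), then $\xX$ is a Deligne-Mumford stack as well. Concretely, pulling back an \'{e}tale atlas $U \to \mM(1)_{\ZZ[1/N]}$ gives a scheme $U \times_{\mM(1)_{\ZZ[1/N]}} \mM(G)$ finite \'{e}tale over $U$, which composes with $U \to \mM(G)$ to give an \'{e}tale atlas for $\mM(G)$. The diagonal of $\mM(G)$ is unramified and separated because the diagonal of $\mM(1)_{\ZZ[1/N]}$ is (these properties descend along representable morphisms), so $\mM(G)$ is Deligne-Mumford.

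Finally, to transfer smoothness and separatedness over $\ZZ[1/N]$: a finite \'{e}tale morphism is in particular smooth, so $p$ is smooth; composing with the smooth structure morphism $\mM(1)_{\ZZ[1/N]} \to \Spec\ZZ[1/N]$ yields smoothness of $\mM(G)$ over $\ZZ[1/N]$. Separatedness follows similarly: $p$ is finite hence proper, hence separated, and separatedness is stable under composition, so $\mM(G) \to \Spec\ZZ[1/N]$ is separated. The only step with any substance is the representability verification in the first paragraph --- everything else is an appeal to general stack-theoretic stability results --- and even that is essentially a direct consequence of how $\mM(G)$ was defined via the sheaf $\pP_G$, which was precisely engineered so that its pullbacks along elliptic curves are the finite \'{e}tale schemes $\hHom^\surext(\pi_1(E^\circ/S),G)$.
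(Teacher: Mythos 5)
Your proposal is correct and follows essentially the same route as the paper: both arguments hinge on the observation that the pullback of the forgetful map along any $S \to \mM(1)_{\ZZ[1/N]}$ classifying $E/S$ is precisely the finite \'{e}tale scheme $\hHom^\surext(\pi_1(E^\circ/S),G)$ from Proposition \ref{prop_FLC_2}, from which representability, the Deligne-Mumford property, smoothness, and separatedness all follow by standard stability of these properties. You spell out the stack-theoretic bookkeeping more explicitly than the paper does, but the substance is identical.
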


We call the stack $\mM(G)$ over $\ZZ[1/N]$ the stack of elliptic curves equipped $G$-structures.

\sgap

\textbf{Notation.} By Proposition \ref{prop_abelian_is_congruence}, $\Gamma(n)$ (resp. $\Gamma_1(n)$)-structures on elliptic curves over $\ZZ[1/n]$ are the same as $(\ZZ/n\ZZ)^2$ (resp. $\mu_n$)-structures. Thus, we will use
$$\mM(n) := \mM((\ZZ/n\ZZ)^2),\qquad \mM_1(n) := \mM(\mu_n)$$
to denote the classical moduli stacks of elliptic curves with (naive) $\Gamma(n),\Gamma_1(n)$-structures over $\ZZ[1/n]$. Let $\zeta_n := e^{2\pi i/n}$, then note that $\mM(n)_{\ZZ[1/n,\zeta_n]}$ has $\phi(n)$ geometrically connected components, each a model of $\hH/\Gamma(n)$ over $\ZZ[1/n,\zeta_n]$. For some $\ZZ[1/n,\zeta_N]$-scheme $S$, let $\yY(n)_S$ denote a geometrically connected component of $\mM(n)_S$. Over $\mM(1)_{\ZZ[1/n]}$, $\mM(n)$ is finite \'{e}tale Galois with Galois group $\GL_2(\ZZ/n\ZZ)$.

\sgap




\begin{remark}\label{remark_alternate_constructions} There are alternative ways of constructing the stacks $\mM(G)$. The first is via the Hurwitz stacks of \cite{BR11} \S6 - Let $\hH(G)$ be the stack over $\ZZ[1/N]$ whose objects are proper smooth marked curves $X$ equipped with a faithful action of $G$, such that $G$ acts freely outside the marking divisor, and such that $E := X/G$ is a pointed curve of genus 1. The automorphisms of $X$ in $\hH(G)$ are the $G$-equivariant automorphisms of $X$. By restricting the cover $X\rightarrow E$ to $E^\circ$, we obtain a $G$-structure on $E$, and indeed this gives a functor $\hH(G)\rightarrow\mM(G)$ which is an \'{e}tale gerbe, and hence gives a bijection on objects over algebraically closed fields.

\sgap

However, since the automorphisms of $X$ viewed in $\mM(G)$ are the automorphisms of $E = X/G$ which admit a lift to an automorphism of $X$, we find that $\Aut_{\mM(G)}(X) = \Aut_{\hH(G)}(X)/Z(G)$, where $Z(G)$ is the center of $G$, which can be identified with the subgroup of $\Aut_{\hH(G)}(X)$ consisting of automorphisms inducing the identity on $E$. In the language of \cite{Rom03} \S5, the map $\hH(G)\rightarrow\mM(G)$ induces an isomorphism $\hH(G)\fs Z(G)\cong \mM(G)$, where $\hH(G)\fs Z(G)$ is the \emph{rigidification} of $\hH(G)$ along $Z(G)$ (c.f. \cite{ACV03} \S5, where they use superscripts ``$\hH(G)^{Z(G)}$'' to denote rigidification).

\sgap

A second alternative is via the language of ``twisted stable maps'' (c.f. \cite{ACV03}, \cite{AV02}), where we start with the stack ``$\bB^\text{tei}_{1,1}(G)$'', which we define to be the rigidification of their stack $\bB^\text{bal}_{1,1}(G)$ (c.f. \cite{ACV03} \S2.2) by $Z(G)$. In this setup, $\bB^\text{bal}_{1,1}(G)$ is analogous to $\hH(G)$, but now the objects are essentially given by morphisms $p : \eE\rightarrow \bB G$, where $\bB G$ is the classifying stack of $G$, and $\eE$ is stable pointed curve of genus 1 with stacky structure at the marking and nodes. Thus, the morphism $p$ corresponds to an \'{e}tale $G$-torsor on $\eE$, which can be viewed as a ramified $G$-cover of the coarse scheme $E$ of $\eE$. From this perspective, $\mM(G)$ can be identified with the open substack of $\bB^\text{tei}_{1,1}(G)$ parametrizing maps $\eE\rightarrow\bB G$ from smooth curves.
\end{remark}

\subsection{The Galois category of $\mM(1)$} \label{ss_Galois_category}
The fact that $\mM(G)$ is finite \'{e}tale over $\mM(1)_{\ZZ[1/N]}$ means that we may study them from the viewpoint of Grothendieck's Galois theory. 

\begin{defn}(Galois Category of $\mM(1)$, see \cite{Noo04} Definition 4.1) Let $S$ be a scheme. We define $\cC_{\mM(1)_S}$ to be the 1-category associated to the 2-category of finite \'{e}tale stacks over $\mM(1)_S$. In other words,
\begin{itemize}
\item $\Ob(\cC_{\mM(1)_S})$ are pairs $(\mM,f)$, where $\mM$ is an algebraic stack over $S$ and $f : \mM\rightarrow\mM(1)_S$ is finite \'{e}tale.
\item $\Hom_{\cC_{\mM(1)_S}}((\mM,f),(\nN,g))$ are morphisms $a : \mM\rightarrow\nN$ such that $f = g\circ a$.
\end{itemize}

Let $k = \bar{k}$ be an algebraically closed field. Pick a geometric point $x_0 : \Spec k\rightarrow\mM(1)_S$ corresponding to an elliptic curve $E_0/k$. The fiber functor $F_{x_0} : \cC_{\mM(1)_S}\rightarrow\Sets$ is defined as follows
$$F_{x_0}(\mM,f) := \Hom_{\Spec k}(\Spec k,\mM_{x_0})$$
where $\mM_{x_0} := \Spec k\times_{x_0,\mM(1),f}\mM$.
\end{defn}

\begin{example}\label{ex_fiber}
If $\mM = \mM(G)_S$ and $f$ the natural ``forget-level-structure'' map, then
$$F_{x_0}(\mM) = \hHom^\surext(\pi_1(E_k^\circ/k),G) = \Hom^\surext(\widehat{F_2}^\LL,G) = \Hom^\surext(F_2,G)$$
where the last equality comes from the fact that $N = |G|$ is only divisible by primes in $\LL$.
\end{example}

\begin{thm}\label{thm_Galois_category_of_stacks} The pair $(\cC_{\mM(1)_S},F_{x_0})$ is a Galois category with fiber functor $F_{x_0}$. In particular, the automorphism group $\Aut(F_{x_0})$ is a profinite group, called the fundamental group $\pi_1(\mM(1)_S)$ of $\mM(1)_S$, and we have an equivalence of categories (\emph{the Galois correspondence})
\begin{eqnarray*}
\cC_{\mM(1)_S} & \rightiso & \text{Finite $\pi_1(\mM(1)_S)$-sets} \\
\mM & \mapsto & F_{x_0}(\mM,f)
\end{eqnarray*}
\end{thm}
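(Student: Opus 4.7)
The plan is to verify that $(\cC_{\mM(1)_S}, F_{x_0})$ satisfies Grothendieck's axioms (G1)--(G6) for a Galois category (cf. SGA1, Expos\'e V). Once these axioms are in place, the standard Galois-category machinery immediately yields that $\Aut(F_{x_0})$ is a profinite group and that $F_{x_0}$ induces the asserted equivalence with finite continuous $\pi_1(\mM(1)_S)$-sets. Since essentially the same verification has been carried out for general connected locally Noetherian Deligne-Mumford stacks in Noohi's paper \cite{Noo04}, a large part of the proof reduces to checking that $\mM(1)_S$ fits into Noohi's framework, and this is exactly what Theorem \ref{thm_M11_is_DM} provides.

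To spell out what needs checking, I would first handle the categorical axioms (G1)--(G3): the existence of finite limits, finite coproducts, and quotients by finite group actions, together with strict-epi/mono factorizations in $\cC_{\mM(1)_S}$. These follow from the facts that the 2-category of Deligne-Mumford stacks admits the relevant (2-)limits and colimits, and that the class of finite \'{e}tale morphisms is closed under base change, composition, disjoint union, and quotients by finite groups. The image factorization for a morphism $a : \mM \to \nN$ in $\cC_{\mM(1)_S}$ can be extracted by taking the union of those connected components of $\nN$ in the image of $a$, using that $\nN$ is itself finite \'{e}tale over $\mM(1)_S$.

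Next I would verify the fiber-functor axioms (G4)--(G5). The key observation is that for any finite \'{e}tale $(\mM, f)$, the geometric fiber $\mM_{x_0} := \Spec k \times_{\mM(1)_S} \mM$ is a finite \'{e}tale stack over $\Spec k$, hence is canonically a finite disjoint union of classifying stacks of finite groups. Consequently $F_{x_0}(\mM) = \Hom_{\Spec k}(\Spec k, \mM_{x_0})$ is a finite set (in bijection with the components of $\mM_{x_0}$), and compatibility of $F_{x_0}$ with the finite limits and colimits constructed above follows from the fact that pullback to a geometric fiber preserves them.

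The main obstacle is axiom (G6), that $F_{x_0}$ is conservative. The crucial input here is the connectedness of $\mM(1)_S$, and the argument should parallel the scheme case: if $a : \mM \to \nN$ in $\cC_{\mM(1)_S}$ induces a bijection of geometric fibers over $x_0$, then the locus in $\mM(1)_S$ over which $a$ is an isomorphism is both open (since $a$ is finite \'{e}tale) and closed (since $a$ is proper), and must therefore equal all of $\mM(1)_S$ by connectedness. Some care is needed to interpret this correctly in the 2-categorical context — passing through the coarse moduli space of $\mM(1)_S$ after trivializing the generic automorphism group may be the cleanest route. With (G1)--(G6) established, Grothendieck's theorem on Galois categories then produces the profinite group $\pi_1(\mM(1)_S) := \Aut(F_{x_0})$ together with the claimed equivalence of categories.
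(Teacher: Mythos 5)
Your proposal follows essentially the same route as the paper, whose proof of this theorem is a one-line citation to Noohi's Theorem 4.2 in \cite{Noo04}; that theorem verifies the Galois-category axioms for finite \'etale covers of a connected Noetherian Deligne--Mumford stack, and Theorem \ref{thm_M11_is_DM} supplies the needed hypotheses on $\mM(1)_S$. Your gloss on the axiom verifications is consistent with Noohi's argument, with one small imprecision: for a \emph{representable} finite \'etale morphism $\mM\to\mM(1)_S$ the geometric fiber $\mM_{x_0} := \Spec k\times_{\mM(1)_S}\mM$ is necessarily a finite constant $k$-scheme (representability forces the $2$-fiber product over $\Spec k$ to be a scheme, and the residual automorphisms of a point of $\mM_{x_0}$ are killed by the compatibility with the fixed $2$-isomorphism to $x_0$), not a disjoint union of classifying stacks.
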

\begin{proof} This is theorem 4.2 of \cite{Noo04}. For an introduction to Galois categories, see \cite{Mu67}, \cite{Sza09} \S V, or \cite{SGA1}, \S V.4. 
\end{proof}



The connection with modular curves begins with the following fundamental result of Oda \cite{Oda97}.

\begin{thm}[Oda, \cite{Oda97}]\label{thm_fundamental_group_M11} The fundamental group of $\mM(1)_{\Qbar}$ is $\widehat{\SL_2(\ZZ)}$.
\end{thm}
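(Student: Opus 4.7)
The plan is to reduce Oda's theorem to the complex-analytic/orbifold computation of $\pi_1^{\tp}(\mM(1)_{\CC})$, then invoke a Riemann existence theorem for Deligne--Mumford stacks together with invariance of $\pi_1^{\et}$ under extension of algebraically closed fields of characteristic $0$.

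\emph{Step 1: Analytic uniformization.} First I would recall the classical fact that, as a complex-analytic Deligne--Mumford stack, $\mM(1)_{\CC}^{\an}$ is the stack quotient $[\hH/\SL_2(\ZZ)]$: a point $\tau\in\hH$ corresponds to the lattice $\ZZ + \tau\ZZ$ and the associated elliptic curve $\CC/(\ZZ + \tau\ZZ)$, and the $\SL_2(\ZZ)$-action by fractional linear transformations parametrizes isomorphism classes. Note that $-I$ acts trivially on $\hH$ but corresponds to the generic $[-1]$-automorphism of every elliptic curve, so the stacky inertia at a generic point is $\{\pm I\}$, while at the elliptic points $i$ and $\rho$ one recovers the $\ZZ/4$ and $\ZZ/6$ automorphism groups. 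This identification can be extracted from, e.g., \cite{DR72} Ch. IV.

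\emph{Step 2: Orbifold fundamental group.} Since $\hH$ is contractible and the stacky quotient $[\hH/\SL_2(\ZZ)]$ has as its orbifold fundamental group the group acting, one has $\pi_1^{\tp}(\mM(1)_{\CC}^{\an}) \cong \SL_2(\ZZ)$. This is a general fact about quotient orbifolds $[X/\Gamma]$ with $X$ simply connected: the orbifold $\pi_1$ sits in a short exact sequence with $\pi_1(X)$ and $\Gamma$, collapsing here to $\Gamma$ itself.

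\emph{Step 3: Riemann existence for stacks.} Next I would apply the Riemann existence theorem in the form proven for Deligne--Mumford stacks by Noohi (\cite{Noo05}), which asserts that analytification induces an equivalence between the category of finite \'etale covers of $\mM(1)_{\CC}$ (in the algebraic sense) and the category of finite covers of $\mM(1)_{\CC}^{\an}$ (in the analytic/orbifold sense). Combined with the standard fact that finite covers of an orbifold are classified by finite-index subgroups of its orbifold $\pi_1$, this yields
$$\pi_1(\mM(1)_{\CC}) \;\cong\; \widehat{\pi_1^{\tp}(\mM(1)_{\CC}^{\an})} \;\cong\; \widehat{\SL_2(\ZZ)}.$$

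\emph{Step 4: Descent to $\Qbar$.} Finally, $\mM(1)_{\Qbar}$ is a finite-type separated Deligne--Mumford stack over $\Qbar$, and $\pi_1^{\et}$ of such stacks is invariant under extension of algebraically closed base fields in characteristic $0$ (the stacky analogue of \cite{SGA1} Expos\'e X, Cor. 1.8, for which Noohi's framework is again the convenient reference). Base-changing along $\Qbar\hookrightarrow\CC$ then gives $\pi_1(\mM(1)_{\Qbar})\cong\pi_1(\mM(1)_{\CC})\cong\widehat{\SL_2(\ZZ)}$.

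\emph{Main obstacle.} The only non-formal input is Step 3, the stacky Riemann existence theorem: one must handle the non-properness of $\mM(1)$ (so the analytic comparison is not literally SGA 1, XII) and simultaneously track the stacky inertia at $j=0,1728$ (so the coarse-space computation $\pi_1(\AA^1_{\CC}) = 1$ is not what one wants). Both difficulties are resolved by working consistently in the category of DM stacks and their analytifications as in \cite{Noo05}, but it is here that all the genuine geometric content is concentrated; Steps 1, 2, and 4 are comparatively formal once the right framework is in place.
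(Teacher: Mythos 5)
The paper does not provide its own proof of this theorem; it is simply cited as a result of Oda. Your proposal is correct, and it is a standard way to establish the result; it amounts to running the GAGA arguments of \S\ref{ss_cms} in reverse. Specifically, the paper establishes in \ref{subsection_cms_are_modular_curves} that $\mM(1)_\CC^\an \cong [\hH/\SL_2(\ZZ)]$ and invokes the Riemann existence theorem for stacks (Theorem \ref{thm_stacky_RET}, Noohi) to identify finite \'etale covers of $\mM(1)_\CC$ with finite covers of $[\hH/\SL_2(\ZZ)]$, then uses Oda's theorem to conclude that the Galois category of analytic covers has fundamental group $\widehat{\SL_2(\ZZ)}$. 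You instead compute the orbifold $\pi_1$ directly from the contractibility of $\hH$, pass through Riemann existence to obtain $\pi_1(\mM(1)_\CC)\cong\widehat{\SL_2(\ZZ)}$, and descend to $\Qbar$ by invariance of $\pi_1^\et$ under extension of algebraically closed fields in characteristic 0. The ingredients are identical; only the direction of the implication differs, and your version genuinely proves the theorem rather than using it. One small caveat in Step 4: the exact SGA~1 reference you cite is for the proper case, whereas $\mM(1)$ is not proper; the invariance in characteristic 0 for non-proper finite-type stacks does hold (e.g.\ via a presentation by schemes and a tame compactification, or by Lefschetz-principle arguments), but strictly speaking a different reference is needed. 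This is a citation gap rather than a mathematical one.
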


So far we know that our stacks $\mM(G)$ are objects in the Galois category of algebraic stacks finite \'{e}tale over $\mM(1)$. Theorem \ref{thm_fundamental_group_M11} tells us that over $\Qbar$, the fundamental group of the base stack $\mM(1)_{\Qbar}$ is $\widehat{\SL_2(\ZZ)}$. In Example \ref{ex_fiber}, we gave an explicit description of their geometric fibers as $\Hom^\surext(F_2,G)$. In order to pin down the geometry of $\mM(G)$, it remains to describe the monodromy action of $\pi_1(\mM(1)_{\Qbar})$ on the geometric fibers.




\sgap

\begin{pt} Let $\eE^\circ$ be the universal elliptic curve over $\mM(1)_{\Qbar}$ with the identity section removed, and let $\eE_{x_0}^\circ$ be a geometric fiber above some geometric point $x_0\in\mM(1)_{\Qbar}$ corresponding to an elliptic curve $E_0/\Qbar$. Then $\eE_{x_0} = E_0$, and there is an exact sequence
$$1\rightarrow\pi_1(E_0^\circ)\rightarrow\pi_1(\eE^\circ)\rightarrow\pi_1(\mM(1)_{\Qbar})\rightarrow 1,$$
and hence an outer representation
\begin{equation}\label{eq_stacky_outer_representation}
\widehat{\SL_2(\ZZ)}\cong\pi_1(\mM(1)_{\Qbar})\stackrel{\rho_{\Qbar}}{\longrightarrow}\Out(\pi_1(E_0^\circ)) \cong \Out(\widehat{F_2})
\end{equation}

\sgap

Let $\mM(G)_{\Qbar}\stackrel{p}{\longrightarrow}\mM(1)_{\Qbar}$ be the natural forgetful map. In Example \ref{ex_fiber} we showed that
$$p^{-1}(x_0) = \Hom^\surext(\pi_1^\LL(E_0^\circ),G) = \Hom^\surext(\widehat{F_2},G)$$
It follows from our construction of $G$-structures that the action of $\pi_1(\mM(1)_{\Qbar})$ on these fibers is given by the outer representation $\rho_{\Qbar}$ of (\ref{eq_stacky_outer_representation}).

\sgap

On the other hand, there is an exact sequence
$$1\rightarrow\Inn(F_2)\rightarrow\Aut(F_2)\rightarrow\GL_2(\ZZ)\rightarrow 1$$
identifying $\GL_2(\ZZ)$ with the group of outer automorphisms $\Out(F_2)$. Since any outer automorphism of $F_2$ induces an outer automorphism of $\widehat{F_2}$, we get a natural injection $\SL_2(\ZZ)\stackrel{\rho_{\text{top}}}{\longrightarrow}\Out(F_2)\longrightarrow\Out(\widehat{F_2})$. Automorphism groups of finitely generated profinite groups are themselves profinite (c.f. \S 4.4 \cite{RZ10}), and hence $\rho_\tp$ induces a map
\begin{equation}\label{eq_naive_outer_representation}
\widehat{\SL_2(\ZZ)}\stackrel{\widehat{\rho_{\text{top}}}}{\longrightarrow}\Out(\widehat{F_2})
\end{equation}
\end{pt}



\begin{thm}\label{thm_outer_representation} The representations of (\ref{eq_stacky_outer_representation}) and (\ref{eq_naive_outer_representation}) are isomorphic, ie
$$\rho_{\Qbar} \cong \widehat{\rho_{\text{top}}}$$
In particular, the action of $\mM(1)_{\Qbar}$ on the geometric fiber
$$p^{-1}(x_0) = \Hom^\surext(\widehat{F_2},G)$$
agrees with the natural outer action of $\SL_2(\ZZ)$ on $F_2$ as elements of $\GL_2(\ZZ)\cong\Out(F_2)$.
\end{thm}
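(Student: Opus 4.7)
I first reduce from $\Qbar$ to $\CC$: since $\mM(1)$ is of finite type over $\ZZ$, base change along $\Qbar\hookrightarrow\CC$ induces a canonical isomorphism $\pi_1(\mM(1)_\CC)\rightiso\pi_1(\mM(1)_{\Qbar})$ compatible with the outer action on $\pi_1(E_0^\circ)$ (after fixing compatible geometric basepoints on both sides). It therefore suffices to verify that the \'{e}tale outer representation $\rho_\CC:\pi_1(\mM(1)_\CC)\to\Out(\pi_1(E_0^\circ))$ coincides, under the natural identifications, with $\widehat{\rho_\tp}$.

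The plan is then to compare $\rho_\CC$ with its topological analogue via Riemann existence, and identify the latter with the classical mapping-class-group action on a once-punctured torus. By the Riemann existence theorem for Deligne-Mumford stacks (Noohi, \cite{Noo05}), the \'{e}tale fundamental groups of $\mM(1)_\CC$ and of $E_0^\circ$ are the profinite completions of the topological fundamental groups of their analytifications, and the \'{e}tale homotopy exact sequence attached to $\eE_\CC^\circ\to\mM(1)_\CC$ is the profinite completion of the corresponding topological homotopy exact sequence of analytic stacks. Consequently $\rho_\CC\cong\widehat{\rho_\tp^\ana}$, where
$$\rho_\tp^\ana : \pi_1^\tp(\mM(1)_\CC^\ana)\longrightarrow\Out(\pi_1^\tp(E_0^{\circ,\ana}))$$
is the outer monodromy of the analytic fibration $\eE_\CC^{\circ,\ana}\to\mM(1)_\CC^\ana$.

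It remains to identify $\rho_\tp^\ana$ with $\rho_\tp$. The analytic stack $\mM(1)_\CC^\ana$ is the orbifold quotient $[\hH/\SL_2(\ZZ)]$, where $\hH$ is the Teichm\"{u}ller space of once-punctured genus-1 surfaces and $\SL_2(\ZZ)$ is the (orientation-preserving) mapping class group of such a surface; the universal punctured elliptic curve $\eE_\CC^{\circ,\ana}$ is likewise the stack quotient of the corresponding Teichm\"{u}ller family of punctured tori. Under these presentations $\rho_\tp^\ana$ is literally the natural action of the mapping class group on the fundamental group $F_2$ of a punctured torus. By Nielsen's theorem the abelianization map induces an isomorphism $\Out(F_2)\rightiso\Aut(F_2^\ab)=\GL_2(\ZZ)$, and the mapping class group acts on $F_2^\ab=H_1(T^2,\ZZ)=\ZZ^2$ by its tautological action on lattice bases; the composition is the natural inclusion $\SL_2(\ZZ)\hookrightarrow\GL_2(\ZZ)\cong\Out(F_2)$, which is $\rho_\tp$ by definition.

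Chaining the identifications gives $\rho_{\Qbar}\cong\rho_\CC\cong\widehat{\rho_\tp^\ana}\cong\widehat{\rho_\tp}$. The main technical hurdle is not the group-theoretic input (the identification of $\rho_\tp^\ana$ in the third paragraph is classical), but the bookkeeping required to set up the Riemann existence comparison for the full stacky homotopy exact sequence in a way that is compatible both with the chosen basepoints and with passage to outer equivalence classes, so that all the isomorphisms in the chain can be composed unambiguously.
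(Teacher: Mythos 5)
Your proposal is correct, but it takes a genuinely different route from the paper's. The paper deliberately avoids Teichm\"{u}ller theory and the stacky homotopy exact sequence comparison: it descends to the honest \emph{scheme} $B = \AA^1_\CC\setminus\{0,1728\}$ (the coarse $j$-line with elliptic points removed), equipped with the Weierstrass family $\EE/B$ of equation (\ref{eq_UEC}); it then invokes Kodaira's theorem that the homological invariant of an elliptic fibration ``belongs to'' its functional ($j$-)invariant (Theorem \ref{thm_h_belongs_to_j}) to pin down the image of the topological outer representation, uses a small group-theoretic argument (containing $-I$) to conclude that the homological invariant of $\EE/B$ surjects onto $\SL_2(\ZZ)$, and finally shows by a component-counting argument (Proposition \ref{prop_monodromy}) that $\pi_1(B)\to\pi_1(\mM(1))$ is surjective, so the computation over $B$ pushes forward to $\mM(1)$. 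Your argument instead works directly with the stack, identifying $\mM(1)_\CC^{\an}$ with the Teichm\"{u}ller orbifold $[\hH/\SL_2(\ZZ)]$, the universal family with the Teichm\"{u}ller family of once-punctured tori, and $\rho_\tp^\ana$ with the classical mapping class group action on $F_2$, finishing by Nielsen's theorem $\Out(F_2)\cong\GL_2(\ZZ)$. The trade-off is roughly as follows: your route is more conceptual and shorter once the GAGA/Riemann-existence comparison for the full stacky homotopy exact sequence is in place (and you correctly flag that bookkeeping as the main technical hurdle — one needs not just Noohi's equivalence of categories of finite \'{e}tale covers, but its compatibility with the exact sequence for the fibration $\eE^\circ\to\mM(1)$ and with the passage to outer classes). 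The paper's route pays a bit more in classical input (Kodaira's homological/functional invariant theorem) in exchange for working entirely with schemes, so that only the scheme-level Riemann existence theorem and the standard homotopy exact sequence of SGA1 are needed, and the only stacky ingredient is the surjectivity of $\pi_1(B)\to\pi_1(\mM(1))$, which is checked directly on connected components via Proposition \ref{prop_basic_results_cms}(4).
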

\begin{proof} This is stated in \cite{Mat00} p.376. We give a proof of this in Appendix \S\ref{proof_thm_outer_representation}.
\end{proof}


Via the Galois correspondence, Theorem \ref{thm_outer_representation} tells us
\begin{cor}\label{cor_G_to_Gamma} As usual let $G$ be a finite 2-generated group.
\begin{itemize}
\item[(1)] The connected components of the stack $\mM(G)_{\Qbar}$ are in bijection with the orbits of $\SL_2(\ZZ)$ acting on $\Hom^\surext(F_2,G)$ via outer automorphisms of $F_2$.
\item[(2)] For an exterior surjection $[\varphi] : F_2\twoheadrightarrow G\mod\Inn(G)$, the component of $\mM(G)_{\Qbar}$ containing $[\varphi]$ corresponds via the Galois correspondence to the conjugacy class of the stabilizer $\Gamma_{[\varphi]} := \Stab_{\SL_2(\ZZ)}([\varphi])\le\SL_2(\ZZ)$.
\end{itemize}
\end{cor}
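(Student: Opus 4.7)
My plan is to obtain both statements as a direct translation of the Galois correspondence of Theorem \ref{thm_Galois_category_of_stacks} into combinatorial language, using the ingredients already assembled. First, by Proposition \ref{prop_GM_finite_\'{e}tale_over_M11}, the forgetful morphism $p : \mM(G)_{\Qbar} \to \mM(1)_{\Qbar}$ is finite \'{e}tale, so $\mM(G)_{\Qbar}$ is an object of the Galois category $\cC_{\mM(1)_{\Qbar}}$ and therefore corresponds, under the equivalence of Theorem \ref{thm_Galois_category_of_stacks}, to the finite $\pi_1(\mM(1)_{\Qbar})$-set given by the fiber $F_{x_0}(\mM(G)_{\Qbar})$. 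Example \ref{ex_fiber} identifies this fiber canonically with $\Hom^\surext(F_2,G)$; Theorem \ref{thm_fundamental_group_M11} identifies $\pi_1(\mM(1)_{\Qbar})$ with $\widehat{\SL_2(\ZZ)}$; and Theorem \ref{thm_outer_representation} identifies the monodromy action with the natural outer action of $\SL_2(\ZZ) \subset \Out(F_2) \cong \GL_2(\ZZ)$ on $\Hom^\surext(F_2,G)$, continuously extended to $\widehat{\SL_2(\ZZ)}$.

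For (1), I will invoke the fact that under this equivalence, the connected components of $\mM(G)_{\Qbar}$ correspond bijectively to the transitive orbits of $\widehat{\SL_2(\ZZ)}$ on $\Hom^\surext(F_2,G)$. Since this fiber is finite, the continuous action factors through a finite quotient $Q$ of $\widehat{\SL_2(\ZZ)}$; because $\SL_2(\ZZ)$ is dense in $\widehat{\SL_2(\ZZ)}$ and $Q$ is discrete, the image of $\SL_2(\ZZ)$ already surjects onto $Q$. Hence the $\widehat{\SL_2(\ZZ)}$-orbits and $\SL_2(\ZZ)$-orbits on $\Hom^\surext(F_2,G)$ coincide, giving the claimed bijection.

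For (2), a connected finite \'{e}tale cover of $\mM(1)_{\Qbar}$ corresponds under the Galois correspondence to the conjugacy class of its point-stabilizers, which are open subgroups of $\pi_1(\mM(1)_{\Qbar})$. I will identify $\Stab_{\widehat{\SL_2(\ZZ)}}([\varphi])$ with the closure $\overline{\Gamma_{[\varphi]}}$ of $\Gamma_{[\varphi]}$, again by reducing to the finite quotient $Q$ through which the action factors and using the density of $\SL_2(\ZZ)$. The associated conjugacy class of open subgroups of $\widehat{\SL_2(\ZZ)}$ is then precisely the $\widehat{\SL_2(\ZZ)}$-conjugacy class of $\overline{\Gamma_{[\varphi]}}$, which corresponds to the $\SL_2(\ZZ)$-conjugacy class of $\Gamma_{[\varphi]}$ inside $\SL_2(\ZZ)$, completing (2). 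The only substantive input is Theorem \ref{thm_outer_representation}, deferred to the appendix; given it, the rest is a formal consequence of the Galois correspondence, and the only bookkeeping — passing between $\SL_2(\ZZ)$ and its profinite completion when acting on a finite set — is routine rather than a genuine obstacle.
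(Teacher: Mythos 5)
Your proposal is correct and follows the same route the paper takes: both treat the corollary as an immediate consequence of the Galois correspondence (Theorem \ref{thm_Galois_category_of_stacks}), the identification of the fiber in Example \ref{ex_fiber}, Oda's Theorem \ref{thm_fundamental_group_M11}, and Theorem \ref{thm_outer_representation}, with the same discrete-versus-profinite bookkeeping that the paper addresses in its remark immediately following the statement. Your explicit justification via factoring the continuous action through a finite quotient and using density of $\SL_2(\ZZ)$ in $\widehat{\SL_2(\ZZ)}$ is a clean way of presenting the point the paper makes more tersely.
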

Technically the Galois correspondence gives us open subgroups of $\widehat{\SL_2(\ZZ)}$, so we should have said $\ol{\Gamma_{[\varphi]}}$ instead of $\Gamma_{[\varphi]}$, where the bar denotes (topological) closure. However, since $\SL_2(\ZZ)$ is finitely generated, taking closures gives a bijection between the finite index subgroups of $\SL_2(\ZZ)$ and the open subgroups of $\widehat{\SL_2(\ZZ)}$, so we will continue to use this abuse of notation.

We also have the following useful construction:

\begin{prop}\label{prop_descent_of_groups} Let $G$ be group of order $N$. If $f : G\rightarrow G'$ is a surjection, and $\pi$ is a group acting on $\widehat{F_2}$, then we have a \emph{surjection} of $\pi$-sets
$$\Hom^\surext(\widehat{F_2},G)\rightarrow\Hom^\surext(\widehat{F_2},G')$$
given by sending $\varphi : \widehat{F_2}\twoheadrightarrow G$ to $f\circ\varphi$. In particular, we get a surjective finite \'{e}tale morphism
$$\mM(G)\stackrel{f_*}{\longrightarrow} \mM(G')$$
and for every $\varphi : \widehat{F_2}\twoheadrightarrow G$, $\Gamma_{[\varphi]}\subseteq\Gamma_{[f\circ\varphi]}$.
\end{prop}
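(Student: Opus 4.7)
The plan is to verify the claim in three steps: well-definedness and $\pi$-equivariance of the map on Hom-sets, surjectivity of this map (the main content), and then the immediate consequences for $\mM(G)\to\mM(G')$ and for stabilizers.

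First I would check that $\varphi\mapsto f\circ\varphi$ really descends to a map on outer surjection classes. Since $f$ is surjective, $f\circ\varphi$ is surjective whenever $\varphi$ is. For $g\in G$, conjugation satisfies $f(g\varphi(\cdot)g^{-1}) = f(g)(f\circ\varphi)(\cdot)f(g)^{-1}$, so $\Inn(G)$-equivalent maps are sent to $\Inn(G')$-equivalent maps. For the $\pi$-equivariance: the action on both sides is by precomposition with (outer) automorphisms of $\widehat{F_2}$, and precomposition trivially commutes with postcomposition by $f$.

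The key step is surjectivity. Given an outer class $[\psi]\in\Hom^\surext(\widehat{F_2},G')$, fix a representative $\psi$ and topological generators $x_1,x_2$ of $\widehat{F_2}$. Then $(\psi(x_1),\psi(x_2))$ is a generating pair of $G'$. Since $G$ is 2-generated by hypothesis (the whole setup requires this, and it makes the domain of our map nonempty) and $G'$ is a quotient, the \emph{Gasch\"{u}tz lemma} (every minimal-size generating set of a quotient lifts to a minimal-size generating set of the group, through any given normal subgroup) produces $a_1,a_2\in G$ with $f(a_i)=\psi(x_i)$ and $\langle a_1,a_2\rangle = G$. Define $\varphi:\widehat{F_2}\twoheadrightarrow G$ by $\varphi(x_i) = a_i$ (using that $\widehat{F_2}$ is the free profinite group on $x_1,x_2$ and $G$ is finite). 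Then $f\circ\varphi = \psi$, establishing surjectivity. This appeal to Gasch\"{u}tz is the only non-formal ingredient and is the main (mild) obstacle in the argument.

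For the geometric statement, applying the Galois correspondence of Theorem~\ref{thm_Galois_category_of_stacks} to the $\pi_1(\mM(1)_{\Qbar})\cong\widehat{\SL_2(\ZZ)}$-equivariant surjection of fibers just established (or equivalently, directly postcomposing sections of $\hHom^\surext(\pi_1(E^\circ/S),G)$ with $f$ for each elliptic curve $E/S$) yields a morphism $f_* : \mM(G)\to\mM(G')_{\ZZ[1/N]}$. Both stacks are finite \'{e}tale over $\mM(1)_{\ZZ[1/N]}$ by Proposition~\ref{prop_GM_finite_\'{e}tale_over_M11}, so $f_*$ is finite \'{e}tale, and fiberwise surjectivity implies surjectivity as a morphism of stacks. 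Finally, if $\gamma\in\SL_2(\ZZ)$ stabilizes $[\varphi]$, then $\gamma\cdot[f\circ\varphi] = [f\circ(\gamma\cdot\varphi)] = [f\circ\varphi]$ by the equivariance already checked, giving $\Gamma_{[\varphi]}\subseteq\Gamma_{[f\circ\varphi]}$.
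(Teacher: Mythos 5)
Your proof is correct and follows essentially the same route as the paper: surjectivity via Gasch\"{u}tz's lemma and the geometric statements via the Galois correspondence for $\pi_1(\mM(1)_{\ZZ[1/N]})$. The paper's proof is a one-line citation of exactly these two ingredients; your version simply spells them out, and your observation that the implicit 2-generation hypothesis on $G$ is what makes Gasch\"{u}tz applicable is a worthwhile clarification.
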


\begin{proof} The surjectivity follows from Gasch\"{u}tz' lemma (Proposition 2.5.4 of \cite{RZ10}), and the rest follows from the Galois correspondence, setting $\pi = \pi_1(\mM(1)_{\ZZ[1/N]})$.
\end{proof}

\begin{remark} If $H := \ker(G\rightarrow G')$, then the map $\mM(G)\rightarrow\mM(G')$ is geometrically given by sending a $G$-torsor $X\rightarrow E$ to $X/H\rightarrow E$, which is a $G'$-torsor.
\end{remark}

\subsection{Coarse moduli schemes and modular curves}\label{ss_cms}

In this section we first recall the definition of \emph{coarse moduli scheme} associated to an algebraic stack and present some basic properties we'll need. The main reference for this section will be Katz-Mazur's book \cite{KM85}. We will end this section by showing that the coarse moduli schemes of connected finite \'{e}tale covers of $\mM(1)_\CC$ are modular curves.

\sgap

\begin{defn}\label{def_CMS} Let $\mM$ be a stack over $\Sch/S$, then a coarse moduli scheme for $\mM$ is a scheme $M/S$ equipped with a morphism $c : \mM\rightarrow M$ such that
\begin{itemize}
\item[(1)] For any $S$-scheme $X$, any morphism $\mM\rightarrow X$ factors uniquely as
$$\mM\stackrel{c}{\longrightarrow} M\longrightarrow X$$
\item[(2)] For any algebraically closed field $k$, $c$ induces a bijection
$$c(k) : \mM(k)/\!\cong\;\;\rightiso M(k)$$
where $\mM(k)/\!\cong$ is the set of isomorphism classes of objects of $\mM$ over $k$.
\end{itemize}
\end{defn}

\textbf{Notation:} We will always denote a stack by script letters $\mM,\xX,\yY,\ldots$, and their associated coarse moduli spaces by $M,X,Y,\ldots$. In particular, $M(G)$ is the coarse moduli scheme of $\mM(G)$, and $M(n)$ is the coarse moduli scheme of $\mM(n)$, whose connected components over $\ZZ[1/n,\zeta_n]$ are smooth models of $\hH/\Gamma(n)$ over $\ZZ[1/n,\zeta_n]$. We will use $\yY(n)_S$ to refer to a geometrically connected component of $\mM(n)_S$ over some $\ZZ[1/n,\zeta_n]$-scheme $S$, and $Y(n)_S$ its coarse moduli scheme.

\begin{pt} First, we observe that any stack $\mM$ finite over a separated Deligne-Mumford stack is itself separated, so by the Keel-Mori theorem \cite{KM97}, coarse moduli schemes always exist for such stacks. Furthermore, by (1) above, if $\mM$ is representable, then it is represented by its coarse moduli scheme $M$, in which case $M$ is also a \emph{fine moduli scheme} for $\mM$.
\end{pt}

\begin{pt}\label{pt_uniformizability} Given a stack $\mM$ finite \'{e}tale over $\mM(1)_S$, if there is a prime $p$ invertible on $S$, then we may construct its coarse moduli scheme as follows. We know that $\mM(p^2)$ is representable and finite \'{e}tale over $\mM(1)_S$ with Galois group $\GL_2(\ZZ/p^2\ZZ)$. Thus, $\mM\times_{\mM(1)_S}\mM(p^2)_S$ is also Galois over $\mM$ with group $\GL_2(\ZZ/p^2\ZZ)$, and so as stacks, we have:
$$\mM = [(\mM\times_{\mM(1)_S}\mM(p^2)_S)/\GL_2(\ZZ/p^2\ZZ)]$$
Since $\mM(p^2)$ is representable, so is $\mM\times_{\mM(1)_S}\mM(p^2)_S$, and moreover it is affine over $S$, so we may define $M$ to be the quotient (in $\Sch/S$)
$$M := (\mM\times_{\mM(1)_S} M(p^2)_S)/\GL_2(\ZZ/p^2\ZZ)$$
The universal property of quotients implies that $M$ is a coarse moduli scheme for $\mM$.
\end{pt}

\begin{prop}\label{prop_basic_results_cms} Let $\mM$ be any stack finite \'{e}tale over $\mM(1)_{\ZZ[1/N]}$ with coarse moduli scheme $M$. Then
\begin{itemize}
\item[(1)] $M$ is smooth over $\ZZ[1/N]$.
\item[(2)] If $S$ is either a regular Noetherian scheme over $\ZZ[1/N]$ or $6$ divides $N$, then $M_S := M\times_{\ZZ[1/N]} S$ is the coarse moduli scheme of $\mM_S$.
\item[(3)] $M$ is finite over the $j$-line $\Spec \ZZ[1/N][j] = M(1)_{\ZZ[1/N]}$.
\item[(4)] The ``coarse map'' $c : \mM\rightarrow M$ induces a homeomorphism on underlying topological space $|c| : |\mM|\rightarrow |M|$.
\end{itemize}
\end{prop}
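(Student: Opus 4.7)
The strategy is to leverage the local description of $M$ given in \ref{pt_uniformizability}. Since every prime of $\ZZ[1/N]$ is coprime to $2$ or $3$, the two open affines $\Spec \ZZ[1/2N]$ and $\Spec \ZZ[1/3N]$ cover $\Spec \ZZ[1/N]$; on each we may take $p\in\{2,3\}$ invertible on the base, so that $\mM(p^2)$ is represented by a smooth affine scheme $M(p^2)$ finite \'etale over $\mM(1)_{\ZZ[1/pN]}$. The fiber product $\tilde M := \mM \times_{\mM(1)_{\ZZ[1/pN]}} M(p^2)$ is then a representable smooth affine scheme of relative dimension $1$ over $\ZZ[1/pN]$, carrying a faithful action of the finite group $H := \GL_2(\ZZ/p^2\ZZ)$, and locally on the base one has $M = \tilde M / H$ as schemes.

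For (1), smoothness is a standard fact about finite group quotients of smooth relative curves of invertible order: \'etale-locally at a fixed point, the $H$-action on the cotangent line diagonalizes into characters, and the invariants of $R[t]$ under a cyclic action of order $n$ (with $n$ invertible on $R$) form the polynomial algebra $R[t^n]$, hence smooth over $R$. Since the action is free outside a closed subset, smoothness extends everywhere.

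For (2), flat base change commutes with taking $H$-invariants, handling that case at once. For a general regular Noetherian $S$, one argues via normality: $\tilde M$ is smooth hence normal, $\tilde M \times_{\ZZ[1/N]} S$ remains normal under regular Noetherian base change, and the invariants functor commutes with such base change by a standard normality argument. Alternatively, when $6 \mid N$, the inertia groups of $\mM$ divide those of $\mM(1)$, which have orders dividing $12$; being tame on the base, this makes $\mM$ a tame Deligne--Mumford stack, whose coarse moduli formation commutes with arbitrary base change.

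For (3), since $\mM \to \mM(1)_{\ZZ[1/N]}$ is finite \'etale, the universal property of coarse moduli yields a morphism $M \to M(1)_{\ZZ[1/N]} = \Spec \ZZ[1/N][j]$; finiteness is checked locally via the factorization $\tilde M \to M(p^2) \to M(1)_{\ZZ[1/pN]}$, both links being finite (the first since $\tilde M \to M(p^2)$ is finite \'etale, the second classical). For (4), the Keel--Mori theorem \cite{KM97} gives that $c$ is a proper universal homeomorphism onto $M$; combined with the bijectivity on geometric points of Definition \ref{def_CMS}(2), this upgrades $|c|$ to a homeomorphism of underlying topological spaces. The main obstacle is (2): verifying compatibility of the scheme-theoretic quotient $\tilde M / H$ with not-necessarily-flat regular Noetherian base change requires either the normality argument sketched above or, when $6\mid N$, the theory of coarse spaces for tame Deligne--Mumford stacks.
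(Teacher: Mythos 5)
Your proposal takes a genuinely different route from the paper: the paper simply cites Katz--Mazur (\cite{KM85}, appendix pp.\ 508--510, 8.1.6, and 8.2.2) for (1)--(3) and declares (4) immediate from the definition, whereas you reconstruct self-contained arguments from the local quotient presentation $M = \tilde M/\GL_2(\ZZ/p^2\ZZ)$ of \ref{pt_uniformizability}. That is a legitimate and instructive alternative. For (1) and (3) your sketch is essentially sound (though for (3) you should close by noting that $M = \tilde M^H$ is a subalgebra of the $R[j]$-finite ring $\tilde M$ over a Noetherian base, hence itself $R[j]$-finite). For (4), your appeal to Keel--Mori giving a proper universal homeomorphism is actually \emph{more} careful than the paper's terse ``follows from the definition,'' since Definition \ref{def_CMS}(2) by itself only gives bijectivity on geometric points.

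The one real soft spot is (2) in the regular Noetherian case, and you're right to flag it. ``The invariants functor commutes with such base change by a standard normality argument'' is not a standard fact, and left as stated it is a gap. Here is how to close it: first establish (1), so that $M$ is smooth over $\ZZ[1/N]$; then for regular Noetherian $S$, both $M_S := M\times_{\ZZ[1/N]}S$ and $\tilde M_S$ are smooth over $S$, hence regular, hence normal. The natural map $\tilde M_S/H \rightarrow M_S$ is finite, and it is an isomorphism over the dense open where $H$ acts freely on $\tilde M_S$ because finite \'etale quotients commute with arbitrary base change. A finite birational morphism between normal Noetherian schemes (or, componentwise, normal integral schemes) is an isomorphism, which completes the argument. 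Your alternative for $6 \mid N$ via tameness of the Deligne--Mumford stack is correct but anachronistic relative to the paper: it uses the Abramovich--Olsson--Vistoli framework rather than the hands-on Katz--Mazur normality argument, which buys generality (arbitrary base change) at the cost of invoking heavier machinery.
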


\begin{proof} For (1) and (2), see the appendix of Katz-Mazur \cite{KM85}, p508-510 and also 8.1.6. For (3), see \cite{KM85} Proposition 8.2.2. Lastly, (4) follows from the definition of coarse moduli schemes.
\end{proof}

\begin{pt} \textit{Modular Curves and GAGA.}\label{subsection_cms_are_modular_curves}
Given a scheme $X$ locally of finite type over $\CC$, there is an associated complex analytic space $X^\an$ whose underlying set is $X(\CC)$ (c.f. \S XII of \cite{SGA1}). The association is functorial and sends finite \'{e}tale morphisms to finite covering maps. For some object $A$, let $\cC_A$ denote the Galois category of $A$ (that is, the category of ``finite covering spaces'' of $A$). Then for a scheme $X$ locally of finite type over $\CC$, the classical Riemann existence theorem gives us an equivalence of categories
$$\cC_X\cong \cC_{X^\an}$$


\sgap

Similarly, let $\xX$ be an algebraic stack smooth and locally of finite type over $\CC$, then we may associate to it a smooth analytic stack $\xX^\an$ in the following manner (c.f. \cite{BN05} and \cite{Noo05}). Let $\xX\cong [X/R]$ be a presentation 
 for $\xX$, then we define $\xX^\an$ to be simply the quotient $[X^\an/R^\an]$ in the category of analytic stacks. 

\sgap

For $\xX = \mM(1)_\CC$, we may take $X = Y(3)_\CC := \hH/\Gamma(3)$, and $R$ to be the relation generated by the group action of $\SL_2(\ZZ/3\ZZ)$ on $X = Y(3)_\CC$. The associated analytic stack $\mM(1)_\CC^\an$ is by definition the stack $[Y(3)_\CC^\an/\SL_2(\ZZ/3\ZZ)]$. It follows directly from the definitions that
$$\mM(1)_\CC^\an = [Y(3)_\CC^\an/\SL_2(\ZZ/3\ZZ)]\cong [\hH/\SL_2(\ZZ)]$$
as analytic stacks.

\end{pt}

\begin{thm}\label{thm_stacky_RET} (Riemann Existence Theorem for Stacks, \cite{Noo05}) Let $\xX$ be an algebraic stack that is locally of finite type over $\CC$, and let $\xX^\an$ be the associated analytic stack. The analytification functor $\yY\mapsto\yY^\an$ defines an equivalence between the category of representable finite \'{e}tale maps $\yY\rightarrow\xX$ and the category of finite covering analytic stacks of $\xX^\an$.
\end{thm}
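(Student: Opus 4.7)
My plan is to reduce the statement to the classical Riemann existence theorem for schemes (\cite{SGA1}, \S XII) by descent along a smooth presentation, in parallel with the construction of $\xX^{\an}$ sketched in \ref{subsection_cms_are_modular_curves}. Choose a smooth presentation $X \to \xX$ by a scheme $X$ locally of finite type over $\CC$, and let $R := X \times_\xX X$, which is also a scheme locally of finite type over $\CC$; then $\xX \cong [X/R]$ and, by the definition of analytification used in the text, $\xX^{\an} \cong [X^{\an}/R^{\an}]$. A representable finite \'etale morphism $\yY \to \xX$ is the same datum as a finite \'etale cover $Y \to X$ together with descent data, i.e.\ an $R$-equivariant structure; symmetrically, by the definition of a covering analytic stack as a quotient of a covering space by a groupoid relation, a finite covering analytic stack $\yY' \to \xX^{\an}$ is the same datum as a finite covering space $Y' \to X^{\an}$ with $R^{\an}$-equivariant descent data.

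With this dictionary in hand, the theorem follows from three reductions. First, to prove full faithfulness: given representable finite \'etale $\yY_1,\yY_2 \to \xX$ with pullbacks $Y_1,Y_2 \to X$, a morphism $\yY_1 \to \yY_2$ over $\xX$ corresponds to an $R$-equivariant morphism $Y_1 \to Y_2$ over $X$; the analytic version is the same statement with $Y_i^{\an},X^{\an},R^{\an}$ in place of $Y_i,X,R$. The classical scheme-level RET gives a bijection $\Hom_X(Y_1,Y_2) \rightiso \Hom_{X^{\an}}(Y_1^{\an},Y_2^{\an})$ and the bijection is functorial enough to preserve $R$-equivariance, since the $R$-action is itself a pair of morphisms of schemes which analytify correctly. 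Second, for essential surjectivity: given a finite covering analytic stack $\yY' \to \xX^{\an}$, pull back to obtain a finite covering space $Y' \to X^{\an}$ equipped with $R^{\an}$-equivariant descent data. The classical RET algebraizes $Y'$ uniquely to a finite \'etale $Y \to X$ with $Y^{\an} \cong Y'$, and the previous full faithfulness step applied to $R \rightrightarrows X$ algebraizes the $R^{\an}$-action on $Y'$ to an $R$-action on $Y$ satisfying the cocycle condition. The resulting algebraic stack $\yY := [Y/R]$ is representable and finite \'etale over $\xX$, and by construction $\yY^{\an} \cong [Y^{\an}/R^{\an}] \cong \yY'$.

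Third, one has to check that both sides really are (2-)categories in which the above dictionary is an equivalence and not merely a surjection, i.e.\ that gluing along the presentation is effective on both sides. On the algebraic side this is stack descent for finite \'etale morphisms along smooth covers (which is standard, since $[X/R]$ is the 2-categorical quotient); on the analytic side it is the corresponding descent statement used in \cite{Noo05} to define analytic stacks as quotients. In particular, the representability of the resulting map $\yY \to \xX$ is automatic because the pullback to $X$ is the scheme $Y$.

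The main obstacle, in my view, is not any one of these reductions individually but rather verifying the compatibility of the two notions of stack quotient: one must check that analytification is compatible with smooth 2-categorical quotients in the sense that $[X/R]^{\an} \cong [X^{\an}/R^{\an}]$ genuinely, and that the 2-categorical structure on both sides matches under the dictionary above (2-morphisms correspond to $R$-compatible sections of the diagonal). Once this compatibility is in place, together with the scheme-theoretic RET applied to $X$ and to $R$, the equivalence follows formally. Since all of this is carried out in \cite{Noo05}, the proof in the paper will presumably be a reference to that source.
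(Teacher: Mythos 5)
Your proposal is correct and correctly predicts the paper's proof, which is a bare citation to Theorem 20.4 of \cite{Noo05}; your descent-along-a-presentation outline (reduce to the classical scheme-level RET applied to $X$, $R=X\times_\xX X$, and $R\times_X R$ for the cocycle condition, then invoke effective descent for finite \'{e}tale morphisms on both the algebraic and analytic sides) is precisely the standard argument underlying Noohi's result. Note only that the compatibility $[X/R]^\an\cong[X^\an/R^\an]$, which you single out as the main obstacle, is how the paper \emph{defines} $\xX^\an$ in \ref{subsection_cms_are_modular_curves}, so the substantive point there is well-definedness (independence of the choice of presentation), a check carried out as part of Noohi's setup rather than an extra step of this theorem's proof.
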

\begin{proof} See Theorem 20.4 in \cite{Noo05}. 
\end{proof}


Thus, the fundamental group of the Galois category of finite covers of the analytic stack $[\hH/\SL_2(\ZZ)]$ is also $\widehat{\SL_2(\ZZ)}$. By comparing the monodromy actions we see that for any finite index $\Gamma\le\SL_2(\ZZ)$, the analytic stack corresponding to the open subgroup $\ol{\Gamma}\le\widehat{\SL_2(\ZZ)}$ is just $[\hH/\Gamma]$, and its coarse space is just the Riemann surface $\hH/\Gamma$ and is characterized by the natural morphism $[\hH/\Gamma]\rightarrow\hH/\Gamma$ which induces a bijection on points and is universal amongst all morphisms from $[\hH/\Gamma]$ to analytic spaces.

\begin{prop}\label{prop_cms_are_modular_curves} Let $\mM$ be a connected algebraic stack finite \'{e}tale over $\mM(1)_\CC$ corresponding to some finite index subgroup $\Gamma\le\SL_2(\ZZ)$. Then $\mM^\an \cong [\hH/\Gamma]$, and the analytification $M^\an$ of the coarse moduli scheme $M$ of $\mM$ is the modular curve $\hH/\Gamma$.

\sgap

In particular, if $[\varphi] : F_2\twoheadrightarrow G$ represents a geometric point of $\mM(G)_{\CC}$ (c.f. Example \ref{ex_fiber}), then the (analytification of the) connected component of its coarse scheme $M(G)_{\CC}$ containing $[\varphi]$ is precisely the modular curve $\hH/\Gamma_{[\varphi]}$, where $\Gamma_{[\varphi]} := \Stab_{\SL_2(\ZZ)}([\varphi])$.
\end{prop}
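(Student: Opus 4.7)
The plan is to transfer the problem to the analytic category via the Riemann existence theorem for stacks, then use the uniformization of $\mM(1)_\CC^\an$ by the upper half plane.

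First, by Theorem \ref{thm_stacky_RET}, the analytification functor gives an equivalence between connected finite \'etale covers $\mM \to \mM(1)_\CC$ and connected finite covering analytic stacks $\mM^\an \to \mM(1)_\CC^\an$. Since $\mM(1)_\CC^\an \cong [\hH/\SL_2(\ZZ)]$ (as noted in \ref{subsection_cms_are_modular_curves}), and $\hH$ is contractible, $\hH \to [\hH/\SL_2(\ZZ)]$ is a universal cover in the analytic category, so connected finite covers of $[\hH/\SL_2(\ZZ)]$ are classified by conjugacy classes of finite index subgroups $\Gamma \le \SL_2(\ZZ)$, with the cover corresponding to $\Gamma$ being $[\hH/\Gamma]$. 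Combining this with the correspondence between finite index subgroups of $\SL_2(\ZZ)$ and open subgroups of $\widehat{\SL_2(\ZZ)}$ (via closure, which is a bijection since $\SL_2(\ZZ)$ is finitely generated), and tracing through monodromy actions on a common geometric fiber, I would conclude $\mM^\an \cong [\hH/\Gamma]$.

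Next, for the coarse moduli scheme statement, I would use the following two facts: (i) by Proposition \ref{prop_basic_results_cms}, the analytification of the algebraic coarse moduli scheme $M$ coincides with the analytic coarse moduli space of $\mM^\an$ (here one verifies that the construction in \ref{pt_uniformizability} via the quotient by a Galois cover $\mM(p^2)$ is compatible with analytification, since the quotient of a scheme by a finite group action commutes with analytification by GAGA); and (ii) the analytic coarse moduli space of the quotient stack $[\hH/\Gamma]$ is the quotient Riemann surface $\hH/\Gamma$ (this follows directly from the universal property of the analytic quotient and the fact that $\Gamma$ acts properly discontinuously on $\hH$, so the coarse space exists as a topological quotient carrying a natural complex-analytic structure). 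Putting these together yields $M^\an \cong \hH/\Gamma$.

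For the final ``in particular'' claim, I would invoke Corollary \ref{cor_G_to_Gamma}(2): the connected component of $\mM(G)_\Qbar$ containing the geometric point $[\varphi]$ corresponds, under the Galois correspondence of Theorem \ref{thm_Galois_category_of_stacks}, to the conjugacy class of the stabilizer $\Gamma_{[\varphi]} := \Stab_{\SL_2(\ZZ)}([\varphi])$. This persists after base change to $\CC$, and then applying the first part of the proposition to this component identifies the analytification of its coarse scheme with $\hH/\Gamma_{[\varphi]}$.

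I expect the main obstacle to be the careful verification that analytification commutes with the formation of coarse moduli schemes in this setting, since we must ensure that the quotient used to construct $M$ in \ref{pt_uniformizability} maps under analytification to the analytic quotient giving the coarse space of $\mM^\an$; everything else is a direct appeal to the cited machinery (the Riemann existence theorem for stacks, contractibility of $\hH$, and the Galois correspondence already developed in \S\ref{ss_Galois_category}).
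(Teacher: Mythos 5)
Your identification of $\mM^\an\cong[\hH/\Gamma]$ via the Riemann existence theorem and the analytic uniformization $\mM(1)_\CC^\an\cong[\hH/\SL_2(\ZZ)]$ matches the paper exactly, as does the appeal to Corollary~\ref{cor_G_to_Gamma} for the final claim. Where you diverge is on the coarse moduli step, and you have correctly put your finger on the weak point of your own route: you reduce the problem to the assertion that analytification commutes with the formation of coarse moduli, and you gesture at verifying this through the quotient construction of~\ref{pt_uniformizability} and GAGA. That fact is true for separated Deligne--Mumford stacks of finite type over $\CC$, but establishing it carefully (compatibility of the finite-group quotient with $(\cdot)^\an$, identification of the resulting analytic space with the analytic coarse space of $\mM^\an$) is real work that your proposal does not carry out.

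The paper avoids this entirely by a short rigidity argument: it writes down the commutative square relating $c:\mM\to M$, $c^\an:\mM^\an\to M^\an$, the isomorphism $\mM^\an\cong[\hH/\Gamma]$, and the analytic coarse map $[\hH/\Gamma]\to\hH/\Gamma$. The universal property of $\hH/\Gamma$ as analytic coarse space applied to $c^\an$ produces a holomorphic map $\hH/\Gamma\to M^\an$; every arrow in sight induces a bijection on $\CC$-points; and both $M^\an$ and $\hH/\Gamma$ are smooth Riemann surfaces, so a bijective holomorphic map between them is a biholomorphism. This sidesteps any claim about analytification commuting with coarse spaces. Your route is not wrong, but its central unverified step (compatibility of coarse moduli with $(\cdot)^\an$) is a heavier input than the paper needs; the diagram-chase-plus-rigidity argument buys the result for the price of only the two universal properties and the one-dimensional fact that bijective holomorphic maps between smooth curves are isomorphisms.
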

\begin{proof} We first prove that $M^\an \cong\hH/\Gamma$. By Theorem \ref{thm_stacky_RET}, and the discussion above, $\mM^\an \cong [\hH/\Gamma]$. Let $M$ be the coarse moduli space for $\mM$, and $\mM\stackrel{c}{\rightarrow}M$ the coarse map, then we have a commutative diagram
$$\xymatrix{
\mM\ar@{~>}[r]^\an\ar[d]^c & \mM^\an\ar[r]^\cong\ar[d]^{c^\an} & [\hH/\Gamma]\ar@{-->}[d]\\
M\ar@{~>}[r]^\an & M^\an & \hH/\Gamma\ar@{-->}[l]
}$$
where the squiggly arrows are really functors, all arrows induce bijections on (equivalence classes) of $\CC$-points, and the dashed arrows are holomorphic maps uniquely determined by $c^\an$ and the universal property of coarse moduli spaces. The first result then follows from the fact that bijective holomorphic maps are isomorphisms.

\sgap

The second statement is a direct consequence of Corollary \ref{cor_G_to_Gamma}.
\end{proof}

\subsection{Modular curves and the congruence subgroup property}\label{ss_CSP}
A congruence subgroup of $\SL_2(\ZZ)$ is a subgroup which contains a principal congruence subgroup $\Gamma(n)$, $n\ge 1$, where
$$\Gamma(n) := \ker\big(\SL_2(\ZZ)\rightarrow\SL_2(\ZZ/n\ZZ)\big)$$
On the other hand, we may view $\SL_2(\ZZ)$ as a subgroup of $\GL_2(\ZZ) = \Aut(\ZZ^2)$. For any surjection $\psi : \ZZ^2\twoheadrightarrow H$ onto a finite group $H$ with kernel $K_\psi$, we may consider the ``congruence subgroup''
$$\begin{array}{rcl}
\Gamma[K_\psi] & := & \{\gamma\in\SL_2(\ZZ) : \gamma(K_\psi) = K_\psi\text{ and $\gamma$ acts trivially on $H$}\} \\
 & = & \{\gamma\in\SL_2(\ZZ) : \psi\circ\gamma = \psi\}
\end{array}$$

Since any surjection from $\ZZ^2$ onto a finite group of order $n$ must factor through the characteristic quotient $\ZZ^2\twoheadrightarrow(\ZZ/n\ZZ)^2$, we find that the systems $\{\Gamma(n)\}_{N\ge 1}$ and $\{\Gamma[K_\psi]\}_{\psi}$ induce equivalent (profinite) topologies on $\SL_2(\ZZ)$ (c.f. \cite{RZ10} \S3.1). The fact that $\SL_2(\ZZ)$ does \emph{not} have the \emph{congruence subgroup property} then implies the existence of finite index subgroups of $\SL_2(\ZZ)$ which do not contain any subgroup of the form $\Gamma[K_\psi]$.

\sgap

Similarly, we may consider surjections $\varphi : F_2\twoheadrightarrow G$ with kernel $K_\varphi$, where $G$ is a finite group, and look at the ``congruence subgroups'' $\Gamma[K_\varphi]\le\Aut(F_2)$
$$\begin{array}{rcl}
\Gamma[K_\varphi] & := & \{\gamma\in\Aut(F_2) : \gamma(K_\varphi) = K_\varphi\text{ and $\gamma$ acts trivially on $G$}\} \\
 & = & \{\gamma\in\Aut(F_2) : \varphi\circ\gamma = \varphi\} 
\end{array}$$
which are precisely the preimages of the stabilizers $\Gamma_{[\varphi]}$ in $\Aut(F_2)$ (c.f. Cor \ref{cor_G_to_Gamma} and Prop \ref{prop_cms_are_modular_curves}). In this case we find that the situation is the opposite:

\begin{thm}[Congruence Subgroup Property for $\Aut(F_2)$]\label{thm_CSP} Let $G$ be a finite group and $\varphi : F_2\rightarrow G$ a surjective homomorphism with kernel $K_\varphi$. Let
$$\Gamma[K_\varphi] := \{\gamma\in\Aut(F_2) : \varphi\circ\gamma = \varphi\}$$
Then every finite index subgroup of $\Aut(F_2)$ contains a subgroup of the form $\Gamma[K_\varphi]$.
\end{thm}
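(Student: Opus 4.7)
The plan is to recast the statement as a comparison of two natural topologies on $\Aut(F_2)$: the full profinite topology (neighborhoods of $1$ are all finite index subgroups) and the \emph{congruence topology} (neighborhoods of $1$ are generated by the subgroups $\Gamma[K_\varphi]$). First I would observe that $\Gamma[K_\varphi]$ coincides with the kernel of the natural map $\Stab_{\Aut(F_2)}(K_\varphi)\to\Aut(F_2/K_\varphi)$, so as $\varphi$ ranges over all surjections onto finite groups, the family $\{\Gamma[K_\varphi]\}$ is a fundamental system of neighborhoods of $1$ for the topology on $\Aut(F_2)$ induced by restricting the profinite topology on $\Aut(\widehat{F_2})$ along the tautological map $\Aut(F_2)\to\Aut(\widehat{F_2})$. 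Thus the theorem is equivalent to the assertion that these two topologies agree, i.e.\ that the congruence kernel of $\Aut(F_2)$ is trivial.

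Next I would exploit the topological interpretation of $F_2$ as $\pi_1$ of a once-punctured torus $\Sigma_{1,1}$. The mapping class group $\mathrm{Mod}(\Sigma_{1,1})$ surjects onto $\Out^+(F_2)$ and fits into a Birman exact sequence
\[ 1 \longrightarrow F_2 \longrightarrow \mathrm{Mod}(\Sigma_{1,1}) \longrightarrow \mathrm{Mod}(\Sigma_1)=\SL_2(\ZZ) \longrightarrow 1.\]
A finite characteristic quotient $F_2\twoheadrightarrow G$ corresponds to an unramified finite Galois cover $\Sigma'\to\Sigma_{1,1}$, and the congruence subgroup $\Gamma[K_\varphi]$ is exactly the subgroup of the mapping class group that lifts to the identity on $\Sigma'$. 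From this viewpoint the theorem becomes the congruence subgroup property for $\mathrm{Mod}(\Sigma_{1,1})$, and one may attack it by induction on the complexity of the covering, reducing to base cases where the mapping class group is a braid or pure braid group on a sphere (whose CSP is accessible by direct manipulation with explicit Dehn-twist generators).

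The central difficulty, and what makes the theorem highly nontrivial, is that $\SL_2(\ZZ)=\Out^+(F_2)$ famously \emph{fails} the classical congruence subgroup property, so the result for $\Aut(F_2)$ cannot be deduced from a CSP for $\Out(F_2)$. The key insight I would rely on is that the failure of CSP for $\SL_2(\ZZ)$ is precisely ``repaired'' once one remembers the puncture: nonabelian finite quotients $F_2\twoheadrightarrow G$ (which have no analog for $\ZZ^2$) supply additional congruence subgroups $\Gamma[K_\varphi]$ of $\Aut(F_2)$ that are invisible from the $\Out(F_2)$ perspective, and these must be exactly enough to refine the congruence topology into the full profinite topology. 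Concretely, given an arbitrary finite index normal subgroup $\Delta\lhd\Aut(F_2)$ with quotient $Q$, I would try to build an appropriate surjection $\varphi:F_2\twoheadrightarrow G$ by pulling back along a free cover of $\Sigma_{1,1}$ that is sensitive enough to distinguish the elements of $Q$; here one uses that $F_2$ is conjugacy separable in a strong functorial sense and that every finite group embeds into some quotient built from iterated extensions visible to $\Aut(F_2)$.

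The main obstacle is therefore controlling the interplay between the inner action of $F_2\cong\Inn(F_2)$ and the outer quotient $\GL_2(\ZZ)=\Out(F_2)$: one must show that although the outer quotient is ``too small'' for its congruence topology to be profinite, the inner $F_2$-factor contributes enough genuinely nonabelian information through $\{\Gamma[K_\varphi]\}$ to compensate. This is the heart of Asada's argument \cite{Asa01}, and any complete proof must at some point exhibit, for each finite group $Q$ arising as $\Aut(F_2)/\Delta$, an explicit finite Galois cover of $\Sigma_{1,1}$ whose associated congruence subgroup is contained in $\Delta$.
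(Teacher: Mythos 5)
The paper does not prove this theorem; it cites Asada \cite{Asa01} and refers the reader to the Bux--Ershov--Rapinchuk exposition \cite{BER11}. Your proposal is a correct \emph{framing} of the problem --- reformulating CSP as the agreement of the profinite and congruence topologies, identifying the failure of CSP for $\SL_2(\ZZ)$ as the central obstacle, and observing that nonabelian quotients of $F_2$ are what must ``repair'' it --- but it is not a proof. At the end you explicitly concede that ``any complete proof must at some point exhibit, for each finite group $Q$ arising as $\Aut(F_2)/\Delta$, an explicit finite Galois cover of $\Sigma_{1,1}$ whose associated congruence subgroup is contained in $\Delta$,'' and that construction, which carries the entire content of the theorem, is never attempted. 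The suggested inductive reduction to spherical braid groups is not how Asada or Bux--Ershov--Rapinchuk actually proceed, and no details are given to support it.

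There is also a factual error in the Birman exact sequence you display. The mapping class groups $\mathrm{Mod}(\Sigma_{1,1})$ and $\mathrm{Mod}(\Sigma_1)$ are both isomorphic to $\SL_2(\ZZ)$, and the point-pushing kernel for forgetting a puncture on the \emph{closed} torus is $\pi_1(\Sigma_1)=\ZZ^2$, which fails to embed because it has nontrivial center, so the Birman sequence is not even exact there. The sequence that genuinely injects $F_2$ is $1\to\Inn(F_2)\to\Aut(F_2)\to\Out(F_2)\to 1$ (equivalently the Birman sequence $1\to\pi_1(\Sigma_{1,1})\to\mathrm{Mod}(\Sigma_{1,2})\to\mathrm{Mod}(\Sigma_{1,1})\to 1$). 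Getting this right is not cosmetic, since the very point of the theorem is that the inner $F_2$-factor, not $\Out(F_2)\cong\GL_2(\ZZ)$, supplies the nonabelian congruence data that makes $\{\Gamma[K_\varphi]\}$ cofinal; misplacing where $F_2$ sits in the tower undermines the mechanism you are trying to exploit.
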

\begin{proof} This was originally proved by Asada \cite{Asa01}. See \cite{BER11} for an exposition of the congruence subgroup property.
\end{proof}

\sgap

\begin{cor}\label{cor_CSP} Let $\Gamma$ be a finite index subgroup of $\SL_2(\ZZ)\subset \Out(F_2)$. Then there exists a finite group $G$ and an exterior surjection $[\varphi] : F_2\twoheadrightarrow G$ such that $\Gamma\supset\Gamma_{[\varphi]} := \Stab_{\SL_2(\ZZ)}([\varphi])$.

\sgap

In particular, for every modular curve $\hH/\Gamma$ corresponding to a finite index subgroup $\Gamma\le\SL_2(\ZZ)$, there exists a finite group $G$ and exterior surjection $[\varphi] : F_2\twoheadrightarrow G$ such that $\hH/\Gamma$ is a quotient of $\hH/\Gamma_{[\varphi]}$. I.e., every modular curve has a moduli interpretation.

\end{cor}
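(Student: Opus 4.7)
The strategy is to lift $\Gamma \le \SL_2(\ZZ) \subset \Out(F_2)$ to $\Aut(F_2)$ and invoke Theorem \ref{thm_CSP}. Let $\pi : \Aut(F_2) \twoheadrightarrow \Out(F_2) \cong \GL_2(\ZZ)$ be the canonical projection, and set $\tilde\Gamma := \pi^{-1}(\Gamma)$. Since $\Gamma$ has finite index in $\SL_2(\ZZ)$, and $\SL_2(\ZZ)$ has index $2$ in $\GL_2(\ZZ)$, the group $\tilde\Gamma$ has finite index in $\Aut(F_2)$. Applying Theorem \ref{thm_CSP} then yields a finite group $G$ and a surjection $\varphi : F_2 \twoheadrightarrow G$ such that $\Gamma[K_\varphi] \subseteq \tilde\Gamma$. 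The claim is that the pair $(G, [\varphi])$, where $[\varphi]$ denotes the class of $\varphi$ modulo $\Inn(G)$, is what we want.

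The step that requires some care is that Theorem \ref{thm_CSP} gives control over the \emph{pointwise} stabilizer $\Gamma[K_\varphi] = \{\gamma \in \Aut(F_2) : \varphi \circ \gamma = \varphi\}$, whereas $\Gamma_{[\varphi]}$ is the stabilizer of the $\Inn(G)$-orbit $[\varphi]$ inside $\SL_2(\ZZ)$. To bridge these: given $\bar\gamma \in \Gamma_{[\varphi]}$, choose any lift $\tilde\gamma \in \Aut(F_2)$ with $\pi(\tilde\gamma) = \bar\gamma$; by definition of $\Gamma_{[\varphi]}$ there exists $g \in G$ with $\varphi \circ \tilde\gamma = \iota_g \circ \varphi$, where $\iota_g$ denotes conjugation by $g$. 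Using surjectivity of $\varphi$, pick $f \in F_2$ with $\varphi(f) = g^{-1}$. Then $\iota_f \circ \tilde\gamma \in \Aut(F_2)$ still has image $\bar\gamma$ under $\pi$, and a direct computation using that $\varphi$ is a homomorphism gives
$$\varphi \circ (\iota_f \circ \tilde\gamma) \;=\; \iota_{\varphi(f)} \circ \varphi \circ \tilde\gamma \;=\; \iota_{\varphi(f)} \circ \iota_g \circ \varphi \;=\; \iota_{\varphi(f) g} \circ \varphi \;=\; \varphi.$$
Hence the adjusted lift $\iota_f \circ \tilde\gamma$ lies in $\Gamma[K_\varphi] \subseteq \tilde\Gamma$, forcing $\bar\gamma \in \pi(\tilde\Gamma) \cap \SL_2(\ZZ) = \Gamma$. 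This establishes $\Gamma_{[\varphi]} \subseteq \Gamma$.

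The ``in particular'' clause then follows immediately: the inclusion $\Gamma_{[\varphi]} \subseteq \Gamma$ yields a covering map $\hH/\Gamma_{[\varphi]} \twoheadrightarrow \hH/\Gamma$, and by Proposition \ref{prop_cms_are_modular_curves} the source is the analytification of a connected component of $M(G)_{\CC}$. The only nontrivial point in the argument is the inner-automorphism adjustment converting pointwise stabilizers (as delivered by the CSP) into orbit stabilizers (as required by the moduli interpretation); everything else is formal bookkeeping with the Galois-categorical dictionary of \S\ref{ss_Galois_category}.
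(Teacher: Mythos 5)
Your argument for the containment $\Gamma_{[\varphi]} \subseteq \Gamma$ is correct and, in fact, more explicit than the paper's. The paper disposes of this part with a one-line appeal to ``the Galois correspondence, the above discussion, and Prop.\ \ref{prop_cms_are_modular_curves},'' where ``the above discussion'' is the imprecise claim that $\Gamma[K_\varphi]$ is the preimage of $\Gamma_{[\varphi]}$. You correctly spot that this is not literally true --- $\Gamma[K_\varphi]$ is the \emph{pointwise} stabilizer of $\varphi$, whereas $\Gamma_{[\varphi]}$ is the stabilizer of the $\Inn(G)$-class --- and you repair it cleanly by twisting a chosen lift $\tilde\gamma$ by a suitable inner automorphism $\iota_f$ with $\varphi(f) = g^{-1}$. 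The computation $\varphi\circ\iota_f\circ\tilde\gamma = \iota_{\varphi(f)g}\circ\varphi = \varphi$ is exactly right, and since $\iota_f$ is killed by $\pi$, the adjusted lift still projects to $\bar\gamma$. This is the bridge the paper's proof leaves to the reader; your filling it in is a genuine improvement in exposition.

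However, your treatment of the ``in particular'' clause has a real gap. You deduce from $\Gamma_{[\varphi]}\subseteq\Gamma$ a \emph{covering map} $\hH/\Gamma_{[\varphi]}\twoheadrightarrow\hH/\Gamma$ and declare this to be a quotient, but the paper means ``quotient'' in the Galois sense, i.e.\ it requires $\Gamma_{[\varphi]}\lhd\Gamma$. This is evident both from the proof itself (which explicitly says ``To show that $\hH/\Gamma$ is a quotient, it suffices to construct a Galois closure of $\hH/\Gamma_{[\varphi]}$'') and from Definition~\ref{defn_gpk}, where a moduli interpretation of $\Gamma$ requires a $[\varphi]$ with $\Gamma_{[\varphi]}\lhd\Gamma$ so that the stack $\mM(G)_{\ZZ_K}([\varphi])/(\Gamma/\Gamma_{[\varphi]})$ is meaningful. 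The containment delivered by the CSP need not be normal. The paper closes this hole by forming the Galois closure: writing $\SL_2(\ZZ)\cdot[\varphi]=\{[\varphi_1],\dots,[\varphi_n]\}$, it considers $\psi := \prod_i\varphi_i\colon F_2\to G^n$, lets $H$ be the image so that $\psi$ is surjective onto $H$, and shows (via Proposition~\ref{prop_descent_of_groups}) that $\Gamma_{[\psi]} = \bigcap_i\Gamma_{[\varphi_i]}$, which is the normal core of $\Gamma_{[\varphi]}$ in $\SL_2(\ZZ)$ and a fortiori normal in $\Gamma$. Your proof needs this extra step, replacing the $(G,[\varphi])$ produced by the CSP with the $(H,[\psi])$ of the Galois closure, to legitimately call $\hH/\Gamma$ a quotient.
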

\begin{proof} 
The fact that $\hH/\Gamma$ can be covered by some $\hH/\Gamma_{[\varphi]}$ follows from the Galois correspondence, the above discussion, and Prop \ref{prop_cms_are_modular_curves}. To show that $\hH/\Gamma$ is a quotient, it suffices to construct a Galois closure of $\hH/\Gamma_{[\varphi]}$. Let $\SL_2(\ZZ)\cdot[\varphi] = \{[\varphi_1],\ldots,[\varphi_n]\}$ be the $\SL_2(\ZZ)$-orbit of $[\varphi]$. Consider the map $\prod_{i=1}^n\varphi_i : F_2\longrightarrow G^n$. This map may not be surjective, so let $H$ be its image. It follows from the Galois correspondence that the component of $\mM(H)$ containing $\prod_i\varphi_i$ is the Galois closure of $\hH/\Gamma_{[\varphi]}$ (c.f. Prop \ref{prop_descent_of_groups}).
\end{proof}

\begin{remark} In \cite{EM11}, Ellenberg and McReynolds further prove that \emph{every subgroup of $\Gamma(2)$ containing $\pm I$ is a Veech group}. If we define a $(G)$-structure as a $G$-structure modulo $\Aut(G)$ (c.f. \ref{pt_coarser_structures}), then the result of \cite{EM11} says that every subgroup $\Gamma$ of $\Gamma(2)$ containing $\pm I$ is a stabilizer of a $(G)$-structure, and thus every modular curve over $\hH/\Gamma(2)$ has a moduli interpretation parametrizing $(G)$-structures, and is a component of a quotient of some $M(G)_{\Qbar}$.

\sgap

The group-theoretic proof of the congruence subgroup property for $\Aut(F_2)$ given in \cite{BER11} provides an explicit procedure which given a finite index $\Gamma\le\SL_2(\ZZ)$, produces a normal subgroup $K\lhd F_2$ with $\Gamma[K]\le \Gamma$. However, the subgroup $K$ produced this way almost always has \emph{very} high index in $F_2$.

\end{remark}

\subsection{Automorphisms and representability}\label{ss_representability}

In this section we address the issue of representability. For a finite index subgroup $\Gamma\le\SL_2(\ZZ)$, the existence of torsion in $\Gamma$ (equivalently, fixed points of the action of $\Gamma$ on $\hH$) presents an obstruction to constructing a universal family of elliptic curves over $\hH/\Gamma$. In this section we will show that this is the only obstruction.

\begin{lemma}\label{lemma_closure_of_TF_is_TF} If $\Gamma\le\SL_2(\ZZ)$ is torsion-free, then its closure $\ol{\Gamma}$ is also torsion-free.
\end{lemma}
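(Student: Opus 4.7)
The plan is to reduce the lemma to the standard fact that free profinite groups are torsion-free. Since $-I$ is a torsion element of $\SL_2(\ZZ)$, any torsion-free subgroup $\Gamma$ avoids $-I$ and hence embeds into $\PSL_2(\ZZ)\cong \ZZ/2\ZZ*\ZZ/3\ZZ$. First I would argue that $\Gamma$ is an abstract free group: by the standard Bass--Serre/Kurosh theory for free products of finite cyclic groups, any torsion-free subgroup of $\ZZ/2\ZZ*\ZZ/3\ZZ$ acts freely on the associated Bass--Serre tree and is therefore free. In our setting $\Gamma$ is also of finite index in $\SL_2(\ZZ)$ (the case used throughout this section), which forces $\Gamma$ to be finitely generated, hence free of finite rank.

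Next I would identify the closure $\overline{\Gamma}\subset\widehat{\SL_2(\ZZ)}$ with the abstract profinite completion $\widehat{\Gamma}$. A fundamental system of open neighborhoods of $1$ for the topology on $\Gamma$ induced from $\widehat{\SL_2(\ZZ)}$ is given by the subgroups $\Gamma\cap N$, where $N$ ranges over finite-index normal subgroups of $\SL_2(\ZZ)$. On the other hand, every finite-index subgroup $H\le\Gamma$ is also finite-index in $\SL_2(\ZZ)$ (since $\Gamma$ is), so the normal core $N\lhd\SL_2(\ZZ)$ of $H$ is finite-index and satisfies $\Gamma\cap N\le H$. This shows that the induced topology agrees with the full profinite topology on $\Gamma$, so the natural map $\Gamma\to\overline{\Gamma}$ realizes $\overline{\Gamma}$ as $\widehat{\Gamma}$ canonically.

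Now $\Gamma$ is free of finite rank, and a direct check of the universal property against finite groups shows that $\widehat{\Gamma}$ is a free profinite group of the same rank. The proof concludes by citing the standard fact (see e.g.\ \cite{RZ10}, Ch.~7) that every free profinite group is torsion-free: free profinite groups are projective, their Sylow pro-$p$ subgroups are therefore free pro-$p$ (and thus torsion-free), and every nontrivial torsion element of a profinite group has prime-power order and must lie in some Sylow pro-$p$ subgroup.

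The one point genuinely requiring care is the identification $\overline{\Gamma}\cong\widehat{\Gamma}$, which rests squarely on the finite-index hypothesis; the remaining inputs (freeness of torsion-free subgroups of $\PSL_2(\ZZ)$, and the torsion-freeness of free profinite groups) are classical and can be cited rather than reproved.
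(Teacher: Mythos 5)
Your proof is correct and follows essentially the same route as the paper: both establish that a torsion-free finite-index subgroup of $\SL_2(\ZZ)$ is free of finite rank, identify the closure $\ol{\Gamma}$ with the abstract profinite completion $\widehat{\Gamma}$, and conclude by citing the torsion-freeness of free profinite groups. The two differences are minor: the paper cites a theorem of Kulkarni for the freeness of $\Gamma$, whereas you rederive it via Bass--Serre theory applied to $\PSL_2(\ZZ)\cong \ZZ/2\ZZ*\ZZ/3\ZZ$; and the paper asserts ``the closure of a finitely generated subgroup inside a profinite completion is itself a profinite completion'' without qualification, which you more carefully justify from the finite-index hypothesis. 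That extra care is warranted --- the paper's assertion is false for general finitely generated subgroups of a residually finite group (some form of subgroup separability or the finite-index hypothesis is needed), so your explicit reduction to the comparison of induced and intrinsic profinite topologies, tied to finite index, is the correct way to nail down that step.
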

\begin{proof} Theorem 3.2 of \cite{Kul91} shows that every finite index torsion-free subgroup of $\SL_2(\ZZ)$ is actually free and finitely generated. The closure of finitely generated subgroup inside a profinite completion is itself a profinite completion, so the result follows from the fact that profinite free groups are torsion-free (c.f. \cite{RZ10}, Cor 7.7.6).
\end{proof}

The key result we need is:

\begin{lemma}[Rigidity]\label{lemma_rigidity} $\;$
\begin{itemize}
\item[(1)] Let $X/S$ be an abelian scheme over a connected scheme $S$, and let $\sigma\in\Aut_S(X)$. If there is a point $s\in S$ such that $\sigma|_{X_s} = \id_{X_s}$, then $\sigma = \id$.
\item[(2)] Let $S$ be a scheme, and $\mM$ a stack affine over $\mM(1)_S$, then $\mM$ is representable (by a scheme) if and only if for any $S$-scheme $T$ and any object $x\in\mM(T)$, $\Aut_{\mM(T)}(x) = \{1\}$.
\item[(3)] Let $G$ be a finite 2-generated group of order $N$ and $S$ be a scheme on which $N$ is invertible. Let $\mM$ be a component of $\mM(G)_S$ finite \'{e}tale over $\mM(1)_S$, then $\mM$ is representable (by a scheme) if and only if its geometric points have trivial automorphism groups.
\end{itemize}
\end{lemma}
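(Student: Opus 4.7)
The plan is to prove the three parts in sequence, deducing (3) from (1) and (2).

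\emph{Part (1).} The idea is to view $\sigma$ as an $S$-point of the automorphism functor $\underline{\Aut}^{\mathrm{gp}}_S(X)$ of $X$ as an abelian scheme. By a classical theorem of Grothendieck (see, e.g., Chapter~6 of Faltings--Chai), this functor is representable by a separated, unramified $S$-group scheme. Both $\sigma$ and $\id$ are sections $S \to \underline{\Aut}^{\mathrm{gp}}_S(X)$, and the hypothesis says they agree at the point $s$. Since the target is unramified over $S$, its relative diagonal is an open immersion, so the equalizer of the two sections is open in $S$; separatedness makes it also closed. Connectedness of $S$ forces the equalizer to be all of $S$, giving $\sigma = \id$.

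\emph{Part (2).} The forward direction is immediate, since a representable stack is a sheaf of sets. For the converse, the strategy is to descend representability from a representable étale presentation of $\mM(1)_S$. Choose an étale Galois cover $U \to \mM(1)_S$ by a scheme with finite group $H$: concretely, take $U = \mM(n)_S$ for some $n \ge 3$ invertible on $S$, with $H = \GL_2(\ZZ/n\ZZ)$. If no single $n$ is invertible on all of $S$, cover $S$ by opens (such as $S[1/3] \cup S[1/5]$) on which different $n$'s work, run the argument on each, and glue. The pullback $W := \mM \times_{\mM(1)_S} U$ is affine over the scheme $U$, so is itself a scheme, and $\mM \cong [W/H]$. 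Triviality of all automorphism groups of $\mM$ is exactly freeness of the $H$-action on $W$, and a free $H$-action on a scheme affine over its intended quotient admits a geometric quotient scheme $W/H$, which represents $\mM$.

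\emph{Part (3).} Since $\mM$ is finite étale over $\mM(1)_S$, it is in particular affine over it, so (2) applies. It remains to upgrade ``trivial automorphism groups at geometric points'' to ``trivial automorphism groups at every object.'' Given any $(E/T, \alpha) \in \mM(T)$ and $\sigma \in \Aut_T(E, \alpha)$, restrict $T$ to one of its connected components and pick a geometric point $t$ on it. By hypothesis $\Aut(E_t, \alpha_t) = \{1\}$, so $\sigma|_{E_t} = \id$. Part (1), applied to the abelian scheme $E$ over this connected component, then forces $\sigma = \id$ on the whole component. Ranging over all components yields $\sigma = \id$.

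\emph{Main obstacle.} The most delicate point is the rigorous setup in (2) when no single level $n \ge 3$ is invertible on all of $S$; this requires a routine but slightly tedious gluing argument. Part (1) reduces to citing the standard representability of $\underline{\Aut}^{\mathrm{gp}}_S(X)$ as a separated unramified $S$-scheme, and part (3) is then a direct synthesis of (1) and (2).
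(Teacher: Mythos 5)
Your proposal is correct, and it follows the same skeleton the paper does: the paper simply cites GIT, Cor.\ 6.2 for (1) and Katz--Mazur Thm.\ 4.7.0 for (2), and then asserts (3) follows from (1) and (2). What you have done is supply proof sketches for the two cited facts and then carry out the same synthesis for (3). Your deduction of (3) --- restrict $\sigma$ to a connected component of $T$, kill it at a geometric point by hypothesis, and then propagate the identity using (1) --- is exactly the right argument and matches what the paper intends; the rest (finite \'{e}tale $\Rightarrow$ affine $\Rightarrow$ (2) applies) is likewise the intended route.

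Two small remarks on your choices inside (1) and (2). For (1), invoking representability of $\underline{\Aut}^{\mathrm{gp}}_S(X)$ as a separated unramified group scheme and then running the ``equalizer is open and closed'' argument is valid, but it is heavier machinery than needed: GIT Prop.\ 6.1 / Cor.\ 6.2 gives the statement directly for two homomorphisms of abelian schemes agreeing on one fiber, with no appeal to Hilbert-scheme representability of $\underline{\Aut}$. (Your route is not circular, since the unramifiedness of $\underline{\Hom}$ is established via an infinitesimal computation rather than via Cor.\ 6.2 itself, but the representability input is substantial.) For (2), your presentation $\mM\cong[W/H]$ with $W = \mM\times_{\mM(1)_S}\mM(n)_S$ and $H=\GL_2(\ZZ/n\ZZ)$, freeness of the $H$-action being equivalent to triviality of automorphisms, and the gluing over $S[1/3]\cup S[1/5]$ when no single level is invertible, is precisely the content of KM 4.7.0 (cf.\ also the uniformizability discussion in \ref{pt_uniformizability} of the paper); the glued quotients agree over overlaps by the universal property of the quotient, so that step is sound.
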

\begin{proof} Item (1) follows from Corollary 6.2 of \cite{GIT}, and item (2) is Theorem 4.7.0 of \cite{KM85}. Item (3) is what we'll need and follows from (1) and (2).
\end{proof}



\begin{thm}\label{thm_torsion_free_implies_representable} Let $G$ be a finite group of order $N$, and let $S$ be a scheme over $\ZZ[1/N]$. Let $x\in\mM(1)_S$ be a geometric point. Let $\mM_S$ be a connected component of $\mM(G)_S$, with forgetful map $p : \mM_S\rightarrow\mM(1)_S$. Then $\mM_S$ is representable by a scheme if and only if every $[\varphi]\in p^{-1}(x)\subseteq\Hom^\surext(F_2,G)$, we have that $\Gamma_{[\varphi]} := \Stab_{\SL_2(\ZZ)}([\varphi])$ is torsion-free.
\end{thm}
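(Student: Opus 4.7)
The plan is to apply Lemma \ref{lemma_rigidity}(3), which reduces representability of $\mM_S$ to the triviality of the automorphism group of every geometric point. A geometric point of $\mM_S$ is a pair $(E,[\varphi'])$, where $E$ is an elliptic curve over an algebraic closure $\bar k$ of a residue field of $S$ and $[\varphi']$ is a $G$-structure lying in the component $\mM_S$. Since the forgetful map $p$ is representable, an automorphism in $\mM_S$ is an element $\sigma\in\Aut(E)$ that stabilizes $[\varphi']$. My first step is to identify
\[
\Aut_{\mM_S}(E,[\varphi']) \;=\; \Aut(E)\cap\Gamma_{[\varphi']} \quad\text{inside}\quad \SL_2(\ZZ).
\]
Indeed, by rigidity (Lemma \ref{lemma_rigidity}(1)) and the Weil pairing, $\Aut(E)$ embeds faithfully into $\SL_2(\ZZ)\subseteq\Out(F_2)$ via its action on $\pi_1(E)$; by Theorem \ref{thm_outer_representation} the induced action on $p^{-1}([E])=\Hom^\surext(F_2,G)$ agrees with the action of $\SL_2(\ZZ)$ described in Corollary \ref{cor_G_to_Gamma}, so the stabilizer of $[\varphi']$ is as claimed.

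For the forward direction, Corollary \ref{cor_G_to_Gamma} identifies $p^{-1}(x)\cap\mM_S$ with a single $\SL_2(\ZZ)$-orbit, so all the stabilizers $\Gamma_{[\varphi']}$ appearing in $\mM_S$ are $\SL_2(\ZZ)$-conjugate to $\Gamma_{[\varphi]}$ and inherit the torsion-freeness hypothesis. The intersection of the finite group $\Aut(E)$ with a torsion-free subgroup of $\SL_2(\ZZ)$ is trivial, so Lemma \ref{lemma_rigidity}(3) yields that $\mM_S$ is representable.

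For the converse, suppose $\Gamma_{[\varphi]}$ contains a nontrivial torsion element $\gamma$. Since the finite subgroups of $\SL_2(\ZZ)$ are cyclic of order $1,2,3,4,$ or $6$, either $\gamma$ has even order (so some power equals $-I$), or $\gamma$ has order $3$. In the first case, $-I\in\Gamma_{[\varphi]}$, and because $-I$ is central it lies in every conjugate $\Gamma_{[\varphi']}$; for any elliptic curve $E$ over any algebraically closed residue field of $S$, the involution $[-1]\in\Aut(E)$ is then a nontrivial automorphism of $(E,[\varphi'])$. In the order-$3$ case, after replacing $[\varphi]$ by $g^{-1}\cdot[\varphi]$ for a suitable $g\in\SL_2(\ZZ)$, I may assume $\gamma$ is the standard generator of the stabilizer of $\zeta_3\in\hH$; then taking $E_0$ to be the elliptic curve with $j=0$ in characteristic different from $3$, or a supersingular elliptic curve in characteristic $3$ (which is permitted since $3\nmid|G|$), the element $\gamma$ lies in $\Aut(E_0)$ and supplies the required nontrivial automorphism.

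The main obstacle is making the identification in the first paragraph rigorous when $\bar k$ has characteristic $2$ or $3$ and $E$ is supersingular, since $\Aut(E)$ can then be nonabelian of order up to $24$ and does not embed as a subgroup of $\SL_2(\ZZ)$. This is handled by noting that the action of $\Aut(E)$ on $\Hom^\surext(F_2,G)=\Hom^\surext(\widehat{F_2}^{(p')},G)$ is governed by its action on the abelianization $H_1(E,\widehat{\ZZ}^{(p')})$, which lands in $\SL_2(\widehat{\ZZ}^{(p')})$; each cyclic subgroup of the allowed orders $2,3,4,6$ is conjugate in $\SL_2(\widehat{\ZZ}^{(p')})$ to one inside $\SL_2(\ZZ)$, which reduces the stabilizer computation to the generic case and validates the two arguments above.
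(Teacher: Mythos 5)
Your overall strategy — reduce to triviality of automorphism groups of geometric points via the rigidity lemma, and then relate those groups to $\Gamma_{[\varphi]}$ — is the same one the paper uses, and your handling of the converse direction (torsion forces a nontrivial automorphism at $j=0$ or via $[-1]$) is essentially fine. The problem is in the positive-characteristic half of the forward direction, and I believe your fix in the last paragraph does not work.

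The key assertion there is that the action of $\Aut(E)$ on $\Hom^\surext(\widehat{F_2}^{(p')},G)$ is ``governed by its action on the abelianization $H_1(E,\widehat{\ZZ}^{(p')})$.'' That is precisely what fails for nonabelian $G$. A $G$-structure is a surjective exterior homomorphism from the full (prime-to-$p$) fundamental group of $E^\circ$, not from its abelianization; removing the origin is exactly what makes this data nonabelian. An automorphism $\alpha$ of $E$ acts on $G$-structures through its image in $\Out\bigl(\pi_1^{(p')}(E^\circ_{\bar k})\bigr)\cong\Out(\widehat{F_2}^{(p')})$, and that image is \emph{not} determined by the induced automorphism of $H_1$. (For the discrete group $F_2$, the map $\Out^+(F_2)\to\SL_2(\ZZ)$ is an isomorphism, which is why your char-$0$ argument succeeds; but for the profinite $\widehat{F_2}^{(p')}$ the abelianization map on $\Out$ has a large kernel.) Consequently your claim that conjugating cyclic subgroups into $\SL_2(\ZZ)\subset\SL_2(\widehat{\ZZ}^{(p')})$ ``reduces the stabilizer computation to the generic case'' has no footing: you have not shown the outer action of $\Aut(E)$ on $\pi_1(E^\circ)$ factors through a torsion element of $\SL_2(\ZZ)\subset\Out(\widehat F_2^{(p')})$, nor can this be extracted from the $\SL_2(\widehat{\ZZ}^{(p')})$-picture. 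This is not a cosmetic issue either — the supersingular curves in characteristics $2$ and $3$ are precisely where $\Aut(E)$ fails to be cyclic and refuses to embed in $\SL_2(\ZZ)$, so this case cannot be swept up by the $\CC$-argument.

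What the paper does instead is to prove the characteristic-$0$ case first (using Lemma~\ref{lemma_closure_of_TF_is_TF} and the injectivity of hidden fundamental groups into $\pi_1\cong\ol{\Gamma_{[\varphi]}}$, which is essentially your paragraph-one argument but formulated so it only needs the discrete picture over $\Qbar$), and then reduces positive characteristic to that case by Deuring's lifting theorem: given $E_0/k$ with an automorphism $\alpha_0$ fixing $[\varphi_0]$, lift $(E_0,\alpha_0)$ to a complete DVR of mixed characteristic, and use the specialization isomorphism $\pi_1^{(p)}(E_{\ol K}^\circ)\rightiso\pi_1^{(p)}(E_0^\circ)$ to transport $[\varphi_0]$ to a $G$-structure on the generic fibre fixed by the lifted automorphism $\alpha$; the already-established char-$0$ representability forces $\alpha=\id$, hence $\alpha_0=\id$. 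This cleanly sidesteps any discussion of whether $\Aut(E_0)$ sits in $\SL_2(\ZZ)$. You should replace your last paragraph with this lifting argument (and you can then simply quote your char-$0$ paragraph as the input). Also, a smaller point about your first paragraph: even over $\CC$ you should justify that $\Aut(E)\to\Out(\pi_1(E^\circ))$ factors through $\SL_2(\ZZ)$ via the hidden-fundamental-group description $\Aut(E)\cong\Stab_{\SL_2(\ZZ)}(\tau)$ in $[\hH/\SL_2(\ZZ)]$, rather than by appealing to the action on $\pi_1(E)$ abelian alone, for the same reason as above.
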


\begin{remark} Note that $\mM_S$ may not be geometrically connected (it may decompose into multiple components over an extension of $S$). 
\end{remark}

\begin{proof} We may assume $S = \Spec R$, where $R$ is an integral domain of characteristic 0. If $\mM_R$ is representable, then let $K$ be the fraction field of $R$, so $\mM_{\ol{K}}$ must also be representable, and hence by Prop \ref{prop_cms_are_modular_curves}, the components of $\mM_{\ol{K}}^\an$ are representable analytic stacks $[\hH/\Gamma_{[\varphi]}]$ for $[\varphi]\in p^{-1}(x)$. However, a quotient $[\hH/\Gamma_{[\varphi]}]$ of analytic stacks can only be representable if $\Gamma_{[\varphi]}$ acts without fixed points, and hence as a subgroup of $\SL_2(\ZZ)$, this means that it must be torsion-free.

\sgap

For the other direction, we begin by proving the statement for $\mM_{\Qbar}$, and then use Deuring's lifting theorem and the rigidity lemma to deal with the positive characteristic case.

\sgap

\textbf{Claim:} $\mM_{\Qbar}$ is representable iff each $\Gamma_{[\varphi]}$ is torsion-free.

\sgap

Indeed, by considering each connected component separately, we may assume $\mM_{\Qbar}$ is connected. Then $\mM_{\Qbar}$ corresponds to $\ol{\Gamma_{[\varphi]}}$ for some $[\varphi]\in p^{-1}(x)$ as above, which by Lemma \ref{lemma_closure_of_TF_is_TF} is also torsion-free. Further, we have $\pi_1(\mM_{\Qbar}) = \ol{\Gamma_{[\varphi]}}$. Since $\mM_{\Qbar}$ is uniformizable (c.f. \ref{pt_uniformizability}), the automorphism groups of its geometric points (which must have order dividing 6, and are called \emph{hidden fundamental groups} in \cite{Noo04}) inject into $\pi_1(\mM)\cong\ol{\Gamma_{[\varphi]}}$ (\cite{Noo04} Thm 6.2). Since $\ol{\Gamma_{[\varphi]}}$ is torsion-free, they must be trivial. The claim follows from the rigidity lemma \ref{lemma_rigidity}.



\sgap

To complete the proof, by Lemma \ref{lemma_rigidity}(3), it suffices to show that if $E_0/k$ is an elliptic curve over an algebraically closed field $k$, and $[\varphi_0] : \pi_1(E_0^\circ/k)\twoheadrightarrow G$ an exterior surjection in $p^{-1}(x)$, then there does not exist an automorphism $\alpha_0\in\Aut_k(E_0)$ such that $[\varphi_0\circ(\alpha_0)_*] = [\varphi_0]$. If $\ch(k) = 0$, then is settled by the triviality of the hidden fundamental groups in the above claim, so we may assume $\ch(k) = p > 0$.

\sgap

Let $\alpha_0\in\Aut_k(E_0)$ be such that $[\varphi_0\circ(\alpha_0)_*] = [\varphi_0]$. By Deuring's lifting theorem (c.f. \cite{Lang87} \S13.5), there exists an elliptic curve $E$ over a characteristic 0 complete discrete valuation ring $A$ with fraction field $K$, maximal ideal $\mf{p}$ and residue field $A/\mf{p}\cong k$, such that $E_k := E\times_A k \cong E_0$, and an automorphism $\alpha\in\Aut_A(E)$ whose restriction to $E_k$ is $\alpha_0$. Note that since $k$ is algebraically closed, $\pi_1(\Spec A)$ is trivial, and thus $[\varphi_0]$ extends to give a $G$-structure $[\varphi]$ on $E$.


\sgap

The specialization homomorphism (\cite{SGA1} Cor 3.9) then gives us an isomorphism of prime-to-$p$ fundamental groups $\pi_1^{(p)}(E_{\ol{K}})\rightiso \pi_1^{(p)}(E_0)$ and hence an exterior surjection
$$[\varphi_{\ol{K}} : \pi_1^{(p)}(E_{\ol{K}})\rightiso \pi_1^{(p)}(E_0)\stackrel{\varphi_0}{\longrightarrow} G]$$
which is fixed by $\alpha$ (note that $p\nmid N$). Since the morphisms
$$\Spec\ol{K}\stackrel{[\varphi_{\ol{K}}]}{\longrightarrow}\mM(G)_R,\quad\text{and}\quad\Spec k\stackrel{[\varphi_0]}{\longrightarrow}\mM(G)_R$$
determined by the $G$-structures $[\varphi_{\ol{K}}]$ and $[\varphi_0]$ both factor through $\Spec A\stackrel{[\varphi]}{\longrightarrow}\mM(G)_R$, they must both land in the same connected component, namely $\mM_R$. Thus, since $[\varphi_{\ol{K}}]\in\mM_{\Qbar}$, and since we already know $\mM_{\Qbar}$ is representable, we must have $\alpha = \id$, and hence $\alpha_0 = \id$.
\end{proof}

\begin{cor}\label{cor_representable_cofinal} The collection of stacks
$$\{\mM(G)_{\QQ} : \text{$G$ is finite, 2-generated, and $\mM(G)_{\QQ}$ is representable}\}$$
form a cofinal system in the category of stacks finite \'{e}tale over $\mM(1)_{\QQ}$
\end{cor}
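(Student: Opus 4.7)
The plan is a two-step refinement argument. Given any stack $\mathcal{N}$ finite \'{e}tale over $\mM(1)_\QQ$, I first exhibit some finite 2-generated $G$ with a morphism $\mM(G)_\QQ \to \mathcal{N}$ over $\mM(1)_\QQ$, then refine $G$ to a finite 2-generated $G'$ so that $\mM(G')_\QQ$ is representable and the composed morphism $\mM(G')_\QQ \to \mM(G)_\QQ \to \mathcal{N}$ still exists.

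For the first step I invoke the congruence subgroup property for $\Aut(F_2)$ (Theorem \ref{thm_CSP}). Via the Galois correspondence for stacks (Theorem \ref{thm_Galois_category_of_stacks}), Oda's theorem (Theorem \ref{thm_fundamental_group_M11}), and the identification of the monodromy with the outer $\SL_2(\ZZ)$-action on $\Hom^\surext(F_2,G)$ (Theorem \ref{thm_outer_representation}), the CSP translates into the assertion that the stabilizers $\{\Gamma_{[\varphi]}\}$ of exterior surjections $[\varphi]: F_2 \twoheadrightarrow G$ are cofinal among open subgroups associated to the relevant $\pi_1(\mM(1)_\QQ)$-sets. The argument runs parallel to Corollary \ref{cor_CSP}, yielding some $G$ together with a morphism $\mM(G)_\QQ \to \mathcal{N}$ over $\mM(1)_\QQ$.

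The heart of the proof is the refinement step. Fix any integer $n \ge 3$, choose any surjection $\varphi : F_2 \twoheadrightarrow G$, and let $\pi_n : F_2 \twoheadrightarrow (\ZZ/n\ZZ)^2$ be abelianization composed with reduction mod $n$. Define
\begin{equation*}
G' := \text{image}\big((\varphi, \pi_n) : F_2 \longrightarrow G \times (\ZZ/n\ZZ)^2\big).
\end{equation*}
Then $G'$ is a finite 2-generated group, and its two coordinate projections restrict to surjections $G' \twoheadrightarrow G$ and $G' \twoheadrightarrow (\ZZ/n\ZZ)^2$. By Proposition \ref{prop_descent_of_groups}, the former induces a surjective finite \'{e}tale morphism $\mM(G')_\QQ \to \mM(G)_\QQ$ over $\mM(1)_\QQ$. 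To verify representability of $\mM(G')_\QQ$ via Theorem \ref{thm_torsion_free_implies_representable}, let $\psi : F_2 \twoheadrightarrow G'$ be any surjection; composing with $G' \twoheadrightarrow (\ZZ/n\ZZ)^2$ yields a surjection $F_2 \twoheadrightarrow (\ZZ/n\ZZ)^2$. By Proposition \ref{prop_descent_of_groups}, $\Gamma_{[\psi]}$ is contained in the stabilizer of this abelian exterior surjection, which by Proposition \ref{prop_abelian_is_congruence} and a direct check (the $\SL_2(\ZZ/n\ZZ)$-action on $\Surj(\ZZ^2,(\ZZ/n\ZZ)^2)$ has trivial stabilizer for $n \ge 3$) is exactly the principal congruence subgroup $\Gamma(n)$. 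Since $\Gamma(n)$ is torsion-free for $n \ge 3$, so is $\Gamma_{[\psi]}$ for every such $\psi$, and Theorem \ref{thm_torsion_free_implies_representable} delivers representability.

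The main obstacle is ensuring the domination in the first step is realized over $\QQ$ rather than only geometrically over $\Qbar$; this requires the CSP argument to be compatible with the combined geometric monodromy and arithmetic Galois action on the fibers of $\mM(G)_\QQ$. The refinement construction in the second step is then a transparent bookkeeping exercise, crucially relying on $n \ge 3$ to avoid the torsion element $-I \in \Gamma(2)$.
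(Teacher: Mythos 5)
Your two-step outline --- reduce to cofinality of all $\{\mM(G)_\QQ\}$, then dominate $\mM(G)_\QQ$ by a representable $\mM(G')_\QQ$ --- is exactly the paper's structure; the paper's proof opens with ``It suffices to show that every $\mM(G)_\QQ$ is covered by a representable stack of the form $\mM(G')_\QQ$'' and then executes only the refinement step.

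The refinement you give is correct but differs in the choice of $G'$. The paper simply takes $G' := G\times\ZZ/p\ZZ$ for a prime $p\ge 5$ with $p\nmid |G|$: the coprimality is what keeps $G'$ 2-generated (Goursat), $G'$ then surjects onto both $G$ and $\ZZ/p\ZZ$, and since every $\ZZ/p\ZZ$-structure has stabilizer conjugate to $\Gamma_1(p)$, which is torsion-free for $p\ge5$, Proposition \ref{prop_descent_of_groups} together with Theorem \ref{thm_torsion_free_implies_representable} deliver representability of $\mM(G')_\QQ$ and the covering map. Your $G' := \mathrm{im}(\varphi,\pi_n)\subset G\times(\ZZ/n\ZZ)^2$ achieves the same thing, with $\Gamma(n)$ ($n\ge3$) replacing $\Gamma_1(p)$; a small advantage is that your $G'$ is 2-generated by construction rather than via a coprimality argument, a small disadvantage is the extraneous dependence on a chosen $\varphi$. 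Both are fine.

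Your flagged worry about the ``first step'' is the one genuine soft spot, but it is shared with the paper itself. Corollary \ref{cor_CSP}, Theorem \ref{thm_CSP}, and Theorem \ref{thm_outer_representation} only establish cofinality of the stabilizers $\Gamma_{[\varphi]}$ inside $\widehat{\SL_2(\ZZ)}=\pi_1(\mM(1)_{\Qbar})$, i.e.\ they give cofinality of the $\mM(G)_{\Qbar}$'s over $\mM(1)_{\Qbar}$. The corollary being proved is over $\QQ$, where $\pi_1(\mM(1)_\QQ)$ is the extension of $G_\QQ$ by $\widehat{\SL_2(\ZZ)}$, and one needs the stabilizers $\mathrm{Stab}_{\pi_1(\mM(1)_\QQ)}([\varphi])$ to be cofinal among \emph{all} open subgroups of $\pi_1(\mM(1)_\QQ)$, not merely among open subgroups of the geometric piece. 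You correctly identify this as ``the main obstacle'' but then leave it as an assertion about compatibility of monodromy with the Galois action, without supplying the missing arithmetic ingredient (essentially, faithfulness/largeness of the full outer representation $\pi_1(\mM(1)_\QQ)\to\Out(\widehat{F_2})$, not just of its restriction to $\widehat{\SL_2(\ZZ)}$). The paper is equally terse --- it simply writes ``It suffices\dots'' without argument --- so your proposal reproduces the paper's argument, including the tacit hypothesis it relies on.
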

\begin{proof} It suffices to show that every $\mM(G)_{\QQ}$ is covered by a representable stack of the form $\mM(G')_{\QQ}$. Let $p\nmid |G|$ be a prime with $p\ge 5$, then any $\ZZ/p\ZZ$-structure $[\varphi] : F_2\twoheadrightarrow \ZZ/p\ZZ$ has stabilizer conjugate to $\Gamma_1(p)$, and hence is torsion-free, so by the above theorem, $\mM(\ZZ/p\ZZ)_{\QQ}$ is representable.

\sgap

Taking $G' := G\times\ZZ/p\ZZ$, we find that $G'$ is still 2-generated and admits both $G$ and $\ZZ/p\ZZ$ as quotients. Thus, by \ref{prop_descent_of_groups}, $\mM(G')_\QQ$ is representable and covers $\mM(G)_\QQ$.
\end{proof}

\subsection{Basic properties of $\mM(G)$}\label{ss_basic_properties} In this section we will collect the basic properties of the stacks $\mM(G)$ developed in the preceding sections, and briefly discuss the action of $\Gal(\Qbar/\QQ)$ on the connected components of $\mM(G)_{\Qbar}$. As usual, $G$ will be a 2-generated finite group of order $N$. 

\sgap

Firstly, the following is just Proposition \ref{prop_GM_finite_\'{e}tale_over_M11}.
\begin{thm}\label{thm_basic_1} The stack $\mM(G)$ is a smooth separated Deligne-Mumford stack over $\ZZ[1/N]$, finite \'{e}tale over $\mM(1)_{\ZZ[1/N]}$.
\end{thm}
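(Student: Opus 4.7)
The plan is to deduce this from the work already done in Section 2, together with the basic structure theorem for $\mM(1)$ (Theorem \ref{thm_M11_is_DM}) and the general principle that finite étale representable morphisms propagate good properties.

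First I would unwind the definition of $\mM(G)$ from Definition \ref{defn_TMS} together with Proposition \ref{prop_TLS_form_a_sheaf}: since $\pP_G$ is a sheaf of sets on $\mM(1)_{\ZZ[1/N]}$, $\mM(G)$ is automatically a stack, and the forgetful morphism
\[
p \colon \mM(G) \longrightarrow \mM(1)_{\ZZ[1/N]}
\]
is relatively representable (by schemes). Explicitly, for any scheme $S$ and any morphism $S \to \mM(1)_{\ZZ[1/N]}$ classifying an elliptic curve $E/S$, the fiber product $\mM(G) \times_{\mM(1)_{\ZZ[1/N]}} S$ represents the functor $\pP_G|_{\Sch/S}$, which by Proposition \ref{prop_FLC_2} is represented by the scheme $\hHom^\surext(\pi_1(E^\circ/S),G)$, finite étale over $S$. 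Thus $p$ is representable and finite étale.

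Next I would invoke Theorem \ref{thm_M11_is_DM}, which says $\mM(1)$ is a smooth separated Noetherian Deligne-Mumford stack over $\Spec\ZZ$. Base-changing to $\ZZ[1/N]$ preserves all of these properties, so $\mM(1)_{\ZZ[1/N]}$ is a smooth separated Deligne-Mumford stack over $\ZZ[1/N]$. The key observation is then general: if $\mM \to \nN$ is a representable finite étale morphism of stacks and $\nN$ is a smooth separated Deligne-Mumford stack over a base $B$, then $\mM$ inherits all three properties. Being Deligne-Mumford can be checked on an étale presentation — pulling back an étale atlas $U \to \nN$ by a scheme gives an étale atlas $U \times_\nN \mM \to \mM$ (a scheme, since $p$ is representable); smoothness over $B$ follows because étale morphisms are smooth and smoothness composes; and separatedness follows because $p$ is finite hence proper, and proper over separated is separated.

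Applying this with $\nN = \mM(1)_{\ZZ[1/N]}$ and $\mM = \mM(G)$ gives the theorem. There is essentially no obstacle here since everything reduces to the two cited inputs (Theorem \ref{thm_M11_is_DM} and Proposition \ref{prop_FLC_2}); the only minor care is in recording that the morphism $p$ is representable in schemes, which is what allows all ``property of morphism of schemes'' arguments to transfer verbatim to the stack setting. In fact, as the author notes, this is exactly Proposition \ref{prop_GM_finite_\'{e}tale_over_M11}, so the proof amounts to a one-line citation.
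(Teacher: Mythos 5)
Your proof is correct and takes the same route as the paper: the paper's proof is simply the one-line observation that this theorem restates the earlier proposition asserting that $\mM(G)$ is a Deligne--Mumford stack, finite \'{e}tale over $\mM(1)_{\ZZ[1/N]}$, smooth and separated over $\ZZ[1/N]$. You correctly identified this and accurately spelled out the underlying argument (representability of the forgetful map via Proposition~\ref{prop_FLC_2}, then transfer of smoothness, separatedness, and the Deligne--Mumford property from $\mM(1)_{\ZZ[1/N]}$ along a representable finite \'{e}tale morphism), so, just as you note, the whole thing reduces to a citation.
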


By Proposition \ref{prop_basic_results_cms} we have:
\begin{thm}\label{thm_basic_2} The coarse moduli scheme $M(G)$ of $\mM(G)$ is a Noetherian normal affine scheme, smooth in relative dimension 1 over $\ZZ[1/N]$, and finite over the $j$-line
$$\Spec \ZZ[1/N][j] = M(1)_{\ZZ[1/N]}$$
For any $S\in\Sch/\ZZ[1/N]$, $M(G)_S := M(G)\times_{\ZZ[1/N]}S$ is the coarse moduli scheme of $\mM(G)_S$. 
\end{thm}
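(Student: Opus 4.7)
The plan is to specialize Proposition \ref{prop_basic_results_cms} to the case $\mM = \mM(G)$, using Theorem \ref{thm_basic_1} to verify the hypothesis that $\mM(G)$ is finite \'{e}tale over $\mM(1)_{\ZZ[1/N]}$. Having done this, each individual assertion of the theorem is then either a direct quote of the proposition or a short formal consequence of its conclusions; the one genuinely delicate point is the base-change compatibility for \emph{arbitrary} $S\in\Sch/\ZZ[1/N]$, which is not covered by \ref{prop_basic_results_cms}(2) outside the regular Noetherian setting (absent the hypothesis $6\mid N$).

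The first batch of claims I would extract in order: (i) smoothness of $M(G)$ over $\ZZ[1/N]$ is Proposition \ref{prop_basic_results_cms}(1); (ii) finiteness of $M(G)$ over $M(1)_{\ZZ[1/N]} = \Spec \ZZ[1/N][j]$ is Proposition \ref{prop_basic_results_cms}(3); (iii) affineness of $M(G)$ follows since a scheme finite over the affine $j$-line is itself affine; (iv) $M(G)$ is Noetherian because it is finite over the Noetherian scheme $\Spec\ZZ[1/N][j]$; (v) normality follows from smoothness via regularity; (vi) the relative dimension is $1$ because $\mM(1)_{\ZZ[1/N]}$ has relative dimension $1$ over $\ZZ[1/N]$, the map $\mM(G)\to\mM(1)_{\ZZ[1/N]}$ is finite \'{e}tale, and the coarsening map $\mM(G)\to M(G)$ is a universal homeomorphism by \ref{prop_basic_results_cms}(4), hence preserves dimension.

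For the base-change statement, I would use the explicit ``uniformizable quotient'' construction of \ref{pt_uniformizability}. Since $G$ is nontrivial, $N>1$, so one can fix a prime $p\mid N$; this $p$ is then automatically invertible on any $S\in\Sch/\ZZ[1/N]$. Because $\mM(p^2)$ is representable (the level is at least $3$, so by \ref{thm_torsion_free_implies_representable} the stabilizers $\Gamma(p^2)$ are torsion-free), the fibre product $\mM(G)\times_{\mM(1)_{\ZZ[1/N]}}\mM(p^2)$ is a scheme, affine over $\mM(p^2)$, and by \ref{pt_uniformizability} we have
$$M(G) \;=\; \bigl(\mM(G)\times_{\mM(1)_{\ZZ[1/N]}}\mM(p^2)\bigr)\big/\GL_2(\ZZ/p^2\ZZ)$$
taken in the category of schemes. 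Base-changing this formula along $S\to\Spec\ZZ[1/N]$, the fibre product commutes with base change tautologically, and the remaining task is to argue that the scheme-theoretic quotient by $\GL_2(\ZZ/p^2\ZZ)$ commutes with this particular base change. Granting this, the same identity then displays $M(G)\times_{\ZZ[1/N]}S$ as the uniformizable-quotient construction of the coarse moduli of $\mM(G)_S$, i.e.\ as the coarse moduli scheme of $\mM(G)_S$.

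The main obstacle is precisely the last step: because the order $|\GL_2(\ZZ/p^2\ZZ)| = p^5(p-1)^2(p+1)$ involves primes other than $p$, the naive averaging argument for invariants does not automatically commute with base change. The right framework is Katz--Mazur's treatment of coarse moduli schemes for relatively representable moduli problems (compare \cite{KM85} \S 8.1--8.2): for the affine schemes arising as $\mM(G)\times_{\mM(1)}\mM(p^2)$ the resulting quotient \emph{is} Katz--Mazur's ``affine $M$'', which is constructed precisely so as to commute with arbitrary base change on $\ZZ[1/N]$. Invoking this is what pushes the base-change statement past the regularity hypothesis in \ref{prop_basic_results_cms}(2) and yields the full conclusion of the theorem.
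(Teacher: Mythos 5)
Your observations on the non-base-change part of the theorem are exactly what the paper does: the paper's entire proof is the single sentence ``By Proposition \ref{prop_basic_results_cms} we have:'', so your itemized extraction of (1), (3), (4) of \ref{prop_basic_results_cms}, together with the standard facts that finiteness over $\Spec\ZZ[1/N][j]$ gives affineness and Noetherian-ness and that smoothness gives normality, is the same argument unpacked. Your approach there is correct and matches the paper.

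You also correctly notice something the paper glosses over: Proposition \ref{prop_basic_results_cms}(2) only asserts base-change compatibility when $S$ is regular Noetherian or $6\mid N$, while Theorem \ref{thm_basic_2} claims it for \emph{every} $S\in\Sch/\ZZ[1/N]$. A bare citation of \ref{prop_basic_results_cms} does not close that gap, and calling this out is a genuine contribution. However, your proposed fix does not close it either. The formula
$$M(G) = \bigl(\mM(G)\times_{\mM(1)_{\ZZ[1/N]}}\mM(p^2)\bigr)\big/\GL_2(\ZZ/p^2\ZZ)$$
from \ref{pt_uniformizability} is a quotient by a finite group whose order is \emph{not} invertible on $\ZZ[1/N]$ (since $p\mid N$ forces only $p$ to be invertible, while $|\GL_2(\ZZ/p^2\ZZ)|$ is divisible by primes dividing $p\pm1$), and for such quotients the passage to invariants need not commute with tensor product. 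You acknowledge this, but the appeal to Katz--Mazur \S8.1--8.2 as a remedy is misplaced: Katz--Mazur explicitly caution that the formation of their ``affine $M(\mathscr{P})$'' does \emph{not} commute with arbitrary change of base ring, and the positive results they give are essentially the two cases already recorded in \ref{prop_basic_results_cms}(2) --- the regular-Noetherian case via their appendix (p.~508--510) and the case $6\mid N$ via tameness (the automorphism groups of elliptic curves have order dividing $24$). So invoking [KM85] does not ``push past'' the hypotheses of \ref{prop_basic_results_cms}(2); it is the source of those hypotheses. As it stands, your proposal identifies the right weak point but then asserts rather than proves that \ref{pt_uniformizability} commutes with base change; either one must restrict $S$ as in \ref{prop_basic_results_cms}(2), or one must supply an additional argument using the specific structure of $\mM(G)$ (for instance, decomposing over torsion-free vs.\ non-torsion-free components and handling the tame/representable cases separately). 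Without that, the claim for arbitrary $S$ is not established.
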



Forgetting level structures defines a finite \'{e}tale morphism
$$p : \mM(G)\longrightarrow\mM(1)_{\ZZ[1/N]}$$
Let $x_0 : \Spec\Qbar\rightarrow \mM_{1,1}$ correspond to an elliptic curve $E_0/\Qbar$. Then example \ref{ex_fiber} showed that the fiber $p^{-1}(x_0)$ can be identified with
$$p^{-1}(x_0) = \hHom^\surext(\underbrace{\pi_1(E^\circ_0/\Qbar)}_{\cong\widehat{F_2}},G) = \Hom^\surext(F_2,G)$$
By Theorem \ref{thm_outer_representation}, the action of $\pi_1(\mM(1)_{\Qbar})\cong\widehat{\SL_2(\ZZ)}$ on $p^{-1}(x_0)$ is given by the natural action of $\SL_2(\ZZ)$ acting on the group $F_2$ via outer automorphisms of determinant 1. For any surjective homomorphism $\varphi : F_2\rightarrow G$, let $[\varphi] $ denote the corresponding exterior homomorphism (ie, class mod $\Inn(G)$), which we may think of as an element of $p^{-1}(x_0)$. Let $\Gamma_{[\varphi]} := \Stab_{\SL_2(\ZZ)}([\varphi])$ be its stabilizer in $\SL_2(\ZZ)$. Then by the Galois correspondence (c.f. \ref{cor_G_to_Gamma} and \ref{prop_descent_of_groups}), we have

\begin{thm}\label{thm_orbits_components} The connected components of $\mM(G)_{\Qbar}$ are in bijection with the orbits of $\SL_2(\ZZ)$ on the set $\Hom^\surext(F_2,G)$. The component of $\mM(G)_{\Qbar}$ containing $[\varphi]$ corresponds to the subgroup $\ol{\Gamma_{[\varphi]}}\subset\widehat{\SL_2(\ZZ)}$. If $G'$ is another group with surjection $f : G\twoheadrightarrow G'$, then $f$ induces a surjective map $\Hom^\surext(F_2,G)\rightarrow\Hom^\surext(F_2,G')$, whence a surjective map
$$\mM(G)\stackrel{f_*}{\longrightarrow} \mM(G')$$
sending $[\varphi] \mapsto [f\circ\varphi]$ on geometric fibers. In particular, $\Gamma_{[\varphi]}\le\Gamma_{[f\circ\varphi]}$.
\end{thm}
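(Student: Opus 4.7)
The plan is to view this theorem as a bookkeeping corollary that combines the Galois correspondence for $\mM(1)_{\Qbar}$ (Theorem \ref{thm_Galois_category_of_stacks}, \ref{thm_fundamental_group_M11}) with the explicit identification of the monodromy action (Theorem \ref{thm_outer_representation}) and the functoriality observation already packaged in Proposition \ref{prop_descent_of_groups}. Since $\mM(G)\to\mM(1)_{\ZZ[1/N]}$ is finite \'etale (Proposition \ref{prop_GM_finite_\'{e}tale_over_M11}), the base change $\mM(G)_{\Qbar}\to\mM(1)_{\Qbar}$ lives in the Galois category $\cC_{\mM(1)_{\Qbar}}$, so I can read off its combinatorial structure from the fiber at $x_0$ together with its $\pi_1(\mM(1)_{\Qbar})$-action.

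First I would recall from Example \ref{ex_fiber} that the fiber over $x_0$ is $p^{-1}(x_0) = \Hom^\surext(\widehat{F_2},G) = \Hom^\surext(F_2,G)$, the last equality because $|G|$ is only divisible by primes in $\LL$. By Theorem \ref{thm_outer_representation}, the monodromy action of $\pi_1(\mM(1)_{\Qbar})\cong\widehat{\SL_2(\ZZ)}$ on this fiber agrees with the action induced by the natural embedding $\SL_2(\ZZ)\hookrightarrow\Out(F_2)\hookrightarrow\Out(\widehat{F_2})$ extended by continuity. The Galois correspondence then identifies connected components of $\mM(G)_{\Qbar}$ with $\widehat{\SL_2(\ZZ)}$-orbits on $p^{-1}(x_0)$, and the component passing through $[\varphi]$ with the open subgroup $\Stab_{\widehat{\SL_2(\ZZ)}}([\varphi])$. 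Because the target $\Hom^\surext(F_2,G)$ is finite, this action factors through a finite quotient of $\widehat{\SL_2(\ZZ)}$; since $\SL_2(\ZZ)$ is dense in its profinite completion, the $\widehat{\SL_2(\ZZ)}$-orbits coincide with the $\SL_2(\ZZ)$-orbits, and the stabilizer in $\widehat{\SL_2(\ZZ)}$ is precisely $\ol{\Gamma_{[\varphi]}}$.

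For the functorial statement, a surjection $f:G\twoheadrightarrow G'$ of 2-generated groups gives, by post-composition, a map $f_*:\Hom^\surext(F_2,G)\to\Hom^\surext(F_2,G')$, which is manifestly $\SL_2(\ZZ)$-equivariant since $\SL_2(\ZZ)$ acts through outer automorphisms of $F_2$ on the left. The surjectivity of $f_*$ is Gasch\"utz' lemma (Proposition 2.5.4 of \cite{RZ10}): any generating pair of $G'$ lifts to a generating pair of $G$. Applying the Galois correspondence in the other direction, this $\SL_2(\ZZ)$-equivariant surjection of finite sets corresponds to a surjective finite \'etale morphism $\mM(G)_{\Qbar}\to\mM(G')_{\Qbar}$ that descends (by the same construction over $\ZZ[1/|G|]$ of \S\ref{ss_definition_of_M(G)}) to $\mM(G)\stackrel{f_*}{\longrightarrow}\mM(G')$ sending $[\varphi]\mapsto[f\circ\varphi]$. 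The containment $\Gamma_{[\varphi]}\le\Gamma_{[f\circ\varphi]}$ is immediate: any $\gamma\in\SL_2(\ZZ)$ fixing $[\varphi]$ modulo $\Inn(G)$ automatically fixes $[f\circ\varphi]$ modulo $\Inn(G')$.

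There is really no serious obstacle here; the only subtlety worth watching is the passage from $\widehat{\SL_2(\ZZ)}$-orbits to $\SL_2(\ZZ)$-orbits, and from the closure $\ol{\Gamma_{[\varphi]}}$ (which is what the Galois correspondence literally produces) back to $\Gamma_{[\varphi]}$. Both are handled by the standard observation that since $\SL_2(\ZZ)$ is finitely generated, taking closures gives an inclusion-preserving bijection between finite index subgroups of $\SL_2(\ZZ)$ and open subgroups of $\widehat{\SL_2(\ZZ)}$, an abuse of notation already adopted immediately after Corollary \ref{cor_G_to_Gamma}.
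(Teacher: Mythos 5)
Your proof is correct and follows essentially the same route the paper takes: the paper states Theorem~\ref{thm_orbits_components} as a direct consequence of the Galois correspondence via Corollary~\ref{cor_G_to_Gamma} and Proposition~\ref{prop_descent_of_groups}, and your argument is simply an unpacking of those two results (fiber identification from Example~\ref{ex_fiber}, monodromy from Theorem~\ref{thm_outer_representation}, Gasch\"utz for surjectivity, and the finite-generation remark to pass between $\SL_2(\ZZ)$ and $\widehat{\SL_2(\ZZ)}$).
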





The connected components of $M(G)_{\Qbar}$ may not be defined over $\QQ$. For example, if $G = (\ZZ/n\ZZ)^2$, then for any surjection $\psi : F_2\twoheadrightarrow(\ZZ/n\ZZ)^2$, the stabilizer $\Gamma_{[\psi]}$ is $\Gamma(n)$, and there are $\phi(n)$ orbits of $\SL_2(\ZZ)$ on $\Hom^\surext(F_2,(\ZZ/n\ZZ)^2)$ corresponding to the $\phi(n)$ possible determinants of matrices over $\ZZ/n\ZZ$. The Galois equivariance of the Weil pairing implies that $M((\ZZ/n\ZZ)^2)_\QQ = M(n)_\QQ$ is connected, and its geometric components are defined over $\QQ(\zeta_n)$, where $\zeta_n := e^{2\pi i/N}$.

\sgap

\textbf{Notation (Important).} From now on for some $G$-structure $[\varphi]$ and $\ZZ[1/N]$-scheme $S$, let $M(G)_S([\varphi])$ (resp. $\mM(G)_S([\varphi])$) be the connected component of $M(G)_S$ (resp. $\mM(G)_S$) containing $[\varphi]$. If the component is furthermore \emph{geometrically connected}, then we will refer to it as $Y([\varphi])_S$ (resp. $\yY([\varphi])_S$).

\sgap

Given the base change diagram

$$\xymatrix{
\mM(G)_{\Qbar}\ar[d]\ar[r] & \mM(G)_\QQ\ar[d] \\
\Spec\Qbar\ar[r] & \Spec\QQ
}$$

the natural action of $G_\QQ := \Gal(\Qbar/\QQ)$ on the bottom induces an automorphism of $\mM(G)_{\Qbar}$ over $\mM(G)_\QQ$, and whence an action on the set of connected components $\pi_0(\mM(G)_{\Qbar})$ of $\mM(G)_{\Qbar}$. Note that this is not in general an automorphism of the cover $\mM(G)_{\Qbar}/\mM(1)_{\Qbar}$.

\sgap

Note that $Y([\varphi])_{\Qbar}$ is defined over the fixed field $K_{[\varphi]}$ of $\Stab_{\Gal(\Qbar/\QQ)} (Y([\varphi]))$, and by Proposition \ref{prop_cms_are_modular_curves}, $Y([\varphi])_\CC \cong \hH/\Gamma_{[\varphi]}$. Thus $K_{[\varphi]}$ is a number field and is the minimal extension of $\QQ$ with the property that there exists a connected component of $M(G)_{K_{[\varphi]}}$ whose inverse image in $M(G)_{\Qbar}$ is $Y([\varphi])_{\Qbar}$. Thus, there is a model of $Y([\varphi])_{\Qbar}$ defined over $K_{[\varphi]}$ which extends to a smooth model of $\hH/\Gamma_{[\varphi]}$ over the ring of integers $\ZZ_{K_{[\varphi]}}[1/N]$. We have the following:

\begin{thm} For any $[\varphi]\in p^{-1}(x_0)$, the connected component $Y([\varphi])_{\Qbar}\subset M(G)_{\Qbar}$ is a $\Qbar$-model of $\hH/\Gamma_{[\varphi]}$. Further, there is a number field $K_{[\varphi]}$, determined by $[\varphi]$, and a smooth model $Y([\varphi])_{\ZZ_{K_{[\varphi]}}[1/N]}$ of $Y([\varphi])_{\Qbar}$ over $\ZZ_{K_{[\varphi]}}[1/N]$.
\end{thm}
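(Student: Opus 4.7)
The first assertion is immediate: by Corollary~\ref{cor_G_to_Gamma} together with Proposition~\ref{prop_cms_are_modular_curves}, the connected component of $M(G)_\CC$ containing $[\varphi]$ is isomorphic as a Riemann surface to $\hH/\Gamma_{[\varphi]}$, and hence so is its pullback $Y([\varphi])_{\Qbar}$.

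The plan for the rest is to descend $Y([\varphi])_{\Qbar}$ first to a geometrically connected scheme over a number field and then spread it out. First I would let $H := \Stab_{G_\QQ}(Y([\varphi])_{\Qbar}) \le G_\QQ$ be the stabilizer of $Y([\varphi])_{\Qbar}$ under the natural permutation action of $G_\QQ = \Gal(\Qbar/\QQ)$ on the finite set of connected components of $M(G)_{\Qbar}$. Since this set is finite, $H$ is open in $G_\QQ$, so its fixed field $K_{[\varphi]} := \Qbar^H$ is a number field, and it is determined by $[\varphi]$ up to the choice of representative of its $G_\QQ$-orbit of components. Since $Y([\varphi])_{\Qbar}$ is an $H$-stable clopen subscheme of $M(G)_{\Qbar} = M(G)_{K_{[\varphi]}} \times_{K_{[\varphi]}} \Spec\Qbar$, Galois descent (applied to the finite étale descent datum of ``picking a clopen component'') produces a geometrically connected clopen subscheme $Y([\varphi])_{K_{[\varphi]}} \subset M(G)_{K_{[\varphi]}}$ whose pullback to $\Qbar$ is $Y([\varphi])_{\Qbar}$.

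To extend $Y([\varphi])_{K_{[\varphi]}}$ to a smooth model over $R := \ZZ_{K_{[\varphi]}}[1/N]$, I would work inside $M(G)_R$, which by Theorem~\ref{thm_basic_2} is smooth over the Dedekind domain $R$, hence regular and therefore normal. On a locally Noetherian normal scheme the connected components coincide with the irreducible components; and for such a scheme flat of finite type over an integral Dedekind base, each irreducible component dominates the base, so passing to generic fibers induces a bijection between the connected components of $M(G)_R$ and those of its generic fiber $M(G)_{K_{[\varphi]}}$. Applying the inverse of this bijection (scheme-theoretic closure in $M(G)_R$) to $Y([\varphi])_{K_{[\varphi]}}$ yields a clopen subscheme $Y([\varphi])_{R}$, which inherits smoothness over $R$ from $M(G)_R$ and whose generic fiber base-changes to $Y([\varphi])_{\Qbar}$.

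The main technical step is the claimed bijection between connected components of a normal Noetherian scheme smooth (hence flat) over a Dedekind domain and those of its generic fiber; once that fact is in place, everything else is assembling results already proved in the paper, namely the smoothness of $M(G)$ over $\ZZ[1/N]$ and its compatibility with base change. A secondary concern is just checking that the various clopen subschemes produced really glue to one another compatibly, but this is automatic from the uniqueness in Galois descent and the uniqueness of the component-closure bijection above.
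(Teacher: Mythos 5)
Your proof is correct and follows essentially the same route as the paper: $K_{[\varphi]}$ is taken to be the fixed field of $\Stab_{G_\QQ}(Y([\varphi])_{\Qbar})$, the component descends to a geometrically connected scheme over $K_{[\varphi]}$ by Galois descent, and one then spreads out inside $M(G)_{\ZZ_{K_{[\varphi]}}[1/N]}$ using its smoothness over $\ZZ[1/N]$. The only difference is one of exposition: the paper compresses the spreading-out step to a single ``Thus,'' whereas you correctly fill in the missing detail, namely that because $M(G)_R$ is Noetherian, normal, and flat over the Dedekind domain $R = \ZZ_{K_{[\varphi]}}[1/N]$, its connected components are irreducible, each dominates $\Spec R$, and passing to generic fibres gives a bijection $\pi_0(M(G)_R)\rightiso\pi_0(M(G)_{K_{[\varphi]}})$, so that the clopen piece over $K_{[\varphi]}$ extends uniquely to a smooth clopen piece over $R$.
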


Next, by Theorem \ref{thm_torsion_free_implies_representable}, we have

\begin{thm} Let $S$ be a scheme over $\ZZ[1/N]$. Let $\mM_S$ be a connected component of $\mM(G)_S$, then $\mM_S$ is representable by a scheme if and only if every
$$[\varphi]\in \big(p^{-1}(x_0)\cap\mM_S\big)\subset\Hom^\surext(F_2,G),$$
the stabilizer $\Gamma_{[\varphi]}$ is torsion-free.
\end{thm}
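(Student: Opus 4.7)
The plan is to recognize that this theorem is a repackaging of Theorem \ref{thm_torsion_free_implies_representable} in the component-based notation introduced in \S\ref{ss_basic_properties}. Since $\mM_S$ is a connected component of $\mM(G)_S$, the set $p^{-1}(x_0)\cap\mM_S$ is precisely the monodromy orbit of $\pi_1(\mM(1)_{\Qbar})$ labelling $\mM_S$, and torsion-freeness of $\Gamma_{[\varphi]}$ is an orbit invariant (a conjugate of a torsion-free group is torsion-free). Thus the condition ``every $[\varphi]\in p^{-1}(x_0)\cap\mM_S$'' is equivalent to the condition in the earlier theorem applied to the component $\mM_S$, and I would simply invoke Theorem \ref{thm_torsion_free_implies_representable}.

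If a self-contained sketch is wanted, I would proceed as follows. First, reduce to $S=\Spec R$ with $R$ a characteristic-zero integral domain, using that representability is \'{e}tale-local on the base and that $\mM(G)$ is finite \'{e}tale over $\mM(1)_{\ZZ[1/N]}$. For the \emph{only if} direction, pull back to $\Qbar=\overline{\Frac R}$; by Proposition \ref{prop_cms_are_modular_curves} each geometric component of $\mM_{\Qbar}$ analytifies to $[\hH/\Gamma_{[\varphi]}]$, which is representable as an analytic stack exactly when $\Gamma_{[\varphi]}$ acts freely on $\hH$, equivalently is torsion-free as a subgroup of $\SL_2(\ZZ)$.

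For the \emph{if} direction in characteristic zero: by Lemma \ref{lemma_closure_of_TF_is_TF}, $\ol{\Gamma_{[\varphi]}}$ is torsion-free. Since $\mM_{\Qbar}$ is uniformizable (pass to the representable cover $\mM(p^2)\to\mM(1)$ for any prime $p\nmid N$, cf.\ \ref{pt_uniformizability}), the hidden fundamental groups of geometric points of $\mM_{\Qbar}$ inject into $\pi_1(\mM_{\Qbar})\cong\ol{\Gamma_{[\varphi]}}$ and hence are trivial; Lemma \ref{lemma_rigidity} then gives representability. For positive residue characteristic $p$ (necessarily $p\nmid N$), given $\alpha_0\in\Aut_k(E_0)$ fixing a $G$-structure $[\varphi_0]\in p^{-1}(x_0)\cap\mM_S$, use Deuring's lifting theorem to lift $(E_0,\alpha_0)$ to $(E,\alpha)$ over a complete DVR $A$ with residue field $k$. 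Since $\pi_1(\Spec A)$ is trivial, $[\varphi_0]$ extends uniquely to a $G$-structure over $A$, and the specialization isomorphism $\pi_1^{(p)}(E_{\ol K})\rightiso\pi_1^{(p)}(E_0)$ transports it to the generic fiber, where $\alpha$ still fixes it. By the already-established characteristic zero case, $\alpha$ is trivial, hence so is $\alpha_0$; by Lemma \ref{lemma_rigidity}(3), this forces $\mM_S$ to be representable.

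The main obstacle is the lifting step in positive characteristic — one must verify that the lifted automorphism $\alpha$ really does preserve the lifted $G$-structure on the generic fiber and that the specialized and lifted structures land in the same connected component $\mM_S$. Both points follow from $p\nmid N$ (so the prime-to-$p$ specialization isomorphism captures all of $G$) together with connectedness of $\Spec A$, but keeping track of the components and the compatibilities across the specialization is the delicate bookkeeping.
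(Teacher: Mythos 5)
Your identification is exactly right: in the paper this theorem is stated in \S\ref{ss_basic_properties} with the one-line justification ``by Theorem \ref{thm_torsion_free_implies_representable},'' and the orbit-invariance observation about torsion-freeness makes the two formulations interchangeable. Your supplementary self-contained sketch also tracks the paper's proof of Theorem \ref{thm_torsion_free_implies_representable} step for step (reduction to a characteristic-zero domain, the analytic uniformization for the ``only if'' direction, hidden fundamental groups and Lemma \ref{lemma_rigidity} for characteristic zero, and Deuring lifting plus prime-to-$p$ specialization for positive characteristic), so there is nothing to correct.
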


We recall that the congruence subgroup property (c.f. Corollary \ref{cor_CSP}) implies that every modular curve has a moduli interpretation:

\begin{thm} For every finite index $\Gamma\le\SL_2(\ZZ)$, there exists a 2-generated group $G$ and a $G$-structure $[\varphi]$ such that $\Gamma_{[\varphi]}$ is a normal subgroup of $\Gamma$ --- that is, $\hH/\Gamma$ is a quotient of $Y([\varphi])_{\CC}\subset M(G)_\CC$.
\end{thm}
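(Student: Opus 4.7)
The plan is to bootstrap Corollary \ref{cor_CSP}, which realizes $\hH/\Gamma$ as a quotient of some $Y([\varphi_0])_\CC$ with $\Gamma_{[\varphi_0]}\le\Gamma$ but does not guarantee normality. I will upgrade this by taking the product over the finite $\Gamma$-orbit of $[\varphi_0]$, and then identify the stabilizer of the resulting exterior surjection with the kernel of a natural outer action of $\Gamma$.

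First, by Corollary \ref{cor_CSP}, I choose a finite 2-generated group $G_0$ and an exterior surjection $[\varphi_0]:F_2\twoheadrightarrow G_0$ with $\Gamma_{[\varphi_0]}\le\Gamma$. Since $\Gamma_{[\varphi_0]}$ has finite index in $\Gamma$, the $\Gamma$-orbit $\{[\varphi_1],\dots,[\varphi_n]\}$ of $[\varphi_0]$ is finite, and each $\Gamma_{[\varphi_i]}$ is a $\Gamma$-conjugate of $\Gamma_{[\varphi_0]}$, still contained in $\Gamma$. I then form
$$\psi:=\prod_{i=1}^n\varphi_i:F_2\longrightarrow G_0^n,\qquad G:=\psi(F_2)\le G_0^n,$$
so that $G$ is a finite 2-generated group and $[\psi]\in\Hom^\surext(F_2,G)$ is a $G$-structure. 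Projecting onto each coordinate, any witness $h=(h_1,\dots,h_n)\in G$ of $\gamma\in\Gamma_{[\psi]}$ also witnesses $\gamma\in\Gamma_{[\varphi_i]}$ for every $i$, so $\Gamma_{[\psi]}\le\bigcap_i\Gamma_{[\varphi_i]}\le\Gamma$.

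The core of the argument is to construct an anti-homomorphism $\bar\alpha:\Gamma\to\Out(G)$ whose kernel equals $\Gamma_{[\psi]}$. For $\gamma\in\Gamma$, let $\sigma(\gamma)\in S_n$ denote the permutation given by $\gamma\cdot[\varphi_i]=[\varphi_{\sigma(i)}]$; after fixing a lift $\tilde\gamma\in\Aut(F_2)$ and elements $h_i\in G_0$ with $\varphi_i\circ\tilde\gamma^{-1}=c_{h_i}\varphi_{\sigma(i)}$, the automorphism $\alpha_\gamma:=c_{(h_1,\dots,h_n)}\circ P_{\sigma(\gamma)}\in\Aut(G_0^n)$ (with $P_{\sigma(\gamma)}$ the coordinate permutation) satisfies $\psi\circ\tilde\gamma^{-1}=\alpha_\gamma\circ\psi$, hence preserves $G$ setwise and restricts to an element of $\Aut(G)$.

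The main obstacle I anticipate is checking that the class $\bar\alpha_\gamma\in\Out(G)$ is independent of the choices: the ambiguity $h_i\bmod Z(G_0)$ lies in $Z(G_0^n)$ and is invisible to conjugation, while replacing $\tilde\gamma$ by $\tilde\gamma\circ c_{g_0}$ alters $(h_1,\dots,h_n)$ by $\psi(g_0^{-1})\in G$, whose conjugation action on $G$ is inner. Once $\bar\alpha$ is confirmed to be a well-defined anti-homomorphism, its kernel is automatically normal in $\Gamma$; and the chain of equivalences $\gamma\in\ker\bar\alpha$ iff $\psi\circ\tilde\gamma^{-1}=c_k\circ\psi$ for some $k\in G$ iff $\gamma\in\Gamma_{[\psi]}$, combined with the containment $\Gamma_{[\psi]}\le\Gamma$ established above, gives $\Gamma_{[\psi]}\lhd\Gamma$, completing the proof.
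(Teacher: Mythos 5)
Your proof is correct and rests on the same basic device as the paper's proof of Corollary~\ref{cor_CSP} --- take the product of $\varphi_0$ over a finite orbit and identify the stabilizer of the resulting exterior surjection $[\psi]$ --- but with two genuine variations that are worth spelling out. The paper forms the product over the \emph{full} $\SL_2(\ZZ)$-orbit of $[\varphi_0]$, so that the component containing $[\psi]$ is the Galois closure of $Y([\varphi_0])_\CC$ over $M(1)_\CC$ and $\Gamma_{[\psi]}$ is normal in all of $\SL_2(\ZZ)$; that is stronger than the theorem requires. You take the product over only the $\Gamma$-orbit, which produces a smaller group $G$ and a lower-degree cover (the Galois closure over $\hH/\Gamma$ rather than over $\hH/\SL_2(\ZZ)$), at the cost of obtaining normality only in $\Gamma$ --- which is exactly what the statement asks. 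The second difference is how normality is verified: the paper invokes the Galois correspondence and the fact that a Galois closure is Galois, whereas you construct the anti-homomorphism $\bar\alpha:\Gamma\to\Out(G)$ and compute $\ker\bar\alpha=\Gamma_{[\psi]}$ by hand. That argument is sound: the two ambiguities you flag (the $Z(G_0)$-ambiguity in each $h_i$, and the $\Inn(F_2)$-ambiguity in the lift $\tilde\gamma$) are indeed absorbed by $\Inn(G)$, the containment $\Gamma_{[\psi]}\le\bigcap_i\Gamma_{[\varphi_i]}\le\Gamma$ is clean from coordinate projection, and the anti-homomorphism identity $\bar\alpha_{\gamma\delta}=\bar\alpha_\delta\circ\bar\alpha_\gamma$ follows because $\widetilde{\gamma\delta}$ and $\tilde\gamma\tilde\delta$ differ by an element of $\Inn(F_2)$, whose effect under $\psi$ is inner in $G$. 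So your route is a correct, slightly more economical and more explicit version of the paper's.
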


We now conclude this section with a discussion of the action of $G_\QQ$ on $\pi_0(\mM(G)_{\Qbar})$.

\sgap

Let $E$ be an elliptic curve over $\QQ$, then $p^{-1}(E)$ is a scheme over $\QQ$, and hence the action of $G_\QQ$ on $\mM(G)_{\Qbar}$ restricts to an action on $p^{-1}(E_{\Qbar})$, which one can verify is identical to the monodromy action of $\pi_1(\Spec\QQ) = G_\QQ$ on the geometric fiber $p^{-1}(E_{\Qbar})$ of $p^{-1}(E)$. We will now give a combinatorial description of the $G_\QQ$ action on $\pi_0(\mM(G)_{\Qbar})$.

\sgap

Consider the exact sequence (c.f. \cite{Zoo01}, Corollary 6.6)
$$1\rightarrow\underbrace{\pi_1(\mM(1)_{\Qbar})}_{\widehat{\SL_2(\ZZ)}}\rightarrow\pi_1(\mM(1)_\QQ)\rightarrow G_\QQ\rightarrow 1$$
The elliptic curve $E/\QQ$ induces a section $\pi_1(\mM(1)_\QQ)\leftarrow G_\QQ$, and hence we may view $\pi_1(\mM(1)_\QQ)$ as being generated by the subgroups $\widehat{\SL_2(\ZZ)}$ and $G_\QQ$. Via the Galois correspondence, we may identify $\mM(G)_\QQ$ with the set $p^{-1}(E_{\Qbar})$ equipped with the action of $\pi_1(\mM(1)_\QQ)$ (and hence also an action of $\widehat{\SL_2(\ZZ)}$ and $G_\QQ$). Then, $\mM(G)_{\Qbar}$ corresponds to the same set $p^{-1}(E_{\Qbar})$ equipped with the action of $\widehat{\SL_2(\ZZ)}$ via pullback
. Since $\widehat{\SL_2(\ZZ)}$ sits as a normal subgroup of $\pi_1(\mM(1)_\QQ)$, the action of $G_\QQ$ on $p^{-1}(E_{\Qbar})$ induces a well-defined action on the $\widehat{\SL_2(\ZZ)}$-orbits of $p^{-1}(E_{\Qbar})$. This is precisely the action of $G_\QQ$ on $\pi_0(\mM(G)_{\Qbar})$. From this description, we also find that if a connected component $\yY([\varphi])_{\Qbar}\subset\mM(G)_{\Qbar}$ corresponds to a finite index subgroup $\ol{\Gamma}\le\widehat{\SL_2(\ZZ)}$, then for any $\sigma\in G_\QQ$, the action of $\sigma$ on $\pi_0(\mM(G)_{\Qbar})$ will send $\yY([\varphi])_{\Qbar}$ to the component $\yY(^\sigma[\varphi])_{\Qbar}$, corresponding to the subgroup $\sigma\ol{\Gamma}\sigma^{-1}\le\widehat{\SL_2(\ZZ)}$. Note that since $\sigma\notin\widehat{\SL_2(\ZZ)}$, conjugation by $\sigma$ only acts as an automorphism of $\widehat{\SL_2(\ZZ)}$, not necessarily an inner automorphism. Since automorphisms preserve the index of a subgroup, we find that $\ol{\Gamma}$ and $\sigma\ol{\Gamma}\sigma^{-1}$ have the same index in $\widehat{\SL_2(\ZZ)}$ and hence $\yY([\varphi])_{\Qbar}$ and $\yY(^\sigma[\varphi])_{\Qbar}$ have the same degree over $\mM(1)_{\Qbar}$. This proves the first statement of the next result.

\sgap

Before presenting the result, we recall that a point $x$ on a modular curve $\hH/\Gamma$ is called an \emph{elliptic point} of order 2 (resp. 3) if the natural map $\hH/\Gamma\rightarrow M(1)_\CC$ (with coordinate $j$) sends $x$ to $j = 1728$ (resp. $j = 0$) and is unramified at $x$.

\begin{prop}\label{prop_galois_action_preserves_signature} Let $\yY([\varphi])_{\Qbar}$ be a connected component of $\mM(G)_{\Qbar}$, then for any $\sigma\in G_\QQ$, the action of $\sigma$ on $\pi_0(\mM(G)_{\Qbar})$ sends $\yY([\varphi])_{\Qbar}$ to $\yY(^\sigma[\varphi])$, where both components must have the same degree, their coarse moduli spaces must have the same number of cusps with the same cusp widths, and have the same number of elliptic points of orders 2 and 3.
\end{prop}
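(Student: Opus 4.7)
The plan is to exploit the fact that the forgetful morphism $p : \mM(G) \to \mM(1)_{\ZZ[1/N]}$ is defined over $\ZZ[1/N]$, so the induced finite map on coarse schemes $j : M(G)_{\QQ} \to M(1)_{\QQ} \cong \AA^1_{\QQ}$ is defined over $\QQ$. Upon base change to $\Qbar$, this map becomes $G_\QQ$-equivariant in the sense that each $\sigma \in G_\QQ$ acts on $M(G)_{\Qbar}$ as an isomorphism of schemes lying above the corresponding action on $\AA^1_{\Qbar}$. Its restriction is a scheme isomorphism $Y([\varphi])_{\Qbar} \rightiso Y(^\sigma[\varphi])_{\Qbar}$ compatible with $j$. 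The equality of degrees was already established in the paragraph preceding the statement, so it remains to track the cusp and elliptic point data.

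For the elliptic point counts, the special values $j = 0$ and $j = 1728$ are $\QQ$-rational points of $\AA^1_\QQ$, hence fixed pointwise by $G_\QQ$. Thus $\sigma$ restricts to a scheme isomorphism from $j^{-1}(1728) \cap Y([\varphi])_{\Qbar}$ onto $j^{-1}(1728) \cap Y(^\sigma[\varphi])_{\Qbar}$, and since this isomorphism lies above the identity on a formal neighborhood of $1728$, it preserves the ramification index of $j$ at each point. By the definition recalled immediately before the proposition, elliptic points of order $2$ are exactly the unramified points of $j$ above $1728$, so their count is preserved. The same reasoning with $j = 0$ settles the order $3$ case.

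For the cusp data, I would pass to smooth projective compactifications. The cleanest option is to take the normalization $\overline{M(G)_\QQ}$ of $\PP^1_\QQ$ in the function field of $M(G)_\QQ$, which extends $j$ to a finite map $\bar{j} : \overline{M(G)_\QQ} \to \PP^1_\QQ$ defined over $\QQ$ (alternatively one can invoke the Deligne--Rapoport compactification via generalized elliptic curves, or use that by Corollary \ref{cor_representable_cofinal} one may dominate $\mM(G)$ by some representable $\mM(n)$, for which the compactification is classical). Since $j = \infty$ is also $\QQ$-rational, the same Galois-equivariance argument produces a scheme isomorphism from $\bar{j}^{-1}(\infty) \cap \overline{Y([\varphi])_{\Qbar}}$ onto $\bar{j}^{-1}(\infty) \cap \overline{Y(^\sigma[\varphi])_{\Qbar}}$ preserving the ramification indices. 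Because the cusps of $Y([\varphi])_\CC = \hH/\Gamma_{[\varphi]}$ are precisely the geometric points above $j = \infty$ and their widths equal the corresponding ramification indices of $\bar{j}$, both the number of cusps and their widths agree on the two components.

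The only mildly delicate point is establishing the extension $\bar{j}$ over $\QQ$ rather than merely over $\Qbar$; once this is in hand, the remainder collapses to the general observation that an isomorphism of schemes intertwining two finite maps to a common base preserves the fibers and ramification indices over any point of the base fixed by the corresponding automorphism.
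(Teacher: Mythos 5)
Your proposal is correct and takes essentially the same approach as the paper: both use the $G_\QQ$-equivariance of the (compactified) map to the $j$-line together with the $\QQ$-rationality of $j=0,1728,\infty$ to conclude that $\sigma$ preserves ramification indices (cusp widths, elliptic point counts) above those fibers. The only cosmetic difference is that you spell out the normalization step and flag the $\QQ$-descent of $\bar{j}$ explicitly, whereas the paper states it more tersely in terms of the fiber divisor $p^{-1}(i\infty)$ being $G_\QQ$-stable.
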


\begin{proof} Their coarse moduli spaces must have the same number of cusps because $\sigma$ induces an isomorphism of the nonsingular curves $Y([\varphi])_{\Qbar}$ and $Y(^\sigma[\varphi])_{\Qbar}$.

\sgap

Let $p : \ol{Y([\varphi])_{\Qbar}}\rightarrow\ol{M(1)_{\Qbar}}$ be the corresponding map of compactifications of coarse moduli schemes. Then since the cusp $i\infty$ of $\ol{M(1)_{\Qbar}}$ is $\QQ$-rational, the fiber $p^{-1}(i\infty)$ is an effective divisor on $\ol{Y([\varphi])_{\Qbar}}$ fixed by $G_\QQ$. Let
$$p^{-1}(i\infty) = \sum n_i(P_i)\qquad n_i\ge 0,\; P_i\in\ol{Y([\varphi])_{\Qbar}},\; p(P_i) = i\infty$$
then the cusp widths are precisely the multiplicities $n_i$. Since the divisor is fixed by $G_\QQ$, it follows that the action of $G_\QQ$ on $p^{-1}(i\infty)$ preserves cusp widths. By considering the fiber above $j = 0,1728\in M(1)_{\Qbar}$, the same argument  shows that $Y([\varphi])_{\Qbar},Y(^\sigma[\varphi])_{\Qbar}$ have the same number of elliptic points of orders 2 and 3.
\end{proof}

\section{Examples and remarks}\label{section_examples}
The geometry of $\mM(G)$ is determined by the structure of finite groups, and hence is readily accessible to computation. In this section we give some examples and discuss some phenomena. For more examples see Appendix \ref{appendix_tables}.

\sgap

By Proposition \ref{prop_abelian_is_congruence}, abelian $G$-structures are equivalent to classical congruence level structures. In particular, the associated modular curves are congruence modular curves. Thus, in this section we will focus our attention on nonabelian groups $G$.

\subsection{Some small nonabelian groups $G$}\label{section_first_examples}

We begin by describing $\mM(G)_{\Qbar}$ for the first four nonabelian groups $G$, together with the abelian group $(\ZZ/3\ZZ)^2$ for comparison, in Table \ref{table_first_four_nonabelian}.

\begin{table}[h]\footnotesize
$$\begin{array}{llllrlllllllll}
\text{Label} & \text{Size} & G & m & d & c_4 & c_6 & c_{-1} & \text{Cusp Widths} & \text{Genus} & \text{c/nc} & \text{c/f} & e & g\\
\hline
\Gamma(3) & 9 & (\ZZ/3\ZZ)^2 & 2 & 24 & 0 & 0 & 0 & 3^4 & 0 & \text{cong} & \text{fine} & 1 & 1 \\

\Gamma(D_6) & 6 & D_6 = S_3 & 1 & 3 & 1 & 0 & 1 & 1^12^1 & 0 & \text{cong} & \text{crse} & 3 & 3 \\
\Gamma(D_8) & 8 & D_8 & 1 & 6 & 0 & 0 & 1 & 2^3 & 0 & \text{cong} & \text{crse} & 2 & 3 \\
\Gamma(Q_8) & 8 & Q_8 & 1 & 6 & 0 & 0 & 1 & 2^3 & 0 & \text{cong} & \text{crse} & 2 & 3 \\
\Gamma(D_{10}) & 10 & D_{10} & 2 & 3 & 1 & 0 & 1 & 1^12^1 & 0 & \text{cong} & \text{crse} & 5 & 5 \\
\end{array}$$
\caption{Components of $\mM(G)_{\Qbar}$ for $(\ZZ/3\ZZ)^2$ and the first four nonabelian groups.}\label{table_first_four_nonabelian}
\end{table}


Here, each line describes the stabilizer of a representative of an $\SL_2(\ZZ)$-orbit on $\Hom^\surext(F_2,G)$, which in turn corresponds to a connected component of $\mM(G)_{\Qbar}$.

\sgap

Going left to right, the label is what we will call this stabilizer, which is  well-defined up to conjugacy. Thus, the second line for example describes the modular curve $\hH/\Gamma(D_6) \cong M(D_6)_\CC$. The field ``$m$'' is for ``multiplicity'', and refers to the number of isomorphic components of that type in $\mM(G)_{\Qbar}$ - that is to say, the components are isomorphic as covers of $\mM(1)_{\Qbar}$, or equivalently, that the corresponding subgroups of $\SL_2(\ZZ)$ are conjugate. 

\sgap

Next, $d := [\SL_2(\ZZ) : \Gamma]$, and $c_4$ (resp. $c_6$) refer to the number of conjugacy classes of elements of order 4 (resp. 6) in the stabilizer $\Gamma$, or equivalently the number of elliptic points of the corresponding modular curve (c.f. Prop \ref{prop_galois_action_preserves_signature}). The field ``$c_{-1}$'' is 1 if $-1\in\Gamma$, and is otherwise 0. The format of cusp widths is $(\text{width})^\text{\# cusps of that width}$. Genus refers to the genus of $\hH/\Gamma$, ``\text{c/nc}'' refers to if $\Gamma$ is congruence or noncongruence, and ``c/f'' asks if the moduli scheme is coarse or fine - ie, whether the stabilizer $\Gamma$ is torsion-free.

\sgap

For any particular line in the table, the data $(d,c_4,c_6,c_{-1},\text{Cusp Widths})$ is called the \emph{signature} of the corresponding subgroup of $\SL_2(\ZZ)$ or corresponding component of $\mM(G)_{\Qbar}$. By Proposition \ref{prop_galois_action_preserves_signature}, we find that the signature is an invariant of the $G_\QQ$ action on the components of $\mM(G)_{\Qbar}$.

\sgap

To explain the last two columns, we will need a result of Higman:

\begin{lemma}[Higman]\label{lemma_higman} Let $x,y\in F_2$ be generators, and let $\Aut^+(F_2) < \Aut(F_2)$ be the subgroup of automorphisms of determinant 1. Then for any $\gamma\in\Aut^+(F_2)$, the commutator $[x,y]$ is conjugate to $[\gamma(x),\gamma(y)]$.
\end{lemma}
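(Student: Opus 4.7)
The plan is to reduce the claim to a one-line verification on two specific automorphisms. Let $C \subset F_2$ denote the conjugacy class of the commutator $[x,y]$, and let $\mathcal{S} := \{\gamma \in \Aut(F_2) : \gamma(C) = C\}$ be its setwise stabilizer in $\Aut(F_2)$. Inner automorphisms act trivially on conjugacy classes, so $\Inn(F_2) \subseteq \mathcal{S}$. Combined with the exact sequence $1 \to \Inn(F_2) \to \Aut(F_2) \to \GL_2(\ZZ) \to 1$ recalled just before (\ref{eq_naive_outer_representation}), it follows that $\mathcal{S} \supseteq \Aut^+(F_2)$ as soon as $\mathcal{S}$ contains representatives of some generating set of $\SL_2(\ZZ) \subset \GL_2(\ZZ)$.

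Next I would take the standard generators $T = \left(\begin{smallmatrix} 1 & 1 \\ 0 & 1 \end{smallmatrix}\right)$ and $S = \left(\begin{smallmatrix} 0 & -1 \\ 1 & 0 \end{smallmatrix}\right)$ of $\SL_2(\ZZ)$ and exhibit the specific lifts $\tilde{T}\colon x \mapsto xy,\; y \mapsto y$ and $\tilde{S}\colon x \mapsto y^{-1},\; y \mapsto x$, which manifestly induce $T$ and $S$ on the abelianization $F_2^{\ab} \cong \ZZ^2$ and hence lie in $\Aut^+(F_2)$. A direct expansion in $F_2$ shows that $[\tilde{T}(x), \tilde{T}(y)] = [xy, y]$ equals $[x,y]$ on the nose, while $[\tilde{S}(x), \tilde{S}(y)] = [y^{-1}, x]$ equals $y^{-1}[x,y]y$; both are therefore conjugate to $[x,y]$, so $\tilde{T}, \tilde{S} \in \mathcal{S}$ and the proof is complete.

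The ``main obstacle'' is really a bookkeeping matter rather than a substantive difficulty: one needs to be careful to set conventions (row vs.\ column action on $\ZZ^2$, orientation, sign of the determinant) so that the chosen lifts genuinely land in $\Aut^+(F_2)$, and to invoke Nielsen's theorem $\Out(F_2) \cong \GL_2(\ZZ)$ in the form that makes the index-two subgroup $\SL_2(\ZZ)$ correspond exactly to $\Aut^+(F_2)/\Inn(F_2)$. Once these conventions are fixed, the whole proof reduces to the two short commutator computations above, and no input beyond Nielsen's theorem is required.
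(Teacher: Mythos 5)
Your proof is correct and follows essentially the same strategy as the paper: reduce to checking that two specific automorphisms lifting generators of $\SL_2(\ZZ)$ preserve the conjugacy class of $[x,y]$. There is one small but worthwhile refinement in your version. The paper asserts that $\Aut^+(F_2)$ is literally generated by its two chosen lifts $\gamma_S,\gamma_T$; this is true but is itself a nontrivial fact (it requires $\Inn(F_2)\subseteq\langle\gamma_S,\gamma_T\rangle$, which does not follow merely from $S,T$ generating $\SL_2(\ZZ)$). Your setwise-stabilizer argument avoids needing that: since $\Inn(F_2)\subseteq\mathcal{S}$, the subgroup $\mathcal{S}$ is a union of $\Inn(F_2)$-cosets and hence is the full preimage of its image in $\Out(F_2)\cong\GL_2(\ZZ)$, so exhibiting lifts of any generating set of $\SL_2(\ZZ)$ inside $\mathcal{S}$ already forces $\mathcal{S}\supseteq\Aut^+(F_2)$. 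You therefore derive the conclusion from strictly weaker inputs (the generation of $\SL_2(\ZZ)$ and the standard exact sequence) rather than from a generation statement about $\Aut^+(F_2)$ itself. Your two commutator computations, $[xy,y]=[x,y]$ and $[y^{-1},x]=y^{-1}[x,y]\,y$, are both correct.
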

\begin{proof} The subgroup $\Aut^+(F_2)\le\Aut(F_2)$ of determinant 1 is generated by $\gamma_S,\gamma_T$, where $\gamma_S$ sends $(x,y)\mapsto (y,x^{-1})$, and $\gamma_T$ sends $(x,y)$ to $(x,xy)$. One may then explicitly check that $\gamma_S,\gamma_T$ fix the conjugacy class of $[x,y]$. This also appears in \cite{Neu56} in a slightly different form.
\end{proof}

\begin{defn}[Nielsen Invariant] \label{def_nielsen_invariant} For a surjection $\varphi : F_2\twoheadrightarrow G$, the conjugacy class of $\varphi([x,y])$ is called the \emph{Nielsen invariant} of the $\SL_2(\ZZ)$-orbit of $[\varphi]$.
\end{defn}

A surjection $\varphi : F_2\twoheadrightarrow G$ corresponds to a $G$-galois cover of an elliptic curve $E$ ramified only above  $O$. The inertia subgroup at $O$ is generated by $\varphi([x,y])$, and its order is the ramification index of any point lying above $O$, which we record as ``$e$'' in the Table. Similarly, $g$ is the genus of the cover, and is determined by $e$ and $|G|$. By Higman's lemma, $e$, $g$, and the Nielsen invariant are indeed invariants of the $\SL_2(\ZZ)$ orbit of $[\varphi]$. As we will see later in Lemma \ref{lemma_BCL}, $e,g$ are also invariants of the $G_\QQ$-action on the components of $\mM(G)_{\Qbar}$.

\sgap

\textbf{Warning.} There is no reason for ``$e$'' to remain constant amongst isomorphic components of $\mM(G)$. However, it turns out that in almost all computed examples this is true. The smallest group $G$ such that $\mM(G)$ contains multiple isomorphic components with different $e$'s is the group of order 216 and  index 87 in GAP's ``Small Groups Library''. The components of $\mM(G)$ can be seen in Appendix \ref{section_NC_1-255}, where the ``$e$'' field shows ``$2^16^2$'' and should be read ``one component has $e = 2$, and two components have $e = 6$''. The next smallest such group has order 384 and is not shown in the tables.

\sgap

The components above are all only coarse moduli spaces, since in each case $c_{-1} = 1$, which means that the $G$-structures are fixed by the automorphism $[-1]$. Furthermore, it turns out that $\Gamma(D_8)$ and $\Gamma(Q_8)$ are equal to the principal congruence subgroup $\Gamma(2)$. Similarly, $\Gamma(D_6)$ and $\Gamma(D_{10})$ are conjugate to $\Gamma_1(2) = \Gamma_0(2)$. Next, we see that $\mM(D_6)_{\Qbar},\mM(D_8)_{\Qbar},\mM(Q_8)_{\Qbar}$ are all connected, and hence their (single) components are defined over $\QQ$. By the Weil pairing we know that $\mM(3)_{\Qbar}$ has two components, each defined over $\QQ(\zeta_3)$. The next section will discuss the components of $\mM(D_{2k})_{\Qbar}$. Lastly, we note that in all these examples above, despite the groups $G$ being nonabelian, the stabilizers are congruence. One might attribute this to the fact that these groups are not ``nonabelian enough''. In particular, they are metabelian. Indeed, in all known examples, if $G$ is metabelian, then every component of $\mM(G)$ is congruence.

\subsection{Dihedral groups --- The structure of $\mM(D_{2k})$}\label{ss_dihedral_structure} In this section we determine the Galois-module structure of $\Hom^\surext(\pi_1(E^\circ_{\Qbar}),D_{2k})$ and the structure of the stacks $\mM(D_{2k})_\QQ$.

\sgap


The dihedral group $D_{2k}$ of order $2k$ can be presented as the semidirect product $\ZZ/k\ZZ\rtimes\mu_2$ with $\mu_2 = \{\pm1\}$ acting on $\ZZ/k\ZZ$ by inversion. Thus, we may write the elements of $D_{2k}$ in the form $(n,\pm1)$ (or just $(n,\pm)$) with $n\in\ZZ/k\ZZ$. Multiplication is given by
$$(n,\pm)\cdot (m,-) = (n \pm m,\mp)\qquad\text{and}\qquad (n,\pm)\cdot(m,+) = (n\pm m,\pm)$$
and inversion is given by
$$(n,\pm)^{-1} = (\mp n,\pm)$$
Explicitly, conjugation is given by the formulas:
\begin{eqnarray*}
(r,+)(n,\pm)(r,+)^{-1} & = & (n+r\mp r,\;\pm) \\
(r,-)(n,\pm)(r,-)^{-1} & = & (-n+r\mp r,\;\pm)
\end{eqnarray*}
We have the abelianization maps
$$\begin{array}{lll}
\ab : D_{2k}\twoheadrightarrow D_{2k}^\ab = \mu_2 & (n,\pm)\mapsto \; \pm 1 & \text{if $k$ is odd} \\
\ab : D_{2k}\twoheadrightarrow D_{2k}^\ab = \mu_2\times\ZZ/2\ZZ & (n,\pm)\mapsto (\pm1,\; n\text{ mod } 2) & \text{if $k$ is even}
\end{array}$$

We will call the composition $p_i\circ[\varphi]$ the abelianization of $[\varphi]$, denoted $\ab([\varphi])$. Note that by Proposition \ref{prop_descent_of_groups}, the abelianization maps induce finite \'{e}tale surjections:
$$\begin{array}{ll}
\ab : \mM(D_{2k})\longrightarrow\mM_1(2) & \text{if $k$ is odd} \\
\ab : \mM(D_{2k})\longrightarrow\mM(2) & \text{if $k$ is even}
\end{array}$$
of stacks over $\ZZ[1/2k]$. In particular, a $D_{2k}$-structure on an elliptic curve $E/S$ induces a $D_{2k}^\ab$-structure on $E/S$, but the reverse may not be true.

\sgap

We will identify $\Hom^\surext(F_2,D_{2k})$ with the set of conjugacy classes of ordered pairs of generators for $D_{2k}$, and similarly for their abelianizations. As usual we will use $[(n,\cdot),(m,\cdot)]$ to denote the conjugacy class of a generating pair $((n,\cdot),(m,\cdot))$. If $k$ is odd, then the elements of $\Hom^\surext(F_2,D_{2k})$ have three possible abelianizations: $(+,-),(-,+)$, and $(-,-)$. Any pair $((n,\cdot),(m,\cdot))$ with such an abelianization will generate $D_{2k}$ if and only if the subgroup it generates contains the cyclic normal subgroup $\ZZ/k\ZZ\lhd D_{2k}$, thus, for example, (generating) pairs of the form $((n,+),(m,-))$ must have $n\in(\ZZ/k\ZZ)^\times\subset\ZZ/k\ZZ$. This explains the column labelled ``Restrictions'' in the table below.

\sgap

One checks using the formulas above that for a pair of the form $((n,+),(m,-))$, the value $\pm n\in(\ZZ/k\ZZ)^\times/\pm 1$ is an invariant of the conjugacy class $[(n,+),(m,-)]$.

\begin{defn} Let $\zeta_k := e^{2\pi i/k}$, and $\mu_k^\prim$ the set of primitive $k$th roots of unity.
$$\Inv([(n,+),(m,-)]) := \zeta_k^{\pm n}\in(\mu_k^\prim)^{\pm1}$$
which we will simply call the \emph{invariant} of $[\varphi] = [(n,+),(m,-)]$. Similarly, we define
$$\Inv([(n,-),(m,+)]) := \zeta_k^{\pm m}\qquad \Inv([(n,-),(m,-)]) := \zeta_k^{\pm(n-m)}$$
\end{defn}

\sgap

We summarize the general situation for $k$ odd in the table below.
$$\begin{array}{llrl}
\text{Abelianization} & \text{Representative} & \text{Restrictions}\quad & \text{$\Inv([(n,\cdot),(m,\cdot)])$} \\
(+,-) & ((n,+),(m,-)) & n\in(\ZZ/k\ZZ)^\times & \zeta_k^{\pm n} \\


(-,+) & ((n,-),(m,+)) & m\in(\ZZ/k\ZZ)^\times & \zeta_k^{\pm m} \\


(-,-) & ((n,-),(m,-)) & n-m\in(\ZZ/k\ZZ)^\times & \zeta_k^{\pm (n-m)}


\end{array}$$

In the above, we view $(\ZZ/k\ZZ)^\times$ as a subset of $\ZZ/k\ZZ\lhd D_{2k}$. If $k$ is even, then we have six possible abelianizations, which are completely described by the sign of the generators $(n,\cdot),(m,\cdot)$ as well as the class of a ``$-$'' generator modulo 2.

$$\begin{array}{llrrcl}
\text{Abelianization} & \text{Representative} & \text{Restrictions} & & \;\; & \text{$\Inv([(n,\cdot),(m,\cdot)])$} \\
(+,-,0) & ((n,+),(m,-)) & n\in(\ZZ/k\ZZ)^\times, & m\equiv 0\mod 2 & &\zeta_k^{\pm n} \\
(+,-,1) & ((n,+),(m,-)) & n\in(\ZZ/k\ZZ)^\times, & m\equiv 1\mod 2 & &\zeta_k^{\pm n} \\
(-,+,0) & ((n,-),(m,+)) & m\in(\ZZ/k\ZZ)^\times, & n\equiv 0\mod 2 & &\zeta_k^{\pm m} \\
(-,+,1) & ((n,-),(m,+)) & m\in(\ZZ/k\ZZ)^\times, & n\equiv 1\mod 2 & &\zeta_k^{\pm m} \\
(-,-,0) & ((n,-),(m,-)) & n-m\in(\ZZ/k\ZZ)^\times, & m\equiv 0\mod 2 & & \zeta_k^{\pm (n-m)} \\
(-,-,1) & ((n,-),(m,-)) & n-m\in(\ZZ/k\ZZ)^\times, & m\equiv 1\mod 2 & & \zeta_k^{\pm (n-m)}
\end{array}$$
These lists classify all elements of $\Hom^\surext(F_2,D_{2k})$. In particular, there is a bijection
$$\begin{array}{rcl}
\Hom^\surext(F_2,D_{2k}) & \rightiso & \Hom^\surext(F_2,D_{2k}^\ab)\times\left(\mu_k^\prim\right)^{\pm1} \\
{}[\varphi] & \mapsto & (\ab([\varphi]),\Inv([\varphi]))
\end{array}$$
Note that $\Inv([\varphi])^2$ is precisely ($\zeta_k$ to the power of) the Nielsen invariant of $[\varphi]$. 

\sgap

Let $E = \spmatrix{0}{1}{-1}{0}$, and $T = \spmatrix{1}{1}{0}{1}$, then $E,T$ generate $\SL_2(\ZZ)$. To compute the action of $\SL_2(\ZZ)$ on $\Hom^\surext(F_2,D_{2k})$, we will consider the lifts of $E,T$ to $\Aut(F_2)$ given by
$$\gamma_E : \left\{\begin{array}{rcl}
x & \mapsto & y^{-1} \\
y & \mapsto & x
\end{array}\right. \qquad\text{and}\qquad \gamma_T : \left\{\begin{array}{rcl}
x & \mapsto & x \\
y & \mapsto & xy
\end{array}\right.$$
The action of $\gamma_E,\gamma_T$ on $[\varphi]$ is as follows, where $\alpha^{\pm 1} = \Inv([\varphi])\in(\mu_k^\prim)^{\pm1}$.

$$\gamma_E : \left\{\begin{array}{rcl}
((+,-,0), \alpha^{\pm1}) & \mapsto & ((-,+,0),\alpha^{\pm1}) \\
((+,-,1), \alpha^{\pm1}) & \mapsto & ((-,+,1),\alpha^{\pm1}) \\
((-,+,0), \alpha^{\pm1}) & \mapsto & ((+,-,0),\alpha^{\pm1}) \\
((-,+,1), \alpha^{\pm1}) & \mapsto & ((+,-,1),\alpha^{\pm1}) \\
((-,-,0),\alpha^{\pm1}) & \mapsto & ((-,-,0),\alpha^{\pm1}) \\
((-,-,1),\alpha^{\pm1}) & \mapsto & ((-,-,1),\alpha^{\pm1})
\end{array}\right.\qquad
\gamma_T : \left\{\begin{array}{rcl}
((+,-,0), \alpha^{\pm1}) & \mapsto & ((+,-,1),\alpha^{\pm1}) \\
((+,-,1), \alpha^{\pm1}) & \mapsto & ((+,-,0),\alpha^{\pm1}) \\
((-,+,0), \alpha^{\pm1}) & \mapsto & ((-,-,1),\alpha^{\pm1}) \\
((-,+,1), \alpha^{\pm1}) & \mapsto & ((-,-,0),\alpha^{\pm1}) \\
((-,-,0),\alpha^{\pm1}) & \mapsto & ((-,+,1),\alpha^{\pm1}) \\
((-,-,1),\alpha^{\pm1}) & \mapsto & ((-,+,0),\alpha^{\pm1})
\end{array}\right.$$
Here if $k$ is odd then we simply ignore the 0 or 1 in the abelianization. From this, we see that $\SL_2(\ZZ)$ acts transitively on the set of $[\varphi]$ with a fixed invariant. This moreover shows that $\Inv([\varphi])$ is also invariant under $\SL_2(\ZZ)$. Thus, there are $\frac{\phi(k)}{2}$ distinct $\SL_2(\ZZ)$-orbits corresponding to the $\phi(k)$ possible choices for $\alpha\in\mu_k^\prim$, each of size 3 or 6 according to whether $k$ is odd or even. This shows that if $k$ is odd, then $\Gamma_{[\varphi]}$ is conjugate to $\Gamma_1(2) = \Gamma_0(2)$. If $k$ is even, then $\Gamma_{[\varphi]} = \Gamma(2)$. This proves part of the following proposition.

\sgap

\begin{thm}\label{thm_dihedral_geometric_structure}[Structure of $\mM(D_{2k})$] For any integer $k\ge 3$, there is a $G_\QQ$-equivariant bijection between the components of $\mM(D_{2k})_{\Qbar}$ and $\left(\mu_k^\prim\right)^{\pm1}$. In particular there are $\frac{\phi(k)}{2}$ distinct components, each defined over $\QQ(\zeta_k+\zeta_k^{-1})$.
\begin{itemize}
\item[1.] If $k$ is odd, then for any surjection $\varphi : F_2\twoheadrightarrow D_{2k}$, $\Gamma_{[\varphi]}$ is conjugate to $\Gamma_1(2)$, and thus every component of $\mM(D_{2k})_{\Qbar}$ is isomorphic to $\yY_1(2)_{\Qbar}=\mM_1(2)_{\Qbar}$.
\item[2.] If $k$ is even, then $\Gamma_{[\varphi]} = \Gamma(2)$, and thus every component of $\mM(D_{2k})_{\Qbar}$ is isomorphic to $\yY(2)_{\Qbar} = \mM(2)_{\Qbar}$.
\end{itemize}
\end{thm}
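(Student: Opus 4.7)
The bulk of the combinatorial work is already carried out in the preceding discussion, so the proof is essentially the assembly of four pieces: orbit analysis, stabilizer identification, Galois action on the invariant $\Inv$, and invocation of Proposition \ref{prop_cms_are_modular_curves}.

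First I would check that the map $[\varphi]\mapsto \Inv([\varphi])$ is $\SL_2(\ZZ)$-invariant and partitions $\Hom^\surext(F_2,D_{2k})$ into $\SL_2(\ZZ)$-orbits parametrized by $(\mu_k^\prim)^{\pm1}$. The explicit computation of $\gamma_E,\gamma_T$ given just before the theorem statement already verifies both the invariance of $\Inv$ and the transitivity of $\SL_2(\ZZ)$ on the set of $[\varphi]$ with fixed invariant $\alpha^{\pm1}$. One then reads off that each orbit has size $3$ when $k$ is odd (one representative per abelianization $(+,-),(-,+),(-,-)$) and size $6$ when $k$ is even. By Corollary \ref{cor_G_to_Gamma} this gives exactly $\phi(k)/2$ connected components of $\mM(D_{2k})_{\Qbar}$, in bijection with $(\mu_k^\prim)^{\pm1}$.

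Next I would identify the stabilizers. The abelianization map $D_{2k}\twoheadrightarrow D_{2k}^\ab$ induces the forgetful morphism $\mM(D_{2k})\to \mM_1(2)$ (resp. $\mM(2)$), and by Proposition \ref{prop_descent_of_groups} we have $\Gamma_{[\varphi]}\le \Gamma_{[\ab(\varphi)]}$. The $\SL_2(\ZZ)$-action on $\Hom^\surext(F_2,D_{2k}^\ab)$ factors through $\SL_2(\ZZ/2\ZZ)$, whose kernel is $\Gamma(2)$, and is transitive on the three (resp. six) surjections. So $\Gamma_{[\ab(\varphi)]}$ is conjugate to $\Gamma_1(2)$ if $k$ is odd, and equals $\Gamma(2)$ if $k$ is even. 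Comparing orbit sizes, the inclusion $\Gamma_{[\varphi]}\le\Gamma_{[\ab(\varphi)]}$ must be an equality, which settles parts (1) and (2) modulo the isomorphism identification.

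For the $G_\QQ$-equivariance and the field of definition, I would invoke the branch cycle lemma (Lemma \ref{lemma_BCL}). Under the identification $p^{-1}(x_0) = \Hom^\surext(\widehat{F_2},G)$, the conjugacy class of $\varphi([x,y])$ is the inertia class at the puncture $O$, and the Galois action twists it by the cyclotomic character: for $\sigma\in G_\QQ$, we have $\Inv(\,^\sigma[\varphi]) = \sigma(\Inv([\varphi])) = \Inv([\varphi])^{\chi(\sigma)}$ in $(\mu_k^\prim)^{\pm1}$. This shows the bijection is $G_\QQ$-equivariant, and the stabilizer of any element of $(\mu_k^\prim)^{\pm1}$ under the mod-$\pm1$ cyclotomic action is precisely $G_{\QQ(\zeta_k+\zeta_k^{-1})}$, giving the stated field of definition.

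Finally, applying Proposition \ref{prop_cms_are_modular_curves} to each component $\yY([\varphi])_\CC\cong[\hH/\Gamma_{[\varphi]}]$ yields the isomorphism with $\yY_1(2)_\CC$ or $\yY(2)_\CC$, and since $\mM_1(2)$ and $\mM(2)$ are geometrically connected over $\QQ$, the identification descends to $\Qbar$. The main obstacle I expect is the last ingredient: pinning down the cyclotomic character action on $\Inv$ via Lemma \ref{lemma_BCL}, which requires reconciling the tangential base point description of the inertia at $O$ with our chosen representative $\varphi([x,y])$; everything else is a direct bookkeeping consequence of the tables already assembled.
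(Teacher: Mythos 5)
Your proposal follows essentially the same route as the paper (orbit analysis via $\Inv$, identification of $\Gamma_{[\varphi]}$ by orbit size/abelianization, Branch Cycle Lemma for the Galois action), and the first two pieces are fine. The gap is in the Galois-equivariance step, and you have correctly flagged that this is where the difficulty lies but misidentified its nature. The difficulty is not about reconciling tangential base points with $\varphi([x,y])$. It is that the Branch Cycle Lemma controls the conjugacy class of $\varphi([x,y])$, i.e.\ the \emph{Nielsen invariant}, and as noted in the text just before the definition of $\Inv$, the Nielsen invariant is $\Inv([\varphi])^2$, not $\Inv([\varphi])$ itself. So the BCL directly yields only $\Inv(\,^\sigma[\varphi])^2 = \Inv([\varphi])^{2\chi(\sigma)}$ in $(\mu_k^\prim)^{\pm 1}$, i.e.\ $2I(\,^\sigma[\varphi]) = 2\chi(\sigma)I([\varphi])$ in $\ZZ/k\ZZ$ modulo $\pm 1$. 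You then assert $\Inv(\,^\sigma[\varphi]) = \Inv([\varphi])^{\chi(\sigma)}$, which amounts to cancelling the factor of $2$.

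When $k$ is odd this cancellation is harmless, and when $k\equiv 2\pmod 4$ the spurious solution $\chi(\sigma)I([\varphi]) + k/2$ is even hence not a unit, so again there is no ambiguity. But when $4\mid k$, the element $\chi(\sigma)I([\varphi]) + k/2$ lies in $(\ZZ/k\ZZ)^\times$ as well, so knowing $\Inv^2$ genuinely fails to determine $\Inv$, and your inference does not go through. The paper's proof of Theorem \ref{thm_dihedral_galois_structure} addresses exactly this: it uses Gasch\"utz' lemma (via Proposition \ref{prop_descent_of_groups}) to lift $[\varphi]$ to $[\tilde\varphi]\in\Hom^\surext(\pi_1(E^\circ_{\Qbar}),D_{4k})$, applies the BCL there to get $2I([\tilde\varphi\circ\sigma^{-1}]) = 2\chi(\sigma)I([\tilde\varphi])$ in $\pm(\ZZ/2k\ZZ)$, so that the ambiguity is now an additive $k$ rather than $k/2$, and then reduces back to $D_{2k}$ where the ambiguity mod $k$ vanishes. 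You should supply this (or an equivalent) argument to close the gap.
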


We will need to understand the $G_\QQ$ action on the inertia subgroup $\langle [x,y]\rangle\le F_2$. This was first articulated as the ``branch cycle lemma'' of Fried (c.f. \cite{Vol96}).

\begin{lemma}[``Branch Cycle Lemma'']\label{lemma_BCL} Let $E/K$ be an elliptic curve over a number field $K$. From the homotopy exact sequence, we obtain a representation (c.f. \ref{prop_Teichmuller_moduli_2}(1)):
$$G_\QQ\longrightarrow\Aut(\pi_1(E^\circ_{\Qbar})) \cong \Aut(\widehat{F_2})$$
There exist topological generators $x,y\in\pi_1(E^\circ_{\Qbar})$, such that for any $\sigma\in G_K$, we have $\sigma([x,y]) = [x,y]^{\chi(\sigma)}$, where $[x,y] := xyx^{-1}y^{-1}$ and $\chi : G_K\longrightarrow\widehat{\ZZ}^\times$ is the cyclotomic character.
\end{lemma}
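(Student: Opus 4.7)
The plan is to work with a tangential base point at the origin $O$ and invoke the standard identification of the inertia group at a rational point with $\widehat{\ZZ}(1)$.

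First, I would choose a nonzero $K$-rational tangent vector $v \in T_O E$ (which exists since $O \in E(K)$ and $T_O E$ is a $K$-line) and promote it to a tangential base point $\vec v$ for $E^\circ$ in the sense of Deligne, \emph{Le groupe fondamental de la droite projective moins trois points}. This gives a splitting $G_K \to \pi_1(E_K^\circ,\vec v)$ of the homotopy exact sequence (\ref{eq_HES}) and hence an honest (not merely outer) action of $G_K$ on $\pi_1(E^\circ_{\Qbar},\vec v)$ by conjugation inside $\pi_1(E_K^\circ,\vec v)$. All assertions below take place with respect to this base point and this action.

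Next, I would identify the inertia subgroup $I_O \subset \pi_1(E^\circ_{\Qbar},\vec v)$ at $O$ as a Galois module. Because $\widehat{\oO}_{E,O} \cong K\ps{t}$ for a $K$-rational uniformizer $t$, the punctured formal disc around $O$ is $\Spec K\ls{t}$, whose tame fundamental group is generated by the Kummer covers $t \mapsto t^{1/n}$. This yields a canonical isomorphism $I_O \cong \widehat{\ZZ}(1) := \varprojlim_n \mu_n(\Qbar)$ of $G_K$-modules, because a compatible system of $n$-th roots of $t$ is permuted by $\sigma \in G_K$ through the cyclotomic character. Consequently, \emph{any} topological generator $\gamma$ of $I_O$ satisfies $\sigma(\gamma) = \gamma^{\chi(\sigma)}$ for every $\sigma \in G_K$.

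It remains to exhibit topological generators $x,y$ of $\pi_1(E^\circ_{\Qbar},\vec v)$ such that $[x,y]$ is a generator of $I_O$. Here I would fix an embedding $\Qbar \hookrightarrow \CC$ and compare with the topological fundamental group, which embeds into the \'etale one. Presenting $E(\CC) = \CC/\Lambda$ and choosing $x,y$ to lift a basis of $\Lambda = H_1(E(\CC),\ZZ)$ that is positively oriented with respect to the complex structure, a direct inspection of a fundamental parallelogram shows that the commutator $[x,y]$ is freely homotopic in $E^\circ(\CC)$ to a small counterclockwise loop around $O$, which topologically generates the local monodromy at $O$; profinitely completing and matching base points via $\vec v$ identifies its image with a generator of $I_O$. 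Combining with the previous step gives $\sigma([x,y]) = [x,y]^{\chi(\sigma)}$, as desired.

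The main obstacle is the second step: pinning down the canonical identification $I_O \cong \widehat{\ZZ}(1)$ of $G_K$-modules when the base point is tangential rather than a genuine geometric point. One must verify that the tangential base point $\vec v$, being defined over $K$, trivializes the cotangent space at $O$ over $K$, so that a $K$-rational uniformizer $t$ is available and the Kummer-theoretic description of the inertia is genuinely $G_K$-equivariant; this is routine but requires careful bookkeeping of the formalism of tangential base points, and could alternatively be cited directly from Nakamura's or Ihara's work on the Galois action on fundamental groups of punctured curves.
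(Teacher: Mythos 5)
The paper's entire proof of this lemma is a single citation: ``This statement is also a special case of the easy direction of \cite{Nak94}, Theorem 2.1.1.'' Your proposal instead gives a direct sketch, and it is essentially correct: the three-step structure --- (i) split the homotopy exact sequence with a tangential base point $\vec v$ at $O$ to obtain a genuine (not merely outer) $G_K$-action; (ii) identify the inertia group $I_O\subset\pi_1(E^\circ_{\Qbar},\vec v)$ with $\widehat{\ZZ}(1)$ as a $G_K$-module via Kummer theory on the punctured formal disc, using the $K$-rationality of a uniformizer guaranteed by $O\in E(K)$; (iii) show topologically over $\CC$ that a suitably chosen positively oriented basis $x,y$ has $[x,y]$ equal to a generator of $I_O$ --- is precisely the argument that underlies Nakamura's theorem (and Fried's branch cycle lemma more generally). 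The one place that deserves the extra bookkeeping you allude to is step (iii): free homotopy only yields conjugacy, and to conclude $\sigma([x,y])=[x,y]^{\chi(\sigma)}$ on the nose (rather than up to a $\sigma$-dependent conjugation) you must arrange the base point so that $[x,y]$ is \emph{literally} a generator of the distinguished inertia subgroup determined by $\vec v$, not merely conjugate to one. Placing the base point at the corner of a fundamental parallelogram adjacent to the puncture, or equivalently choosing the tangential base point so that the canonical inertia group contains $[x,y]$, handles this; you flagged it as the main obstacle and suggested citing Nakamura or Ihara for the formalism, which is in fact exactly what the paper does. So the two proofs coincide in substance; yours simply spells out what the citation delegates.
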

\begin{proof} This statement is also a special case of the easy direction of \cite{Nak94}, Theorem 2.1.1.
\end{proof}

Note that this implies that the data ``$e$'' associated to any component of $\mM(G)_{\Qbar}$ is also invariant under the $G_\QQ$ action on connected components. 

\sgap

Proposition \ref{thm_dihedral_geometric_structure} will follow from the following result, setting $K = \QQ$.

\begin{thm}\label{thm_dihedral_galois_structure} Let $K$ be a number field and $E/K$ an elliptic curve, and $x,y$ topological generators for $\pi_1(E^\circ_{\Qbar})$ as in the Branch Cycle Lemma. Then the bijection
$$\begin{array}{rcl}
\Hom^\surext(\pi_1(E^\circ_{\Qbar}),D_{2k}) & \rightiso & \Hom^\surext(\pi_1(E^\circ_{\Qbar}),D_{2k}^\ab)\times\left(\mu_k^\prim\right)^{\pm1} \\
{}[\varphi] & \mapsto & (\ab([\varphi]),\Inv([\varphi]))
\end{array}$$
is an isomorphism if $G_K$-modules, where the $G_K$ action on the first direct factor is determined by the classical mod 2 representation $G_K\rightarrow\GL_2(\ZZ/2\ZZ)$.
\end{thm}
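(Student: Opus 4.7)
The underlying bijection of sets was already established in the discussion preceding the theorem by explicit enumeration of generating pairs according to abelianization type and invariant. My plan is thus to focus on $G_K$-equivariance, which I will verify on each of the two factors of the target separately.

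For the abelianization factor, equivariance is automatic: the map is induced by post-composition with the natural surjection $D_{2k}\twoheadrightarrow D_{2k}^\ab$, which is a morphism of constant group schemes and so is tautologically $G_K$-equivariant. That the resulting $G_K$-action on $\Hom^\surext(\pi_1(E^\circ_{\Qbar}),D_{2k}^\ab)$ factors through the classical mod-$2$ representation $G_K\to\GL_2(\ZZ/2\ZZ)$ follows because $D_{2k}^\ab$ has exponent $2$, so only the action on $\pi_1(E^\circ_{\Qbar})^\ab\otimes\ZZ/2\ZZ \cong E[2]$ is relevant.

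For the invariant factor, I would apply the Branch Cycle Lemma. Fix topological generators $x,y$ of $\pi_1(E^\circ_{\Qbar})$ as in Lemma \ref{lemma_BCL}, so that $\sigma([x,y])=[x,y]^{\chi(\sigma)}$ for every $\sigma\in G_K$. For any $\varphi$ representing $[\varphi]$, an explicit computation in each of the three (or six) abelianization cases using the dihedral multiplication rules shows that $\varphi([x,y])$ lies in the cyclic normal subgroup $\ZZ/k\ZZ\subset D_{2k}$, and that its class in $(\ZZ/k\ZZ)/\{\pm 1\}\cong \mu_k/\{\pm 1\}$ is precisely $\Inv([\varphi])^2$ after identifying $\ZZ/k\ZZ$ with $\mu_k$ via $1\mapsto\zeta_k$. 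Combining this computation with the Branch Cycle Lemma then yields $\Inv(^\sigma[\varphi])^2 = \sigma(\Inv([\varphi]))^2$ in $\mu_k/\{\pm 1\}$, where $\sigma$ acts by the cyclotomic character.

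It remains to upgrade this squared identity to the genuine identity $\Inv(^\sigma[\varphi])=\sigma(\Inv([\varphi]))$ in $(\mu_k^\prim)^{\pm 1}$. When $k$ is odd, squaring is an injection on $\mu_k/\{\pm 1\}$, so the upgrade is automatic. The main obstacle is the case of even $k$, where squaring on $\mu_k/\{\pm 1\}$ is two-to-one (collapsing $[\eta]$ with $[-\eta]$). My plan here is to use the parity bit in abelianization types such as $(+,-,0)$ versus $(+,-,1)$, which records precisely the mod-$2$ datum missed by the squaring map; a case-by-case verification then shows that combining the already-established $G_K$-equivariance on the abelianization factor with the squared equivariance on the invariant factor forces the correct choice of square root, yielding full $G_K$-equivariance. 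The substantive input throughout is the Branch Cycle Lemma; everything else is a bookkeeping exercise matching conventions on inversion and on the $\pm 1$ quotient.
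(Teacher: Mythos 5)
Your handling of the abelianization factor and your reduction of the invariant factor to the Branch Cycle Lemma via the Nielsen invariant are both sound, and in the cases $k$ odd and $k \equiv 2 \pmod 4$ your argument already closes (in the latter case because exactly one of the two square roots of $\Inv^2$ is a \emph{primitive} $k$th root of unity, as you can check: if $n$ is odd and $k/2$ is odd, then $n + k/2$ is even). The gap is in the case $k \equiv 0 \pmod 4$, and your proposed fix does not work. The extra bit in the abelianization labels $(+,-,0)$ versus $(+,-,1)$ records $m \bmod 2$, i.e.\ the parity of the $\ZZ/k\ZZ$-coordinate of the \emph{reflection} generator $(m,-)$, whereas $\Inv([\varphi]) = \zeta_k^{\pm n}$ depends only on the $\ZZ/k\ZZ$-coordinate $n$ of the \emph{rotation} generator. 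These are independent: the only constraint is $n \in (\ZZ/k\ZZ)^\times$, with $m$ free. So the parity bit does not carry ``the mod-$2$ datum missed by the squaring map.'' Concretely, for $k = 8$ one has $(\mu_8^\prim)^{\pm 1} = \{\{\zeta_8,\zeta_8^7\},\{\zeta_8^3,\zeta_8^5\}\}$, and both classes square to the same element $\{\zeta_4,\zeta_4^3\}$; so there are distinct $[\varphi]$'s with identical abelianization and identical squared invariant, which means no formal combination of equivariance on $\ab$ and on $\Inv^2$ can ``force'' the choice of square root.

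The paper's way around this is more substantive than bookkeeping: lift $[\varphi]$ to a $D_{4k}$-structure $[\tilde\varphi]$ along the quotient $D_{4k}\twoheadrightarrow D_{2k}$ (possible by Gasch\"{u}tz, Prop.~\ref{prop_descent_of_groups}). Applying the Branch Cycle Lemma at level $D_{4k}$ gives the identity on twice the invariant inside $\pm(\ZZ/2k\ZZ)$, so the invariant there is pinned down only up to adding $k$; but pushing forward to $D_{2k}$ reduces modulo $k$, which annihilates precisely that ambiguity. I recommend you incorporate this lifting step, or find some other argument that extracts information about $\sigma$ on the individual generators rather than only on $[x,y]$.
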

\begin{proof} By \ref{prop_descent_of_groups}, the first coordinate of the bijection above is $G_K$-equivariant. It remains to show that the $G_K$ action on $\Inv([\varphi])$ is via the cyclotomic character $\chi$. In the discussion above we found that for $[\varphi]\in\Hom^\surext(\pi_1(E^\circ_{\Qbar}),D_{2k})$, we have $\Inv([\varphi])^2 = \zeta_k^{\pm\varphi([x,y])}$, where $\pm\varphi([x,y])$ is viewed inside $\ZZ/k\ZZ\le D_{2k}$ and is precisely the Nielsen invariant of $[\varphi]$. 

\sgap

Fix a $[\varphi]\in\Hom^\surext(\pi_1(E^\circ_{\Qbar},D_{2k})$. By definition, $\sigma.[\varphi] = [\varphi\circ\sigma^{-1}]$. For ease of notation, let $I([\varphi]) := \log_{\mu_k}\Inv([\varphi])$, so that $I([\varphi])$ takes values in $\pm(\ZZ/k\ZZ)^\times$ instead of $(\mu_k^\prim)^{\pm1}$. Thus we want to show that $I([\varphi\circ\sigma^{-1}]) = \chi(\sigma)\cdot I([\varphi])$. Using the fact that $2I([\varphi]) = \pm\varphi([x,y])$, this essentially follows from the Branch Cycle Lemma, but there is a difficulty when $k\equiv 0\mod 4$ which we will circumvent as follows:

\sgap

Let $[\tilde{\varphi}]\in\Hom^\surext(\pi_1(E^\circ_{\Qbar}),D_{4k})$ be a lift of $[\varphi]$ via the surjection $f : D_{4k}\rightarrow D_{2k}$ given by $(n\mod 2k,\pm)\mapsto (n\mod k,\pm)$ (This is possible by Gasch\"{u}tz' lemma, c.f. \ref{prop_descent_of_groups}). Then by the Branch Cycle Lemma, we have:
$$2\cdot I([\tilde{\varphi}\circ\sigma^{-1}]) = \pm2\cdot\tilde{\varphi}(\sigma^{-1}([x,y])) = \pm2\cdot\chi(\sigma)\tilde{\varphi}([x,y]) = 2\chi(\sigma)\cdot I([\tilde{\varphi}])$$
which takes place inside $\pm(\ZZ/2k\ZZ)$. This implies that either
$$I([\tilde{\varphi}\circ\sigma^{-1}]) = \chi(\sigma)\cdot I([\tilde{\varphi}])\qquad\text{or}\qquad I([\tilde{\varphi}\circ\sigma^{-1}]) = \chi(\sigma)\cdot I([\tilde{\varphi}]) + k$$
This ambiguity however disappears mod $k$, so passing to $D_{2k}$ via $f$, we find that
$$I([\varphi\circ\sigma^{-1}]) = \chi(\sigma)\cdot I([\varphi])$$
as desired.

\end{proof}

\begin{remark}\label{remark_same_Gamma_different_stack} Let $k\ge 3$ be odd. As noted in the proof, a $D_{2k}$-structure on an elliptic curve $E/K$ induces a $\Gamma_1(2)$-structure on $E$, and furthermore the components of $\mM(D_{2k})_{\Qbar}$ are isomorphic to $\mM_1(2)_{\Qbar}$. Since the components are determined up to $\Qbar$-isomorphism by the mapping class group action of $\SL_2(\ZZ)$ on $\pi_1(E^\circ_{\Qbar})$, this means that the mapping-class-group action cannot distinguish $D_{2k}$-structures from $\Gamma_1(2)$-structures --- in other words, it cannot see the nonabelian side of $D_{2k}$. Moduli-theoretically, this means that every 2-isogeny $E'\rightarrow E$ over $\Qbar$ can be extended (in $\frac{\phi(k)}{2}$ different ways) to a $D_{2k}$-torsor $X\rightarrow E'\rightarrow E$ over $E$. On the other hand, we see from Theorem \ref{thm_dihedral_galois_structure} that the Galois action \emph{does} see the nonabelian part of $D_{2k}$, which in this case is completely described by the Galois action on the inertia subgroup. In other words, while one might say a $D_{2k}$-structure is a ``congruence'' level structure in that the components of $\mM(D_{2k})_{\Qbar}$ are congruence, it is not in general a ``classical congruence'' level structure due to the difference in the Galois action. 

\end{remark}

\subsection{The simple groups $A_5$, $\PSL_2(\FF_7)$ and $\Sz(8)$}\label{ss_three_simple_groups}

\begin{defn}\label{def_purity} Let $G$ be a finite 2-generated group, then we will say that $G$ is \emph{purely noncongruence} (resp. \emph{purely congruence}) if for every surjection
$$\varphi : F_2\twoheadrightarrow G$$
the stabilizer $\Gamma_{[\varphi]}$ is noncongruence (resp. congruence). We say that $G$ is \emph{noncongruence} if $G$ is not purely congruence.
\end{defn}
In all examples of 2-generated groups $G$ for which we have computed the set $\Hom^\surext(F_2,G)$ and the associated $\SL_2(\ZZ)$-stabilizers, $G$ has been either purely congruence or purely noncongruence. At the time of writing, this list includes all 2-generated groups of order $\le 255$ (c.f. \S\ref{section_NC_1-255}), as well as all simple groups of order $\le 29120$ (c.f. \S\ref{section_23_FSGs}). However we do not know if this should always be the case.

\sgap

We saw in the previous section that all $D_{2k}$-structures are congruence, but generally are not ``classical congruence'' due to the difference in the $G_\QQ$-action. Indeed, this is no exception -- of all 2-generated groups of order $\le 255$, computations show that every metabelian group is congruence. However, by the congruence subgroup property for $\Aut(F_2)$ (c.f. \ref{thm_CSP}, \ref{cor_CSP}), we know that for any finite index $\Gamma\le\SL_2(\ZZ)$, there exists a group $G$ and a surjection $\varphi : F_2\twoheadrightarrow G$ such that $\Gamma_{[\varphi]}\lhd \Gamma$. Thus, there definitely exist noncongruence $G$, and by \ref{prop_abelian_is_congruence}, such groups will necessarily be nonabelian. Thus, it is natural to expect that if a group $G$ is ``sufficiently nonabelian'' in some sense, then it should be noncongruence. Indeed, computations show that all solvable groups of order $\le 255$ and solvable length $\ge 4$ are purely noncongruence. In particular, one would expect that nonabelian finite simple groups should be purely noncongruence (note that every finite simple group is 2-generated). We list the data for the smallest two nonabelian simple groups in Table \ref{table_first_two_nonabelian_simple} (see Appendix \ref{section_23_FSGs} for the smallest 23 nonabelian finite simple groups).





\begin{table}[h]\footnotesize
$$\begin{array}{llllrlllllllll}
\text{Label} & \text{Size} & G & m & \text{d} & c_4 & c_6 & c_{-1} & \text{cusp widths} & \text{genus} & \text{c/nc} & \text{c/f} & e & g\\
\hline
\Gamma(A_5)_1 & 60 & A_5 & 1 & 18 & 0 & 0 & 1 & 2^13^25^2 & 0 & \text{ncng} & \text{crse} & 3 & 21 \\
\Gamma(A_5)_2 & 60 & A_5 & 2 & 10 & 0 & 1 & 1 & 2^13^15^1 & 0 & \text{ncng} & \text{crse} & 5 & 25 \\
\\
\Gamma(2,7)_1 & 168 & \PSL_2(\FF_7) & 2 & 7 & 1 & 1 & 1 & 3^14^1 & 0 & \text{ncng} & \text{crse} & 7 & 73 \\
\Gamma(2,7)_2 & 168 & \PSL_2(\FF_7) & 1 & 32 & 0 & 1 & 0 & 2^13^14^17^1& 0 & \text{ncng} & \text{crse} & 4 & 64 \\
\Gamma(2,7)_3 & 168 & \PSL_2(\FF_7) & 1 & 32 & 0 & 1 & 0 & 2^13^14^17^1& 0 & \text{ncng} & \text{crse} & 4 & 64 \\
\Gamma(2,7)_4 & 168 & \PSL_2(\FF_7) & 1 & 36 & 0 & 0 & 0 & 1^13^24^17^1 & 0 & \text{ncng} & \text{fine} & 3 & 57
\end{array}$$
\caption{Components of $\mM(A_5)_{\Qbar}$ and $\mM(\PSL_2(\FF_7))_{\Qbar}$}\label{table_first_two_nonabelian_simple}
\end{table}

\sgap

Here, we find that there are three components of $\mM(A_5)_{\Qbar}$. One has stabilizer $\Gamma(A_5)_1$, corresponding to a degree $18$ cover of the $j$-line $\AA^1_j$, and two have conjugate stabilizers $\Gamma(A_5)_2$, corresponding to two degree 10 covers of $\AA^1_j$. The action of the absolute Galois group $G_\QQ$ on the components of $\mM(A_5)_{\Qbar}$ must preserve the signature of the components (c.f. Proposition \ref{prop_galois_action_preserves_signature}), as well as the field ``$e$'' (c.f. Lemma \ref{lemma_BCL}), and hence also ``$g$'' . Thus, we deduce that the component $\yY(\Gamma(A_5)_1)\subset \mM(A_5)_{\Qbar}$ corresponding to $\Gamma(A_5)_1$ is defined over $\QQ$ and $Y(\Gamma(A_5)_1)$ is a model of $\hH/\Gamma(A_5)_1$. Each component $\yY(\Gamma(A_5)_2)$ is defined over at most a quadratic extension of $\QQ$. In this case, since $e = 5$, we know that the Nielsen invariant of an $\Gamma(A_5)_2$-structure $[\varphi] : F_2\twoheadrightarrow A_5$ must have order 5. The group $A_5$ has two conjugacy classes of 5-cycles, represented by $(12345)$ and $(12345)^2$. The Branch Cycle Lemma (\ref{lemma_BCL}) then tells us that for any $\sigma\in G_\QQ$, $\varphi(\sigma([x,y])) = \varphi([x,y])^{\chi(\sigma)}$. This implies that each component $\yY(\Gamma(A_5)_2)$ is defined over the field $\QQ(\zeta_5+\zeta_5^{-1}) = \QQ(\sqrt{5})$.

\sgap

Similarly, for $\PSL_2(\FF_7)$, there are two conjugacy classes of elements of order 7. Let $T := \spmatrix{1}{1}{0}{1}$, then the two conjugacy classes are $\{T,T^2,T^4\}$ and $\{T^3,T^5,T^6\}$. As above, we find that $\yY(\Gamma(2,7)_1)$ is defined over $\QQ(\sqrt{-7})$, the unique quadratic subfield of $\QQ(\zeta_7)$. Since $m = 1$ for $\Gamma(2,7)_4$, we find that $\yY(\Gamma(2,7)_4)$ is defined over $\QQ$. Unfortunately, since $\PSL_2(\FF_7)$ has only one conjugacy class of order $e = 4$, this method says nothing about the field of definition of $\yY(\Gamma(2,7)_2)$ or $\yY(\Gamma(2,7)_3)$.

\sgap

Note that for $G = \PSL_2(\FF_7)$, the groups $\Gamma(2,7)_2,\Gamma(2,7)_3$ have the same signature (and the same $e,g$), but are not conjugate as subgroups of $\SL_2(\ZZ)$, which explains why they are presented on different lines in the table.

\sgap



As expected, all components of $\mM(A_5)$ and $\mM(\PSL_2(\FF_7))$ are noncongruence, and further we note that the component $Y(\Gamma(2,7)_3)\subset M(\PSL_2(\FF_7))_{\Qbar}$ is a genus 0 fine moduli scheme, with multiplicity $m = 1$ and a unique signature, so it is defined over $\QQ$. Furthermore, it has a unique cusp of width 1, which must be $\QQ$-rational, and so $Y(\Gamma(2,7)_3)_\QQ \cong \PP^1_\QQ - \{\text{cusps}\}$. Thus, there exists a universal family of elliptic curves $\EE(2,7)_3$ over $Y(\Gamma(2,7)_3)_\QQ$ and a universal $\PSL_2(\FF_7)$-Galois cover $\mathbb{X}(2,7)_3$ over $\EE(2,7)_3$ with fibers of genus 57.

\sgap





We conclude this section with data for the largest group $G$ for which we've computed the components of $\mM(G)$ - the Suzuki Group $G = \Sz(8)$, in Table \ref{table_Suzuki_8}.

\sgap

\begin{table}[h]\footnotesize
$$\begin{array}{llllrlllllllll}
\text{Label} & \text{Size} & G & \text{m} & \text{d} & c_4 & c_6 & c_{-1} & \text{cusp widths} & \text{genus} & \text{c/nc} & \text{c/f} & e & g\\
\hline
\Gamma(\Sz(8))_1 & 29120 &  \Sz(8)  & 3 & 84 & 0 & 0 & 0 &  1^{1}4^{3}5^{3}7^{2}  & 0 & \text{ncng} & \text{fine} & 7 & 12481\\
\Gamma(\Sz(8))_2 & 29120 &  \Sz(8)  & 3 & 468 & 0 & 0 & 0 &  1^{3}4^{13}5^{7}7^{15}13^{3}  & 0 & \text{ncng} & \text{fine} & 13 & 13441\\
\Gamma(\Sz(8))_3 & 29120 &  \Sz(8)  & 3 & 588 & 0 & 0 & 0 &  1^{3}4^{13}5^{12}7^{20}13^{3}  & 0 & \text{ncng} & \text{fine} & 7 & 12481\\
\Gamma(\Sz(8))_4 & 29120 &  \Sz(8)  & 1 & 660 & 0 & 0 & 0 &  1^{3}4^{9}5^{21}7^{21}13^{3}  & 0 & \text{ncng} & \text{fine} & 5 & 11649\\
\Gamma(\Sz(8))_5 & 29120 &  \Sz(8)  & 3 & 624 & 0 & 0 & 0 &  2^{4}4^{14}5^{13}7^{15}13^{6}  & 1 & \text{ncng} & \text{fine} & 13 & 13441\\
\Gamma(\Sz(8))_6 & 29120 &  \Sz(8)  & 3 & 624 & 0 & 0 & 0 &  2^{4}4^{14}5^{13}7^{15}13^{6}  & 1 & \text{ncng} & \text{fine} & 13 & 13441\\
\Gamma(\Sz(8))_7 & 29120 &  \Sz(8)  & 3 & 624 & 0 & 0 & 0 &  2^{4}4^{14}5^{13}7^{15}13^{6}  & 1 & \text{ncng} & \text{fine} & 13 & 13441\\
\Gamma(\Sz(8))_8 & 29120 &  \Sz(8)  & 3 & 234 & 4 & 0 & 1 &  2^{3}5^{9}7^{15}13^{6}  & 3 & \text{ncng} & \text{crse} & 13 & 13441\\
\Gamma(\Sz(8))_9 & 29120 &  \Sz(8)  & 3 & 1008 & 0 & 0 & 0 &  2^{4}4^{16}5^{21}7^{30}13^{9}  & 3 & \text{ncng} & \text{fine} & 7 & 12481\\
\Gamma(\Sz(8))_{10} & 29120 &  \Sz(8)  & 3 & 1008 & 0 & 0 & 0 &  2^{4}4^{16}5^{21}7^{30}13^{9}  & 3 & \text{ncng} & \text{fine} & 7 & 12481\\
\Gamma(\Sz(8))_{11} & 29120 &  \Sz(8)  & 3 & 1008 & 0 & 0 & 0 &  2^{4}4^{16}5^{21}7^{30}13^{9}  & 3 & \text{ncng} & \text{fine} & 7 & 12481\\
\Gamma(\Sz(8))_{12} & 29120 &  \Sz(8)  & 1 & 192 & 0 & 0 & 1 &  5^{6}7^{12}13^{6}  & 5 & \text{ncng} & \text{crse} & 2 & 7281\\
\Gamma(\Sz(8))_{13} & 29120 &  \Sz(8)  & 1 & 192 & 0 & 0 & 1 &  5^{6}7^{12}13^{6}  & 5 & \text{ncng} & \text{crse} & 2 & 7281\\
\Gamma(\Sz(8))_{14} & 29120 &  \Sz(8)  & 3 & 1200 & 0 & 0 & 0 &  2^{4}4^{10}5^{24}7^{45}13^{9}  & 5 & \text{ncng} & \text{fine} & 5 & 11649\\
\Gamma(\Sz(8))_{15} & 29120 &  \Sz(8)  & 1 & 1536 & 0 & 0 & 0 &  4^{24}5^{36}7^{48}13^{12}  & 5 & \text{ncng} & \text{fine} & 4 & 10921\\
\Gamma(\Sz(8))_{16} & 29120 &  \Sz(8)  & 1 & 1536 & 0 & 0 & 0 &  4^{24}5^{36}7^{48}13^{12}  & 5 & \text{ncng} & \text{fine} & 4 & 10921\\
\Gamma(\Sz(8))_{17} & 29120 &  \Sz(8)  & 3 & 462 & 8 & 0 & 1 &  2^{3}5^{13}7^{28}13^{15}  & 8 & \text{ncng} & \text{crse} & 7 & 12481\\
\Gamma(\Sz(8))_{18} & 29120 &  \Sz(8)  & 1 & 690 & 12 & 0 & 1 &  2^{3}5^{12}7^{39}13^{27} & 15 & \text{ncng} & \text{crse} & 5 & 11649\\
\end{array}$$
\caption{Components of $\mM(\Sz(8))_{\Qbar}$}\label{table_Suzuki_8}
\end{table}

The Suzuki group $\Sz(8)$ is the second smallest nonabelian simple group for which the Inverse Galois Problem for $\QQ$ is not known (according to a short note on David Zywina's website). As expected, all components are noncongruence. In particular, the component $Y(\Gamma(\Sz(8))_4)$ has multiplicity one and unique signature, so it is defined over $\QQ$, and is a fine moduli scheme of genus 0. Further, it has precisely three cusps of width 1, which must correspond to a point on $Y(\Gamma(\Sz(8))_4)$ defined at most over a cubic extension of $\QQ$. Riemann-Roch implies that $Y(\Gamma(\Sz(8))_4)_\QQ$ is isomorphic to $\PP^1_\QQ - \{\text{cusps}\}$. Thus, we have an universal elliptic curve $\EE$ over $Y(\Gamma(\Sz(8))_4)_\QQ$ with nonconstant $j$-invariant, equipped with a universal $\Sz(8)$-torsor $\mathbb{X}^\circ/\EE^\circ$. It seems to be generally believed that any such family of elliptic curves should admit infinitely many specializations to elliptic curves over $\QQ$ with positive rank. If this were true, or if we could find such a specialization, then I claim this would solve the Inverse Galois Problem for $\Sz(8)$. Indeed, let $y\in Y(\Gamma(\Sz(8))_4)(\QQ)$ be such that $\EE_y^\circ$ has positive rank. Then the universal $\Sz(8)$-torsor specializes to a connected $\Sz(8)$-torsor $f : \mathbb{X}^\circ_y\rightarrow\EE_y^\circ$. For every $\QQ$-point $P\in\EE_y^\circ(\QQ)$, if $f^{-1}(P)$ is disconnected, then it is a disjoint union of copies of $\Spec K$, where $K$ is a field Galois over $\QQ$ with Galois group isomorphic to some subgroup $H\le\Sz(8)$, and each $\Spec K$ maps to a $\QQ$-rational point of the intermediate cover $\mathbb{X}^\circ_y/H$, whose image in $\EE_y^\circ$ is $P$. Let $u : \mathbb{X}_y/H\rightarrow\EE_y$ be the map of proper curves corresponding to $\mathbb{X}^\circ_y/H\rightarrow \EE_y^\circ$. 

\sgap

\textbf{Claim:} The map $u : \mathbb{X}_y/H\rightarrow\EE_y$ is ramified.

\sgap

We will need the following lemmas:

\begin{lemma}\label{lemma_commutator_subgroup_F2} Let $F_2$ be the free group on generators $x,y$. Then its commutator subgroup $[F_2,F_2]$ is generated by the set $A := \{z[x,y]z^{-1} : z\in F_2\}$.
\end{lemma}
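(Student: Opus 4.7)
The plan is to show that the subgroup $N := \langle A\rangle$ generated by $A$ coincides with $[F_2,F_2]$ by proving the two inclusions directly. The main observation is that although $N$ is described as the subgroup (not the normal closure) generated by $A$, it automatically contains inverses and is closed under conjugation, so it agrees with the normal closure of $[x,y]$ in $F_2$.

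First I would verify that $N$ is a normal subgroup of $F_2$. For any generator $z[x,y]z^{-1}\in A$ and any $w\in F_2$, one has
\[
w\bigl(z[x,y]z^{-1}\bigr)w^{-1} = (wz)[x,y](wz)^{-1}\in A,
\]
so conjugation by any element of $F_2$ sends $A$ into $A$, and therefore sends $N$ into $N$. In particular $N$ is normal in $F_2$. Since $[x,y]\in [F_2,F_2]$ and the commutator subgroup is normal, every $z[x,y]z^{-1}$ also lies in $[F_2,F_2]$, giving the inclusion $N\subseteq [F_2,F_2]$.

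For the reverse inclusion $[F_2,F_2]\subseteq N$, I would pass to the quotient $F_2/N$. This group is generated by the images $\bar x,\bar y$ of $x,y$, and since $[x,y]\in N$ by construction (take $z=1$), we have $\bar x\bar y = \bar y\bar x$ in $F_2/N$. Hence $F_2/N$ is generated by two commuting elements, so it is abelian. This forces $[F_2,F_2]\subseteq N$ by the universal property of abelianization. Combining the two inclusions gives $N = [F_2,F_2]$.

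There is really no serious obstacle; the only point worth emphasizing is that the definition of $A$ in the statement does not explicitly include the inverses $z[x,y]^{-1}z^{-1}$, but because $A$ is used to generate a \emph{subgroup}, these inverses are present automatically, so the argument above goes through without modification.
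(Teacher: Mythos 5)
Your proof is correct and follows essentially the same route as the paper: both identify $\langle A\rangle$ as the normal closure of $[x,y]$ and observe that the quotient $F_2/\langle A\rangle$ is abelian (since the images of $x,y$ commute), which forces $[F_2,F_2]\subseteq\langle A\rangle$. Your write-up just spells out the normality check and the inverse-closure remark more explicitly than the paper does.
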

\begin{proof} Certainly $\langle A\rangle\subseteq[F_2,F_2]$. On the other hand, note that $\langle A\rangle$ is the smallest normal subgroup containing $[x,y]$, and it's clear that any quotient of $F_2$ which sends $[x,y]$ to the identity must be abelian, and thus $\langle A\rangle\supseteq [F_2,F_2]$.
\end{proof}
\begin{lemma}\label{lemma_perfect_generated_by_conjugates} If $G$ is a perfect group generated by $g,h$. Then it is also generated by the conjugates of $[g,h]$.
\end{lemma}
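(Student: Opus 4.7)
The plan is to reduce to the previous lemma by lifting through the surjection $\varphi\colon F_2 \twoheadrightarrow G$ defined on the free generators by $x\mapsto g$, $y\mapsto h$, which exists (and is surjective) because $G$ is generated by $g$ and $h$.

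First I would note that since $\varphi$ is a group homomorphism, it carries the commutator subgroup $[F_2,F_2]$ onto $[G,G]$. By the assumption that $G$ is perfect, $[G,G]=G$, so $\varphi$ restricts to a surjection $[F_2,F_2]\twoheadrightarrow G$. Next, Lemma \ref{lemma_commutator_subgroup_F2} provides a generating set for the source: $[F_2,F_2]$ is generated by the set of conjugates $A=\{z[x,y]z^{-1}:z\in F_2\}$. Since homomorphisms send generating sets of the source to generating sets of the image, $\varphi(A)$ generates $G$.

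Finally I would identify $\varphi(A)$ with the set of $G$-conjugates of $[g,h]$. On the one hand, for any $z\in F_2$ we have $\varphi(z[x,y]z^{-1})=\varphi(z)[g,h]\varphi(z)^{-1}$, so $\varphi(A)$ is contained in the set of conjugates of $[g,h]$ by elements of $G$. Conversely, since $\varphi$ is surjective, every element of $G$ is $\varphi(z)$ for some $z\in F_2$, so every $G$-conjugate of $[g,h]$ lies in $\varphi(A)$. Hence $G$ is generated by the conjugates of $[g,h]$, as claimed.

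There is no real obstacle here; the only point that could be mishandled is forgetting to invoke the surjectivity of $\varphi$ when matching $\varphi(A)$ with the full $G$-conjugacy class of $[g,h]$, but this is immediate.
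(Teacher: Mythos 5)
Your proof is correct and is essentially the same as the paper's: both define the surjection $F_2\twoheadrightarrow G$ sending $x,y$ to $g,h$, invoke Lemma \ref{lemma_commutator_subgroup_F2} to generate $[F_2,F_2]$ by conjugates of $[x,y]$, and use perfectness to conclude that the image of $[F_2,F_2]$ is all of $G$. You simply spell out the final identification of $\varphi(A)$ with the full $G$-conjugacy class of $[g,h]$ a bit more explicitly than the paper does.
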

\begin{proof} Consider the surjection $\phi : F_2\twoheadrightarrow G$ given by sending the generators $x,y$ of $F_2$ to $g,h$. By Lemma \ref{lemma_commutator_subgroup_F2}, the commutator subgroup $[F_2,F_2]$ is generated by conjugates of $[x,y]$. On the other hand, since $G$ is perfect, we have $\phi([F_2,F_2]) = G$, so $G$ is generated by conjugates of $\phi([x,y]) = [g,h]$.
\end{proof}

\begin{proof}(of claim) Note that $\mathbb{X}^\circ_y\rightarrow\EE^\circ_y$ determines a monodromy representation
$$\varphi : \pi_1(\EE^\circ_{y,\Qbar})\cong\Fhat\twoheadrightarrow\Sz(8)$$
Hence, the fiber $u^{-1}(P)$ of $\mathbb{X}^\circ_y/H$ over $P$ corresponds to the coset space $\Sz(8)/H$, whose monodromy action is given via left multiplication via $\varphi$. Let $x,y$ be generators of $\Fhat$, then $\Sz(8)$ is generated by the conjugates of $\varphi([x,y])$, which implies that $H$ cannot contain every conjugate of $\varphi([x,y])$. Thus, there is a conjugate $z[x,y]z^{-1}$ such that $\varphi(z[x,y]z^{-1})\notin H$, and hence $\varphi(z[x,y]z^{-1})$ acts nontrivially on $\Sz(8)/H$. The claim follows by noting that the order of $\varphi(z[x,y]z^{-1})$ acting as a permutation of $\Sz(8)/H$ is $ > 1$, and is precisely the ramification index of a point in the fiber $u^{-1}(O)$.
\end{proof}

Finally, by the Riemann-Hurwitz formula, any ramified cover of an elliptic curve must have genus $\ge 2$. By Falting's theorem, such curves can have only finitely many $\QQ$-rational points. Thus, since there are only finitely many intermediate covers $\mathbb{X}_y^\circ/H$, together the images of their $\QQ$-points account for only finitely many $\QQ$-points of $\EE_y$, and hence there must exist infinitely many $\QQ$-points on $\EE_y$ whose fiber in $\mathbb{X}^\circ_y$ is connected. This would solve the Inverse Galois Problem for $\Sz(8)$.

\sgap

While it is known that there exist at most finitely many congruence subgroups of any genus, there exist in fact infinitely many noncongruence subgroups of any genus (c.f. \cite{Jon79}). In a way, the existence of genus 0 components of $\mM(\Sz(8))_{\Qbar}$ can be seen as manifestation of this fact. By the Riemann-Hurwitz formula, the genera of components of the stacks $\mM(G)_{\Qbar}$ is determined by the size of $G$, the number of components of $\mM(G)_{\Qbar}$, and their numbers of cusps. The problem of counting the components of $\mM(G)_{\Qbar}$ is essentially the problem of \emph{Nielsen equivalence} in group theory. Very little is known about this in our case (c.f. \cite{Pak00}) beyond the existence of the Nielsen invariant. 

\subsection{Noncongruence criteria for finite groups $G$} From the discussion above, it's natural to conjecture that every finite nonabelian simple group should be \emph{purely noncongruence} (c.f. \ref{def_purity}). First we make an observation. Given an exact sequence of groups
$$1\rightarrow N\rightarrow G'\stackrel{f}{\rightarrow} G\rightarrow 1,$$
we say that $G'$ is an extension of $G$ by $N$, and $G$ is a quotient of $G'$. Then, by Prop \ref{prop_descent_of_groups}, we have a surjection
\begin{eqnarray*}
\Hom^\surext(F_2,G') & \stackrel{f_*}{\longrightarrow} & \Hom^\surext(F_2,G) \\
{}[\varphi'] & \mapsto & [f\circ\varphi']
\end{eqnarray*}
and hence $\Gamma_{[\varphi']}\subset\Gamma_{[f\circ\varphi']}$. This implies that the property of being (purely) noncongruence is stable under taking extensions, whereas the property of being (purely) congruence is stable under taking quotients (note that this gives another proof that every 2-generated abelian group $G$ is purely congruence). It seems reasonable to somewhat generalize the conjecture that finite nonabelian simple groups are purely noncongruence to the following:

\begin{conj}\label{conj_nonsolvable_implies_noncongruence} Every nonsolvable finite 2-generated group $G$ is purely noncongruence.
\end{conj}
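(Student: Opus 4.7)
The plan is to proceed by induction on $|G|$, reducing the conjecture to the case that $G$ is finite simple nonabelian, and then to obstruct congruence stabilizers using the branch-cycle machinery of \S\ref{ss_dihedral_structure} together with the Nielsen-invariant analysis of \S\ref{section_first_examples}. For the reduction, Proposition \ref{prop_descent_of_groups} says that for any surjection $f:G\twoheadrightarrow\bar G$ and any $\varphi:F_2\twoheadrightarrow G$ we have $\Gamma_{[\varphi]}\subseteq\Gamma_{[f\circ\varphi]}$, so if $\bar G$ is already known to be purely noncongruence, so is $G$. Thus I may assume $G$ has no proper nonsolvable quotient; since $G/R(G)$ is semisimple and nontrivial, this forces the solvable radical $R(G)$ to be $1$, and the socle of $G$ is a direct product $T^k$ of isomorphic nonabelian simple groups with $G/T^k$ solvable. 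A further case analysis of the embedding $T^k\lhd G\le\Aut(T^k)$ --- using Gasch\"utz-type lifting as in Proposition \ref{prop_descent_of_groups} and the inductive hypothesis --- reduces the statement to the case $G=T$, a single finite nonabelian simple group.

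Suppose now $G$ is finite nonabelian simple and, for contradiction, $\Gamma_{[\varphi]}\supseteq\Gamma(n)$ for some $n$. By Theorem \ref{thm_outer_representation} the $\SL_2(\ZZ)$-orbit of $[\varphi]$ in $\Hom^\surext(F_2,G)$ must then factor through $\SL_2(\ZZ/n\ZZ)$, i.e.\ the $G$-torsor on $E^\circ$ classified by $[\varphi]$ is determined, as an abstract cover together with its $G$-action, by the level-$n$ structure on $E$. Lemma \ref{lemma_higman} and the branch cycle lemma (Lemma \ref{lemma_BCL}) pin down the conjugacy class of $\varphi([x,y])$ in $G$ and its action under the cyclotomic character, recovering the Nielsen invariant (Definition \ref{def_nielsen_invariant}) as an $\SL_2(\ZZ)$-invariant of the orbit. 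The key step is then to exhibit an explicit Nielsen transformation $\gamma\in\Aut(F_2)$ which lies in the kernel of $\Aut(F_2)\to\GL_2(\ZZ/n\ZZ)$ but satisfies $[\varphi\circ\gamma]\neq[\varphi]$ in $\Hom^\surext(F_2,G)$. For Nielsen classes containing a generating pair $(a,b)$ with $|a|,|b|,|ab|$ pairwise coprime, such $\gamma$ are furnished by Theorem \ref{cor_noncongruence_summary}; the extension to arbitrary Nielsen classes would proceed by combining the simplicity of $G$ with an explicit comparison between the natural $\SL_2(\ZZ/n\ZZ)$-action on pairs of generators of $G^{\ab}/n$ and the actual mapping-class-group action on $\Hom^\surext(F_2,G)$.

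The main obstacle, and the reason the statement is posed only as a conjecture, is precisely this last step: for a fixed simple nonabelian $G$ there can be several Nielsen classes, and passing from ``some orbit is noncongruence'' (Theorem \ref{cor_noncongruence_summary}) to ``every orbit is noncongruence'' seems to require either a sharper group-theoretic obstruction or a quantitative strengthening of Asada's congruence subgroup property for $\Aut(F_2)$ (Theorem \ref{thm_CSP}). One natural angle I would pursue is to study how the congruence kernel $\bigcap_{N\ge 1}\ol{\Gamma(N)}\subset\widehat{\SL_2(\ZZ)}$ acts on $\Hom^\surext(\widehat{F_2},G)$ through its outer action on $\widehat{F_2}$: the group $G$ is purely noncongruence precisely when this action has no fixed exterior surjection, and for nonsolvable $G$ one expects this from the fact that the congruence kernel is free profinite of countable rank (c.f.\ \cite{Mel76}). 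Making this expectation precise --- in particular, ruling out any fixed exterior surjection of $\widehat{F_2}$ onto a nonsolvable $G$ under the congruence kernel --- is where I expect the real work to lie, and it seems to demand new input beyond the tools developed in this paper.
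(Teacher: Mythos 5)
This is Conjecture \ref{conj_nonsolvable_implies_noncongruence}, which the paper does not prove; it establishes only the partial results in Theorems \ref{thm_S_n_is_NC}, \ref{thm_A_n_noncongruence}, \ref{thm_simple_nc_criteria}, and Corollaries \ref{cor_PSL2p_noncongruence}--\ref{cor_noncongruence_summary}, and these show a \emph{particular} orbit is noncongruence rather than \emph{every} orbit. You are right to flag, in your last paragraph, that passing from ``some orbit'' to ``every orbit'' for a given $G$ demands new ideas and that the reformulation via the action of the congruence kernel on $\Hom^\surext(\widehat{F_2},G)$ is the natural place to look --- that observation is a correct restatement of what ``purely noncongruence'' means and is consistent with the discussion around Theorem \ref{thm_UBDB=UBD}.

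However, your proposed reduction to simple groups is not valid. Proposition \ref{prop_descent_of_groups} (and the surrounding remarks) give that \emph{purely noncongruence is stable under extensions}: if $f:G\twoheadrightarrow\bar G$ and $\bar G$ is purely noncongruence, then so is $G$, since $\Gamma_{[\varphi]}\subseteq\Gamma_{[f\circ\varphi]}$ and containing a $\Gamma(n)$ would propagate upward. After you assume $G$ has no proper nonsolvable quotient, this tool is exhausted: the only quotients left are solvable, and solvability gives no information (indeed the paper's data suggest many solvable groups are purely \emph{congruence}). In particular, for an almost simple $G$ with $T\lhd G\le\Aut(T)$ and $G/T$ a nontrivial solvable subgroup of $\Out(T)$, there is no surjection $G\twoheadrightarrow T$ and Gasch\"utz/\ref{prop_descent_of_groups} cannot transfer pure noncongruence from $T$ up to $G$. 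The same issue arises for groups with socle $T^k$ and $G/T^k$ solvable-but-nontrivial. So even granting the conjecture for all finite simple nonabelian groups, the proposed induction would not close; the conjecture genuinely must be attacked for all nonsolvable $G$ directly, which is yet another reason it remains open in the paper.
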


In this section we will make some partial progress towards the conjecture. For a congruence subgroup $\Gamma\le\SL_2(\ZZ)$, recall that the \emph{congruence level} of $\Gamma$ is the least integer $n$ such that $\Gamma\supset\Gamma(n)$. We will need a generalization of this notion of level.

\sgap

\begin{defn}\label{defn_geometric_level} For any finite index subgroup $\Gamma\subset\SL_2(\ZZ)$, we define its (geometric) level $l_\Gamma$ to be the least common multiple of its cusp widths.
 \end{defn}
 
From now on by default ``level'' will refer to geometric level.
 
\begin{prop}\label{prop_level} Let $\varphi : F_2\twoheadrightarrow G$ be surjective. For any finite index $\Gamma\le\SL_2(\ZZ)$, let $\pm\Gamma$ be the subgroup generated by $\Gamma$ and $-I$. The following quantities are equal
\begin{itemize}
\item[1.] The geometric level $l_{\Gamma_{[\varphi]}}$
\item[2.] The order of $\spmatrix{1}{1}{0}{1}$ as a permutation acting on the coset space $\SL_2(\ZZ)/\pm\Gamma_{[\varphi]}$.
\item[3.] The order of $\spmatrix{1}{1}{0}{1}$ as a permutation acting on $(\SL_2(\ZZ)\cdot[\varphi])/\pm I$.
\end{itemize}
In particular, the cusp widths of $\Gamma_{[\varphi]}$ are the cycle lengths in the disjoint cycle decomposition of the the permutation representation of $\spmatrix{1}{1}{0}{1}$ acting as in (2) or (3).

\sgap

Furthermore, the sum of the cusp widths is the index $[\SL_2(\ZZ) : \pm\Gamma_{[\varphi]}]$, and the geometric level is a conjugacy-invariant. 
\end{prop}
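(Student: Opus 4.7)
The plan is to identify all three quantities with the LCM of cycle lengths of $T := \spmatrix{1}{1}{0}{1}$ acting on a common $\SL_2(\ZZ)$-set, namely $\SL_2(\ZZ)/\pm\Gamma_{[\varphi]}$, and then to match the cycles of this action with the cusps of $\hH/\Gamma_{[\varphi]}$, whose widths then give the geometric level in (1).

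First I would recall the standard description of cusps: $\SL_2(\ZZ)$ acts transitively on $\PP^1(\QQ)$ via fractional linear transformations with stabilizer of $i\infty$ equal to $\pm\langle T\rangle$, so the cusps of $\hH/\Gamma$ correspond bijectively to the double cosets $\pm\Gamma\backslash\SL_2(\ZZ)/\pm\langle T\rangle$. The width of the cusp represented by $g\cdot i\infty$ is by definition the smallest $h>0$ with $gT^h g^{-1}\in\pm\Gamma$; this is the ramification index of the cusp in the degree-$[\SL_2(\ZZ):\pm\Gamma]$ map $\hH/\Gamma\rightarrow\hH/\PSL_2(\ZZ)$, so summing widths recovers that index. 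Next I would observe that the $T$-cycle through $g^{-1}\pm\Gamma\in\SL_2(\ZZ)/\pm\Gamma$ under left multiplication has length equal to the least $h>0$ with $T^h g^{-1}\in g^{-1}(\pm\Gamma)$, i.e., with $gT^h g^{-1}\in\pm\Gamma$. Thus cycles of $T$ on $\SL_2(\ZZ)/\pm\Gamma$ are in bijection with cusps of $\hH/\Gamma$ with matching lengths; taking LCMs yields (1)$=$(2), and summing cycle lengths gives $[\SL_2(\ZZ):\pm\Gamma_{[\varphi]}]$.

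For (2)$=$(3), the orbit-stabilizer correspondence gives an $\SL_2(\ZZ)$-equivariant bijection $\SL_2(\ZZ)\cdot[\varphi]\cong\SL_2(\ZZ)/\Gamma_{[\varphi]}$. Since $-I$ is central, its action on the right-hand side is by left multiplication, and $\langle\Gamma_{[\varphi]},-I\rangle = \pm\Gamma_{[\varphi]}$, so quotienting both sides by $\pm I$ yields an $\SL_2(\ZZ)$-equivariant bijection $(\SL_2(\ZZ)\cdot[\varphi])/\pm I\cong \SL_2(\ZZ)/\pm\Gamma_{[\varphi]}$; the action of $T$ therefore has the same cycle structure, and hence the same order, on both. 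Conjugacy invariance of the level is then immediate: if $\Gamma' = h\Gamma h^{-1}$ then $g\pm\Gamma'\mapsto gh\pm\Gamma$ is an $\SL_2(\ZZ)$-equivariant bijection between the corresponding coset spaces, preserving the $T$-cycle structure.

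The main subtlety throughout is the bookkeeping with $-I$: although $\Gamma$ and $\pm\Gamma$ cut out the same cusps on $\hH$, the widths must be computed using $\pm\Gamma$ for the sum-equals-index formula to hold, and one must verify that quotienting the orbit by $\pm I$ in (3) really corresponds to enlarging $\Gamma_{[\varphi]}$ to $\pm\Gamma_{[\varphi]}$ in (2). Once this sign convention is fixed consistently, each identification above reduces to a routine application of orbit-stabilizer.
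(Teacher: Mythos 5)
Your proof is correct. The paper in fact states Proposition \ref{prop_level} without supplying a proof, so there is no paper argument to compare against line by line; but the route you take is the standard one and fills the gap cleanly. Specifically: identifying the cusps of $\hH/\Gamma_{[\varphi]}$ with the double cosets $\pm\Gamma_{[\varphi]}\backslash\SL_2(\ZZ)/\pm\langle T\rangle$, transporting these via inversion to $\langle T\rangle$-orbits on $\SL_2(\ZZ)/\pm\Gamma_{[\varphi]}$, and checking that the width of the cusp $g\cdot i\infty$ (the least $h>0$ with $gT^hg^{-1}\in\pm\Gamma_{[\varphi]}$) agrees with the cycle length of $g^{-1}\pm\Gamma_{[\varphi]}$ under $T$; then using orbit--stabilizer to see that modding the orbit $\SL_2(\ZZ)\cdot[\varphi]$ by $\pm I$ is $\SL_2(\ZZ)$-equivariantly the same as passing from $\SL_2(\ZZ)/\Gamma_{[\varphi]}$ to $\SL_2(\ZZ)/\pm\Gamma_{[\varphi]}$ (since $-I$ is central). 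The $-I$ bookkeeping is exactly the subtle point and you handle it correctly — in particular the degree formula for the sum of widths requires working with $\pm\Gamma_{[\varphi]}$ rather than $\Gamma_{[\varphi]}$, which is precisely why the statement uses $\pm\Gamma_{[\varphi]}$ in item (2), and why item (3) quotients by $\pm I$. Conjugacy invariance falls out as you say. No gaps.
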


Note that the geometric level only cares about the image of $\Gamma$ in $\PSL_2(\ZZ)$. If $-I\in\Gamma$, then the geometric level is a generalization of the congruence level. If $-I\notin\Gamma$, then the situation is described a result of Kiming-Sch\"{u}tt-Verrill:

\begin{thm}[Wohlfart, Kiming-Sch\"{u}tt-Verrill] Let $\Gamma\subset\SL_2(\ZZ)$ be finite index of geometric level $l$. Then $\pm\Gamma$ is congruence if and only if $\pm\Gamma\supset\Gamma(l)$, and in this case the congruence level of $\pm\Gamma$ is $l$.

\sgap

In general, $\Gamma$ is congruence if and only if it contains $\Gamma(2l)$. In this case, the congruence level of $\Gamma$ is either $l$ or $2l$.
\end{thm}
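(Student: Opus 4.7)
The key group-theoretic ingredient I will invoke, as a classical black box, is:
\[
(\ast) \qquad \text{for every integer } m \geq 2, \text{ the normal closure of } T^m \text{ in } \SL_2(\ZZ) \text{ equals } \Gamma(m),
\]
where $T = \spmatrix{1}{1}{0}{1}$. Equivalently, $\SL_2(\ZZ)/\langle T^m\rangle^{\SL_2(\ZZ)} \rightiso \SL_2(\ZZ/m\ZZ)$; this can be read off from an explicit presentation of $\SL_2(\ZZ/m\ZZ)$.

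\emph{Paper-internal backbone.} By Proposition \ref{prop_level}, each cusp width $h_\sigma$ of $\Gamma$ at $\sigma(\infty)$ divides $l$, so $\sigma T^l \sigma^{-1} \in \pm\Gamma$ for every $\sigma \in \SL_2(\ZZ)$. Conversely, if $\Gamma \supset \Gamma(m)$, then normality of $\Gamma(m)$ in $\SL_2(\ZZ)$ forces $\sigma T^m \sigma^{-1} \in \Gamma(m)\subset\Gamma$ for every $\sigma$, so every cusp width divides $m$, whence $l \mid m$. Taken together, these observations already show that $l$ divides any congruence level of $\Gamma$, regardless of whether $-I \in \Gamma$.

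\emph{Part 1 ($-I \in \Gamma$).} If $\Gamma$ is congruence, the first observation gives $\sigma T^l \sigma^{-1} \in \pm\Gamma = \Gamma$ for every $\sigma$, so by $(\ast)$ the subgroup $\Gamma$ contains the normal closure of $T^l$, namely $\Gamma(l)$. Combined with the prior divisibility, the congruence level equals $l$ exactly; the converse is automatic. \emph{Part 2 ($-I \notin \Gamma$).} Because $\hH/\Gamma = \hH/\pm\Gamma$, the cusp widths of $\Gamma$ and $\pm\Gamma$ coincide and they share the same geometric level $l$. Applying Part 1 to $\pm\Gamma$ gives $\pm\Gamma \supset \Gamma(l)$, so $\sigma T^l \sigma^{-1} \in \pm\Gamma$ for every $\sigma$. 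The sign ambiguity is washed out by squaring:
\[
\sigma T^{2l} \sigma^{-1} = (\sigma T^l \sigma^{-1})^2 \in \Gamma,
\]
since $(-\gamma)^2 = \gamma^2$ for any $\gamma$. Applying $(\ast)$ with $m = 2l$ then yields $\Gamma \supset \Gamma(2l)$. The congruence level $N$ of $\Gamma$ is therefore forced into the chain $l \mid N \mid 2l$, i.e.\ $N \in \{l, 2l\}$.

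\textbf{The hard part.} All the paper-specific input is immediate from Proposition \ref{prop_level}; the genuine content lies entirely in the black box $(\ast)$, which is classical but not derived from anything earlier in the paper. What makes the whole statement clean is that no abelianization or group-cohomological argument involving $\Gamma(l)$ is needed once $(\ast)$ is in hand: Part 2 reduces to the pleasant fact that squaring in $\SL_2(\ZZ)$ kills the $\pm$ ambiguity, converting containment in $\pm\Gamma$ into containment in $\Gamma$ at the cost of doubling the exponent.
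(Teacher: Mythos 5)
Your proof hinges on the claim $(\ast)$ that the normal closure of $T^m$ in $\SL_2(\ZZ)$ equals $\Gamma(m)$. This is false for every $m \ge 6$. In $\PSL_2(\ZZ) \cong \ZZ/2\ZZ * \ZZ/3\ZZ$, writing $\bar T = \bar S \bar U$ with $\bar S$ of order $2$, $\bar U$ of order $3$, adjoining the single relation $\bar T^m = 1$ produces the von Dyck triangle group $\Delta(2,3,m)$, which is infinite as soon as $\tfrac12 + \tfrac13 + \tfrac1m \le 1$, i.e.\ for $m \ge 6$. Hence the normal closure of $T^m$ has \emph{infinite} index in $\SL_2(\ZZ)$ for $m \ge 6$, while $\Gamma(m)$ has index $m^3\prod_{p\mid m}(1-p^{-2})$. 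The parenthetical ``this can be read off from an explicit presentation of $\SL_2(\ZZ/m\ZZ)$'' is exactly where the trap is: such presentations exist, but they require relations \emph{beyond} $T^m=1$ on top of the defining relations of $\SL_2(\ZZ)$; the gap between $\langle\!\langle T^m\rangle\!\rangle$ and $\Gamma(m)$ is exactly what those extra relations kill.

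There is also a structural red flag that you should have caught independently of the above: your Part 1 never invokes the hypothesis that $\Gamma$ is congruence. As written, the chain of deductions (cusp widths divide $l$ $\Rightarrow$ all conjugates of $T^l$ lie in $\Gamma$ $\Rightarrow$ $\Gamma \supset \langle\!\langle T^l\rangle\!\rangle \overset{(\ast)}{=} \Gamma(l)$) would apply to \emph{every} finite-index $\Gamma \le \SL_2(\ZZ)$ containing $-I$, and would therefore prove the congruence subgroup property for $\SL_2(\ZZ)$. Since that property is false --- indeed, its failure is the premise of the entire paper --- $(\ast)$ cannot hold.

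For the record, the actual proofs (Wohlfart for the $-I\in\Gamma$ case, Kiming--Sch\"utt--Verrill for the rest) work not in $\SL_2(\ZZ)$ but inside finite quotients $\SL_2(\ZZ/N\ZZ)$ with $N$ divisible by both $l$ and some congruence level of $\Gamma$; what replaces $(\ast)$ is a finite-group lemma, roughly that $\Gamma(l)/\Gamma(N)$ is normally generated in $\SL_2(\ZZ/N\ZZ)$ by the image of $T^l$, which is proved by reducing to prime powers and examining the filtration quotients $\Gamma(p^i)/\Gamma(p^{i+1})$ as adjoint modules. The congruence hypothesis on $\Gamma$ is what licenses passing to a finite quotient in the first place. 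The paper itself does not reprove any of this --- it simply cites \cite{Woh63} and \cite{KSV11}. Your reduction of Part 2 to Part 1 by squaring $\sigma T^l\sigma^{-1}$ is correct and is the easy step; it is Part 1 that needs a genuinely different argument.
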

\begin{proof} The first statement is a result of Wolhfart \cite{Woh63}. The second is Proposition 3 in \cite{KSV11}.
\end{proof}

\begin{remark}\label{remark_pcong_does_not_imply_cong} It is shown in \cite{KSV11} that even if $\pm\Gamma$ is congruence, and hence by Wohlfart contains $\Gamma(l)$, this doesn't imply that $\Gamma$ also contains $\Gamma(l)$ - in fact, $\Gamma$ might even be noncongruence! We will not need the refinement of \cite{KSV11}, but it is good to be aware of this subtlety.
\end{remark}

\sgap

We will adapt an idea of Schmithusen (c.f. \cite{Sch12}). For a subgroup $\Gamma\subset\SL_2(\ZZ)$ of level $l$, let $d := [\SL_2(\ZZ):\pm\Gamma]$, and consider the commutative diagram

\begin{equation}\label{cd_schmithusen}\xymatrix{
1\ar[r] & \Gamma(l)\ar[r] & \SL_2(\ZZ)\ar[r]^{p_l} & \SL_2(\ZZ/l\ZZ)\ar[r] & 1 \\
1\ar[r] & \Gamma(l)\cap\pm\Gamma\ar@{-}[u]_f\ar[r] & \pm\Gamma\ar@{-}[u]_d\ar[r]^{p_l} & p_l(\pm\Gamma)\ar@{-}[u]_e\ar[r] & 1
}\end{equation}

where the rows are exact, the vertical lines are inclusions, and $f,d,e$ are the respective indices. In \cite{Sch12}, $f$ is also called the \emph{congruence deficiency} of $\Gamma$. When $e = 1$ and $d > 1$, $\Gamma$ is said to be \emph{totally noncongruence}. From this we get
\begin{prop} In the diagram above, $d = e\cdot f$, so $\pm\Gamma$ is congruence if and only if $f = 1$ or equivalently $e = d$.
\end{prop}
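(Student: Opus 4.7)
The plan is to prove $d = e \cdot f$ by introducing the intermediate subgroup $H := \Gamma(l) \cdot \pm\Gamma$, which is a genuine subgroup of $\SL_2(\ZZ)$ because $\Gamma(l)$ is normal in $\SL_2(\ZZ)$. By multiplicativity of indices through this intermediate subgroup, we have
$$d = [\SL_2(\ZZ) : \pm\Gamma] = [\SL_2(\ZZ) : H] \cdot [H : \pm\Gamma],$$
and I will identify the first factor on the right with $e$ and the second with $f$.

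For the first identification, since $\Gamma(l) = \ker p_l$ is contained in $H$, we have $H = p_l^{-1}(p_l(\pm\Gamma))$, and so $p_l$ induces a bijection between the cosets of $H$ in $\SL_2(\ZZ)$ and the cosets of $p_l(\pm\Gamma)$ in $\SL_2(\ZZ/l\ZZ)$. Hence $[\SL_2(\ZZ) : H] = [\SL_2(\ZZ/l\ZZ) : p_l(\pm\Gamma)] = e$. For the second identification, I will apply the second isomorphism theorem: since $\Gamma(l)$ is normal in $\SL_2(\ZZ)$ (in particular in $H$),
$$[H : \pm\Gamma] = [\Gamma(l) \cdot \pm\Gamma : \pm\Gamma] = [\Gamma(l) : \Gamma(l) \cap \pm\Gamma] = f.$$
Combining these gives $d = ef$ as claimed.

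For the congruence equivalence, I will invoke Wohlfart's theorem recalled just above the statement: $\pm\Gamma$ is congruence if and only if $\pm\Gamma \supseteq \Gamma(l)$. This containment is equivalent to $\Gamma(l) \cap \pm\Gamma = \Gamma(l)$, i.e.\ to $f = 1$, and by the factorization $d = ef$ this is in turn equivalent to $e = d$. There is no serious obstacle here: the whole statement is essentially a packaging of the second isomorphism theorem together with Wohlfart's criterion, and all the input has already been set up in the diagram (\ref{cd_schmithusen}) and the preceding theorem.
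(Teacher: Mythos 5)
Your argument is correct, and since the paper supplies no explicit proof for this proposition (it appears immediately after the diagram (\ref{cd_schmithusen}) with only the words ``From this we get''), your write-up is a legitimate filling-in of the implicit reasoning. The decomposition through $H := \Gamma(l)\cdot\pm\Gamma$, the correspondence-theorem identification $[\SL_2(\ZZ):H] = [\SL_2(\ZZ/l\ZZ):p_l(\pm\Gamma)] = e$, the coset-count $[H:\pm\Gamma]=[\Gamma(l):\Gamma(l)\cap\pm\Gamma]=f$, and the appeal to Wohlfart for the congruence equivalence all check out.

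One small remark on attribution: what you invoke for the second factor is the index identity $[NA:A]=[N:N\cap A]$, whereas the statement most texts label ``second isomorphism theorem'' is the isomorphism $NA/N\cong A/(N\cap A)$, i.e.\ $[NA:N]=[A:N\cap A]$. The version you need is the companion coset-count: the natural map $N/(N\cap A)\to NA/A$, $n(N\cap A)\mapsto nA$, is a bijection, and this holds without even needing $NA$ to be a subgroup. It is an equally elementary fact; I only flag it because citing ``the second isomorphism theorem'' could mislead a reader into thinking you mean the other index. A mild variant of your argument, which reads the diagram's other row, pushes through the intersection instead: compute $[\SL_2(\ZZ):\Gamma(l)\cap\pm\Gamma]$ in two ways, as $d\cdot[\pm\Gamma:\Gamma(l)\cap\pm\Gamma] = d\cdot|p_l(\pm\Gamma)|$ using exactness of the bottom row, and as $|\SL_2(\ZZ/l\ZZ)|\cdot f$ using the top row; equating and dividing by $|p_l(\pm\Gamma)|$ gives $d=ef$. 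Either route is fine, and each buys essentially the same thing; yours has the small advantage that the use of Wohlfart at the end lines up directly with the containment $\Gamma(l)\subseteq\pm\Gamma$ read off from $f=1$.
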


Thus, to show that $\Gamma$ is noncongruence, it suffices to show that $e < d$.
\gap

In the following we will prove that all $S_n (n\ge 4)$, $A_n (n\ge 5)$, and $\PSL_2(\FF_p) (p\ge 5)$ are noncongruence. In fact the proofs all have essentially the same structure, and so we only include the proof for $S_n$, which serves as a good example to illustrate the basic idea and some of the issues that may arise.

\gap

We will make use of the following three automorphisms of $F_2$ representing the matrices $E = \spmatrix{0}{1}{-1}{0}, T = \spmatrix{1}{1}{0}{1}$, and $-I = \spmatrix{-1}{0}{0}{-1}\in\SL_2(\ZZ)$
$$\gamma_E : \left\{\begin{array}{rcl}
x & \mapsto & y^{-1} \\
y & \mapsto & x
\end{array}\right.\qquad\text{and}\qquad
\gamma_T : \left\{\begin{array}{rcl}
x & \mapsto & x \\
y & \mapsto & xy
\end{array}\right.\qquad\text{and}\qquad
\gamma_{-I} : \left\{\begin{array}{rcl}
x & \mapsto & x^{-1} \\
y & \mapsto & y^{-1}
\end{array}\right.$$

\begin{thm}\label{thm_S_n_is_NC} Let $S_n$ be the symmetric group on $n$ elements, where $n\ge 4$. Let $\varphi : F_2\rightarrow S_n$ be the surjection given by
$$\varphi_1 : \left\{\begin{array}{rcl}
x & \mapsto & (12) \\
y & \mapsto & (123\cdots n)
\end{array}\right.$$
Then $\Gamma_{[\varphi]}$ is noncongruence.
\end{thm}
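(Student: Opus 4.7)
The plan is to apply the Wohlfart--Kiming--Sch\"utt--Verrill criterion recalled above: once $-I\in\Gamma_{[\varphi_1]}$ is verified, $\pm\Gamma_{[\varphi_1]}=\Gamma_{[\varphi_1]}$, and it suffices to exhibit an element of $\Gamma(l)\setminus\Gamma_{[\varphi_1]}$, where $l$ is the geometric level. The argument has three computational ingredients --- verifying that $-I$ stabilizes $[\varphi_1]$, locating cusp widths of $\Gamma_{[\varphi_1]}$, and producing an explicit witness --- followed by one technical bound on the level.

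First, the involution $g\in S_n$ obtained by composing $(1\,2)$ with the order-reversal $i\mapsto n+3-i$ on $\{3,\ldots,n\}$ commutes with $(12)$ and inverts $(12\cdots n)$, realizing $\gamma_{-I}\colon x\mapsto x^{-1},\,y\mapsto y^{-1}$ as an inner automorphism of $S_n$. Next, Proposition~\ref{prop_level} is used to read off two cusp widths. At $[\varphi_1]$, $T^k\cdot[\varphi_1]=[(12),\,(12)^k(12\cdots n)]$; since $(12)(12\cdots n)=(23\cdots n)$ is an $(n{-}1)$-cycle (not $S_n$-conjugate to the $n$-cycle $(12\cdots n)$), the $T$-orbit length is exactly $2$. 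At $E\cdot[\varphi_1]=[(12\cdots n)^{\pm 1},(12)]$, the centralizer identity $C_{S_n}((12\cdots n))=\langle(12\cdots n)\rangle$ together with the fact that $(12\cdots n)^k(12)$ is a transposition only when $n\mid k$ give $T$-orbit length exactly $n$. Hence $\lcm(2,n)\mid l$, and both $T^2$ and the matrix $V_n:=E^{-1}T^nE=\spmatrix{1}{0}{-n}{1}$ lie in $\Gamma_{[\varphi_1]}$.

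The heart of the proof is the following witness. Set
\[
\gamma_0 \;:=\; TV_nT^{-1} \;=\; \spmatrix{1-n}{n}{-n}{n+1} \;\in\; \Gamma(n),\qquad \gamma_0^2 \;=\; \spmatrix{1-2n}{2n}{-2n}{2n+1} \;\in\; \Gamma(2n).
\]
A step-by-step computation of the mapping-class-group action using the lifts $\gamma_T,\gamma_{V_n}\in\Aut(F_2)$ yields
\[
\gamma_0\cdot[\varphi_1] \;=\; [\,(1,2,n,n{-}1,\ldots,3),\;(12)\,],
\]
whose first coordinate is an $n$-cycle while that of $[\varphi_1]$ is a transposition. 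Since $S_n$-conjugation preserves cycle type coordinatewise, $\gamma_0\cdot[\varphi_1]\neq[\varphi_1]$, so $\gamma_0\notin\Gamma_{[\varphi_1]}$; an analogous computation shows $\gamma_0^2\cdot[\varphi_1]$ has first coordinate of order $\lcm((n{+}1)/2,(n{-}1)/2)>2$ (for $n$ odd $\ge 5$), so $\gamma_0^2\notin\Gamma_{[\varphi_1]}$ as well. Hence for $n$ even, where $\lcm(2,n)=n$, the element $\gamma_0$ is our candidate witness in $\Gamma(l)$; for $n$ odd, where $\lcm(2,n)=2n$, we use $\gamma_0^2$.

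The main obstacle is to \emph{bound} $l$ above by $\lcm(2,n)$: the cusp-width computation only yields $\lcm(2,n)\mid l$, but to apply Wohlfart via this witness we also need $l\mid\lcm(2,n)$, i.e.\ equality. This requires showing that every cusp width of $\Gamma_{[\varphi_1]}$ divides $\lcm(2,n)$, which amounts to a case analysis of the $T$-orbits in the full $\SL_2(\ZZ)$-orbit of $[\varphi_1]$ inside $\Hom^\surext(F_2,S_n)/\pm I$. The Nielsen invariant $\varphi_1([x,y])=(123)$, constant on this orbit by Lemma~\ref{lemma_higman}, restricts the relevant generating pairs to those whose commutator is a $3$-cycle, and a coordinate-by-coordinate cycle-type analysis then shows no cusp has width exceeding $\lcm(2,n)$. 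This is the ``issue that may arise'' flagged in the preamble to the theorem and constitutes the technical heart of the proof; once it is settled the argument closes.
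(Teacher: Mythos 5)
Your approach departs from the paper's and, unfortunately, cannot be repaired in the form proposed. The paper does not compute the geometric level $l$ or exhibit an explicit element of $\Gamma(l)\setminus\Gamma_{[\varphi_1]}$; instead it bounds the index $e=[\SL_2(\ZZ/l\ZZ):p_l(\pm\Gamma_1)]$ from above by $3$ (by playing the two conjugate stabilizers $\Gamma_1,\Gamma_2$ against each other across the prime factorization of $l$, never needing to know $l$ exactly) and bounds $d=[\SL_2(\ZZ):\pm\Gamma_1]$ from below by $4$ (via four pairs in the $\SL_2(\ZZ)$-orbit distinguished by the cycle types of the two coordinates). Since $d=ef$, this forces the congruence deficiency $f>1$.

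The step you flag as ``the technical heart'' --- that every cusp width divides $\lcm(2,n)$, so that $l=\lcm(2,n)$ --- is not merely unproved but false. The cusp width at a pair $[g,h]$ divides $|g|$ (in the paper's $T$-convention), and the $\SL_2(\ZZ)$-orbit of $[\varphi_1]$ contains pairs whose first coordinate is an $(n-1)$-cycle (e.g.\ $\varphi_1\circ\gamma_T\circ\gamma_E$ sends $x\mapsto(n\cdots 32)$) and, for larger $n$, elements of many other orders. Concretely for $n=5$: the commutator $\varphi_1([x,y])=(123)$ has order $3$, which matches the $d=18$ line of the paper's data table for $S_5$, whose cusp widths are $1,2,4,5,6$; hence $l=60$, not $\lcm(2,5)=10$, and widths $4$ and $6$ do not even divide $10$. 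Your proposed witness $\gamma_0$ lies in $\Gamma(\lcm(2,n))$, which is strictly larger than $\Gamma(l)$, so $\gamma_0\notin\Gamma(l)$ in general and the Wohlfart criterion does not apply. Also note that even the \emph{magnitude} version of your bound (``no cusp width exceeds $\lcm(2,n)$'') would be insufficient, since $l$ is the lcm of the widths, not their maximum.

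What each approach buys: yours would be short and explicit if the level were actually computable in closed form, but for $S_n$ that is genuinely delicate (the cusp widths depend on the full Nielsen orbit of the generating pair, which is why the paper instead proves $e\le 3<4\le d$, sidestepping the level entirely). If you want an explicit-witness style argument in the spirit of Wohlfart--Kiming--Sch\"utt--Verrill, you would need to work with the \emph{true} level --- or, equivalently, establish $e<d$ as the paper does, which is the robust way to show totally-noncongruence-type results for families like $S_n$.
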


\begin{proof} 
Consider the two surjections

$$\varphi_1 : \left\{\begin{array}{rcl}
x & \mapsto & (12) \\
y & \mapsto & (123\cdots n) \\
\end{array}\right.\qquad
\varphi_2 = \varphi_1\circ\gamma_T : \left\{\begin{array}{rcl}
x & \mapsto & (12) \\
y & \mapsto & (23\cdots n) 
\end{array}\right.$$

Let $\Gamma_1 := \Gamma_{[\varphi_1]}$, and $\Gamma_2 := \Gamma_{[\varphi_2]}$. Clearly $\Gamma_2 = \gamma_T^{-1}\Gamma_1\gamma_T$, so it suffices to show that one of $\Gamma_1,\Gamma_2$ is noncongruence.

\sgap

Let $p_1,\ldots,p_r$ be the odd primes dividing $n$. Write $l := l(\Gamma_1) = l(\pm\Gamma_1) = 2^k AM$ where $A$ is divisible only by $p_1,\ldots,p_r$, and $M$ is coprime to $2A$.

\sgap

The first step is to prove that the $e := [\SL_2(\ZZ/l\ZZ):p_l(\pm\Gamma_1)]$ as in diagram (\ref{cd_schmithusen}) is small.

\sgap

The decomposition of $l$ gives
$$\SL_2(\ZZ/l\ZZ) = \SL_2(\ZZ/2^k\ZZ)\times\SL_2(\ZZ/A\ZZ)\times\SL_2(\ZZ/M\ZZ)$$
Note that $\Gamma_1$ contains $\spmatrix{1}{2}{0}{1}$ and $\spmatrix{1}{0}{n}{1}$, so since $(M,2n) = 1$, the image $p_l(\Gamma_1)$ contains matrices in $\SL_2(\ZZ/l\ZZ)$ congruent to $\spmatrix{1}{1}{0}{1}$ and $\spmatrix{1}{0}{1}{1}\mod M$. Thus, $p_l(\Gamma_1)\supseteq I\times I\times\SL_2(\ZZ/M\ZZ)$.

\sgap

Similarly, $\Gamma_2$ contains $\spmatrix{1}{2}{0}{1}$ and $\spmatrix{1}{0}{n-1}{1}$. Since $n-1$ is coprime to $n$, $p_l(\Gamma_2)\supset I\times \SL_2(\ZZ/A\ZZ)\times I$.

\sgap

Since $I\times I\times\SL_2(\ZZ/M\ZZ)$ and $I\times\SL_2(\ZZ/A\ZZ)\times I$ are normal subgroups of $\SL_2(\ZZ/l\ZZ)$, and since $\Gamma_1,\Gamma_2$ are conjugate, we see that $p_l(\Gamma_1)$ and $p_l(\Gamma_2)$ must each contain the product $I\times \SL_2(\ZZ/A\ZZ)\times\SL_2(\ZZ/M\ZZ)$.

\sgap

Depending on whether $n$ is even or odd, one of $p_l(\Gamma_1)$ or $p_l(\Gamma_2)$ must contain
$$\ol{\Gamma(2,1)} := \langle \spmatrix{1}{2}{0}{1}\text{ mod } 2^k,\spmatrix{1}{0}{1}{1}\text{ mod } 2^k\rangle\times I\times I\cong \langle\spmatrix{1}{2}{0}{1}\text{ mod } 2^k,\spmatrix{1}{0}{1}{1}\text{ mod } 2^k\rangle\subset\SL_2(\ZZ/2^k\ZZ)$$
We may suppose without loss of generality that $n$ is odd, then from this we see that
$$[\SL_2(\ZZ/2^{k}\ZZ):\ol{\Gamma(2,1)}] \ge [\SL_2(\ZZ/l\ZZ):p_l(\Gamma_1)] \ge [\SL_2(\ZZ/l\ZZ):p_l(\pm\Gamma_1)] =: e$$
Thus, we'd like to show that $[\SL_2(\ZZ/2^{k}\ZZ):\ol{\Gamma(2,1)}]$ isn't too large.

\sgap

Note that $\ol{\Gamma(2,1)}$ is the image of $\Gamma(2,1) := \langle\spmatrix{1}{2}{0}{1},\spmatrix{1}{0}{1}{1}\rangle\subset\SL_2(\ZZ)$ under $p_l$. A finite computation shows $[\SL_2(\ZZ):\Gamma(2,1)] = 3$, and so we find that
$$[\SL_2(\ZZ/2^{k}\ZZ):\ol{\Gamma(2,1)}]\le[\SL_2(\ZZ),\Gamma(2,1)] = 3$$
so we get $e \le [\SL_2(\ZZ/2^{k}\ZZ):\ol{\Gamma(2,1)}] \le 3$. The last step is to show that the index $d = [\SL_2(\ZZ):\pm\Gamma_1]$ of diagram (\ref{cd_schmithusen}) is $\ge 4$.

\sgap

To do this, we may consider the four explicit homomorphisms $\psi_i : F_2\twoheadrightarrow S_n$
$$\psi_1 = \varphi_1 : \left\{\begin{array}{rcl}
x & \mapsto & (12) \\
y & \mapsto & (123\cdots n) \\
\end{array}\right.\qquad
\psi_2 = \varphi_1\circ\gamma_E\circ\gamma_T : \left\{\begin{array}{rcl}
x & \mapsto & (n\cdots 321) \\
y & \mapsto & (n\cdots 32)
\end{array}\right.$$
$$
\psi_3 = \varphi_1\circ\gamma_T\circ\gamma_E : \left\{\begin{array}{rcl}
x & \mapsto & (n\cdots 32) \\
y & \mapsto & (12) 
\end{array}\right.\qquad 
\psi_4 = \varphi_1\circ\gamma_E : \left\{\begin{array}{rcl}
x & \mapsto & (n\cdots 321) \\
y & \mapsto & (12) 
\end{array}\right.$$



Note that for a surjection $\psi : F_2\twoheadrightarrow S_n$, $\pm\Gamma_{[\psi]}$ is the stabilizer of the set $\{[\psi],[\psi\circ\gamma_{-I}]\}$ (ie, the equivalence class of $[\psi]$ ``up to $\pm I$''). For any such class $\{[\psi],[\psi\circ\gamma_{-I}]\}$, the orders of the images of the generators $x,y$ of $F_2$ are invariants of this class. Thus, $\psi_1,\psi_2,\psi_3,\psi_4$ are all inequivalent up to $\pm\Gamma$, but lie in the same $\SL_2(\ZZ)$-orbit, so $d \ge 4$ as desired.

\end{proof}

A similar argument considering each residue class of $n\mod 6$ separately yields:

\begin{thm}\label{thm_A_n_noncongruence} Let $A_n$ be the alternating group on $n$ elements, where $n\ge 5$. Let $\varphi : F_2\rightarrow A_n$ be the surjection given by
$$\begin{array}{rcl}
x & \mapsto & (123) \\
y & \mapsto & (123\cdots n)
\end{array}\qquad\text{if $n$ is odd, or} \qquad\begin{array}{rcl}
x & \mapsto & (123) \\
y & \mapsto & (23\cdots n)
\end{array}\qquad\text{if $n$ is even}$$
Then $\Gamma_{[\varphi]}$ is noncongruence.
\end{thm}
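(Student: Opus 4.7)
The plan is to emulate the proof of Theorem~\ref{thm_S_n_is_NC}, with the key additional complication of a case analysis on $n \bmod 6$ to handle the interaction of $|\varphi(x)| = 3$ with $|\varphi(y)| \in \{n, n-1\}$. As in the $S_n$ argument, I would use diagram~(\ref{cd_schmithusen}) and show that $e := [\SL_2(\ZZ/l\ZZ) : p_l(\pm\Gamma_{[\varphi]})]$ is bounded by a small constant while $d := [\SL_2(\ZZ) : \pm\Gamma_{[\varphi]}]$ grows with $n$, forcing $e < d$.

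First I would fix a conjugate auxiliary surjection $\varphi_2$ obtained from $\varphi_1 := \varphi$ by precomposing with a judicious element of $\Aut^+(F_2)$; for instance $\gamma_E$ yields $\varphi_2 : x \mapsto \varphi_1(y)^{-1},\; y \mapsto (123)$, swapping the roles of $T$ and $S$ at the level of stabilizers, so that $\Gamma_1 := \Gamma_{[\varphi_1]} \ni T^3, S^{|\varphi_1(y)|}$ while $\Gamma_2 := \Gamma_{[\varphi_2]} \ni T^{|\varphi_1(y)|}, S^3$. Since $\Gamma_2$ is $\SL_2(\ZZ)$-conjugate to $\Gamma_1$, one is congruence if and only if the other is.

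Next I would bound $e$ from above by decomposing $l$ via the Chinese remainder theorem according to (i) the prime $2$, (ii) the prime $3$, (iii) primes $p \geq 5$ dividing $|\varphi_1(y)|$, and (iv) primes coprime to $6 \cdot |\varphi_1(y)|$. At primes of type (iv) the elements $T^3$ and $S^{|\varphi_1(y)|}$ generate the full $\SL_2$-factor; at primes of type (iii) we combine $T^3 \in \Gamma_1$ (which generates $\langle T \rangle$ mod $p$) with the conjugate $S^3 \in \Gamma_2$ together with normality of the $\SL_2$-factor in $\SL_2(\ZZ/l\ZZ)$, to fill in the full factor. At the remaining primes 2 and 3, the obstruction is bounded by a constant analogous to the index $[\SL_2(\ZZ) : \Gamma(2,1)] = 3$ used in the $S_n$ argument, using subgroups of the form $\langle T^3, S\rangle$ or $\langle T^2, S^3\rangle$, each having finite index in $\SL_2(\ZZ)$. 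For the lower bound on $d$, I would exhibit several $\pm I$-inequivalent elements of the $\SL_2(\ZZ)$-orbit of $[\varphi]$ by applying words in $\gamma_E, \gamma_T$, distinguished by the pair of orders $(|\varphi(x)|, |\varphi(y)|)$ — and when these pairs coincide (as often happens in $A_n$ because a 3-cycle and an $n$-cycle yield fewer distinct order-pairs than in $S_n$), by finer invariants such as the $A_n$-conjugacy classes of $\varphi(x), \varphi(y)$, noting that suitable cycle types split into two $A_n$-classes.

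The main obstacle is the residue analysis modulo $6$. Each case $n \bmod 6 \in \{0,1,2,3,4,5\}$ requires separate handling: when $3 \mid n$ the element $T^3 \in \Gamma_1$ is trivial modulo 3, forcing one to extract the missing information from the conjugate $\Gamma_2$; when $2 \mid n$ one uses the $(n-1)$-cycle variant of $\varphi$, whose odd order $n-1$ changes the mod 2 analysis. A secondary difficulty is ensuring enough inequivalent orbit representatives to make $d$ large for all $n \geq 5$; for small $n$ where $(|\varphi(x)|, |\varphi(y)|)$ fails to separate $\pm I$-classes, I expect finer invariants (for example the $A_n$-conjugacy class of $\varphi([x,y])$ viewed modulo the $\Out(F_2)$-action, or the order of $\varphi(xy)$) to yield the required separation.
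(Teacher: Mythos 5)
Your plan follows exactly what the paper indicates: adapt the $S_n$ argument of Theorem~\ref{thm_S_n_is_NC} via diagram~(\ref{cd_schmithusen}), bounding $e$ above and $d$ below, and work through the residues of $n$ mod~$6$; the paper offers no detail beyond precisely this reduction, so your expanded account of the CRT split, the normality step at the small primes, and the need for additional $\pm I$-invariants is all consistent with it. One small correction: by Higman's Lemma~\ref{lemma_higman} the Nielsen invariant ($G$-conjugacy class of $\varphi([x,y])$) is constant on the whole $\SL_2(\ZZ)$-orbit of $[\varphi]$, so it cannot separate $\pm I$-classes inside that orbit for the lower bound on $d$; lean instead on your other suggestion, the order of $\varphi(xy)$, which is a genuine $\pm I$-invariant (since $\varphi(x^{-1}y^{-1})$ is conjugate to $\varphi(xy)^{-1}$) and does vary across the orbit, or on the unordered pair of $A_n$-classes $\{\varphi(x),\varphi(x)^{-1}\}$ when the relevant cycle type splits.
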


\begin{remark} The proof for $S_n$ was only complicated by the nontriviality of $\gcd(|\varphi_i(x)|,|\varphi_i(y)|)$ for either $i = 1$ or $2$, which required the computation of the index of $\ol{\Gamma(2,1)}$. In particular, we find
\end{remark}

\begin{thm}\label{thm_simple_nc_criteria} Let $G$ be a finite group generated by $a,b$ such that the orders $|a|,|b|,|ab|$ are pairwise coprime, then for surjection $\varphi : F_2\rightarrow G$ given by sending $x,y$ to $a,b$, we have $\Gamma_{[\varphi]}$ is \emph{totally noncongruence} (ie, it is noncongruence, and $e = 1$ in diagram (\ref{cd_schmithusen})).
\end{thm}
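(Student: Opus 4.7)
My plan is to adapt the strategy of Theorem \ref{thm_S_n_is_NC}, using the pairwise coprimality of $|a|,|b|,|ab|$ to sharpen the conclusion from a bound on $e$ to the equality $e=1$. Set $c:=ab$ and $l:=|a|\cdot|b|\cdot|c|$; by coprimality $l=\lcm(|a|,|b|,|c|)$, and since the geometric level of $\Gamma_{[\varphi]}$ can be shown to divide $l$, it suffices to prove $p_l(\pm\Gamma_{[\varphi]})=\SL_2(\ZZ/l\ZZ)$ (giving $e=1$) together with $d>1$ (giving noncongruence via $d=ef$).

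I first exhibit three parabolic elements of $\Gamma_{[\varphi]}$. Besides the lifts $\gamma_T,\gamma_E$ already used, introduce the lift $\gamma_P:x\mapsto xy,\ y\mapsto y$ of $P:=\spmatrix{1}{0}{1}{1}$. Direct computation shows that $\varphi\circ\gamma_T^{|a|}=(a,b)=\varphi$, that $\varphi\circ\gamma_E=(b^{-1},a)$ is fixed by $\gamma_T^{|b|}$, and that $\varphi\circ\gamma_P=(c,b)$ is fixed by $\gamma_T^{|c|}$; hence
\[
T^{|a|},\quad ET^{|b|}E^{-1}=\spmatrix{1}{0}{-|b|}{1},\quad PT^{|c|}P^{-1}=\spmatrix{1-|c|}{|c|}{-|c|}{|c|+1}
\]
all lie in $\Gamma_{[\varphi]}$. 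Writing $H:=p_l(\Gamma_{[\varphi]})$ and invoking the CRT isomorphism $\SL_2(\ZZ/l\ZZ)\cong\SL_2(\ZZ/|a|\ZZ)\times\SL_2(\ZZ/|b|\ZZ)\times\SL_2(\ZZ/|c|\ZZ)$, I show $H$ surjects onto each factor. Working modulo $|a|$ (the other cases being symmetric): $T^{|a|}$ reduces to $I$; since $|b|$ is a unit modulo $|a|$, powers of $\spmatrix{1}{0}{-|b|}{1}$ give every lower unipotent in $\SL_2(\ZZ/|a|\ZZ)$; and conjugating $PT^{|c|}P^{-1}$ by $\spmatrix{1}{0}{-1}{1}$ (which itself appears as a suitable power of $\spmatrix{1}{0}{-|b|}{1}$ modulo $|a|$) yields the upper unipotent $\spmatrix{1}{|c|}{0}{1}$, whose powers give every upper unipotent as $|c|$ is also a unit modulo $|a|$. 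Upper and lower unipotents generate $\SL_2(\ZZ/|a|\ZZ)$.

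The main obstacle is passing from factorwise surjectivity to surjectivity onto the full product. The key observation is that suitable powers isolate each factor: $(T^{|a|})^{|c|k}=T^{|a||c|k}$ reduces to $I$ modulo both $|a|$ and $|c|$, so it lies in $I\times\SL_2(\ZZ/|b|\ZZ)\times I$, and as $k$ varies with $|a||c|$ a unit modulo $|b|$, we realise every upper unipotent isolated on the $|b|$-factor; analogously $(ET^{|b|}E^{-1})^{|c|k}$ yields every lower unipotent isolated on the $|a|$-factor. Define $K_a:=\ker(H\to\SL_2(\ZZ/|b|\ZZ)\times\SL_2(\ZZ/|c|\ZZ))\subseteq\SL_2(\ZZ/|a|\ZZ)\times I\times I$: this is a normal subgroup of $H$ containing the nontrivial isolated lower unipotents just constructed. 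Conjugation inside $H$ preserves the shape $(\ast,I,I)$, and because $H$ surjects onto the first factor, the image of $K_a$ there is a normal subgroup of $\SL_2(\ZZ/|a|\ZZ)$ containing a nontrivial unipotent. But upper and lower unipotents are $\SL_2$-conjugate and together generate $\SL_2(\ZZ/|a|\ZZ)$, so this normal subgroup must be all of $\SL_2(\ZZ/|a|\ZZ)$; hence $K_a=\SL_2(\ZZ/|a|\ZZ)\times I\times I$. Symmetric arguments give the other two block inclusions, so $H=\SL_2(\ZZ/l\ZZ)$ and $e=1$.

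For $d>1$, pairwise coprimality and nontriviality of $G$ force each of $|a|,|b|,|c|$ to be at least $2$ (with at most one equal to $2$), which readily implies that the orbit of $[\varphi]$ under $\SL_2(\ZZ)$ modulo $\pm I$ has more than one element; hence $\pm\Gamma_{[\varphi]}\subsetneq\SL_2(\ZZ)$. Combined with $e=1$, this yields $d=ef=f>1$, so $\Gamma_{[\varphi]}$ is totally noncongruence.
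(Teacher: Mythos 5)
The one genuine issue is the opening assertion that the geometric level of $\Gamma_{[\varphi]}$ divides $l := |a|\cdot|b|\cdot|ab|$; this is unproved and in general false, so the argument as written only produces the surjectivity of $p_l(\pm\Gamma_{[\varphi]})$ onto $\SL_2(\ZZ/l\ZZ)$ rather than the claimed $e=1$. Recall that the cusp widths are the cycle-lengths of $T$ on $(\SL_2(\ZZ)\cdot[\varphi])/\pm I$, and the cycle through a class $[(g,h)]$ divides $|g|$; but the elements $g$ appearing as first coordinates in the orbit are images of arbitrary primitive words and need not have orders dividing $|a|\,|b|\,|ab|$. Concretely, for $\PSL_2(\FF_7)$ with a $(2,3,7)$-generating pair $(a,b)$ one computes that $[a,b]$ has order $4$; the corresponding component in Table~\ref{table_first_two_nonabelian_simple} has cusp widths $2^13^14^17^1$, so the level is $84$, which does not divide $2\cdot3\cdot7=42$.

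Fortunately nothing essential in your argument depends on that claim. Write the true level as $l_\Gamma = N_aN_bN_cM$ with $N_a,N_b,N_c$ the parts supported on the primes dividing $|a|,|b|,|ab|$ and $M$ the coprime remainder. Your three parabolics still give factorwise surjectivity, each of the four CRT blocks being generated by the two parabolics whose defining orders are coprime to it; and the isolation step goes through after raising to a slightly larger power, e.g.\ $T^{|a|m}$ with $\lcm\bigl(N_a/\gcd(N_a,|a|),\,N_c,\,M\bigr)\mid m$, which is trivial on the $N_a,N_c,M$ blocks and has a unit off-diagonal entry on the $N_b$ block. (One should spell out that this entry is a unit, not merely nonzero: a unipotent $\spmatrix{1}{pr}{0}{1}$ with $p\mid n$ has normal closure contained in the mod-$p$ congruence kernel of $\SL_2(\ZZ/n\ZZ)$, so the normal-closure step genuinely needs a unit entry.) With those corrections the proof is complete, and is essentially the argument the paper gestures at via its remark after Theorem~\ref{thm_A_n_noncongruence}. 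I would also note that introducing the third parabolic $PT^{|ab|}P^{-1}$ directly from $c=ab$ is a genuine streamlining over the $S_n$ proof, which instead passes to the conjugate stabilizer $\Gamma_2=\gamma_T^{-1}\Gamma_{[\varphi]}\gamma_T$ to obtain its second off-diagonal unipotent: your version treats the three orders symmetrically and makes the coprimality hypothesis transparent. Likewise your orbit argument for $d>1$ is shorter than the four explicit surjections used for $S_n$ (where $d\ge 4$ had to be forced because only $e\le 3$ was available).
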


\begin{cor}\label{cor_PSL2p_noncongruence} For $p\ge 5$, $\PSL_2(\mathbb{F}_p)$ is noncongruence.
\end{cor}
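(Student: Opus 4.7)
The plan is to deduce this directly from Theorem \ref{thm_simple_nc_criteria} by exhibiting an explicit generating pair $a,b$ of $\PSL_2(\FF_p)$ such that the orders $|a|, |b|, |ab|$ are pairwise coprime. The natural candidate comes from the classical presentation of the modular group: take $a$ to be the image in $\PSL_2(\FF_p)$ of $S = \spmatrix{0}{-1}{1}{0}$ and $b$ the image of $ST = \spmatrix{0}{-1}{1}{1}$, where $T = \spmatrix{1}{1}{0}{1}$. A direct computation in $\PSL_2(\FF_p)$ gives $|a| = 2$, $|b| = 3$, and $ab = T$, which has order $p$ since $p \geq 5$ (so in particular $T$ is not the identity nor an element of order dividing 2, 3, 4, or 6 in $\PSL_2(\FF_p)$).

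Next I would verify that $a$ and $b$ generate $\PSL_2(\FF_p)$. This is standard: the reduction map $\SL_2(\ZZ) \twoheadrightarrow \SL_2(\FF_p)$ is surjective (by strong approximation, or equivalently by the fact that $\SL_2(\ZZ)$ surjects onto $\SL_2(\ZZ/p\ZZ)$ via elementary matrix arguments), hence descends to a surjection $\PSL_2(\ZZ) \twoheadrightarrow \PSL_2(\FF_p)$. Since $S$ and $ST$ generate $\PSL_2(\ZZ)$ (this is the classical presentation $\PSL_2(\ZZ) \cong \ZZ/2\ZZ * \ZZ/3\ZZ$), their images $a, b$ generate $\PSL_2(\FF_p)$.

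Finally, for $p \geq 5$ the three integers $2, 3, p$ are pairwise coprime, so the hypothesis of Theorem \ref{thm_simple_nc_criteria} is satisfied for the surjection $\varphi : F_2 \twoheadrightarrow \PSL_2(\FF_p)$ defined by $x \mapsto a$, $y \mapsto b$. Therefore $\Gamma_{[\varphi]}$ is (totally) noncongruence, and in particular $\PSL_2(\FF_p)$ is noncongruence in the sense of Definition \ref{def_purity}.

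There is essentially no obstacle here — the content is entirely in Theorem \ref{thm_simple_nc_criteria}, and this corollary is just the observation that the classical generators of $\PSL_2(\ZZ)$ reduce to a pair in $\PSL_2(\FF_p)$ whose orders fit the coprimality hypothesis precisely when $p \geq 5$. The cases $p = 2, 3$ would fail the pairwise coprimality (and indeed $\PSL_2(\FF_2) \cong S_3$ and $\PSL_2(\FF_3) \cong A_4$ are solvable, so not expected to be noncongruence by the philosophy of Conjecture \ref{conj_nonsolvable_implies_noncongruence}).
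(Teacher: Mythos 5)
Your proof is correct and takes essentially the same approach as the paper: both appeal to Theorem~\ref{thm_simple_nc_criteria} by exhibiting a generating pair of $\PSL_2(\FF_p)$ of orders $2$ and $3$ whose product is $\spmatrix{1}{1}{0}{1}$ of order $p$ (your $S, ST$ is just a minor variant of the paper's $V,E$ realizing the same $(2,3,p)$-triple). The only cosmetic difference is that you spell out the surjectivity of $\SL_2(\ZZ)\twoheadrightarrow\SL_2(\FF_p)$ to justify generation, which the paper leaves implicit.
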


\begin{proof} Using the standard generators represented by $V := \spmatrix{1}{1}{-1}{0}, E := \spmatrix{0}{1}{-1}{0}$ of order 2,3, their product is by $\spmatrix{1}{1}{0}{1}$ which has order $p$. The result follows from Corollary \ref{thm_simple_nc_criteria}.
\end{proof}

The proof of Theorem \ref{thm_S_n_is_NC} also shows:
\begin{cor}\label{cor_level_divides_N} For $\varphi : F_2\twoheadrightarrow G$, the level $l(\Gamma_{[\varphi]})$ divides the exponent $e(G)$ of $G$.
\end{cor}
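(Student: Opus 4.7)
The plan is to reduce the claim to a direct computation via Proposition \ref{prop_level}. That proposition identifies $l(\Gamma_{[\varphi]})$ with the order of $T := \spmatrix{1}{1}{0}{1}$ acting as a permutation on the coset space $(\SL_2(\ZZ)\cdot[\varphi])/\pm I$, so it suffices to check that $T^{e(G)}$ acts trivially on this set. In fact I will show the stronger statement that $T^{e(G)}$ already acts trivially on the full orbit $\SL_2(\ZZ)\cdot[\varphi]$.

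Using the lift $\gamma_T\in\Aut(F_2)$ defined by $x\mapsto x$, $y\mapsto xy$ (as employed in the proof of Theorem \ref{thm_S_n_is_NC}), a routine induction gives $\gamma_T^n : x\mapsto x$, $y\mapsto x^n y$. Under the outer action of $\SL_2(\ZZ)\cong \Out(F_2)^+$ on $\Hom^\surext(F_2,G)$, the matrix $T^n$ sends the class of a representative $\psi : F_2\twoheadrightarrow G$ to the class of $\psi\circ\gamma_T^n$, which on generators reads
$$\psi\circ\gamma_T^n(x) = \psi(x), \qquad \psi\circ\gamma_T^n(y) = \psi(x)^n\,\psi(y).$$

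The decisive step is then the observation that for any $\psi$ in the $\SL_2(\ZZ)$-orbit of $\varphi$, the value $\psi(x)$ is an element of $G$, and so its order divides $e(G)$. Setting $n = e(G)$ gives $\psi(x)^{e(G)}=1$, whence $\psi\circ\gamma_T^{e(G)}=\psi$ on the nose (not merely up to $\Inn(G)$). Therefore $T^{e(G)}$ fixes every $[\psi]$ in the orbit, i.e.\ $T^{e(G)}$ lies in every $\SL_2(\ZZ)$-conjugate of $\Gamma_{[\varphi]}$, and in particular acts trivially on $\SL_2(\ZZ)/\pm\Gamma_{[\varphi]}$. Proposition \ref{prop_level} then gives $l(\Gamma_{[\varphi]})\mid e(G)$.

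There is no real obstacle here: the core of the argument is a one-line calculation, already implicit in the verification of $\spmatrix{1}{2}{0}{1}\in\Gamma_1$ at the start of the proof of Theorem \ref{thm_S_n_is_NC}. The only subtlety worth flagging is that one must run the calculation for every $\psi$ in the orbit, not just for $\varphi$ itself, in order to get a statement about the level (the order of $T$ on the coset space) rather than merely the weaker statement $T^{e(G)}\in\Gamma_{[\varphi]}$; but since every such $\psi$ is a homomorphism into the same group $G$, the uniform bound $e(G)$ applies throughout.
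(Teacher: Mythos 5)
Your proof is correct and rests on exactly the same calculation as the paper's: $\gamma_T^n$ maps $(x,y)\mapsto(x,x^ny)$, so $\gamma_T^{e(G)}$ fixes any homomorphism $\psi:F_2\to G$ because $\psi(x)^{e(G)}=1$. If anything you are slightly more careful than the paper's terse one-line proof, which as phrased only discusses the cusp width ``attached to $[\varphi]$''; you spell out that the uniform bound $e(G)$ applies to every $\psi$ in the $\SL_2(\ZZ)$-orbit, which is what is actually required to bound the geometric level (the lcm of all cusp widths) rather than a single cusp width.
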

\begin{proof} This follows from the fact that if $\varphi(x)$ has order $n$, then $n$ divides $e(G)$, and $\gamma_T^n$ stabilizes $\varphi$. Thus, the the smallest integer $k$ such that $\gamma_T^k$ stabilizes $[\varphi]$, which is the cusp width ``attached to $[\varphi]$'', must also divide $n$.
\end{proof}

\sgap

It is a consequence of Thompson's results on $N$-groups that the hypothesis of Theorem \ref{thm_simple_nc_criteria} holds for all \emph{minimal nonabelian finite simple groups} (c.f. \cite{Tmp68} Cor. 3). The Thompson result implies:

\begin{cor}[Thompson]\label{cor_MFSNG} Let $G$ be a minimal nonabelian finite simple group (ie, one for which all proper subgroups are solvable). These are precisely the groups:
\begin{itemize}
\item[1.] $\PSL_2(\FF_{2^p})$, $p$ any prime.
\item[2.] $\PSL_2(\FF_{3^p})$, $p$ any odd prime.
\item[3.] $\PSL_2(\FF_p)$, $p > 3$ a prime with $p^2+1\equiv 0\mod 5$.
\item[4.] $\Sz(2^p)$, $p$ any odd prime. 
\item[5.] $\PSL_3(\FF_3)$.
\end{itemize}
Then $G$ is noncongruence.
\end{cor}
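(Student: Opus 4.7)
The plan is to separate the corollary into two independent ingredients: Thompson's classification of minimal nonabelian finite simple groups into the five listed families, and an application of Theorem~\ref{thm_simple_nc_criteria} to each family. The classification itself is a celebrated group-theoretic result, established in Thompson's work on $N$-groups \cite{Tmp68}, and I would simply invoke it.

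Given the classification, to show each such $G$ is noncongruence it suffices, by Theorem~\ref{thm_simple_nc_criteria}, to exhibit a generating pair $(a,b) \in G \times G$ with $|a|$, $|b|$, and $|ab|$ pairwise coprime. For then the surjection $\varphi : F_2 \twoheadrightarrow G$ sending $x \mapsto a$, $y \mapsto b$ has $\Gamma_{[\varphi]}$ (totally) noncongruence, which by Definition~\ref{def_purity} is precisely what it means for $G$ to be noncongruence. The existence of such a generating pair for each of the five families is the content of Corollary~3 of \cite{Tmp68}, where Thompson exhibits, for each family, a generating pair whose element orders are drawn from a Sylow $p$-subgroup (for the defining characteristic $p$) together with two maximal tori of coprime orders (e.g.\ orders dividing $q - 1$ and $q + 1$ in the $\PSL_2$ cases, or $q - 1$ and $q^2 + 1$ in the Suzuki case), and verifies both generation and pairwise coprimality of the three orders.

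For concreteness: the case $\PSL_2(\mathbb{F}_p)$ with $p \ge 5$ is already Corollary~\ref{cor_PSL2p_noncongruence}, using the triple $(2,3,p)$ realized by $V = \spmatrix{1}{1}{-1}{0}$ and $E = \spmatrix{0}{1}{-1}{0}$; the other three infinite families $\PSL_2(\mathbb{F}_{2^p})$, $\PSL_2(\mathbb{F}_{3^p})$, and $\Sz(2^p)$ admit analogous explicit triples coming from their natural BN-pair structure; and the single sporadic case $\PSL_3(\mathbb{F}_3)$ can be handled by direct computation (e.g.\ in GAP, using a generating pair of orders $2$ and $3$ whose product has order $13$). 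The only genuine obstacle is verifying that Thompson's orders are actually realized by a \emph{generating} pair in each infinite family — but this is exactly the content of Thompson's corollary. Once it is cited, the noncongruence conclusion is a formal application of Theorem~\ref{thm_simple_nc_criteria}, applied family by family.
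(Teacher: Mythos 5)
Your overall decomposition is sound, but there is a genuine gap at precisely the point you single out as the ``only genuine obstacle,'' namely why a Thompson triple is realized by a \emph{generating} pair. You resolve it by asserting that this is ``exactly the content of Thompson's corollary,'' characterizing Corollary~3 of \cite{Tmp68} as exhibiting, family by family, an explicit generating pair with orders drawn from a Sylow subgroup and two maximal tori. That is not what Corollary~3 says. As the paper cites it, Corollary~3 is the abstract solvability criterion: a finite group is solvable if and only if it contains no three elements $a,b,c$ of pairwise coprime orders with $abc = 1$. Applied to the nonsolvable group $G$, this produces such a triple but says nothing about generation, so your appeal to Corollary~3 at this step is a misattribution rather than a proof; and the family-by-family verification you gesture at is not actually carried out.

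The paper closes the gap with a short observation that makes the casework unnecessary and in fact never appears in the proof at all. Since $G$ is nonabelian simple, hence nonsolvable, Thompson's criterion yields $a,b,c$ of pairwise coprime orders with $abc=1$. The subgroup $H := \langle a,b\rangle$ contains $a$, $b$, and $c = (ab)^{-1}$, so by the very same criterion $H$ is nonsolvable. But $G$ is \emph{minimal} nonabelian simple, meaning every proper subgroup is solvable; hence $H = G$, i.e., $a,b$ generate $G$, and Theorem~\ref{thm_simple_nc_criteria} applies directly. The minimality hypothesis is doing essential work exactly at the generation step, and your proposal never invokes it there. You would need to add this observation (the clean route), or actually supply explicit generating triples with the required coprimality in each of the five families (the laborious route you outline but do not execute).
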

\begin{proof} This follows from \cite{Tmp68} Cor. 3, which states that any finite group is solvable if and only if it does not contain three elements $a,b,c$ of pairwise coprime order with $abc = 1$.

\end{proof}

Since the property of being noncongruence is stable under group extensions, we get:

\begin{cor}\label{cor_noncongruence_summary} Let $G$ be an extension of $S_n$ ($n\ge 4$), $A_n$ ($n\ge 5$), $\PSL_2(\FF_p)$ ($p\ge 5$), or a minimal nonabelian finite simple group. Then $G$ is noncongruence (c.f. \ref{def_purity}).
\end{cor}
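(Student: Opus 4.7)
The statement is essentially the conjunction of the previously established noncongruence results with the stability of noncongruence under group extensions, and my plan is simply to put these two ingredients together. Concretely, my plan is as follows.

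First, I would recall that each of the base groups in the four listed families is already known to be noncongruence: for $S_n$ with $n\ge 4$ this is Theorem \ref{thm_S_n_is_NC}; for $A_n$ with $n\ge 5$ this is Theorem \ref{thm_A_n_noncongruence}; for $\PSL_2(\FF_p)$ with $p\ge 5$ this is Corollary \ref{cor_PSL2p_noncongruence}; and for a minimal nonabelian finite simple group this is Corollary \ref{cor_MFSNG}. In each case, we have an explicit surjection $\varphi : F_2\twoheadrightarrow G$ whose associated stabilizer $\Gamma_{[\varphi]}\le\SL_2(\ZZ)$ is noncongruence, so $G$ is noncongruence in the sense of Definition \ref{def_purity}.

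Second, I would record the extension-stability principle as a one-line lemma. Suppose $1\to N\to G'\stackrel{f}{\to} G\to 1$ is exact and $G$ is noncongruence, with $\varphi : F_2\twoheadrightarrow G$ such that $\Gamma_{[\varphi]}$ is noncongruence. By Gasch\"utz' lemma (invoked through Proposition \ref{prop_descent_of_groups}), the map $f_* : \Hom^\surext(F_2,G')\to\Hom^\surext(F_2,G)$ is surjective, so there exists $\varphi' : F_2\twoheadrightarrow G'$ with $f\circ\varphi' = \varphi$. That same proposition gives $\Gamma_{[\varphi']}\subseteq\Gamma_{[f\circ\varphi']} = \Gamma_{[\varphi]}$. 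The key elementary observation is that any finite index subgroup of a noncongruence subgroup of $\SL_2(\ZZ)$ is itself noncongruence: if $\Gamma_{[\varphi']}$ contained some $\Gamma(n)$, then the larger group $\Gamma_{[\varphi]}$ would also contain $\Gamma(n)$, contradicting the assumption. Hence $\Gamma_{[\varphi']}$ is noncongruence, proving that $G'$ is noncongruence.

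Third, combining the two steps gives the corollary: if $G'$ is an extension of any of the groups $S_n$, $A_n$, $\PSL_2(\FF_p)$, or a minimal nonabelian finite simple group, then $G'$ is noncongruence. There is no substantive obstacle to overcome here --- all the real work is contained in the constructive noncongruence proofs (Theorem \ref{thm_S_n_is_NC} and the pairwise-coprime-orders criterion of Theorem \ref{thm_simple_nc_criteria}, which handles the $A_n$, $\PSL_2(\FF_p)$, and minimal simple cases via Thompson's classification). The only point requiring a bit of care in writing is the distinction between ``noncongruence'' and ``purely noncongruence'' in Definition \ref{def_purity}: the stability argument above preserves noncongruence (existence of a bad surjection) rather than pure noncongruence (noncongruence of every surjection), which matches exactly what is claimed in the statement.
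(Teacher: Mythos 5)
Your proposal is correct and follows the same route as the paper's: invoke Theorems \ref{thm_S_n_is_NC}, \ref{thm_A_n_noncongruence} and Corollaries \ref{cor_PSL2p_noncongruence}, \ref{cor_MFSNG} for the base groups, then apply the extension-stability principle coming from Proposition \ref{prop_descent_of_groups} (Gasch\"utz' lemma) together with the elementary fact that any subgroup of a noncongruence subgroup of $\SL_2(\ZZ)$ is itself noncongruence. Your note that this only preserves ``noncongruence'' (not ``purely noncongruence''), matching the claim exactly, is the right thing to flag.
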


A number of papers on Hurwitz groups also prove the property of Theorem \ref{thm_simple_nc_criteria} for a wide class of other nonabelian simple groups. It is expected that every nonabelian finite simple group should satisfy this property. Note that results of this form do not prove that the groups in question are \emph{purely noncongruence}. However, having checked all surjections from $F_2$ onto all finite 2-generated groups of size $\le 255$ and all simple groups of size $\le 29120$, all examples show that if $G$ is noncongruence, then it is purely noncongruence. Accordingly, we make the following conjecture:

\begin{conj}\label{conj_nc_implies_purely_nc} Let $G$ be a finite 2-generated group. Then $G$ is noncongruence if and only if it is purely noncongruence.
\end{conj}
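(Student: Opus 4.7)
The direction ``purely noncongruence implies noncongruence'' is immediate from the definitions, so the content of the conjecture is the converse: if \emph{some} surjection $\varphi_1 : F_2 \twoheadrightarrow G$ has $\Gamma_{[\varphi_1]}$ congruence, then \emph{every} surjection $\varphi_2 : F_2 \twoheadrightarrow G$ has $\Gamma_{[\varphi_2]}$ congruence. First I would observe that within a single $\SL_2(\ZZ)\times\Out(G)$-orbit (Nielsen class), all stabilizers are conjugate in $\SL_2(\ZZ)$, so congruence-ness is already well-defined per Nielsen class; the conjecture is really about relating \emph{different} Nielsen classes of surjections.

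The cleanest reformulation would be through the congruence kernel $K := \bigcap_{n\ge 1}\ol{\Gamma(n)}\lhd \widehat{\SL_2(\ZZ)}$. The action of $\widehat{\SL_2(\ZZ)}$ on $\Hom^\surext(F_2,G)$ factors through a finite quotient $Q$, and a given Nielsen class yields a congruence stabilizer iff the image $\ol{K}$ of $K$ in $Q$ lies in the stabilizer of some (equivalently every) representative of that class. Since $K$ is normal in $\widehat{\SL_2(\ZZ)}$, its image $\ol{K}$ is normal in $Q$, so it lies in a stabilizer iff it lies in the \emph{normal core} of that stabilizer. The conjecture is thus equivalent to: $\ol{K}$ is contained in the normal core of one orbit-stabilizer in $Q$ iff it is contained in the normal core of every orbit-stabilizer in $Q$. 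From this vantage point, I would try to exploit the normality of $\ol{K}$ in $Q$ and the fact that the various orbit-stabilizers are all images of the same overgroup $\widehat{\SL_2(\ZZ)}$ to argue that their normal cores collectively determine a single invariant of $G$.

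For a more hands-on approach, given two surjections $\varphi_1,\varphi_2 : F_2\twoheadrightarrow G$, I would consider the diagonal $\varphi_1\times\varphi_2 : F_2\to G\times G$ with image $H\le G\times G$. Both projections $H\twoheadrightarrow G$ are surjective, so by Proposition \ref{prop_descent_of_groups} we obtain $\Gamma_{[\varphi_1\times\varphi_2]}\subseteq \Gamma_{[\varphi_1]}\cap\Gamma_{[\varphi_2]}$. If one could show that $H$-structures are themselves congruence (say by establishing the present conjecture for $H$ and combining with the fact that $\Gamma_{[\varphi_1]}$ is already known congruence), then $\Gamma_{[\varphi_2]}$ would be congruence as well. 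This suggests an inductive scheme on a carefully chosen complexity measure of $G$ --- but not on $|G|$, since $|H|$ may well exceed $|G\,|$; a more delicate invariant (perhaps derived length combined with a count of Nielsen classes) would be needed.

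A third and in some ways most promising route would be to aim for a structural classification: based on the computational evidence of \S\ref{section_examples} and the tables, the congruence groups $G$ appear to form a very restricted class (certainly inside the metabelian groups, perhaps even narrower). Establishing a characterization of the form ``$G$ admits a congruence surjection iff $G$ belongs to class $\cC$'' would settle the conjecture, since one could then verify by generators-and-relations arguments (in the spirit of \S\ref{ss_dihedral_structure}) that every surjection from $F_2$ onto a group in $\cC$ has congruence stabilizer. The principal obstacle in all three approaches is the absence of an obvious local-to-global principle relating distinct Nielsen classes: a priori, different components of $\mM(G)_{\Qbar}$ produce unrelated finite-index subgroups of $\SL_2(\ZZ)$, and the conjecture asserts that, unlike the genus or cusp widths, the property of being congruence is insensitive to the choice of component. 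Proving this rigidity seems to require either the structural classification just described, or a new mechanism for transporting the ``congruence'' property across Nielsen classes --- likely via the behavior of the congruence kernel on the fiber $\Hom^\surext(F_2,G)$ of $p : \mM(G)\to\mM(1)$. I expect this last point to be the main obstacle.
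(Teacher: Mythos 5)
The statement you were asked about is stated in the paper as Conjecture~\ref{conj_nc_implies_purely_nc}, supported only by computational evidence (all $2$-generated groups of order $\le 255$ and all simple groups of order $\le 29120$); the paper offers no proof, so there is nothing to compare your argument against. Your write-up is appropriately framed as a discussion of strategies rather than as a proof, and its substance is sound: the reduction to the direction ``noncongruence $\Rightarrow$ purely noncongruence'' is right, and the observation that the property is constant on $\SL_2(\ZZ)$-orbits (and even on $\SL_2(\ZZ)\times\Out(G)$-orbits, since postcomposition with $\Out(G)$ commutes with the $\SL_2(\ZZ)$-action and hence preserves stabilizers) correctly isolates what the conjecture actually asserts, namely a rigidity across distinct components of $\mM(G)_{\Qbar}$.

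The congruence-kernel reformulation is a clean way to state the problem: since $\mM(G)_{\Qbar}$ is congruence exactly when the image of $K = \bigcap_n\ol{\Gamma(n)}$ in the finite quotient through which $\widehat{\SL_2(\ZZ)}$ acts on $\Hom^\surext(F_2,G)$ lies in the relevant stabilizer, and since that image is normal, the conjecture amounts to saying this containment holds for one orbit stabilizer iff it holds for all. That is genuinely the right invariant-theoretic framing. Your diagonal construction via $\varphi_1\times\varphi_2$ and Proposition~\ref{prop_descent_of_groups} is also a natural move, and you are right to flag that $|H|$ can grow, which blocks naive induction on order; the issue is worse than that, since even granting the conjecture for $H$, you would need some a priori congruence surjection out of $H$ to get started, and there is no obvious source of one. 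The obstacle you name at the end --- the lack of a mechanism transporting ``congruence'' between Nielsen classes --- is exactly where the problem stands; nothing in the paper (Higman's lemma, the Branch Cycle Lemma, the geometric level bound of Corollary~\ref{cor_level_divides_N}) gives any relation between the stabilizers $\Gamma_{[\varphi_1]}$ and $\Gamma_{[\varphi_2]}$ for surjections in different orbits beyond the crude fact that both are contained in $\pm\Gamma(1)$ and both have geometric level dividing the exponent of $G$. So no gap to report in the usual sense: you have not claimed a proof, and your assessment of what would be needed is accurate.
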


\section{Applications to the arithmetic of noncongruence modular forms}\label{section_UBD} In this section we use our moduli interpretations to give a description of the ``bad primes'' for the unbounded denominators conjecture (Theorem \ref{thm_bad_primes}), and to interpret the conjecture in geometric terms (\S\ref{ss_geometric_UBD}).
\subsection{The Tate curve}
Here we recall some basic properties of the Tate curve, which will play a central role in our discussion. We like to think of the Tate curve as the base change of the universal elliptic curve over $\hH$ with coordinate $\tau$ to an infinitesimal neighborhood of $i\infty$, with coordinate $q = e^{2\pi i \tau}$. Thus, modular functions on modular curves are then realized on the Tate curve as their $q$-expansions (c.f. Proposition \ref{prop_level_structures_are_qexps}). Our main references for this are \cite{SilAT}, \S V and the appendix of \cite{K72}.



\gap

The exponential map $x\mapsto e^x$ gives an isomorphism of elliptic curves $\CC/\langle 2\pi i,2\pi i \tau\rangle\rightiso\CC^\times/q^\ZZ$. If $z$ is a coordinate on $\CC$, then $t := e^{2\pi iz}$ is the corresponding coordinate on $\CC^\times$, and the above isomorphism sends $2\pi idz$ to $dt/t$. By standard calculations, the curve $\CC/\langle 2\pi i, 2\pi i\tau\rangle$ with differential $2\pi idz$ is given by the plane cubic 
$$Y^2 = 4X^3 - \frac{E_4}{12}X + \frac{E_6}{216},\qquad\text{with differential }dX/Y$$
where $X = \wp(2\pi i z,\langle 2\pi i,2\pi i \tau\rangle), Y = \wp'(2\pi iz,\langle 2\pi i,2\pi i\tau\rangle)$, and 
\begin{eqnarray*}
E_4 & = & 1 + 240\sum_{n\ge 1}\sigma_3(n)q^n \\
E_6 & = & 1 - 504\sum_{n\ge 1}\sigma_5(n)q^n
\end{eqnarray*}
where $\sigma_k(n) := \sum_{d\mid n, d\ge 1} d^k$. The equation above defines an elliptic curve over $\ZZ[\frac{1}{6}]\ps{q}$. By applying the change of variables
$$X = x + \frac{1}{12},\qquad Y = x + 2y$$
we may rewrite the equation as
$$y^2 + xy = x^3 + B(q)x + C(q)$$
where
$$\begin{array}{rclcl}
B(q) & = & -5\left(\frac{E_4-1}{240}\right) & = & -5\sum_{n\ge 1}\sigma_3(n)q^n \\
C(q) & = & \frac{-5\left(\frac{E_4-1}{240}\right) - 7\left(\frac{E_6-1}{-504}\right)}{12} & = & \sum_{n\ge 1}\left(\frac{-5\sigma_3(n) - 7\sigma_5(n)}{12}\right) q^n
\end{array}$$

This last equation defines an elliptic curve over $Z\ps{q}$ with $j$-invariant
$$j(q) = \frac{1}{q} + 744 + 196884q + 21493760q^2\cdots,$$
discriminant
$$\Delta(q) = q\prod_{n\ge 1}(1-q^n)^{24},$$
and whose restriction to $\ZZ[\frac{1}{6}]\ps{q}$ is the above curve. The nowhere vanishing differential $dx/(x + 2y)$ restricts to give $dX/Y$ over $\ZZ[\frac{1}{6}]\ps{q}$.

\begin{defn} We define the Tate curve $\Tate(q)$ to be the curve given by $y^2 + xy = x^3 + B(q)x + C(q)$ over $\ZZ\ps{q}$, with canonical differential $\omega_\can = dx/(x+2y)$. It is a generalized elliptic curve over $\ZZ\ps{q}$, which is smooth over $\ZZ\ls{q}$ and degenerates to a nodal curve at $q = 0$.
\end{defn}

There are explicit expressions of $x,y$ as functions of $t = e^{2\pi iz}$
\begin{eqnarray}\label{eq_explicit_equations}
x(t) & = & \sum_{j\in\ZZ}\frac{q^jt}{(1-q^jt)^2} - 2\sum_{j\ge 1}\frac{q^j}{1-q^j} \\
y(t) & = & \sum_{j\in\ZZ}\frac{(q^jt)^2}{(1-q^jt)^3}  + \sum_{j\ge 1}\frac{q^j}{1-q^j}\nonumber
\end{eqnarray}

From this, we deduce:

\begin{cor}\label{cor_tate_level_N} Every $n$ torsion point on $\Tate(q) = \CC^\times/q^\ZZ$, namely the points given by $t = (\zeta_n)^iq^{j/n}$ for $0\le i,j\le n-1$, is defined over $\ZZ\ps{q^{1/n}}\otimes_\ZZ \ZZ[1/n,\zeta_n]$.

\sgap

In particular, for any $n\ge 1$, all $\Gamma(n)$-structures on $\Tate(q)$ are defined over $\ZZ\ps{q^{1/n}}\otimes_\ZZ\ZZ[1/n,\zeta_n]$.
\end{cor}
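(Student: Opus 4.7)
The plan is to establish the torsion-point claim by direct substitution into the explicit formulas (\ref{eq_explicit_equations}) for $x(t)$ and $y(t)$, and then derive the $\Gamma(n)$-structure claim as a formal consequence. Let $R_n := \ZZ\ps{q^{1/n}}\otimes_\ZZ\ZZ[1/n,\zeta_n]$, fix $0\le i,j\le n-1$ with $(i,j)\ne(0,0)$, and set $t := \zeta_n^i q^{j/n}$.

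First I would show that $x(t),y(t)\in R_n$. Split the sum $\sum_{k\in\ZZ}\frac{q^k t}{(1-q^k t)^2}$ in (\ref{eq_explicit_equations}) by sign of $k$. For $k\ge 1$ the element $q^k t = \zeta_n^i q^{k+j/n}$ has strictly positive $q^{1/n}$-adic valuation, so the geometric expansion $\frac{q^k t}{(1-q^k t)^2} = \sum_{m\ge 1}m(q^k t)^m$ converges in $\ZZ[\zeta_n]\ps{q^{1/n}}$. For $k\le -1$ I would apply the symmetry
\[
\frac{u}{(1-u)^2}=\frac{u^{-1}}{(1-u^{-1})^2}
\]
so that the summand becomes $\frac{q^{-k}t^{-1}}{(1-q^{-k}t^{-1})^2}$ with $q^{-k}t^{-1}=\zeta_n^{-i}q^{-k-j/n}$, which has positive $q^{1/n}$-adic valuation since $-k\ge 1$ and $j/n<1$. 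The middle term $k=0$ equals $t/(1-t)^2$: if $j\ge 1$ this converges by the same expansion, and if $j=0$ (hence $i\ne 0$) it reduces to $\zeta_n^i/(1-\zeta_n^i)^2$, which lies in $\ZZ[1/n,\zeta_n]$ because $N_{\QQ(\zeta_n)/\QQ}(1-\zeta_n^i)$ divides $n$. The remaining piece $-2\sum_{k\ge 1}\frac{q^k}{1-q^k}$ is a power series in $q$ with integer coefficients, contributing only to $\ZZ\ps{q}\subset R_n$. Summing up, every positive-degree coefficient of $x(t)$ in $q^{1/n}$ lies in $\ZZ[\zeta_n]$, while the constant term lies in $\ZZ[1/n,\zeta_n]$; hence $x(t)\in R_n$, and the identical analysis (using the analogue of the same symmetry for cubes) handles $y(t)$.

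For the second assertion, I would observe that the $n^2$ exhibited points are pairwise distinct in $\Tate(q)(R_n)$: an identification $\zeta_n^i q^{j/n}\equiv \zeta_n^{i'}q^{j'/n}\pmod{q^\ZZ}$ forces $n\mid j-j'$ and then $i\equiv i'\pmod n$, which within the given range means $i=i'$, $j=j'$. Since $\Tate(q)[n]$ is finite étale of rank $n^2$ over $R_n$ and we have produced $n^2$ distinct $R_n$-sections of it, this group scheme is constant over $R_n$. The assignment $(i,j)\mapsto \zeta_n^i q^{j/n}$ is a homomorphism $(\ZZ/n\ZZ)^2\to \Tate(q)[n](R_n)$ compatible with the multiplicative group law on $\CC^\times/q^\ZZ$, and an isomorphism by order; this is precisely a $\Gamma(n)$-structure in the sense of the definition preceding Proposition \ref{prop_abelian_is_congruence}.

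The main obstacle is the substitution/convergence analysis, and specifically the degenerate case $k=0$, $j=0$: it is here that the denominators $(1-\zeta_n^i)^{-2}$ appear, explaining why $1/n$ must be inverted in addition to $\zeta_n$ being adjoined. Once that case is treated, the $\Gamma(n)$-structure conclusion is a short counting/constancy argument.
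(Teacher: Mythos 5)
Your proof is correct and follows exactly the route the paper has in mind: the paper derives the corollary by direct substitution of $t = \zeta_n^i q^{j/n}$ into the explicit Tate-parametrization formulas (\ref{eq_explicit_equations}), and your convergence analysis of the three ranges $k\ge 1$, $k\le -1$, $k=0$, together with the observation that constancy of $\Tate(q)[n]$ over $R_n$ is what yields the $\Gamma(n)$-statement, is the intended argument. Two small slips in the write-up are worth fixing, though neither affects the conclusion: the norm $N_{\QQ(\zeta_n)/\QQ}(1-\zeta_n^i)$ need not divide $n$ (e.g.\ for $n=12$, $i=4$ it equals $9$) — what is true, and is what you need, is that it is a product of primes dividing $n$, hence a unit in $\ZZ[1/n]$; and the ``cube analogue'' of the symmetry is not of the same shape, since one has $(1-u)^3 = -u^3(1-u^{-1})^3$, giving $\frac{u^2}{(1-u)^3} = -\frac{u^{-1}}{(1-u^{-1})^3}$, which does nonetheless expand as a power series in $u^{-1}$ with integer coefficients so the argument still closes.
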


Note that all coefficients of power series in $\ZZ\ps{q^{1/N}}\otimes_\ZZ\ZZ[1/N,\zeta_N]$ have \emph{bounded denominators}.

\subsection{The Unbounded Denominators Conjecture}

It is known that any $q$-expansion of a modular form for a congruence subgroup of $\SL_2(\ZZ)$ with algebraic Fourier coefficients has bounded denominators (ie, an integer multiple of that $q$-expansion has algebraic integer coefficients). The same is not true in general for noncongruence modular forms.

\begin{defn} For a ring $R$, let $R\ls{q} := R\ps{q}[q^{-1}]$, and $R\ls{q^{1/\infty}} := \varinjlim_n R\ls{q^{1/n}}$. Here we fix once and for all a compatible system of $n$th roots $q^{1/n}$, and whenever it makes sense, we define $q^{1/n} := e^{2\pi i\tau/n}$, where $\tau$ is a coordinate on $\hH$.

\sgap

For a finite index $\Gamma\le\SL_2(\ZZ)$, a cusp $c$ of $\Gamma$ of width $d$, and a meromorphic modular function (ie, a form of weight 0) $f$ for $\Gamma$, we say that a $q$-expansion at $c$ is the expansion of $f$ as a Laurent series in $e^{2\pi i \gamma(\tau)/d}$, where $\gamma\in\SL_2(\ZZ)$ and $\gamma(c) = i\infty$. Let $\mu := [\PSL_2(\ZZ):\pm\Gamma]$, $\{c_1,\ldots,c_k\}$ the cusps of $\hH/\Gamma$ of widths $\{d_1,\ldots,d_k\}$, then $\mu = \sum_{i=1}^k d_i$, and $f$ will have at most $d_i$ distinct $q$-expansions at each cusp $c_i$, corresponding to the $d$ uniformizers $\zeta_d^je^{2\pi i \gamma(c)/d}$ where $\zeta_d = e^{2\pi i/d}$ and $0\le j\le d-1$, and at most $\mu$ distinct $q$-expansions in total, with equality precisely when $f$ is not invariant under any strictly larger subgroup of $\SL_2(\ZZ)$.

\sgap

For a finite index $\Gamma\le\SL_2(\ZZ)$ such that the cusp $i\infty$ has width $d$, and a subring $R\subset\CC$, let $M_k(\Gamma,R)$ be the set of meromorphic modular forms of weight $k$ for $\Gamma$, which are holomorphic on $\hH$ and whose $q$-expansion in $e^{2\pi i\tau/d}$ has coefficients in $R$. By definition the \emph{Fourier coefficients} of a form in $M_k(\Gamma,R)$ are the coefficients of this expansion. Thus, $\Spec M_0(\Gamma,\CC)$ is a $\CC$-model of $\hH/\Gamma$. Note that $M_0(\SL_2(\ZZ),R) = R[j]$ for any subring $R\subset\CC$, where $j$ is the classical $j$-invariant with $q$-expansion

$$j(q) = \frac{1}{q} + 744 + 196884q + 21493760q^2 + \cdots$$

\sgap

Let $M_*(\Gamma,R) := \bigoplus_{k\in\ZZ} M_k(\Gamma,R)$. We will say that an element $f\in M_*(\Gamma,R)$ is \emph{primitive}, or \emph{primitive for $\Gamma$} if $f\notin M_*(\Gamma',R)$ for any strictly larger subgroup $\Gamma'\supsetneq\Gamma$. Note that since $M_{2k}(\Gamma,R) = M_{2k}(\pm\Gamma,R)$, there exist primitive elements of even weight for $\Gamma$ if and only if $-I\in\Gamma$, and in this case a primitive modular function (i.e., form of weight 0) is just a primitive element for the function field extension $\Qbar(Y(\Gamma))/\Qbar(j)$.


\sgap

We say that an element $f(q)\in \Qbar\ls{q^{1/\infty}}$ has \emph{bounded denominators} if there exists an integer $n$ such that $n\cdot f(q)\in\Zbar\ls{q^{1/\infty}}$, where $\Zbar$ is the ring of algebraic integers in $\Qbar$. If $f\in M_*(\Gamma,\Qbar)$ and $c$ is a cusp of $\hH/\Gamma$, then we will say that $f$ has \emph{bounded denominators at $c$} if it has a $q$-expansion at $c$ with bounded denominators. 

\sgap

For $\Gamma\le\SL_2(\ZZ)$, let $\Gamma^c$ be its \emph{congruence closure} --- that is, the intersection of all congruence subgroups containing $\Gamma$.
\end{defn}

Let $A\subset\ZZ$ be any subset. Let UBD for $\Gamma$ of weight in $A$, or alternatively $\UBD(\Gamma,A)$, refer to the following property:

\begin{quote}
$\UBD(\Gamma,A)$: If $\Gamma$ is noncongruence, then all primitive elements $f\in\oplus_{k\in A}M_k(\Gamma,\Qbar)$ have unbounded denominators at all cusps.
\end{quote}

For some $k\in\ZZ$, we'll abuse notation and define $\UBD(\Gamma,k) := \UBD(\Gamma,\{k\})$. Note that the statement $\UBD(\Gamma,k)$ is vacuously true if $k$ is even and $-I\notin\Gamma$, since in this case there are no primitive elements in $M_k(\Gamma,\Qbar)$. One should also be wary of the case where $\Gamma$ is noncongruence, but $\pm\Gamma$ is congruence (c.f. Remark \ref{remark_pcong_does_not_imply_cong}), in which case all modular forms of even weight for $\Gamma$ are also forms for $\pm\Gamma$, and hence have bounded denominators, even though $\Gamma$ is noncongruence. Lastly, we will see in Corollary \ref{cor_primitive_bounded_generates_bounded} that if a primitive element in $M_0(\Gamma,\Qbar)$ has bounded denominators, then the same is true of all elements of $M_0(\Gamma,\Qbar)$.

\sgap

\begin{conj}[Unbounded Denominators Conjecture, c.f. \cite{LL11,KL07,Bir94}]\label{conj_UBD}
$$\text{Property $\UBD(\Gamma,\ZZ)$ holds for every finite index subgroup $\Gamma\le\SL_2(\ZZ)$.}$$
In other words,
$$\text{Every genuinely noncongruence modular form has unbounded denominators at all cusps.}$$
\end{conj}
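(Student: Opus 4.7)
The plan is to attack the conjecture through the geometric reformulation developed in \S\ref{section_UBD}. By Proposition \ref{prop_reduce_to_weight_0} it suffices to handle weight-zero modular functions, and for such functions Theorem \ref{thm_level_structures_and_UBD} converts $\UBD(\Gamma,\ZZ)$ into the nonexistence of any $\Gamma$-structure on $\Tate(q)$ over $B(\Qbar,q)$ (after passing to $\pm\Gamma$ when $-I\notin\Gamma$). Aggregating over all finite-index $\Gamma$, Theorem \ref{thm_UBDB=UBD} further reduces the problem to showing that the image of
\[
\pi_1(\Spec B(\Qbar,q)) \xrightarrow{\Tate(q)_*} \widehat{\SL_2(\ZZ)}
\]
is \emph{exactly} the congruence kernel $K := \bigcap_{N\ge 1}\overline{\Gamma(N)}$. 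So the whole conjecture becomes a statement about controlling the monodromy of the Tate curve viewed as an elliptic curve over $B(\Qbar,q)$.

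The inclusion $\Image\subseteq K$ is immediate from Corollary \ref{cor_tate_level_N}, which gives all $\Gamma(N)$-structures on $\Tate(q)$ over $B(\Qbar,q)$ explicitly (via the torsion points $\zeta_N^i q^{j/N}$). For the reverse inclusion, the congruence subgroup property for $\Aut(F_2)$ (Theorem \ref{thm_CSP}, Corollary \ref{cor_CSP}) says that every open subgroup of $\widehat{\SL_2(\ZZ)}$ contains some $\overline{\Gamma_{[\varphi]}}$ arising from a finite 2-generated $G$ and a surjection $\varphi : F_2\twoheadrightarrow G$. Dually, to show the image hits every open subgroup of $K$ it suffices to produce, for each exterior surjection $[\varphi]$ whose stabilizer $\Gamma_{[\varphi]}$ contains some principal congruence subgroup $\Gamma(N)$ (i.e.\ whose $\overline{\Gamma_{[\varphi]}}$ sits between $K$ and $\overline{\Gamma(N)}$), a $G$-structure on $\Tate(q)$ over $B(\Qbar,q)$ realizing $[\varphi]$. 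By Proposition \ref{prop_Teichmuller_moduli_2}(3), after passing to a central extension this amounts to constructing a geometrically connected $G$-torsor of $\Tate(q)^\circ$ defined over $B(\Qbar,q)$ with the prescribed monodromy.

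The main obstacle is exactly this construction. Analytically, over $\CC\ls{q^{1/N}}$ the punctured Tate curve has the expected free profinite fundamental group and every $[\varphi]$ is realized; algebraically, Harbater's theorem \cite{Har84} gives the inverse Galois problem for $\Frac B(\Qbar,q)$, so every $G$ occurs abstractly as a Galois group there. The difficulty is synthesizing these two inputs: one needs covers that are simultaneously fibered over $\Tate(q)$ and defined over $B(\Qbar,q)$, with the monodromy action relative to the $j$-line landing in the prescribed $\SL_2(\ZZ)$-orbit in $\Hom^\surext(\widehat{F_2},G)$ rather than some enlargement coming from extra field automorphisms. The natural route is formal/rigid patching over the complete DVR $\Zbar\ps{q^{1/N}}\otimes\Qbar$: construct $G$-covers of a rigid annulus around the node, glue them to tame covers of the smooth locus of $\Tate(q)$, algebraize via Grothendieck's existence theorem, and descend using the fact that $B(\Qbar,q)$ absorbs arbitrary finite extensions $K\ls{q^{1/n}}$. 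The hardest step---and in my view the decisive one---is controlling the Galois equivariance of the construction so that the branch-cycle/outer-monodromy data for the patched cover realizes precisely $[\varphi]$ rather than a different orbit; this is where fine information about the action of $G_{\Qbar(q)}$ on $\pi_1^\LL(\Tate(q)^\circ)$ beyond the cyclotomic character on the inertia (Lemma \ref{lemma_BCL}) is indispensable, and where a direct attack most naturally stalls.
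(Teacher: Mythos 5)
The statement you are attacking is a \emph{conjecture}: the paper proves no part of it, but only reformulates it (Theorem \ref{thm_level_structures_and_UBD}, Theorem \ref{thm_UBDB=UBD}) and establishes ancillary facts such as Theorem \ref{thm_bad_primes}. So there is no proof in the paper to compare against, and your proposal would have to stand on its own. It does not: your ``reverse inclusion'' step is logically backwards.

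Write $T$ for the image of $\pi_1(\Spec B(\Qbar,q))$ and $K$ for the congruence kernel; you correctly record $T\subseteq K$ and correctly translate UBD into $T=K$. But producing a $G$-torsor on $\Tate(q)^\circ$ over $B(\Qbar,q)$ realizing a given $[\varphi]$ proves that $T$ is contained (up to conjugacy) in $\ol{\Gamma_{[\varphi]}}$ --- an \emph{upper} bound on $T$. To prove $T\supseteq K$ you need the exact opposite: that no open subgroup $\ol{\Gamma}$ with $T\subseteq\ol{\Gamma}$ can fail to contain $K$, i.e.\ that if a $\Gamma$-structure on $\Tate(q)/B(\Qbar,q)$ exists then $\Gamma$ is congruence. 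This is a \emph{non-existence} statement about covers, not a construction; indeed Theorem \ref{thm_level_structures_and_UBD} shows that realizing even one noncongruence $[\varphi]$ by the patching you describe would \emph{disprove} UBD. Meanwhile the $[\varphi]$ you actually propose to realize --- those whose stabilizer contains some $\Gamma(N)$ --- are congruence, and realizing them is already immediate from Corollary \ref{cor_tate_level_N}; it merely restates the known inclusion $T\subseteq K$ and gives no information about $T$ from below. So the formal-patching / Harbater apparatus is pointed entirely the wrong way: it is a machine for producing covers, and UBD requires a mechanism for ruling covers out. (You also acknowledge the strategy ``stalls'' at the Galois-equivariance step, so even setting aside the directional error the argument is incomplete; but the earlier error means that completing it would not help and could only refute the conjecture.)
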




Before moving on to our main discussion, we make the following important reductions.

\begin{prop}\label{prop_reduce_to_weight_0} We have the following:
\begin{itemize} 
\item[(1)] For any finite index $\Gamma\le\SL_2(\ZZ)$, $\UBD(\Gamma,0)$ is equivalent to $\UBD(\Gamma,2\ZZ)$.
\item[(2)] If $\UBD(\Gamma,2\ZZ)$ holds for all finite index $\Gamma\le\SL_2(\ZZ)$, then $\UBD(\Gamma,\ZZ)$ also holds for all finite index $\Gamma\le\SL_2(\ZZ)$.
\end{itemize}
\end{prop}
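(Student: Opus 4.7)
The direction $\UBD(\Gamma,2\ZZ)\Rightarrow\UBD(\Gamma,0)$ is immediate since $\{0\}\subset 2\ZZ$ and primitivity of $f$ for $\Gamma$ is a condition on the full $\SL_2(\ZZ)$-stabilizer, independent of the ambient weight space. For the converse, I would suppose that $f\in M_{2k}(\Gamma,\Qbar)$ is primitive with bounded denominators at some cusp, and construct a primitive weight-zero modular function for $\Gamma$ with bounded denominators at the same cusp, contradicting $\UBD(\Gamma,0)$. The device is multiplication by an $\SL_2(\ZZ)$-invariant meromorphic weight-$(-2k)$ modular form $h$ with bounded-denominator Laurent $q$-expansion and neither zero nor pole at any cusp. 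Such an $h$ exists for every even $2k$: take $h=1$ for $2k=0$, $h=1/E_{2k}$ for $2k\ge 4$, $h=E_4E_6/\Delta$ for $2k=2$ (weight $-2$, integer Laurent coefficients, nonvanishing at cusps), and an appropriate polynomial in $E_4,E_6$ for $2k<0$. Since $h$ is $\SL_2(\ZZ)$-invariant, $F:=fh$ has weight zero and the same stabilizer $\Gamma$ as $f$, and inherits bounded denominators from $f$.

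\textbf{Plan for (2).} Suppose $f\in M_k(\Gamma,\Qbar)$ is primitive with $\Gamma$ noncongruence. If $k$ is even, then $f|_k(-I)=f$ forces $-I\in\Gamma$ by primitivity, and $\UBD(\Gamma,2\ZZ)$ applies directly. So the remaining task is when $k$ is odd; then $-I\notin\Gamma$ and $\pm\Gamma$ strictly contains $\Gamma$ with index $2$. A short computation shows that $f^2\in M_{2k}(\pm\Gamma,\Qbar)$ has stabilizer exactly $\pm\Gamma$: the relation $f|_k(-I)=-f$ identifies $\{\gamma:f|_k\gamma=-f\}$ with the coset $(-I)\Gamma$, making the stabilizer $\Gamma\cup(-I)\Gamma=\pm\Gamma$. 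Also $f$ having bounded denominators at a cusp implies the same for $f^2$. In \emph{Case A}, when $\pm\Gamma$ is itself noncongruence, $\UBD(\pm\Gamma,2\ZZ)$ applied to $f^2$ immediately forces $f$ to have unbounded denominators.

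The main obstacle is \emph{Case B}, where $\pm\Gamma$ is congruence (c.f.\ Remark \ref{remark_pcong_does_not_imply_cong}), since then $f^2$ is automatically a congruence modular form with bounded denominators by classical Shimura theory, so the naive reduction via squaring fails. Crucially, bounded denominators do not in general pass from $f^2$ back to $f$, as the binomial expansion of $\sqrt{1+q}$ shows by producing denominators growing like $2^n$. To finish Case B, I would multiply $f$ by a weight-one Eisenstein series $h=E_1(\chi_0)$ attached to a nontrivial Dirichlet character $\chi_0$ modulo some $N$ coprime to the level of $\pm\Gamma$, so that $h$ has bounded denominators and $fh$ is of even weight $k+1$. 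Then $fh$ is invariant under $\Gamma\cap\Gamma_1(N)$, which is noncongruence because any congruence subgroup contained in it would force $\Gamma$ itself to be congruence. With $N$ and $\chi_0$ chosen carefully, the stabilizer $\Gamma'$ of $fh$ is $\Gamma\cap\Gamma_1(N)$ (or a noncongruence overgroup thereof); applying $\UBD(\Gamma',2\ZZ)$ yields $fh$ unbounded, and since $h$ is bounded, $f$ must be unbounded. The hard part will be the careful choice of $N$ and $\chi_0$---in particular verifying that the nontrivial quadratic/Dirichlet character $\chi_0$ combines with the noncongruence sign character $\chi$ of $\pm\Gamma$ to prevent $fh$ from acquiring extra symmetries, so that $fh$ is genuinely primitive for a noncongruence subgroup of $\SL_2(\ZZ)$.
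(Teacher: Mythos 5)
Your approach---shifting the weight by multiplying by a complementary-weight form---is the same in spirit as the paper's, but differs in the choice of multiplier and in the handling of (2); let me address one genuine error and one over\-complication.

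In (1), your idea of multiplying by an $\SL_2(\ZZ)$-invariant $h$ of weight $-2k$ has a real advantage: since $h$ is $\SL_2(\ZZ)$-invariant and not identically zero, $\Stab(fh)=\Stab(f)=\Gamma$, so $fh$ is automatically primitive for $\Gamma$. (The paper instead multiplies by a complementary-weight form for the congruence closure $\Gamma^c$ and divides by $\Delta^n$; primitivity of the result is asserted but not spelled out, whereas your variant makes it transparent.) However, the specific choice $h=1/E_{2k}$ for $2k\ge 4$ is wrong: $E_{2k}$ has zeros in $\hH$ for every $2k\ge 4$ (e.g.\ $E_4(\rho)=0$ at $\rho=e^{2\pi i/3}$, $E_6(i)=0$), so $1/E_{2k}$ has poles in $\hH$, and $fh$ would then fail to lie in $M_0(\Gamma,\Qbar)$, which by the paper's definition consists of functions \emph{holomorphic} on $\hH$. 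The fix is to take $h=E_4^{a}E_6^{b}\Delta^{c}$ with $a,b\ge 0$ and $c\in\ZZ$ chosen so that $4a+6b+12c=-2k$ (such a triple exists for every even integer, since $\{4a+6b : a,b\ge 0\}+12\ZZ=2\ZZ$). Because $\Delta$ is nowhere vanishing on $\hH$ and $\Delta(q)=q\prod(1-q^n)^{24}$ is a unit in $\ZZ\ls{q}$, this $h$ is holomorphic on $\hH$ with bounded-denominator Laurent $q$-expansion, and the argument then closes.

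In (2), the Case A / Case B split is unnecessary, and the ``hard part'' you flag---a careful choice of $N$ and $\chi_0$ so that $fh$ does not acquire extra symmetries---is a non-issue. Take \emph{any} nonzero odd-weight congruence form $g\in M_\ell(\Gamma_g,\Qbar)$ with bounded denominators (a weight-$1$ Eisenstein series suffices), and set $\Gamma':=\Stab(fg)$. For $\gamma\in\Gamma'\cap\Gamma_g$ one has $(fg)|_\gamma=fg$ and $g|_\gamma=g$, hence $(f|_\gamma-f)\,g=0$, and since $g\not\equiv 0$ this forces $f|_\gamma=f$, i.e.\ $\gamma\in\Gamma=\Stab(f)$. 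Thus $\Gamma'\cap\Gamma_g\subseteq\Gamma$. If $\Gamma'$ were congruence, then so would be $\Gamma'\cap\Gamma_g$, and $\Gamma$, containing a congruence subgroup, would itself be congruence---contradiction. So $\Gamma'$ is \emph{automatically} noncongruence, regardless of $g$: you only need $\Gamma'$ noncongruence, not $\Gamma'=\Gamma\cap\Gamma_1(N)$, and this is forced. Then $fg$ is a primitive element of $M_{k+\ell}(\Gamma',\Qbar)$ of even weight with bounded denominators at a cusp, and $\UBD(\Gamma',2\ZZ)$ gives the contradiction. This is exactly the paper's one-line argument made explicit, and it also renders your Case A redundant.
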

\begin{proof} Part (1) is essentially Proposition 5 of \cite{KL07}, but we will give a quick proof. Suppose $\UBD(\Gamma,2\ZZ)$ fails --- ie, $\Gamma$ is noncongruence and there is a primitive element $f\in M_k(\Gamma,\Qbar)$ of even weight $k$ with bounded denominators. By multiplying it by some suitable power of the modular discriminant $\Delta$, we may assume $k\ge 0$. Then for a large enough $n\ge 0$ there exists $f^c\in M_{12n-k}(\Gamma^c,\Qbar)$, which must have bounded denominators (here we use the fact that $12n-k$ is even). Then since the $q$-expansion of $\Delta$ lies in $\ZZ\ps{q}^\times$, the quotient $ff^c/\Delta^n$ has bounded denominators and is a primitive element of $M_0(\Gamma,\Qbar)$.

\sgap

For (2), suppose $\UBD(\Gamma,\ZZ)$ fails for some $\Gamma\le\SL_2(\ZZ)$, then $\Gamma$ is noncongruence, and there exists a $f\in M_k(\Gamma,\Qbar)$ of weight $k$ with bounded denominators. If $f$ is even weight, then $\UBD(\Gamma,2\ZZ)$ also fails, so we're done. If $f$ has odd weight, then we may choose some odd weight congruence modular form $g$, then $fg$ is noncongruence and even weight with bounded denominators, so if $fg$ is primitive for $\Gamma'$, then $\UBD(\Gamma',2\ZZ)$ fails.
\end{proof}

Thus, from now on we will focus on the UBD property for modular functions.

\subsection{Level structures and $q$-expansions} We recall some facts about $\mM(G)$ (c.f. \S\ref{ss_basic_properties}), where $G$ is a finite 2-generated group of order $N$.

\sgap

Fix an elliptic curve $E_0$ over $\Qbar$, let $[\varphi] : \pi_1(E_0)\twoheadrightarrow G$ represent $G$-structure on $E_0$. Then $[\varphi]$ represents a geometric point of $\mM(G)$ (resp. $M(G)$). For a scheme $S$ over $\ZZ[1/N]$, let $\mM(G)_S([\varphi])$ (resp. $M(G)_S([\varphi])$) denote the connected component of $\mM(G)_S$ (resp. $M(G)_S$) containing $[\varphi]$. Since $\mM(G)$ and hence $M(G)$ are defined over $\QQ$, there is an action of $G_\QQ := \Gal(\Qbar/\QQ)$ on the components of $M(G)_{\Qbar}$. The component $Y([\varphi])_{\Qbar}$ of $M(G)_{\Qbar}$ containing $[\varphi]$ is defined over the fixed field $K$ of $\Stab_{G_\QQ}\big(M(G)_{\Qbar}([\varphi])\big)$, which is a number field.

\sgap

Since $\mM(G)$ was defined over $\ZZ[1/N]$, the connected component $\yY([\varphi])_{\ZZ_K[1/N]}$ of $\mM(G)_{\ZZ_K[1/N]}$ containing $[\varphi]$ is geometrically connected and its coarse moduli scheme $Y([\varphi])_{\ZZ_K[1/N]}$ is a model of $\hH/\Gamma_{[\varphi]}$ as a Noetherian affine scheme smooth with relative dimension 1 over $\ZZ_K[1/N]$.

\sgap

Suppose $[\varphi],[\varphi']$ are distinct $G$-structures on $E_0/\Qbar$ corresponding to two geometric points on $\mM(G)$. If their stabilizers $\Gamma_{[\varphi]},\Gamma_{[\varphi']}$ are conjugate in $\SL_2(\ZZ)$, then corresponding geometric components $Y([\varphi])_{\Qbar}$ and $Y([\varphi'])_{\Qbar}$ are isomorphic over $\Qbar$. However, even if they are both defined over a number field $K$, they may not a priori be isomorphic over $K$. Furthermore, given a subgroup $\Gamma\le\SL_2(\ZZ)$, there are in general many ways of realizing $\hH/\Gamma$ as a moduli space of elliptic curves with level structures, and the associated moduli problems arising this way may not be isomorphic (c.f. Remark \ref{remark_same_Gamma_different_stack}). This motivates the following definition.

\begin{defn}\label{defn_gpk} For a finite index $\Gamma\le\SL_2(\ZZ)$, we define a $(G,[\varphi],K)$-interpretation (informally, a \emph{moduli interpretation}) of $\Gamma$ to be the following data:
\begin{itemize}
\item[(1)] A finite group $G$ of order $N$.
\item[(2)] A $G$-structure $[\varphi]$ on some elliptic curve $E_0/\Qbar$ represented by $\varphi : F_2\twoheadrightarrow G$, such that the stabilizer $\Gamma_{[\varphi]}$ is normal in $\Gamma$.
\item[(3)] A number field $K$ such that the component $\mM(G)_K([\varphi])$, is geometrically connected, and such that the quotient $\Big(\mM(G)_{\Qbar}([\varphi])\Big)/(\Gamma/\Gamma_{[\varphi]})$ is defined over $K$.
\item[(4)] The coarse moduli scheme $Y(\Gamma)_{\ZZ_K[1/N]}$ of $\yY(\Gamma)_{\ZZ_K[1/N]} := \Big(\mM(G)_{\ZZ_K}([\varphi])\Big)/(\Gamma/\Gamma_{[\varphi]})$
\end{itemize}
\end{defn}
\begin{remark*} Firstly, the existence of such a $G$ and $\varphi$ is guaranteed by the \emph{congruence subgroup property} of $\Out(F_2)$ (c.f. Corollary \ref{cor_CSP}). Second, note that $Y(\Gamma)_{\ZZ_K[1/N]}$ is determined by the data of (1),(2),(3). We list it here to officially highlight our notation for a moduli-theoretic model of $\hH/\Gamma$. 
\end{remark*}

Thus, for any finite index $\Gamma\le\SL_2(\ZZ)$, upon choosing a $(G,[\varphi],K)$-interpretation, for any elliptic curve $E$ over a $\ZZ_K[1/N]$-scheme $S$, we may speak of $\Gamma$-structures on $E/S$. In this case, a $\Gamma$-structure will be called \emph{noncongruence} (resp. congruence, fine), if $\Gamma$ is noncongruence (resp. congruence, torsion-free). Thus, in general a $\Gamma$-structure is an equivalence class of $G$-structures.



\sgap

All of our subsequent developments are based on the following key fact.

\begin{lemma}\label{lemma_qexp}
Let $\Gamma\le\SL_2(\ZZ)$ be finite index. Any homomorphism
$$M_0(\Gamma,\CC)\longrightarrow\CC\ls{q^{1/\infty}}$$
sending $j\mapsto j(q)$ is the homomorphism defined by taking $q$-expansions at a cusp $c$. If the cusp $c$ has width $w$, then the image lands in $\CC\ls{q^{1/w}}$.
\end{lemma}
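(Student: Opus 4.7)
The plan is to interpret $\phi$ geometrically as a morphism into the compactified modular curve $X(\Gamma)_\CC$ and identify the ``limit cusp'' via the valuative criterion of properness. First, since $Y(\Gamma)_\CC=\Spec M_0(\Gamma,\CC)$ is finite over $Y(1)_\CC=\Spec\CC[j]$, the algebra $M_0(\Gamma,\CC)$ is finite over $\CC[j]$, so the image of $\phi$ lies in $\CC\ls{q^{1/n}}$ for some $n\ge 1$. Because $\phi(j)=j(q)$ is transcendental over $\CC$ and $\CC[j]\hookrightarrow M_0(\Gamma,\CC)$ is integral, $\phi$ is injective, and the associated morphism $\psi\colon\Spec\CC\ls{q^{1/n}}\to Y(\Gamma)_\CC\subset X(\Gamma)_\CC$ is dominant. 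The morphism $\Spec\CC\ps{q^{1/n}}\to\PP^1_j$ induced by $j\mapsto j(q)$ sends the closed point to $j=\infty$, since $1/j(q)=q-744q^2+\cdots\in q\CC\ps{q}^\times$. Applying the valuative criterion to the proper morphism $X(\Gamma)_\CC\to\PP^1_j$ then produces a unique extension $\bar\psi\colon\Spec\CC\ps{q^{1/n}}\to X(\Gamma)_\CC$ whose closed point lands on a cusp $c$ of some width $w$.

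Next, I would pin down the image using a Galois-theoretic argument. The morphism $\bar\psi$ induces an injective local homomorphism $\widehat\oO_{X(\Gamma)_\CC,c}\hookrightarrow\CC\ps{q^{1/n}}$. Since $X(\Gamma)_\CC\to\PP^1_j$ has ramification index $w$ at $c$, the extension $\Frac\,\widehat\oO_{X(\Gamma)_\CC,c}/\CC\ls{1/j}$ has degree $w$, so its image in $\CC\ls{q^{1/n}}$ is a degree-$w$ extension of $\CC\ls{q}$ (the image of $\CC\ls{1/j}$, since $1/j(q)$ is a uniformizer of $\CC\ps{q}$). But $\CC\ls{q^{1/n}}/\CC\ls{q}$ is cyclic Galois of degree $n$, generated by $q^{1/n}\mapsto\zeta_n q^{1/n}$, so by the Galois correspondence the unique degree-$w$ subextension is $\CC\ls{q^{1/w}}$; in particular $w\mid n$, and the image of $\phi$ lies inside $\CC\ls{q^{1/w}}$.

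Finally, I would identify $\phi$ with a classical $q$-expansion at $c$. Choosing $\gamma\in\SL_2(\ZZ)$ with $\gamma(c)=i\infty$, the analytic uniformization provides the canonical identification $\widehat\oO_{X(\Gamma)_\CC,c}\cong\CC\ps{q_w}$ with $q_w=e^{2\pi i\gamma(\tau)/w}$, under which the embedding $M_0(\Gamma,\CC)\hookrightarrow\widehat\oO_{X(\Gamma)_\CC,c}[q_w^{-1}]=\CC\ls{q_w}$ is by construction the classical $q$-expansion at $c$ (the Fourier series of $f\circ\gamma^{-1}$ in $e^{2\pi i\tau/w}$). Under this identification, the relation $1/j=q_w^w(1+744 q_w^w+\cdots)^{-1}$ in $\CC\ps{q_w}$ must match $1/j(q)=q(1-744q+\cdots)$ in $\CC\ps{q^{1/w}}$ via the embedding of the previous paragraph, which forces $q_w\mapsto\zeta q^{1/w}$ for a uniquely determined $w$-th root of unity $\zeta$. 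Thus $\phi$ coincides with the $q$-expansion at $c$ in the uniformizer $\zeta q^{1/w}$. The main technical point is the Galois-theoretic identification of the image inside $\CC\ls{q^{1/n}}$; once that is in hand, the remaining identification is immediate from the analytic definition of the $q$-expansion.
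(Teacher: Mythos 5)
Your argument is correct, but it proceeds along a genuinely different route than the paper's. The paper uses a short counting argument: since $\CC\ls{q^{1/\infty}}$ is algebraically closed (Newton--Puiseux), and $j(q)$ is transcendental, the fiber of $Y(\Gamma)\to Y(1)$ over $\Spec\CC\ls{q^{1/\infty}}\to Y(1)$ is a disjoint union of exactly $\mu=[\PSL_2(\ZZ):\pm\Gamma]$ points, i.e.\ there are precisely $\mu$ homomorphisms sending $j\mapsto j(q)$; the $\sum_i w_i=\mu$ classical $q$-expansions (one per cusp per choice of $w_i$-th root of unity) are visibly distinct, hence exhaust them. Your proof is constructive: you extend along the valuative criterion to land on a well-defined cusp $c$, pin down the image field inside $\CC\ls{q^{1/n}}$ by Kummer theory (using that $\CC\ls{q^{1/n}}/\CC\ls{q}$ is cyclic so has a unique degree-$w$ subextension), and then identify the uniformizer. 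The counting argument is shorter and avoids all local analysis; your version is more explicit and actually exhibits which $q$-expansion $\phi$ is.

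One step is stated as automatic that deserves a sentence of justification: that matching $1/j$ on both sides ``forces $q_w\mapsto\zeta q^{1/w}$'' \emph{on the nose}, rather than $\zeta q^{1/w}(1+\cdots)$. A priori the local map $\iota\colon\CC\ps{q_w}\to\CC\ps{q^{1/w}}$ only sends $q_w$ to some uniformizer. To see it must be exactly $\zeta q^{1/w}$: set $\beta:=\iota(q_w)^w$ and let $g(x)=x(1-744x+\cdots)\in x\CC\ps{x}$ be the power series with $1/j=g(q_w^w)$ in $\CC\ps{q_w}$ and $1/j(q)=g(q)$ in $\CC\ps{q}$. Then $g(\beta)=g(q)$; since $g$ has a compositional inverse in $x\CC\ps{x}$, this gives $\beta=q$, so $\iota(q_w)^w=q$, and hence $\iota(q_w)=\zeta q^{1/w}$ because $\CC\ps{q^{1/w}}$ is a domain containing all the $w$-th roots of unity. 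With that filled in, the identification with the classical $q$-expansion at $c$ in the uniformizer $\zeta q^{1/w}$ is complete.
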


\begin{proof} Note that $\Spec M_0(\Gamma,\CC)\cong\hH/\Gamma$. Let $\mu := \deg(\hH/\Gamma\rightarrow Y(1)_\CC)$. Let $c_1,\ldots,c_k$ be the cusps of $\hH/\Gamma$ with widths $w_1,\ldots,w_k$, then $\mu = \sum_{i=1}^k w_i$. Let $w := \lcm(w_1,\ldots,w_k)$, then taking $q$-expansions at each cusp gives us $k$ homomorphisms
$$M_0(\Gamma,\CC)\longrightarrow\CC\ls{q^{1/w}}\subset\CC\ls{q^{1/\infty}}$$
sending $j\in M_0(\Gamma,\CC)$ to $j(q)$. For each cusp $c_i$, sending $q^{1/w_i}\mapsto\zeta_{w_i}q^{1/w_i}$ gives $w_i$ distinct homomorphisms $M_0(\Gamma,\CC)\rightarrow\CC\ls{q^{1/\infty}}$. Thus, the $w_i$ different $q$-expansions at each cusp $c_i$ give a total of $\sum_{i=1}^k w_i = \mu$ distinct homomorphisms $M_0(\Gamma,\CC)\rightarrow\CC\ls{q^{1/\infty}}$ sending $j\mapsto j(q)$, and hence $\mu$ distinct lifts

$$\xymatrix{
 & \hH/\Gamma \ar[d]^\mu \\
 \Spec\CC\ls{q^{1/\infty}}\ar[r]_{\qquad j}\ar@{-->}[ru]^{\text{$q$-exp}} & Y(1)_\CC
}$$



Since the map $\hH/\Gamma\rightarrow Y(1)_\CC$ is finite generically \'{e}tale of degree $\mu$, there are precisely $\mu$ maps $\Spec\CC\ls{q^{1/\infty}}\rightarrow \hH/\Gamma$ over $Y(1)_\CC$, which are all accounted for by taking $q$-expansions at some cusp. Thus, any homomorphism $M_0(\Gamma,\CC)\rightarrow \CC\ls{q^{1/\infty}}$ sending $j\mapsto j(q)$ must be one of these.



\end{proof}

\begin{prop}\label{prop_level_structures_are_qexps} Let $\Gamma\le\SL_2(\ZZ)$ be finite index. Fix a $(G,[\varphi],K)$-interpretation of $\Gamma$, from which we get a model $Y(\Gamma)_{\ZZ_K[1/N]}$ of $\hH/\Gamma$ smooth over $\ZZ_K[1/N]$ which is a coarse moduli scheme for $\Gamma$-structures. Let $R\subset\CC$ be a subring containing $\ZZ_K[1/N]$, and $\Omega\subset\CC\ls{q^{1/\infty}}$ be a subring containing $R\ls{q}$, \emph{such that there exists a $\Gamma$-structure $\alpha$ on $\Tate(q)/\Omega$}. Then $\alpha$ determines a map
$$\alpha_* : \Spec \Omega\longrightarrow Y(\Gamma)_R$$
and this map is precisely $q$-expansion at some cusp in the sense of Lemma \ref{lemma_qexp}.
\end{prop}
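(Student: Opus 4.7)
The plan is to first construct $\alpha_*$ directly from the moduli interpretation, and then verify after base change to $\CC$ that the induced ring map sends $j \mapsto j(q)$, so Lemma \ref{lemma_qexp} forces it to be a $q$-expansion.

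First I would use the fact that, per Definition \ref{defn_gpk}, $\yY(\Gamma)_{R}$ has $T$-points parametrizing elliptic curves over $T$ equipped with a $\Gamma$-structure. The pair $(\Tate(q)_\Omega,\alpha)$ therefore determines a morphism $\Spec \Omega \to \yY(\Gamma)_R$, and composing with the coarse map $c : \yY(\Gamma)_R \to Y(\Gamma)_R$ (which exists by Proposition \ref{prop_basic_results_cms}) produces the desired morphism
$$\alpha_* : \Spec \Omega \longrightarrow Y(\Gamma)_R.$$

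Next I would base change to $\CC$ using the inclusion $\Omega \hookrightarrow \CC\ls{q^{1/\infty}}$ composed with the $R$-algebra structure. By Proposition \ref{prop_basic_results_cms}(2) applied to the (smooth, hence flat) extension $R \to \CC$, we have $Y(\Gamma)_R \times_R \CC \cong Y(\Gamma)_\CC$, and the latter is canonically identified with $\Spec M_0(\Gamma,\CC)$ as in Lemma \ref{lemma_qexp}. The base change of $\alpha_*$ thus yields a morphism $\Spec \CC\ls{q^{1/\infty}} \to \Spec M_0(\Gamma,\CC)$, i.e.\ a ring homomorphism
$$\alpha_*^\CC : M_0(\Gamma,\CC) \longrightarrow \CC\ls{q^{1/\infty}}.$$
The crucial calculation is that the composition with $\CC[j] \hookrightarrow M_0(\Gamma,\CC)$ sends $j \mapsto j(q)$: indeed, the composition $\Spec \Omega \to Y(\Gamma)_R \to Y(1)_R = \Spec R[j]$ is the forget-structure morphism, which by the moduli interpretation of $Y(1)$ classifies $\Tate(q)/\Omega$ by its $j$-invariant, and that $j$-invariant is the power series $j(q) \in \Omega$.

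At this point Lemma \ref{lemma_qexp} applies verbatim: any homomorphism $M_0(\Gamma,\CC) \to \CC\ls{q^{1/\infty}}$ sending $j\mapsto j(q)$ is $q$-expansion at some cusp $c$ of width $w$, with image landing in $\CC\ls{q^{1/w}}$. Since $\alpha_*$ is obtained from $\alpha_*^\CC$ by restriction of scalars along $M_0(\Gamma,R)\hookrightarrow M_0(\Gamma,\CC)$ and $\Omega \hookrightarrow \CC\ls{q^{1/\infty}}$, it is the same $q$-expansion at $c$ in the sense of the lemma. The main obstacle I anticipate is the bookkeeping around compatibility of base change for the coarse scheme: one must ensure that $Y(\Gamma)_R \times_R \CC$ is really $\Spec M_0(\Gamma,\CC)$ \emph{as a scheme over the $j$-line $\Spec R[j]$}, so that the identification of the $j$-line factor is strictly preserved. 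Once Proposition \ref{prop_basic_results_cms}(2) and (3) are invoked to settle this, the rest is a direct application of Lemma \ref{lemma_qexp} to the ring map induced by $\Tate(q)$'s own $j$-invariant.
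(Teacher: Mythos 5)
Your proof is correct and follows essentially the same route as the paper's: construct $\alpha_*$ from the moduli interpretation via the coarse map, pass to $\CC$, observe that the induced $\CC$-point sends $j\mapsto j(q)$ so Lemma \ref{lemma_qexp} identifies it with a $q$-expansion, and then descend using the fact that the vertical maps in the base-change square induce injections on global sections (the paper phrases this via flatness of $Y(\Gamma)_R/R$, which is the same compatibility bookkeeping you flag at the end).
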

\begin{proof} Since the $j$-invariant of $\Tate(q)$ is $j(q)$, $\alpha_*$ sends $j\mapsto j(q)$. We have a commutative diagram
$$\xymatrix{
\Spec\CC\ls{q^{1/\infty}}\ar[d]\ar@{-->}[r]^{\qquad (\alpha_\CC)_*} & Y(\Gamma)_\CC\ar[d]\ar[r] & \Spec\CC\ar[d] \\
\Spec \Omega\ar[r]^{\alpha_*} & Y(\Gamma)_{R}\ar[r] & \Spec R
}$$
where the rightmost square is cartesian and the dashed map $(\alpha_\CC)_*$ is induced by the natural map $\Spec\CC\ls{q^{1/\infty}}\rightarrow\Spec\CC$. Since $\alpha_*$ sends $j\mapsto j(q)$, so must $(\alpha_\CC)_*$, but by Lemma \ref{lemma_qexp}, $(\alpha_\CC)_*$ must be $q$-expansion. Since $Y(\Gamma)_R$ is flat over $R$, the vertical maps induce injections on global sections, and we conclude that $\alpha_*$ is also just $q$-expansion.
\end{proof}

We have the following nice corollary.
\begin{cor}[``Canonical Models'']\label{cor_nice} In the notation of Proposition \ref{prop_level_structures_are_qexps}, if $R = k$ is a subfield of $\CC$ such that for $\Omega := k\ls{q^{1/\infty}}$, there exists a $\Gamma$-structure on $\Tate(q)/\Omega$,
then after possibly replacing $\Gamma$ by a conjugate, we have $Y(\Gamma)_k = \Spec M_0(\Gamma,k)$.

\sgap

In particular, $Y(\Gamma)_{\Qbar} = \Spec M_0(\Gamma,\Qbar)$, and for a large enough number field $K$, we have $Y(\Gamma)_K = \Spec M_0(\Gamma,K)$.
\end{cor}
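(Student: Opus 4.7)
The plan is to use Proposition \ref{prop_level_structures_are_qexps} to realize $\Gamma(Y(\Gamma)_k,\mathcal{O})$ as an explicit $k$-subring of $k\ls{q^{1/w}}$ via $q$-expansion at $i\infty$, and then to deduce equality with $M_0(\Gamma,k)$ by faithfully flat descent from $\CC$ to $k$. First, by Proposition \ref{prop_level_structures_are_qexps}, the given $\Gamma$-structure $\alpha$ on $\Tate(q)/\Omega$ yields a morphism $\alpha_* \colon \Spec \Omega \to Y(\Gamma)_k$ which coincides with $q$-expansion at some cusp $c$ of $\hH/\Gamma$ of width $w$. Since the cusps of $\hH/\Gamma$ are parametrized by $\Gamma$-orbits on $\PP^1(\QQ)$, replacing $\Gamma$ by a suitable $\SL_2(\ZZ)$-conjugate lets me assume $c = i\infty$. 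Taking global sections yields an injection $A := \Gamma(Y(\Gamma)_k,\mathcal{O}) \hookrightarrow k\ls{q^{1/w}}$, and flat base change from $k$ to $\CC$ identifies $A \otimes_k \CC$ with $\Gamma(Y(\Gamma)_\CC,\mathcal{O}) = M_0(\Gamma,\CC)$ (using Proposition \ref{prop_cms_are_modular_curves} together with the definition of $M_0(\Gamma,\CC)$ as the affine coordinate ring of $\hH/\Gamma$); the extended map $A \otimes_k \CC \hookrightarrow \CC\ls{q^{1/w}}$ is then classical $q$-expansion at $i\infty$. Hence $A \subseteq M_0(\Gamma,\CC) \cap k\ls{q^{1/w}} = M_0(\Gamma,k)$.

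To upgrade the inclusion $A \subseteq M_0(\Gamma,k)$ to equality I set $Q := M_0(\Gamma,k)/A$ and tensor over $k$ with $\CC$. Faithful flatness of $\CC/k$ reduces $Q = 0$ to $Q \otimes_k \CC = 0$, i.e.\ to showing that the injection $A \otimes_k \CC \hookrightarrow M_0(\Gamma,k) \otimes_k \CC$ is surjective. The natural map $M_0(\Gamma,k) \otimes_k \CC \to \CC\ls{q^{1/w}}$ factors as $M_0(\Gamma,k) \otimes_k \CC \hookrightarrow k\ls{q^{1/w}} \otimes_k \CC \hookrightarrow \CC\ls{q^{1/w}}$, with the rightmost inclusion checked coefficient-by-coefficient against a $k$-basis of $\CC$; thus $M_0(\Gamma,k) \otimes_k \CC$ embeds into $M_0(\Gamma,\CC) \subseteq \CC\ls{q^{1/w}}$. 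Its image, being the $\CC$-span of $M_0(\Gamma,k) \supseteq A$ inside $M_0(\Gamma,\CC)$, contains the $\CC$-span of $A$, which is all of $M_0(\Gamma,\CC)$ by $A \otimes_k \CC = M_0(\Gamma,\CC)$. Therefore $M_0(\Gamma,k) \otimes_k \CC = M_0(\Gamma,\CC) = A \otimes_k \CC$, forcing $Q = 0$.

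For the ``in particular'' statements: in characteristic zero every finite \'{e}tale extension of $\Qbar\ls{q^{1/n}}$ has the form $\Qbar\ls{q^{1/nm}}$, so the image of $\pi_1(\Spec \Qbar\ls{q^{1/\infty}})$ inside $\pi_1(\mM(1)_\Qbar) = \widehat{\SL_2(\ZZ)}$ lies in $\bigcap_n n\widehat{\ZZ} = 0$ in the tame inertia at $i\infty$; hence every $\Gamma$-structure is realized on $\Tate(q)/\Qbar\ls{q^{1/\infty}}$, and the main statement gives the $\Qbar$ claim. For a number field $K$, I would pick a specific such $\Gamma$-structure and enlarge $K$ to contain the field of definition of the component of $\mM(G)_\Qbar$ involved and the residue field of the corresponding $\Qbar$-point on $Y(\Gamma)$, so that the $\Gamma$-structure descends to $\Tate(q)/K\ls{q^{1/\infty}}$. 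The principal obstacle will be the injectivity step in the second paragraph: although elementary, it precisely captures the fact that ``modular forms with $k$-rational $q$-expansion at $i\infty$'' constitute a $k$-form of $M_0(\Gamma,\CC)$, and it is exactly this point where the definitions of $M_0(\Gamma,k)$ and of the arithmetic model $Y(\Gamma)_k$ are forced into alignment.
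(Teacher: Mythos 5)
Your argument is correct and coincides with the paper's own proof: replace $\Gamma$ by a conjugate so that $\alpha_*$ is $q$-expansion at $i\infty$, identify $\Gamma(Y(\Gamma)_k,\oO)\otimes_k\CC$ with $M_0(\Gamma,\CC)$ via coarse base change, show $M_0(\Gamma,k)\otimes_k\CC\hookrightarrow\CC\ls{q^{1/\infty}}$ is injective and lands in $M_0(\Gamma,\CC)$, and conclude by faithfully flat descent of the quotient. Your sketch of the ``in particular'' statements (algebraic closedness of $\Qbar\ls{q^{1/\infty}}$, and Abhyankar/tame descent to a large enough number field) is a reasonable supplement to the paper, which leaves those implicit.
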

\begin{proof} Recall that $M_0(\Gamma,k)$ was defined to be the modular functions for $\Gamma$ whose $q$-expansions at $i\infty$ have coefficients in $k$. Thus, we will replace $\Gamma$ by a conjugate such that the map $\alpha_*$ of Proposition \ref{prop_level_structures_are_qexps} corresponds to $q$-expansion at $i\infty$. The map $(\alpha_\CC)_*$ in the proof of Proposition \ref{prop_level_structures_are_qexps} certainly induces an injection on global sections, and since the vertical maps are as well, we conclude that $\alpha_*$ also induces an injection on global sections.

\sgap

Suppose $Y(\Gamma)_k = \Spec B$, then $B$ sits as an $k$-subalgebra of $\Omega = k\ls{q^{1/\infty}}$. Note that $Y(\Gamma)_\CC = \Spec M_0(\Gamma,\CC)$. 
We have a diagram
$$\xymatrix{
\CC\ls{q^{1/\infty}} & M_0(\Gamma,\CC)\ar[l]_{\qquad(\alpha_\CC)^*\quad} \\
k\ls{q^{1/\infty}}\ar[u] & B\ar[l]_{\quad\alpha^*}\ar[u]
}$$
where all maps are inclusions. We would like to show that $B = \underbrace{M_0(\Gamma,\CC)\cap k\ls{q^{1/\infty}}}_{M_0(\Gamma,k)}$. Consider the natural map
$$M_0(\Gamma,k)\otimes_k\CC\stackrel{f}{\longrightarrow}M_0(\Gamma,\CC)$$
By coarse base change, the injection $B\hookrightarrow M_0(\Gamma,\CC)$ induces an isomorphism $B\otimes_k\CC\rightiso M_0(\Gamma,\CC)$. Thus, since $B\subset M_0(\Gamma,k)$, we see that $f$ is surjective.

\sgap

On the other hand, $f$ is a restriction of the visibly injective map $k\ls{q^{1/\infty}}\otimes_k\CC\hookrightarrow\CC\ls{q^{1/\infty}}$, and thus must be injective, hence an isomorphism. We may consider the sequence
\begin{eqnarray}\label{fppf_descent}
0\rightarrow B\rightarrow M_0(\Gamma,k)\rightarrow 0
\end{eqnarray}
By the above discussion, tensoring with $\CC$ yields an exact sequence
$$0\rightarrow B\otimes_k\CC\rightiso M_0(\Gamma,k)\otimes_k\CC\rightarrow 0$$
but since $\CC$ is faithfully flat over $k$, the original sequence (\ref{fppf_descent}) must have been exact as well, hence $B\cong M_0(\Gamma,k)$.
\end{proof}

\begin{remark} Note that while the hypothesis of the corollary depends on a choice of a moduli interpretation of $\Gamma$, the conclusion does not. Thus, while a given $\hH/\Gamma$ may have multiple moduli interpretations, all moduli-theoretic models of $\hH/\Gamma$ are ``commensurate'' with the model given by $\Spec M_0(\Gamma,K)$ for a suitable $K$ as described in the corollary.

\end{remark}

\subsection{The bad primes for UBD} In this section we give a quick application of our moduli interpretations to show that for a finite index $\Gamma\le\SL_2(\ZZ)$, elements of $M_*(\Gamma,\Qbar)$ can only have unbounded denominators at primes dividing $N := |G|$. Specifically,

\begin{thm}\label{thm_bad_primes} Let $\Gamma\le\SL_2(\ZZ)$ be finite index. For any $(G,[\varphi],K)$-interpretation of $\Gamma$, if $p$ is a prime not dividing $N$, then all elements of $M_*(\Gamma,\Qbar)$ have bounded denominators at $p$.
\end{thm}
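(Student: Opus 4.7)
The plan is to interpret weight-$k$ modular forms for $\Gamma$ as sections of an integral model of the Hodge bundle over the $\ZZ_K[1/N]$-model of $\hH/\Gamma$, and then to extract their $q$-expansions by pullback along the Tate curve equipped with a suitable $\Gamma$-structure; because everything in sight will be integral over $\ZZ_K[1/N]$, denominators arising in this process can only involve primes dividing $N$.

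First I would pass to the appropriate compactification $\bar{\yY}(\Gamma)_{\ZZ_K[1/N]}$ of $\yY(\Gamma)_{\ZZ_K[1/N]}$ obtained by adjoining cusps in the style of Deligne--Rapoport, and consider the Hodge bundle $\omega$, pulled back from $\bar{\mM}(1)_{\ZZ_K[1/N]}$. The finitely generated torsion-free $\ZZ_K[1/N]$-module
$$M_k^{\mathrm{int}} := H^0\bigl(\bar{\yY}(\Gamma)_{\ZZ_K[1/N]},\;\omega^{\otimes k}(\text{cusps})\bigr)$$
should be a lattice inside $M_k(\Gamma,L)$ for every finite extension $L/K$, i.e., $M_k^{\mathrm{int}}\otimes_{\ZZ_K[1/N]} L = M_k(\Gamma,L)$, since after base change to $\CC$ this recovers the classical sheaf of weight-$k$ modular forms. (If $\Gamma$ has torsion and $\bar{\yY}(\Gamma)$ is not representable, I would first pass to a torsion-free normal subgroup $\Gamma'\lhd\Gamma$ via Corollary \ref{cor_representable_cofinal} and then descend by $\Gamma/\Gamma'$-invariants.) Consequently, for any $f\in M_k(\Gamma,\Qbar)$ I would enlarge $K$ to a number field $L$ containing the Fourier coefficients of $f$ and find $c\in\ZZ_L[1/N]\setminus\{0\}$ with $cf\in M_k^{\mathrm{int}}\otimes_{\ZZ_K[1/N]}\ZZ_L[1/N]$.

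Next I would extract the $q$-expansion via the Tate curve. By Corollary \ref{cor_level_divides_N} the geometric level of $\Gamma$ divides $N$, and by Corollary \ref{cor_tate_level_N} every $\Gamma(N)$-structure on $\Tate(q)$ is defined over $\Omega := \ZZ_L[1/N,\zeta_N]\ps{q^{1/N}}$; hence any $\Gamma$-structure (which factors through some $\Gamma(N)$-structure) is defined on $\Tate(q)/\Omega$ as well. Choosing such a $\Gamma$-structure and applying Proposition \ref{prop_level_structures_are_qexps} produces a map $\Spec\Omega\longrightarrow Y(\Gamma)_{\ZZ_L[1/N]}$ that extends to the stacky compactification and whose composition with $j$ is the classical $q$-expansion. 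Crucially, the pullback of $\omega$ along this map is trivialized by the canonical invariant differential $\omega_{\can} = dx/(x+2y)$ of $\Tate(q)$, so the pullback of $cf$, viewed as a section of $\omega^{\otimes k}$, lies in $\Omega$, and its image is precisely the $q$-expansion of $cf$ at the chosen cusp.

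Finally, writing $c = a/b$ with $a\in\ZZ_L$ and $b$ a positive integer supported only at primes dividing $N$, the form $af = b\cdot(cf)$ has $q$-expansion in $\ZZ_L[1/N,\zeta_N]\ps{q^{1/N}}$, so its coefficients are $p$-integral for every $p\nmid N$; hence $f$ has bounded denominators at every such prime. Applying Proposition \ref{prop_reduce_to_weight_0} (or repeating the argument weight by weight) upgrades this to all of $M_*(\Gamma,\Qbar)$. The main obstacle in this program will be the first step: carefully producing a Deligne--Rapoport-type compactification of the stack $\yY(\Gamma)$ built from $\mM(G)$, identifying the classical space $M_k(\Gamma,\Qbar)$ with $M_k^{\mathrm{int}}\otimes\Qbar$, and checking that the integral Hodge bundle together with the cuspidal data behaves correctly under pullback along the Tate curve; once this scheme-theoretic setup is in place, the remaining denominator-chasing is essentially formal.
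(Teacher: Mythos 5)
Your first step---replacing the paper's reduction to weight $0$ via Proposition \ref{prop_reduce_to_weight_0} by sections of an integral Hodge bundle $\omega^{\otimes k}$ on a Deligne--Rapoport-type compactification of $\yY(\Gamma)$---is a legitimately different route, and once such an integral compactification exists the lattice statement and the extraction of $q$-expansions via the canonical differential of the Tate curve are sound. But the decisive step where you place a $\Gamma$-structure on $\Tate(q)$ over $\Omega = \ZZ_L[1/N,\zeta_N]\ps{q^{1/N}}$ contains a genuine error that sinks the argument in the only interesting case.

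You write that since the geometric level of $\Gamma$ divides $N$ (Corollary \ref{cor_level_divides_N}) and all $\Gamma(N)$-structures on $\Tate(q)$ are defined over $\Omega$ (Corollary \ref{cor_tate_level_N}), ``any $\Gamma$-structure (which factors through some $\Gamma(N)$-structure) is defined on $\Tate(q)/\Omega$ as well.'' This is exactly backwards. A $\Gamma$-structure would be induced by a $\Gamma(N)$-structure only if $\Gamma(N)\le\Gamma$, i.e.\ precisely when $\Gamma$ is \emph{congruence} of level dividing $N$. When $\Gamma=\Gamma_{[\varphi]}$ is noncongruence---the case the theorem is designed to control---$\Gamma(N)\not\le\Gamma$ for any $N$, and having the geometric level divide $N$ is a statement about cusp widths only, not an inclusion of groups. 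Group-theoretically: a $G$-structure induces a $G^{\ab}$-structure via $G\twoheadrightarrow G^{\ab}$, not conversely; there is a map $\mM(G)\to\mM(G^{\ab})$ but no map $\mM(N)\to\mM(G)$ unless $G$ is a quotient of $(\ZZ/N\ZZ)^2$, which fails for every nonabelian $G$. So the existence of a $\Gamma(N)$-structure on $\Tate(q)$ over $\Omega$ says nothing about the existence of a $\Gamma$-structure; the latter is a lifting problem along the finite \'etale cover $\yY(\Gamma)\to\mM(1)$ pulled back to $\Spec\Omega$, and whether that cover splits is exactly the substance of the theorem. Indeed, whether $\Gamma$-structures exist on $\Tate(q)$ over various rings is shown in Theorem \ref{thm_level_structures_and_UBD} to be equivalent to the UBD conjecture itself, so there cannot be a short formal argument of the kind you propose.

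The paper closes this gap by localizing at a prime $\mf p$ of $\ZZ_K$ not dividing $N$ and passing to the Henselization $\ZZ_\mf p^\h$, so that $\ZZ_\mf p^\h\ps{q}$ is a regular Henselian local ring, and then invoking Abhyankar's lemma (Corollary \ref{cor_abhyankar}) to show that \emph{every} connected finite \'etale extension of $\ZZ_\mf p^\h\ls{q}$ is dominated by some $\ZZ_\mf q^\h\ls{q^{1/e}}$ with $e$ prime to $p$. Applied to the scheme of $\Gamma$-structures on $\Tate(q)$, this forces the $q$-expansions (via Proposition \ref{prop_level_structures_are_qexps}) to land in $\ZZ_\mf q^\h\ls{q^{1/e}}$, which has $p$-integral coefficients. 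The Henselian local hypothesis is essential: over a global ring like $\ZZ_L[1/N]\ls{q}$ the \'etale fundamental group mixes arithmetic and geometric monodromy in an uncontrolled way, and there is no reason for the scheme of $\Gamma$-structures to split. To repair your argument you would need to replace your step 6 by this Abhyankar-over-a-Henselization argument, run prime by prime; once you do that, your Hodge-bundle framework can carry the rest, though at that point it buys little over the paper's weight-$0$ reduction via multiplication by powers of $\Delta$.
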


\begin{remark*} This explains the bad primes first noticed by Atkin and Swinnerton-Dyer in \cite{ASD71}, and the ``$M$'' appearing in \cite{Sch85}, \S5. We note that by possibly using another $(G',[\varphi'],K')$-interpretation for $\Gamma$, we can deduce that the bad primes must all divide $\gcd(|G|,|G'|)$. However, since the level $l(\Gamma)$ divides $|G|$ (c.f. Corollary \ref{cor_level_divides_N}), we cannot rule out the primes dividing $l(\Gamma)$ this way, nor should we expect to if $\UBD$ is to hold.
\end{remark*}

Before we give the proof, we recall Abhyankar's lemma.

\begin{prop}[Abhyankar's Lemma]\label{lemma_abhyankar} Let $A$ be a regular local ring with quotient field $K$, $L$ a finite Galois field extension of $K$ with Galois group $G$, and $B$ the integral closure of $A$ in $L$. Let $x$ be a regular parameter, i.e. an element of a regular system of parameters $x_1,\ldots,x_{n-1},x$ of $A$, and suppose that the principal ideal $A\cdot x$ is the only prime ideal of height 1 in $A$ possibly ramified in $B$. We consider a geometric point
$$\alpha : \Spec\Omega\longrightarrow\Spec B$$
that is localized at a prime ideal over $A\cdot x$. Suppose the order $e := |G_\alpha|$ of the ramification group $G_\alpha := \{\sigma\in G : \sigma\circ\alpha = \alpha\}$ is not divisible by the characteristic of $\Omega$. Then $B$ is regular. More precisely: Let $\mf{m}$ be a maximal ideal of $B$ and let $\tilde{A}$ resp. $\tilde{B}_\mf{m}$ be strict Henselizations of $A$ and $B_\mf{m}$. Then
$$\tilde{B}_\mf{m} = \tilde{A}[\sqrt[e]{x}].$$
The elements $x_1,\ldots,x_{n-1},\sqrt[e]{x}$ form a regular system of parameters for $\tilde{B}_\mf{m}$. The ramification degree $e$ is relatively prime to the characteristic of the residue field of $A$.
\end{prop}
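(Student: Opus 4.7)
The plan is to reduce to the case where $A$ is strictly Henselian, at which point the statement becomes an explicit consequence of tame Kummer theory. The hypothesis that $e$ is coprime to the residue characteristic is exactly what makes this tameness argument available, and it is the single ingredient that will be used in every step.

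The first step is to pass to strict Henselizations. Since $A \to \tilde{A}$ is faithfully flat and preserves regularity as well as regular systems of parameters, and since the asserted identity already lives at the level of $\tilde{A}$ and $\tilde{B}_\mf{m}$, we may as well assume $A$ is itself strictly Henselian with separably closed residue field $k$. After this reduction the Galois extension $L \otimes_K K'$ (with $K'$ the fraction field of $\tilde{A}$) decomposes as a product of fields, and only the factor corresponding to $\mf{m}$ is relevant; its Galois group over $K'$ is the decomposition group, which equals the inertia group $G_\alpha$ because the residue field extension is trivial. Hence we are reduced to the case where $L/K$ is Galois of degree $e$ and $B$ is a local ring, free of rank $e$ over $A$.

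Next I would produce an $e$-th root of $x$ directly. Because $e$ is invertible in $k$, Hensel's lemma applied to $t^e - 1$ shows that $A$ contains the group $\mu_e$ of $e$-th roots of unity, and applied to $t^e - u$ for $u\in A^\times$ shows that $A^\times$ is $e$-divisible. Picking a uniformizer $\pi$ of $B$, the relation $\pi^e = u\cdot x$ holds for some $u \in B^\times$, because $A\cdot x$ has ramification index $e$ in $B$. Standard tame ramification theory identifies the action of $G_\alpha$ on $\pi$ with an injective character $\chi\colon G_\alpha \hookrightarrow \mu_e$, which by order count is an isomorphism; this forces $u$ to be $G_\alpha$-invariant, hence $u \in A^\times$. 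Writing $u = y^e$ with $y \in A^\times$ and replacing $\pi$ by $\pi/y$ yields $\pi^e = x$ exactly. Then $A[\pi] \subseteq B$ is a free $A$-subalgebra of rank $e$ inside a free $A$-module of rank $e$, forcing $B = A[\pi] = A[\sqrt[e]{x}]$. The claim that $x_1,\dots,x_{n-1},\pi$ form a regular system of parameters for $B$ follows because $B/(x_1,\dots,x_{n-1}) \cong k[\pi]/(\pi^e)$ is the right Artin local ring.

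The main obstacle is the Galois-theoretic bookkeeping in the reduction step: one must carefully verify that strict Henselization really does reduce the problem to a totally ramified Galois extension of degree $e$, and that the tame inertia character surjects onto $\mu_e$. Once this framework is in place, the final Kummer computation is essentially forced, because the tameness hypothesis simultaneously supplies the inclusion $\mu_e \subset A$ and the $e$-divisibility of $A^\times$ needed to absorb the unit $u$. The last assertion about the residue characteristic being coprime to $e$ is automatic once $\mu_e$ is shown to embed in the residue field.
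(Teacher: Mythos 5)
Your reduction to the strictly Henselian case is the heart of the argument, but one of its steps hides the essential geometric content of the lemma. You identify $\mathrm{Gal}(L'/K')$ (after strict Henselization at $\mathfrak{m}$) with the decomposition group of $\mathfrak{m}$, and then with the inertia group at $\mathfrak{m}$ ``because the residue field extension is trivial'' --- both of those steps are fine. But you then silently identify this inertia group at the \emph{closed point} $\mathfrak{m}$ with $G_\alpha$, which is by definition the inertia at a \emph{height-one} prime over $A\cdot x$. These two inertia groups are a priori different, with $G_\alpha \subseteq I_{\mathfrak{m}}$, and nothing in the hypotheses forces equality without an additional input: one could worry about ``extra'' ramification concentrated over the closed point alone. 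The equality $I_{\mathfrak{m}} = G_\alpha$ is exactly the Zariski--Nagata purity of the branch locus applied to the regular local ring $A$: purity forces the ramification locus of $B/A$ to be a divisor, hence contained in $V(x)$, and therefore the inertia at $\mathfrak{m}$ cannot exceed the inertia coming from that divisor. (One way to run it cleanly: the integral closure of $\tilde{A}$ in $(L')^{G_\alpha}$ is unramified in codimension one, hence by purity \'{e}tale over $\tilde{A}$, hence trivial since $\tilde{A}$ is strictly Henselian with separably closed residue field, forcing $(L')^{G_\alpha} = K'$.) Without naming this, your argument has not yet touched the reason Abhyankar's lemma is true rather than merely a formal Kummer-theory exercise.

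Once that identification is supplied, the remaining Kummer computation is essentially correct, though the final step ``free rank $e$ inside free of rank $e$'' presupposes that $\tilde{B}_{\mathfrak{m}}$ is $\tilde{A}$-free, which is not automatic in dimension $\ge 3$ (normal does not imply Cohen--Macaulay there); it is cleaner to argue that $\tilde{A}[\pi]$ is already regular (hence normal) with the same fraction field, so it equals its integral closure $\tilde{B}_{\mathfrak{m}}$. You should also verify $t^e - x$ is irreducible over $K'$, which follows since $x$ is a prime element of the normal domain $\tilde{A}$ and hence not an $\ell$-th power for any $\ell \mid e$. For calibration: the paper itself does not reprove this statement but simply cites Freitag--Kiehl, so your sketch is more ambitious than theirs; the gap above is the one nontrivial thing such a sketch must address explicitly.
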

\begin{proof} See \cite{FK13}, appendix \S A I.11.
\end{proof}

\begin{cor}\label{cor_abhyankar} Let $R$ be a Henselian discrete valuation ring of mixed characteristic $(0,p)$ residue field $k$, and uniformizer $\pi$. Then for $R'$ finite \'{e}tale over $R$, and $e\ge 1$ coprime to $p$, $R'\ls{q^{1/e}}$ is finite \'{e}tale over $R\ls{q}$, and moreover every connected finite \'{e}tale extension of $R\ls{q}$ is dominated by some $R'\ls{q^{1/e}}$.
\end{cor}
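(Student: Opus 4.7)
For the first assertion, observe that since $e$ is coprime to $p$ and $R$ has residue characteristic $p$, the integer $e$ is a unit in $R$, hence in $R\ls{q}$. Therefore $R\ls{q^{1/e}} = R\ls{q}[T]/(T^e - q)$ is a standard Kummer extension of degree $e$, and in particular finite \'{e}tale over $R\ls{q}$. Since $R'/R$ is finite \'{e}tale, base change yields $R'\ls{q^{1/e}} \cong R' \otimes_R R\ls{q^{1/e}}$ finite \'{e}tale over $R\ls{q^{1/e}}$, and hence over $R\ls{q}$.

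For the second assertion, I would apply Abhyankar's Lemma (Proposition~\ref{lemma_abhyankar}) to the two-dimensional regular local Henselian ring $A := R\ps{q}$, with regular parameter $q$. Let $S$ be a connected finite \'{e}tale extension of $R\ls{q}$; then $S$ is a normal domain. Passing to the Galois closure, I may assume $L := \Frac(S)$ is Galois over $K := \Frac(A)$; let $B$ denote the integral closure of $A$ in $L$, so that $S = B[q^{-1}]$. The \'{e}taleness of $S$ over $R\ls{q} = A[q^{-1}]$ ensures that $B/A$ is unramified at every height-one prime of $A$ not containing $q$ (in particular at $(\pi)$), so $V(q)$ is the only candidate ramified divisor. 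Since $A/(q) = R$ has characteristic-zero fraction field, this ramification is automatically tame; let $e$ denote its common ramification index, which by the final clause of Proposition~\ref{lemma_abhyankar} is coprime to $p$.

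I would then base change along the Kummer cover $A \hookrightarrow C := A[q^{1/e}] = R\ps{q^{1/e}}$ and let $B^*$ be the integral closure of $C$ in $L \otimes_K \Frac(C)$. The local conclusion of Abhyankar's Lemma --- that each strict Henselization $\widetilde{B^*}_{\mf{m}}$ coincides with $\widetilde{C}$ --- shows that $B^* \to \Spec C$ is unramified at every prime above $(q^{1/e})$; at the remaining height-one primes of $C$, unramifiedness is inherited from $B/A$ away from $(q)$. By Zariski--Nagata purity of the branch locus, applicable since $C$ is regular and $B^*$ is normal and finite over $C$, these codimension-one verifications promote to global \'{e}taleness of $B^* / C$. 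Finally, since $R\ps{q^{1/e}}$ is $q^{1/e}$-adically complete, the pair $(R\ps{q^{1/e}}, (q^{1/e}))$ is Henselian, and finite \'{e}tale covers of $R\ps{q^{1/e}}$ are in bijection with those of the quotient $R\ps{q^{1/e}}/(q^{1/e}) = R$. Thus $B^* \cong R' \otimes_R R\ps{q^{1/e}} = R'\ps{q^{1/e}}$ for some finite \'{e}tale $R'/R$, and inverting $q^{1/e}$ yields $S \hookrightarrow S \otimes_{R\ls{q}} R\ls{q^{1/e}} \hookrightarrow B^*[q^{-1/e}] = R'\ls{q^{1/e}}$, possibly after replacing $R'$ by one of its direct factors.

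\textbf{The main obstacle} is the globalization step: Abhyankar's Lemma is phrased as a strictly local assertion at a single maximal ideal, while one needs global \'{e}taleness of $B^* \to \Spec C$ to invoke Henselian-pair descent. Purity of the branch locus supplies the bridge, converting unramifiedness in codimension one (at $(q^{1/e})$ via Abhyankar, at other height-one primes via the hypothesis on $S$) into global \'{e}taleness over the regular base. The rest is routine manipulation of Henselian pairs, localization, and checking that the integral closure of $C$ in $L \otimes_K \Frac(C)$ indeed becomes \'{e}tale after base change.
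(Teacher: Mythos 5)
Your proposal reaches the right conclusion by a noticeably different route. The paper applies Abhyankar's Lemma once, directly to the normalization $A$ of $R\ps{q}$ inside $\Frac(S)$ (with $R\ps{q}$ as the regular Henselian local base), obtaining $A^\sh = R\ps{q}^\sh[q^{1/e}]$. Since $R\ps{q}$ is Henselian local with residue field $k$, the strict Henselization $R\ps{q}^\sh$ is a filtered colimit of $R'\ps{q}$ over finite \'{e}tale $R'/R$; as $A$ is a finitely generated $R\ps{q}$-module, it sits inside some $R'\ps{q}[q^{1/e}]$ at a finite stage, and inverting $q$ finishes the job --- no purity theorem and no auxiliary base change is needed. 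You instead pass to the Galois closure, Kummer base change along $C := R\ps{q^{1/e}}$, invoke Zariski--Nagata purity to upgrade codimension-one unramifiedness of $B^*/C$ to global \'{e}taleness, and then use Henselian-pair descent along $(C,(q^{1/e}))$. This is a valid alternative strategy (a geometric form of Abhyankar's lemma), but it imports extra machinery that the paper avoids.

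One step is under-justified. You write that ``the local conclusion of Abhyankar's Lemma'' gives $\widetilde{B^*}_{\mf{m}} = \widetilde{C}$, but Proposition~\ref{lemma_abhyankar} as stated only yields $\widetilde{B}_{\mf{m}} = \widetilde{A}[\sqrt[e]{q}]$ for the extension $B/A$, not for the base-changed $B^*/C$; applying the lemma directly to $B^*/C$ would require checking the Galois hypothesis for $\Frac(B^*)/\Frac(C)$, which is not automatic since $L \otimes_K K(q^{1/e})$ need not be a field. The cleanest repair is to note that $\widetilde{A}[q^{1/e}] = \widetilde{C}$ is a filtered colimit of finite \'{e}tale $C$-algebras, and $B$ (hence $B\cdot C$) is a finitely generated subalgebra of $\widetilde{B}_\mf{m} = \widetilde{C}$, so it lands inside a finite \'{e}tale $C$-algebra at a finite stage. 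But notice that once $\widetilde{B^*}_{\mf{m}} = \widetilde{C}$ is established at the closed points --- and $C$ is Henselian local, so $B^*$ is already a finite product of local rings --- $B^*/C$ is \'{e}tale with no appeal to purity at all; your purity step is not load-bearing. At that point the argument has effectively collapsed to the paper's, and you could have stayed over $R\ps{q}$ throughout.
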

\begin{proof} Since $R'/R$ is finite, we have $R\ps{q}\otimes_R R' = R'\ps{q}$ so $R'\ps{q}$ is \'{e}tale over $R\ps{q}$. Since $R'\ps{q^{1/e}}$ over $R'\ps{q}$ is ramified only over $(q)$, upon inverting $q$ we see that $R'\ls{q^{1/e}}\rightarrow R'\ls{q}\rightarrow R\ls{q}$ is \'{e}tale.

\sgap

The ring $R\ps{q}$ is a regular Henselian (c.f. \cite{San68}) Noetherian local ring with maximal ideal $(\pi,q)$ and residue field $k$. The residue field of the height 1 prime $(q)$ is characteristic 0, so for any connected finite extension $A$ of $R\ps{q}$ \'{e}tale away from $(q)$, any ramification above $(q)$ is necessarily tame. Since $R\ps{q}$ is Henselian, $A$ is necessarily local, so by Abhyankar's lemma, we have $A^\sh = R\ps{q}^\sh[q^{1/e}]$ with $e$ coprime to $p$. Further, the finite \'{e}tale extensions of $R\ps{q}$ are in functorial bijection with finite separable extensions of its residue field $k$, which in turn correspond to finite \'{e}tale extensions $R'$ of $R$. Since $R^\sh$ is the colimit of all such extensions and $A$ is finite over $R\ps{q}$, there exists an $R'$ finite \'{e}tale over $R$ with $R'\ps{q}[q^{1/e}]$ dominating $A$. Inverting $q$ then shows that $R'\ls{q^{1/e}}$ dominates $A[q^{-1}]$.
\end{proof}

\sgap

We now prove Theorem \ref{thm_bad_primes}.
\begin{proof} The $(G,[\varphi],K)$-interpretation gives us a geometrically connected stack $\yY(\Gamma)$ over $\ZZ_K[1/N]$, finite \'{e}tale over $\mM(1)$, whose coarse moduli scheme is $Y(\Gamma)_{\ZZ_K[1/N]}$. Let $\mf{p}$ be a prime ideal of $\ZZ_K$ not dividing $N$, then let $\ZZ_\mf{p}$ be the localization of $\ZZ_K$ at $\mf{p}$, and $\ZZ_\mf{p}^\h$ its Henselization. Fix an embedding of $\ZZ_\mf{p}^\h$ into $\Qbar$. The stack $\yY(\Gamma)_{\ZZ_\mf{p}^\h}$ is finite \'{e}tale over $\mM(1)_{\ZZ_\mf{p}^\h}$. The Tate curve $\Tate(q)$ over $\ZZ_\mf{p}^\h\ls{q}$ determines a map $\ZZ_\mf{p}^\h\ls{q}\rightarrow \mM(1)_{\ZZ_\mf{p}^\h}$, and the pullback $\yY(\Gamma)_{\ZZ_\mf{p}^\h}\times_{\mM(1)_{\ZZ_\mf{p}^\h}} \ZZ_\mf{p}^\h\ls{q}$ is precisely the scheme of $\Gamma$-structures on $\Tate(q)/\ZZ_\mf{p}^\h\ls{q}$, and thus is trivialized over a finite \'{e}tale extension $\Omega$ of $\ZZ_\mf{p}^\h\ls{q}$. By Lemma \ref{cor_abhyankar}, we may take $\Omega = \ZZ_\mf{q}^\h\ls{q^{1/e}}$, where $\mf{q}$ is a prime in some finite extension of $K$ lying above $\mf{p}$. This means that all $\Gamma$-structures on $\Tate(q)$ are defined over $\ZZ_\mf{q}^\h\ls{q^{1/e}}$, so by Proposition \ref{prop_level_structures_are_qexps} we find that all $q$-expansions of global sections of $Y(\Gamma)_{\ZZ_\mf{q}^\h}$ lie in $\ZZ_\mf{q}^\h\ls{q^{1/e}}$.

\sgap

Let $B$ denote the ring of global sections of $Y(\Gamma)_{\ZZ_\mf{q}^\h}$, then by Corollary \ref{cor_nice}, $Y(\Gamma)_{\Qbar} = \Spec M_0(\Gamma,\Qbar)$, so we have 
$$M_0(\Gamma,\Qbar) = B\otimes_{\ZZ_\mf{q}^\h}\Qbar\subset \ZZ_\mf{q}^\h\ls{q^{1/e}}\otimes_{\ZZ_\mf{q}^\h}\Qbar$$
where the last ring consists only of Laurent series in $q^{1/e}$ with bounded denominators at $p$. This proves the result for modular functions. The full result then follows from Proposition \ref{prop_reduce_to_weight_0}.
\end{proof}

\subsection{The geometric Unbounded Denominators Conjecture}\label{ss_geometric_UBD} In this section we give geometric interpretations of the Unbounded Denominators Conjecture.

\sgap

\textbf{Notation.} For a finite index subgroup $\Gamma\le\SL_2(\ZZ)$, we will make extensive use of the notion of a ``$\Gamma$-structure'', with the understanding that the precise definition depends on a choice of a $(G,[\varphi],K)$-interpretation of $\Gamma$ (c.f. Definition \ref{defn_gpk}). However, if $\Gamma$ is definitively a congruence subgroup (for example, if $\Gamma = \Gamma^c,\Gamma_1(n),\Gamma(n)$), then we will defer to the classical moduli interpretations in terms of torsion points.

\sgap

For an algebraic extension $L/\QQ$, let $\ZZ_L$ be its ring of integers. We define
$$B(L,q) := \varinjlim_{K}\ZZ_K\ls{q^{1/\infty}}\otimes_{\ZZ_K}K$$
where the limit ranges over all finite extensions $K$ of $\QQ$ contained in $L$. Note that for a number field $K$, $B(K,q) = \ZZ_K\ls{q^{1/\infty}}\otimes_{\ZZ_K}K$ and is precisely the subring of $K\ls{q^{1/\infty}}$ consisting of $q$-series with bounded denominators. By Proposition \ref{prop_level_structures_are_qexps}, we find that for any $\Gamma\le_f\SL_2(\ZZ)$, there is a number field $K$ for which the $q$-expansions of elements in $M_0(\Gamma,\Qbar)$ lie in $K\ls{q^{1/\infty}}$. Thus, an element in $f\in M_0(\Gamma,\Qbar)$ has bounded denominators at some cusp $c$ if and only if its $q$-expansion at $c$ lies in $B(K,q)$.

\sgap

We may think of Proposition \ref{prop_level_structures_are_qexps} as saying that if there exists noncongruence $\Gamma$-structures on $\Tate(q)$ over $B(\Qbar,q)$, then $\UBD$ must be false. We now work towards proving a converse.

\sgap

Note that since each $B(K,q)$ is a Dedekind domain, we have the following.

\begin{lemma}\label{lemma_integrally_closed_bounded_denominators} $B(\Qbar,q)$ is an integrally closed domain.
\end{lemma}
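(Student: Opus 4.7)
The plan is to realize $B(\Qbar,q)$ as a filtered colimit of integrally closed domains with injective transition maps, and then invoke the fact that such a colimit is itself an integrally closed domain. Concretely, by definition
\[
B(\Qbar,q) \;=\; \varinjlim_{K} B(K,q) \;=\; \varinjlim_{K}\;\varinjlim_{n}\;\bigl(\ZZ_K\ls{q^{1/n}}\otimes_{\ZZ_K}K\bigr),
\]
where $K$ ranges over number fields inside $\Qbar$ and $n$ over positive integers; both limits are filtered, and all transition maps are injective (and compatible with passage to fraction fields), so up to a re-indexing we may treat the whole thing as a single filtered colimit.

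First I would verify that each constituent ring $R_{K,n} := \ZZ_K\ls{q^{1/n}}\otimes_{\ZZ_K}K$ is integrally closed in its fraction field. The ring $\ZZ_K\ps{q^{1/n}}$ is a formal power series ring over the Dedekind domain $\ZZ_K$, hence a two-dimensional regular Noetherian domain, and in particular normal. The ring $R_{K,n}$ is obtained from $\ZZ_K\ps{q^{1/n}}$ by inverting $q^{1/n}$ and all nonzero elements of $\ZZ_K$, i.e.\ by localization, and localizations of normal domains are normal. (This is simpler than, and implied by, the Dedekind statement recorded just before the lemma.)

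Next I would invoke (or record) the standard fact: if $R = \varinjlim_i R_i$ is a filtered colimit of integrally closed domains along injective ring maps, with induced fraction-field inclusions $K_i \hookrightarrow K_j$ whenever $i \le j$, then $R$ is an integrally closed domain with fraction field $\Frac R = \varinjlim_i K_i$. The proof is routine: an element $\alpha \in \Frac R$ integral over $R$ satisfies a monic relation $\alpha^n + a_{n-1}\alpha^{n-1} + \cdots + a_0 = 0$ with finitely many coefficients $a_j \in R$; by filteredness one can pick a single index $i$ for which $\alpha \in K_i$ and all $a_j \in R_i$, and then integral closedness of $R_i$ gives $\alpha \in R_i \subseteq R$. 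Applying this to $R = B(\Qbar,q)$ completes the proof.

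There is no real obstacle here beyond the bookkeeping of filtered colimits; the only point requiring any care is verifying normality of $\ZZ_K\ps{q^{1/n}}$, which follows immediately from regularity of formal power series rings over Dedekind domains.
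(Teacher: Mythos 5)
Your proof is correct, and it takes the same basic route as the paper (a filtered colimit of integrally closed domains is integrally closed), but it works at a slightly finer level of the colimit, which is actually a small improvement. The paper's justification is the one-line remark that ``each $B(K,q)$ is a Dedekind domain.'' That claim is plausible but not obvious: $B(K,q) = \varinjlim_n \bigl(\ZZ_K\ls{q^{1/n}}\otimes_{\ZZ_K}K\bigr)$ is a strictly increasing union of Dedekind domains, and an infinite such union need not remain Noetherian, so calling $B(K,q)$ itself Dedekind requires an argument that the paper does not supply. Your proof sidesteps this entirely by unravelling the double colimit $\varinjlim_{K,n}$ and checking normality of the constituents $R_{K,n} = \ZZ_K\ls{q^{1/n}}\otimes_{\ZZ_K}K$ directly, as localizations of the regular Noetherian domain $\ZZ_K\ps{q^{1/n}}$. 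Everything else — injectivity of the transitions, compatibility of fraction fields, and the standard argument that integral closedness passes to filtered colimits along injections — is carried out correctly. (In fact each $R_{K,n}$ is a localization of the one-dimensional regular ring $\ZZ_K\ls{q^{1/n}}$, so is itself Dedekind, though you only need normality.) In short: same idea, but your decomposition avoids an unjustified Noetherianity assertion and is, if anything, cleaner than what the paper records.
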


This immediately implies the following two useful facts.

\begin{cor}\label{cor_bounded_generates_bounded} Let $\Gamma_1,\Gamma_2$ be finite index subgroups of $\SL_2(\ZZ)$ such that one of them contains $-I$. Then $M_0(\Gamma_1\cap\Gamma_2,\Qbar)$ has bounded denominators at some cusp $c$ if and only if $M_0(\Gamma_1,\Qbar),M_0(\Gamma_2,\Qbar)$ both have bounded denominators at $c$.
\end{cor}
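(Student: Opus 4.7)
Set $\Gamma_3 := \Gamma_1 \cap \Gamma_2$, and fix throughout a cusp $c$ of $\hH/\Gamma_3$ with images $c_i$ in $\hH/\Gamma_i$ of cusp widths $w_i$ (so $w_i$ divides $w_3$). For the ``only if'' direction, the inclusion $\Gamma_3 \subseteq \Gamma_i$ gives $M_0(\Gamma_i,\Qbar) \subseteq M_0(\Gamma_3,\Qbar)$, and the $q$-expansion at $c$ of an $f \in M_0(\Gamma_i,\Qbar)$ is obtained from its $q$-expansion at $c_i$ by the polynomial substitution $q^{1/w_i} = (q^{1/w_3})^{w_3/w_i}$; such a substitution visibly preserves the property of having bounded denominators, so this direction is immediate.

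For the converse, assume without loss of generality that $-I \in \Gamma_1$. The key input is the claim $E_3 = E_1 \cdot E_2$ inside $\Qbar(\hH)$, where $E_i := \Qbar(X(\Gamma_i)) = \Frac M_0(\Gamma_i,\Qbar)$. Since modular function fields depend only on the image in $\PSL_2(\ZZ)$, the hypothesis $-I \in \Gamma_1$ yields $\pm\Gamma_1 \cap \pm\Gamma_2 = \pm(\Gamma_1 \cap \Gamma_2) = \pm\Gamma_3$, whence by the Galois correspondence $E_1 \cdot E_2 = E_3$. Suppose now that both $M_0(\Gamma_i,\Qbar)$ lie in $B(\Qbar,q)$ at $c_i$. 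Passing to fraction fields, each $E_i$ embeds into $\Frac B(\Qbar,q)$, so the compositum satisfies $E_3 = E_1 \cdot E_2 \subseteq \Frac B(\Qbar,q)$, and in particular $M_0(\Gamma_3,\Qbar) \subseteq \Frac B(\Qbar,q)$. On the other hand, the finite morphism $Y(\Gamma_3)_{\Qbar} \to Y(\Gamma_1)_{\Qbar}$ of affine curves (Proposition \ref{prop_basic_results_cms}) exhibits $M_0(\Gamma_3,\Qbar)$ as a finite, hence integral, extension of $M_0(\Gamma_1,\Qbar) \subseteq B(\Qbar,q)$. Thus every $f \in M_0(\Gamma_3,\Qbar)$ is integral over $B(\Qbar,q)$, and since $f \in \Frac B(\Qbar,q)$ and $B(\Qbar,q)$ is integrally closed in its fraction field by Lemma \ref{lemma_integrally_closed_bounded_denominators}, we conclude $f \in B(\Qbar,q)$, as required.

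The principal obstacle is the compositum step $E_1 \cdot E_2 = E_3$: integrality of $M_0(\Gamma_3,\Qbar)$ over $M_0(\Gamma_1,\Qbar)$ alone would only place $f$ in the integral closure of $B(\Qbar,q)$ inside $\Qbar\ls{q^{1/\infty}}$, whereas Lemma \ref{lemma_integrally_closed_bounded_denominators} gives integral closedness only inside $\Frac B(\Qbar,q)$. One genuinely needs both $M_0(\Gamma_1,\Qbar)$ and $M_0(\Gamma_2,\Qbar)$ to land in $B(\Qbar,q)$ in order to realize $M_0(\Gamma_3,\Qbar)$ inside $\Frac B(\Qbar,q)$ via the compositum, and this step is in turn precisely where the $-I$ hypothesis intervenes: without it one could have $\pm(\Gamma_1 \cap \Gamma_2) \subsetneq \pm\Gamma_1 \cap \pm\Gamma_2$, in which case $E_1 \cdot E_2$ would be a proper subfield of $E_3$ and the argument would break down.
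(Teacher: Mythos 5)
Your proof is correct and takes essentially the same approach as the paper. The paper identifies $\Qbar(Y(\Gamma_1\cap\Gamma_2))$ with the function field of an irreducible component of the fiber product $Y(\Gamma_1)_{\Qbar}\times_{Y(1)_{\Qbar}}Y(\Gamma_2)_{\Qbar}$, which is just the geometric face of your field-theoretic identification $E_3 = E_1\cdot E_2$; both hinge on the same use of the $-I$ hypothesis (so that $\pm\Gamma_1\cap\pm\Gamma_2 = \pm(\Gamma_1\cap\Gamma_2)$, otherwise the fiber-product component/compositum is a proper subfield and the argument breaks, as you note), and both close via integrality of $M_0(\Gamma_1\cap\Gamma_2,\Qbar)$ over a bounded-denominators subring combined with Lemma \ref{lemma_integrally_closed_bounded_denominators}.
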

\begin{proof} 
The assumption that one of $\Gamma_1,\Gamma_2$ contains $-I$ allows us to identify $\Qbar(Y(\Gamma_1\cap\Gamma_2))$ with the function field of an irreducible component $Y$ of the fiber product
$$Y(\Gamma_1)_{\Qbar}\times_{Y(1)_{\Qbar}}Y(\Gamma_2)_{\Qbar}$$
(In general $\Qbar(Y(\Gamma_1\cap\Gamma_2))$ may be a quadratic extension of the function field of $Y$) It's clear that the global sections of $Y$ have bounded denominators at $c$, though in general $Y$ may not be smooth. Nonetheless, by \ref{cor_nice}, we may identify $M_0(\Gamma_1\cap\Gamma_2,\Qbar)$ with the integral closure of $\Qbar[j]$ inside $\Qbar(Y(\Gamma_1\cap\Gamma_2))$, and the result follows by noting that $B(\Qbar,q)$ is integrally closed.
\end{proof}

\begin{cor}\label{cor_primitive_bounded_generates_bounded} Suppose there is a primitive $f\in M_0(\Gamma,\Qbar)$. If $f$ has bounded denominators at a cusp $c$, then every modular function in $M_0(\Gamma,\Qbar)$ has bounded denominators at $c$.
\end{cor}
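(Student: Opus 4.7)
The plan is to combine primitivity of $f$ with two facts already at hand: (a) by Corollary \ref{cor_nice}, $M_0(\Gamma,\Qbar)$ is the integral closure of $\Qbar[j]$ inside the function field $\Qbar(Y(\Gamma))$; and (b) by Lemma \ref{lemma_integrally_closed_bounded_denominators}, the ring $B(\Qbar,q)$ of $q$-series with bounded denominators is integrally closed in its fraction field. The strategy is to use (a) to get an integrality relation for any $g\in M_0(\Gamma,\Qbar)$, push this relation to $q$-expansions, and then use (b) to conclude, with primitivity serving only to guarantee that $\phi_c(g)$ lives in the fraction field of $B(\Qbar,q)$.

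Concretely, fix $g\in M_0(\Gamma,\Qbar)$ and let $\phi_c:M_0(\Gamma,\Qbar)\hookrightarrow\Qbar\ls{q^{1/w_c}}$ denote $q$-expansion at $c$ (where $w_c$ is the width). By the paper's convention, primitivity of $f$ means that $f$ is a primitive element of the extension $\Qbar(Y(\Gamma))/\Qbar(j)$, so $\Qbar(Y(\Gamma))=\Qbar(j,f)$. Hence I can write $g=P(j,f)/Q(j,f)$ for some polynomials $P,Q\in\Qbar[X,Y]$ with $Q(j,f)\neq 0$. Since $j$ has $q$-expansion $j(q)\in\ZZ\ls{q}\subseteq B(\Qbar,q)$ at every cusp (the appropriate local uniformizer is also called $q$), and since $\phi_c(f)\in B(\Qbar,q)$ by hypothesis, both $\phi_c(P(j,f))$ and $\phi_c(Q(j,f))$ lie in $B(\Qbar,q)$. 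The injectivity of $\phi_c$ forces $\phi_c(Q(j,f))\neq 0$, so $\phi_c(g)$ belongs to the fraction field of $B(\Qbar,q)$.

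On the other hand, fact (a) tells us that $g$ satisfies a monic polynomial $g^n+a_{n-1}(j)g^{n-1}+\cdots+a_0(j)=0$ with $a_i\in\Qbar[j]$. Applying the ring homomorphism $\phi_c$ and using $\phi_c(a_i(j))=a_i(j(q))\in B(\Qbar,q)$, I obtain a monic integrality relation for $\phi_c(g)$ over $B(\Qbar,q)$. Fact (b) then yields $\phi_c(g)\in B(\Qbar,q)$, i.e., $g$ has bounded denominators at $c$, completing the proof.

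There is no serious obstacle here: the argument is just a formal compilation of earlier results. The only subtle point to verify is that primitivity of $f$ (as defined for weight-$0$ forms in the paper) coincides with $f$ being a primitive element of $\Qbar(Y(\Gamma))/\Qbar(j)$ in the field-theoretic sense, and this is explicitly built into the paper's definition.
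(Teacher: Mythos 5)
Your proof is correct and takes essentially the same route as the paper's: both deduce from primitivity (via the Galois correspondence, as noted parenthetically in the definition of $M_*(\Gamma,R)$) that $\Qbar(Y(\Gamma))=\Qbar(j,f)$, realize $M_0(\Gamma,\Qbar)$ as an integral closure whose $q$-expansion image lands in $B(\Qbar,q)$, and invoke Lemma \ref{lemma_integrally_closed_bounded_denominators}. The only presentational difference is that you separate the two ingredients ``$\phi_c(g)$ is integral over $B(\Qbar,q)$'' (unconditional, from integrality over $\Qbar[j]$) and ``$\phi_c(g)\in\Frac\;B(\Qbar,q)$'' (where the hypothesis on $f$ enters), while the paper folds both into the single assertion that $M_0(\Gamma,\Qbar)$ is the integral closure of $\Qbar[j][f]$; your split is worth keeping, since the fraction-field membership is not automatic (were $\Frac\;B(\Qbar,q)$ all of $\Qbar\ls{q^{1/\infty}}$, the argument would absurdly prove unconditional boundedness), and is precisely the point where primitivity is used.
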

\begin{proof} Let $\Gamma_0$ be a normal subgroup of $\SL_2(\ZZ)$ contained in $\Gamma$. Then the function field $\Qbar(Y(\Gamma_0))$ is Galois over $\Qbar(j)$ with Galois group $\PSL_2(\ZZ)/\pm\Gamma_0$. By the Galois correspondence there is a bijection between its intermediate fields and subgroups $\pm\Gamma'\le\PSL_2(\ZZ)$ containing $\pm\Gamma_0$, which are all accounted for by the function fields $\Qbar(Y(\Gamma'))$. Certainly $\Qbar(j,f)\subset\Qbar(Y(\Gamma))$, and by our assumption on $f$, $\Qbar(j,f)$ is not contained in a smaller field, so $\Qbar(j,f) = \Qbar(Y(\Gamma))$. This implies that $\Qbar(Y(\Gamma)) = \Frac\;\Qbar[j][f]$, so that $M_0(\Gamma,\Qbar)$ is the integral closure of $\Qbar[j][f]$. The result follows from the fact that $B(\Qbar,q)$ is integrally closed.
\end{proof}



While a $\Gamma$-structure $\alpha$ on $\Tate(q)/B(\Qbar,q)$ certainly induces a map $\Spec B(\Qbar,q)\rightarrow Y(\Gamma)$ sending $j$ to $j(q)$, because of the existence of twists of $\Tate(q)$, it may not be true that every map $\Spec B(\Qbar,q)\rightarrow Y(\Gamma)$ sending $j$ to $j(q)$ comes from a level structure on $\Tate(q)/B(\Qbar,q)$. However, just from degree considerations, we have

\begin{lemma}\label{lemma_structures_on_twists} Let $\Gamma$ be torsion-free, and suppose all $\Gamma$-structures are defined over $\Tate(q)/B(\Qbar,q)$, then any morphism $\Spec B(\Qbar,q)\rightarrow Y(\Gamma)_{\Qbar}$ sending $j$ to $j(q)$ coincides with the map induced by an isomorphism class of pairs of the form $(\Tate(q)/B(\Qbar,q),\alpha)$ where $\alpha$ is a $\Gamma$-structure on $\Tate(q)$ over $B(\Qbar,q)$.
\end{lemma}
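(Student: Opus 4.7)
The strategy is a counting argument comparing the number of lifts of $j(q)$ to $Y(\Gamma)_{\Qbar}$ with the number of isomorphism classes of pairs $(\Tate(q),\alpha)$. Set $\mu := [\SL_2(\ZZ):\pm\Gamma]$. Since $\Gamma$ is torsion-free we have $-I\notin\Gamma$, hence $[\SL_2(\ZZ):\Gamma] = 2\mu$. Under these hypotheses the generic automorphism group of $\yY(\Gamma)$ is trivial (the involution $[-1]$ on a generic elliptic curve does not fix any $\Gamma$-structure), and there are no elliptic points, so by Theorem \ref{thm_torsion_free_implies_representable} the stack $\yY(\Gamma)$ is representable and $Y(\Gamma)_{\Qbar}$ is a fine moduli scheme. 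The coarse map $Y(\Gamma)_{\Qbar}\to Y(1)_{\Qbar}$ is finite of generic degree $\mu$, and I plan to show that exactly $\mu$ lifts of $j(q)$ exist and that all of them already arise from pairs $(\Tate(q),\alpha)$.

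First I would bound the number of lifts from above. Because $j(q) = q^{-1} + 744 + O(q)$ and $j(q) - 1728 = q^{-1} - 984 + O(q)$ are both units in $B(\Qbar,q)$, the morphism $\Spec B(\Qbar,q)\to Y(1)_{\Qbar}$ given by $j\mapsto j(q)$ factors through the \'{e}tale locus of $Y(\Gamma)_{\Qbar}\to Y(1)_{\Qbar}$; hence its pullback is a finite \'{e}tale $B(\Qbar,q)$-algebra of rank $\mu$, and since $B(\Qbar,q)$ is a domain, $\Spec B(\Qbar,q)$ is connected, so this pullback admits at most $\mu$ sections. Next I would count isomorphism classes of pairs $(\Tate(q),\alpha)$. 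The hypothesis that all $\Gamma$-structures are defined over $B(\Qbar,q)$ means the finite \'{e}tale scheme of $\Gamma$-structures on $\Tate(q)/B(\Qbar,q)$ is completely split, supplying all $[\SL_2(\ZZ):\Gamma] = 2\mu$ $\Gamma$-structures (using orbit-stabilizer: the $\SL_2(\ZZ)$-orbit of any representative $[\varphi]$ has size $[\SL_2(\ZZ):\Gamma_{[\varphi]}]$ and decomposes into $[\SL_2(\ZZ):\Gamma]$ orbits under $\Gamma/\Gamma_{[\varphi]}$). Since $j(q)$ avoids $0$ and $1728$, one has $\Aut(\Tate(q)/B(\Qbar,q)) = \{\pm1\}$, and since $-I\notin\Gamma$ this $\pm 1$-action on the $\Gamma$-structures is free, leaving exactly $\mu$ isomorphism classes of pairs.

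By fineness of $Y(\Gamma)_{\Qbar}$, each such isomorphism class determines a distinct $B(\Qbar,q)$-point of $Y(\Gamma)_{\Qbar}$, all lying over $j(q)$. This injects a set of size $\mu$ into the set of lifts of $j(q)$, which was shown to have size at most $\mu$, so the two sets coincide. Consequently every lift of $j(q)$ arises from a unique isomorphism class of pairs $(\Tate(q),\alpha)$, as required. The main subtlety will be the fine-moduli step: one has to verify carefully that torsion-freeness of $\Gamma$, together with $-I\notin\Gamma$, kills \emph{both} the elliptic-point automorphisms and the generic $[-1]$-automorphism on objects of $\yY(\Gamma)$, so that $\yY(\Gamma) = Y(\Gamma)$ as a representable stack; once this is in place the rest of the argument is simply matching cardinalities.
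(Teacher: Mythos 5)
Your proposal is correct, and it takes essentially the same approach as the paper: the paper reduces Lemma \ref{lemma_structures_on_twists} to the same counting argument used in Lemma \ref{lemma_qexp}, namely that $Y(\Gamma)_{\Qbar}\rightarrow Y(1)_{\Qbar}$ has degree $\mu = [\SL_2(\ZZ):\pm\Gamma]$, is \'{e}tale away from $j = 0,1728$, and the $\Gamma$-structures on $\Tate(q)$ supply $\mu$ distinct lifts of $j(q)$, exhausting all possibilities. Your accounting of the $[\SL_2(\ZZ):\Gamma] = 2\mu$ sections of the scheme of $\Gamma$-structures, the free $\pm 1$-action (using $-I\notin\Gamma$ forced by torsion-freeness), and the appeal to representability via Theorem \ref{thm_torsion_free_implies_representable} all match the intent of the paper's (much terser) proof.
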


\begin{proof} The argument is identical to that of Lemma \ref{lemma_qexp}. It follows from the fact that $Y(\Gamma)_{\Qbar}$ is generically \'{e}tale over $Y(1)_{\Qbar}$, and under our hypotheses, the $\Gamma$-structures $\alpha$ on $\Tate(q)/B(\Qbar,q)$ exhaust all possible maps $\Spec B(\Qbar,q)\rightarrow Y(\Gamma)_{\Qbar}$ over $Y(1)_{\Qbar}$.
\end{proof}

\begin{cor}\label{cor_congruence_closure_torsion_free} Let $\Gamma$ be noncongruence such that $\Gamma^c$ is torsion-free. Then if there is a cusp at which all modular functions in $M_0(\Gamma,\Qbar)$ have bounded denominators, then there must exist a $\Gamma$-structure on $\Tate(q)/B(\Qbar,q)$.
\end{cor}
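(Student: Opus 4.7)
The strategy is to first convert the bounded-denominators hypothesis into a $\Gamma^c$-structure on $\Tate(q)$ via the congruence case (Lemma \ref{lemma_structures_on_twists} applied to $\Gamma^c$), and then lift that $\Gamma^c$-structure to a $\Gamma$-structure by exploiting fine moduli. The crucial preliminary observation is that since $\Gamma\le\Gamma^c$ and subgroups of torsion-free groups are torsion-free, $\Gamma$ itself is torsion-free. By Theorem \ref{thm_torsion_free_implies_representable} we may therefore fix compatible $(G,[\varphi],K)$-interpretations of $\Gamma^c$ and $\Gamma$ --- concretely, choose the $(G,[\varphi],K)$-datum so that $\Gamma_{[\varphi]}\lhd\Gamma$ (hence also $\Gamma_{[\varphi]}\lhd\Gamma^c$) --- for which both $Y(\Gamma)_{\Qbar}$ and $Y(\Gamma^c)_{\Qbar}$ are fine moduli schemes, with the forgetful map $Y(\Gamma)_{\Qbar}\to Y(\Gamma^c)_{\Qbar}$ induced by the tautological inclusion of groups.

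After possibly conjugating $\Gamma$ so that the distinguished cusp is $i\infty$, the bounded-denominators hypothesis combined with Corollary \ref{cor_nice} gives an inclusion $M_0(\Gamma,\Qbar)\hookrightarrow B(\Qbar,q)$, whence a morphism $\iota:\Spec B(\Qbar,q)\to Y(\Gamma)_{\Qbar}$ sending $j\mapsto j(q)$. Composing with the forgetful map yields $\iota^c:\Spec B(\Qbar,q)\to Y(\Gamma^c)_{\Qbar}$, which still sends $j\mapsto j(q)$. Since $\Gamma^c$ is congruence, it contains some $\Gamma(N)$; by Corollary \ref{cor_tate_level_N} every $\Gamma(N)$-structure on $\Tate(q)$ is already defined over $\ZZ\ps{q^{1/N}}\otimes_\ZZ\ZZ[1/N,\zeta_N]\subset B(\Qbar,q)$, so every $\Gamma^c$-structure on $\Tate(q)$ is defined over $B(\Qbar,q)$. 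The hypotheses of Lemma \ref{lemma_structures_on_twists} are thus met for $\Gamma^c$, and it produces a $\Gamma^c$-structure $\alpha^c$ on $\Tate(q)/B(\Qbar,q)$ whose classifying map is precisely $\iota^c$.

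Finally, the fine moduli property of $Y(\Gamma)_{\Qbar}$ means $\iota$ corresponds to a pair $(E,\alpha)$ with $E/B(\Qbar,q)$ an elliptic curve and $\alpha$ a $\Gamma$-structure. Forgetting down to a $\Gamma^c$-structure, the pair $(E,\alpha|_{\Gamma^c})$ classifies $\iota^c$, and so --- since $Y(\Gamma^c)_{\Qbar}$ is also a fine moduli scheme --- it must be isomorphic to $(\Tate(q),\alpha^c)$. Transporting $\alpha$ along the resulting isomorphism $E\xrightarrow{\sim}\Tate(q)$ produces the desired $\Gamma$-structure on $\Tate(q)/B(\Qbar,q)$. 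The only point requiring care is the compatibility between the chosen moduli interpretations of $\Gamma$ and $\Gamma^c$: one needs to ensure the $\Gamma$-to-$\Gamma^c$ forgetful map at the level of moduli problems genuinely realizes the forgetful map of level structures used when invoking Lemma \ref{lemma_structures_on_twists}; once $\Gamma_{[\varphi]}$ is arranged to be normal in both, this is automatic and the rest of the argument is a diagram chase in fine moduli schemes.
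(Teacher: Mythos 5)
Your proof is correct and follows essentially the same route as the paper: produce the map $\Spec B(\Qbar,q)\to Y(\Gamma)_{\Qbar}$ from the bounded-denominators hypothesis, push down to $Y(\Gamma^c)_{\Qbar}$, use Corollary~\ref{cor_tate_level_N} and Lemma~\ref{lemma_structures_on_twists} applied to the torsion-free congruence group $\Gamma^c$ to identify that composite with a $\Gamma^c$-structure on $\Tate(q)$ itself (rather than a twist), and then pull the $\Gamma$-structure back up through the fine moduli scheme. You make the final step more explicit than the paper does --- exhibiting the pair $(E,\alpha)$ classified by the lift and then identifying $E\cong\Tate(q)$ via the fine $\Gamma^c$-moduli --- which is a useful clarification, since the paper simply asserts ``so the morphism $u$ must correspond to a $\Gamma$-structure on $\Tate(q)/B(\Qbar,q)$'' without unpacking it. You also use a common $(G,[\varphi],K)$-interpretation for both $\Gamma$ and $\Gamma^c$, whereas the paper invokes the classical torsion-point interpretation for $\Gamma^c$; either choice is fine by the compatibility result (Corollary~\ref{cor_nice}).

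One small slip worth flagging: your parenthetical ``choose the $(G,[\varphi],K)$-datum so that $\Gamma_{[\varphi]}\lhd\Gamma$ (hence also $\Gamma_{[\varphi]}\lhd\Gamma^c$)'' has the implication backwards. Normality in the smaller group $\Gamma$ does not by itself imply normality in $\Gamma^c$. The correct order is to first arrange $\Gamma_{[\varphi]}\lhd\Gamma^c$ (e.g.\ by taking the Galois closure as in the proof of Corollary~\ref{cor_CSP}, which yields a $\Gamma_{[\varphi]}$ normal in $\SL_2(\ZZ)$), and then observe that since $\Gamma\le\Gamma^c$, a subgroup normal in $\Gamma^c$ is automatically normal in every intermediate group, so $\Gamma_{[\varphi]}\lhd\Gamma$ as well. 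This is a genuine error in the written justification, but not a gap in the argument: the compatibility you are after is achievable, just by choosing the normal subgroup in the larger group first.
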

\begin{proof} 
As a torsion-free congruence subgroup, $\Gamma^c$ must have level $N\ge 3$, and be a quotient of $Y(n)$ ($n\ge 3$) and hence all $\Gamma^c$ structures must be defined over $\Tate(q)/B(\Qbar,q)$ (c.f. Cor \ref{cor_tate_level_N}).

\sgap

We have a finite \'{e}tale cover $Y(\Gamma)_{\Qbar}\rightarrow Y(\Gamma^c)_{\Qbar}$. Taking $q$-expansions at some cusp of $Y(\Gamma)_{\Qbar}$ gives a map $\Spec B(\Qbar,q)\stackrel{u}{\longrightarrow}Y(\Gamma)_{\Qbar}$ which sends $j\mapsto j(q)$. Since the composition
$$\Spec B(\Qbar,q)\stackrel{u}{\longrightarrow}Y(\Gamma)_{\Qbar}\longrightarrow Y(\Gamma^c)_{\Qbar}$$
must also send $j\mapsto j(q)$, then by Lemma \ref{lemma_structures_on_twists}, and the fact that all $\Gamma^c$-structures on $\Tate(q)$ are defined over $B(\Qbar,q)$, the composition corresponds to a $\Gamma^c$-structure on $\Tate(q)/B(\Qbar,q)$, so the morphism $u$ must correspond to a $\Gamma$-structure on $\Tate(q)/B(\Qbar,q)$.
\end{proof}


\sgap

\begin{thm}[Main Equivalence]\label{thm_level_structures_and_UBD} Let $\Gamma\le\SL_2(\ZZ)$. The following are equivalent:
\begin{itemize}
\item[(a)] If there exists a $\Gamma$-structure on $\Tate(q)$ over $B(\Qbar,q)$, then for some cusp $c$, all modular functions in $M_0(\Gamma,\Qbar)$ have bounded denominators at $c$. 
\item[(b)] Conversely, if at some cusp $c$, all modular functions in $M_0(\Gamma,\Qbar)$ have bounded denominators at $c$, then there exists a $\pm\Gamma$-structure on $\Tate(q)/B(\Qbar,q)$.
\end{itemize}
In particular, (a) and (b) are equivalent if $-I\in\Gamma$, and in this case, by \ref{prop_reduce_to_weight_0} and \ref{cor_primitive_bounded_generates_bounded}, we find that $\UBD(\Gamma,2\ZZ)$ is equivalent to the nonexistence of noncongruence $(\Gamma = \pm\Gamma)$-structures on $\Tate(q)$ over $B(\Qbar,q)$.
\end{thm}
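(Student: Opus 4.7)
The proof splits into the two implications. Direction (a) is essentially a direct consequence of Proposition \ref{prop_level_structures_are_qexps} together with Corollary \ref{cor_nice}. Given a $\Gamma$-structure $\alpha$ on $\Tate(q)/B(\Qbar, q)$, Proposition \ref{prop_level_structures_are_qexps} provides a scheme morphism $\alpha_* : \Spec B(\Qbar, q) \to Y(\Gamma)_R$ that coincides with $q$-expansion at some cusp $c$; after enlarging the number field $K$ in the chosen $(G, [\varphi], K)$-interpretation, Corollary \ref{cor_nice} identifies $Y(\Gamma)_K$ with $\Spec M_0(\Gamma, K)$, so that the associated ring map is exactly $q$-expansion at $c$, and base-changing to $\Qbar$ yields the bounded-denominators embedding $M_0(\Gamma, \Qbar) \hookrightarrow B(\Qbar, q)$.

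For the converse (b), my plan starts from the fact that weight-$0$ modular functions are automatically invariant under $-I$, so $M_0(\Gamma, \Qbar) = M_0(\pm\Gamma, \Qbar)$. By the ``in particular'' statement of Corollary \ref{cor_nice} applied to $\pm\Gamma$, there is a large enough number field $K'$ with $Y(\pm\Gamma)_{K'} = \Spec M_0(\pm\Gamma, K')$, and the bounded-denominators hypothesis then induces a scheme morphism $u: \Spec B(\Qbar, q) \to Y(\pm\Gamma)_{K'}$ whose composition to $Y(1)$ is $j \mapsto j(q)$. The task is then to lift $u$ to a $\pm\Gamma$-structure on $\Tate(q)$ itself.

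The central step will be to effect this lift by comparing two finite \'etale schemes over $\Spec B(\Qbar, q)$. Let $S := \yY(\pm\Gamma) \times_{\mM(1)} \Spec B(\Qbar, q)$ (pulled back along $\Tate(q)$) and $T := Y(\pm\Gamma) \times_{Y(1)} \Spec B(\Qbar, q)$ (pulled back along $j(q)$); the morphism $\yY(\pm\Gamma) \to \mM(1)$ is representable because it is inertia-preserving (the generic $\{\pm I\}$-inertia of $\mM(1)$ matches that of $\yY(\pm\Gamma)$), so $S$ is a scheme and its sections are precisely the $\pm\Gamma$-structures on $\Tate(q)/B(\Qbar, q)$. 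The commutative coarse-moduli square gives a natural map $S \to T$, finite \'etale between schemes of the same degree. Since $j(q)$ and $j(q) - 1728$ are units in $B(\Qbar, q)$, the image of $\Spec B(\Qbar, q)$ avoids the elliptic-point locus $j = 0, 1728$; on the remaining generic locus the only extra automorphism is $-I$, and it acts trivially on $\pm\Gamma$-structures by construction (we have quotiented $\mM(G)([\varphi])$ by $\pm\Gamma/\Gamma_{[\varphi]}$, which contains the class of $-I$), so $S \to T$ is bijective on geometric fibers and hence an isomorphism. The section of $T$ furnished by $u$ then lifts via this isomorphism to a section of $S$, yielding the desired $\pm\Gamma$-structure. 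The hard part will be establishing this isomorphism $S \cong T$ carefully, and it is exactly where the passage from $\Gamma$ to $\pm\Gamma$ becomes essential: if $-I \notin \Gamma$, then $-I$ would act nontrivially on $\Gamma$-structures, the analogous map would fail to be injective on geometric fibers, and a genuine quadratic-twist obstruction would prevent descent from a twist of $\Tate(q)$ to $\Tate(q)$ itself.
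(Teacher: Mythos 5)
Your treatment of direction (a) matches the paper's: both pass from a $\Gamma$-structure on $\Tate(q)/B(\Qbar,q)$ to a morphism $\Spec B(\Qbar,q)\to Y(\Gamma)_{\Qbar}$ over $j\mapsto j(q)$, invoke Proposition~\ref{prop_level_structures_are_qexps} to identify the ring map as $q$-expansion at some cusp, and then use Corollary~\ref{cor_nice} to recognize the target as $\Spec M_0(\Gamma,\Qbar)$.

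For (b) you take a genuinely different route. The paper intersects $\pm\Gamma$ with an auxiliary torsion-free congruence subgroup $\Gamma'$, uses Corollary~\ref{cor_bounded_generates_bounded} (integral closedness of $B(\Qbar,q)$) to push bounded denominators down to $M_0(\pm\Gamma\cap\Gamma',\Qbar)$, then lands in the \emph{fine} moduli scheme $Y(\pm\Gamma\cap\Gamma')$ where it can apply Corollary~\ref{cor_congruence_closure_torsion_free} — whose engine is Lemma~\ref{lemma_structures_on_twists}, the counting of maps that rules out twist obstructions — and finally projects the resulting $\pm\Gamma\cap\Gamma'$-structure to a $\pm\Gamma$-structure. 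You instead compare the stacky pullback $S=\yY(\pm\Gamma)\times_{\mM(1)}\Spec B(\Qbar,q)$ (along $\Tate(q)$) with the coarse pullback $T=Y(\pm\Gamma)\times_{Y(1)}\Spec B(\Qbar,q)$ (along $j(q)$), and show directly that the natural map $S\to T$ is an isomorphism, then lift the $q$-expansion section of $T$ through it.

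Your argument is correct, but two points deserve sharper wording. Representability of $\yY(\pm\Gamma)\to\mM(1)$ holds simply because it is a forget-structure morphism: the inertia map $\Aut_{\yY(\pm\Gamma)}(E,\alpha)\hookrightarrow\Aut(E)$ is automatically injective, regardless of whether the generic $\{\pm I\}$-inertia agrees. Where genuine inertia agreement matters is in the fiber comparison $S\cong T$: over the locus $j\neq 0,1728$ (which contains the image of $\Spec B(\Qbar,q)$ since $j(q)$ and $j(q)-1728$ are units), $\Aut(E)=\{\pm1\}$, and you must argue that $[-1]^*$ fixes every $\pm\Gamma$-structure — equivalently, that the $\Aut(E)$-action agrees with the $\SL_2(\ZZ)$-action on level structures via $-I\in\pm\Gamma$. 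Granting that compatibility, the coarse-moduli square
$\yY(\pm\Gamma)\to Y(\pm\Gamma)$, $\mM(1)\to Y(1)$
becomes $2$-Cartesian over $j\neq 0,1728$, which is precisely the content of your $S\cong T$. Finally, your use of the ``in particular'' clause of Corollary~\ref{cor_nice} for $\pm\Gamma$ over $\Qbar$ is legitimate, but it silently relies on the existence of a $\pm\Gamma$-structure on $\Tate(q)/\Qbar\ls{q^{1/\infty}}$ — automatic because $\Qbar\ls{q^{1/\infty}}$ trivializes every finite \'{e}tale cover of $\Qbar\ls{q}$, but worth flagging since it is what makes the unconditional form of the corollary valid. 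In short: the paper trades stacky reasoning for an auxiliary torsion-free congruence cover and the twist-counting lemma; you trade those auxiliaries for a gerbe-compatibility argument. Both are complete; yours is shorter once the inertia comparison and the Cartesianness of the coarse-moduli square are nailed down.
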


\begin{proof}  We fix a $(G,[\varphi],\Qbar)$-interpretation of $\Gamma$. Suppose there exists a $\Gamma$-structure $\alpha$ defined on $\Tate(q)/B(\Qbar,q)$, then this corresponds to a morphism $\alpha_* : \Spec B(\Qbar,q)\rightarrow Y(\Gamma)_{\Qbar}$ sending $j\mapsto j(q)$, 
which at the level of rings is
$$M_0(\Gamma,\Qbar)\stackrel{\alpha^*}{\longrightarrow}B(\Qbar,q)$$

sending $j\mapsto j(q)$. By Prop \ref{prop_level_structures_are_qexps}, this map is actually just a $q$-expansion at some cusp $c$, so this shows that all modular functions for $\Gamma$ have bounded denominators at $c$.

\gap

Conversely, suppose at some cusp $c$, all modular functions in $M_0(\Gamma,\Qbar)$ has bounded denominators, then the same is true of all modular functions in $M_0(\pm\Gamma,\Qbar)$. 
Let $\Gamma'$ be any torsion-free congruence subgroup, then we know that $M_0(\Gamma',\Qbar)$ also has bounded denominators at $c$, and hence by Lemma \ref{cor_bounded_generates_bounded}, $M_0(\pm\Gamma\cap\Gamma',\Qbar)$ has bounded denominators. We may consider the product
$$\yY(\pm\Gamma)\times_{\mM(1)}\yY(\Gamma')\qquad\text{as stacks over $\Qbar$}$$
Let $\yY(\pm\Gamma\cap\Gamma')_{\Qbar}$ be a connected component of the product, then since $\pm\Gamma\cap\Gamma'$ is torsion-free, it is represented by the modular curve $Y(\pm\Gamma\cap\Gamma')_{\Qbar} = \Spec M_0(\pm\Gamma\cap\Gamma',\Qbar)$, which is a fine moduli scheme for elliptic curves equipped with a $\pm\Gamma\cap\Gamma'$-structure, where a $\pm\Gamma\cap\Gamma'$-structure is equivalent to the data of both a $\pm\Gamma$-structure and a $\Gamma'$-structure (see the proof of Corollary \ref{cor_representable_cofinal} if $\Gamma' = \Gamma_1(p)$). By \ref{cor_congruence_closure_torsion_free}, we find that $\Tate(q)/B(\Qbar,q)$ admits a $\pm\Gamma\cap\Gamma'$-structure, and hence also a $\pm\Gamma$-structure.
\end{proof}

\begin{remark} Note that by Remark \ref{remark_pcong_does_not_imply_cong}, there exist noncongruence subgroups $\Gamma$ for which $\pm\Gamma$ is congruence. This is why the condition $-I\in\Gamma$ is necessary to produce an equivalence between $\UBD(\Gamma,2\ZZ)$ and the nonexistence of noncongruence $\Gamma$-structures on $\Tate(q)/B(\Qbar,q)$. Roughly speaking, the existence of $\Gamma$ structures on $\Tate(q)/B(\Qbar,q)$ is governed by the property ``all modular functions for $\Gamma$ has bounded denominators'', and since $M_0(\Gamma,\Qbar) = M_0(\pm\Gamma,\Qbar)$, this property cannot distinguish between $\Gamma$ and $\pm\Gamma$.

\end{remark}




\begin{cor} If the unbounded denominators conjecture is true, then there does not exist any $G$-Galois cover of $\Tate(q)/B(\Qbar,q)$ unramified away from the identity, where $G$ is any nonabelian simple group of order $\le 29120$ or an extension of such a group.
\end{cor}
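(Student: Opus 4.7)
The approach is to translate the existence of such a $G$-Galois cover into the existence of a noncongruence $\Gamma$-structure on $\Tate(q)$ over $B(\Qbar,q)$, and then contradict the Unbounded Denominators Conjecture by means of Theorem \ref{thm_level_structures_and_UBD}.

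Given a $G$-Galois cover $X^\circ\to\Tate(q)^\circ$ over $B(\Qbar,q)$ unramified away from the origin, I would first reduce to the case that $X^\circ$ is geometrically connected: base-changing to $\Qbar\ls{q^{1/\infty}}$ and restricting to one geometric component produces a geometrically connected $G'$-torsor, where $G'\le G$ is the stabilizer of that component. In the simple case one has $G'\in\{1,G\}$, and the trivial case can be discarded; in the extension case, $G'$ surjects onto the required simple quotient of $G$, so $G'$ is itself an extension of the same simple group. After replacing $G'$ by $G'/Z(G')$ if necessary, Proposition \ref{prop_Teichmuller_moduli_2}(3) converts this torsor into a $G'$-structure on $\Tate(q)/B(\Qbar,q)$, equivalently a morphism $\Spec B(\Qbar,q)\to \yY([\varphi])\subset \mM(G')$ for some surjection $\varphi\colon F_2\twoheadrightarrow G'$.

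Next I would invoke the paper's computational results to see that $\Gamma_{[\varphi]}$, together with its sign-enlargement $\pm\Gamma_{[\varphi]}$, is noncongruence. For nonabelian simple $G'$ this is recorded in \S\ref{ss_three_simple_groups} and Appendix \ref{section_23_FSGs}; for extensions of such simple groups it is inherited via the stability of the noncongruence property under group extensions (see the discussion preceding Conjecture \ref{conj_nonsolvable_implies_noncongruence}), applied to the induced structure on the simple quotient. The $\Gamma_{[\varphi]}$-structure on $\Tate(q)/B(\Qbar,q)$ then induces a $\pm\Gamma_{[\varphi]}$-structure, and since $\pm\Gamma_{[\varphi]}$ is noncongruence and contains $-I$, Theorem \ref{thm_level_structures_and_UBD} applied to $\pm\Gamma_{[\varphi]}$ yields the desired contradiction with the Unbounded Denominators Conjecture.

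The main obstacles lie in the reductions of the first paragraph --- making the torsor geometrically connected without losing control of the Galois group, and handling the center when forming the $G$-structure --- together with the verification that $\pm\Gamma_{[\varphi]}$ (and not merely $\Gamma_{[\varphi]}$) is noncongruence. This last point is essential, because the form of Theorem \ref{thm_level_structures_and_UBD} that applies here requires $-I\in\Gamma$; if $\pm\Gamma_{[\varphi]}$ happened to be congruence, then modular functions for $\Gamma_{[\varphi]}$ would coincide with modular functions for a congruence subgroup and would automatically have bounded denominators, so the contradiction would evaporate and one would need to argue separately at odd weights via Proposition \ref{prop_reduce_to_weight_0}.
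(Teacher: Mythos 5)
Your overall route matches the paper's: pass from a $G$-Galois cover of $\Tate(q)/B(\Qbar,q)$ to a $\Gamma_{[\varphi]}$-structure on $\Tate(q)/B(\Qbar,q)$, appeal to the computational fact that the relevant stabilizers are noncongruence (with extension-stability for the extension case), and derive bounded denominators for noncongruence modular functions via Theorem \ref{thm_level_structures_and_UBD}, contradicting UBD. The paper's own proof is a two-sentence version of exactly this.

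Two remarks on the details you add. First, there is a genuine gap in your first paragraph: in the extension case you assert that the geometric stabilizer $G'$ of a component ``surjects onto the required simple quotient of $G$.'' This is not true: $G'$ is a normal subgroup of $G$ (it is the image of the geometric $\pi_1$), and for $1\to N\to G\to Q\to 1$ a perfectly good normal $G'$ is a subgroup of $N$, whose image in $Q$ is trivial. The correct reduction is to quotient \emph{first} via $f_* : \mM(G)\to\mM(Q)$ (Proposition \ref{prop_descent_of_groups}), obtaining a $Q$-Galois cover of $\Tate(q)$, and only then invoke simplicity of $Q$ so that the geometric image is $1$ or $Q$. (In any case the paper is implicitly reading ``$G$-Galois cover'' as ``$G$-structure,'' i.e.\ geometrically connected, so this reduction is really only a matter of making the hypothesis explicit; the case of trivial geometric monodromy is a base extension, which cannot be excluded --- indeed Harbater's theorem forces its existence --- and is not what the corollary is about.)

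Second, your observation about $\pm\Gamma_{[\varphi]}$ is sharp and is something the paper's terse proof does not spell out. Since $M_0(\Gamma,\Qbar)=M_0(\pm\Gamma,\Qbar)$, the bounded-denominators conclusion of Theorem \ref{thm_level_structures_and_UBD}(a) only contradicts UBD if $\pm\Gamma_{[\varphi]}$ (not merely $\Gamma_{[\varphi]}$) is noncongruence; by Remark \ref{remark_pcong_does_not_imply_cong} these really are different conditions. The paper's argument works because the computational ``c/nc'' verification is based on the Wohlfart/Kiming--Schütt--Verrill criterion and the index comparison $e<d$ of diagram \eqref{cd_schmithusen}, both of which are computed with respect to $\pm\Gamma$; and since $\Gamma_{[\varphi]}\subseteq\Gamma_{[f\circ\varphi]}$ forces $\pm\Gamma_{[\varphi]}\subseteq\pm\Gamma_{[f\circ\varphi]}$, the noncongruence of $\pm\Gamma$ is also inherited by extensions. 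You are right that this needs to be said, and your instinct to check it is exactly the care the paper's proof omits.
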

\begin{proof} Computational data show that all nonabelian simple groups of order $\le 29120$ are purely noncongruence in the sense of Definition \ref{def_purity}. Thus, if $G$-structures exist, then by Theorem \ref{thm_level_structures_and_UBD}, there exist genuinely noncongruence modular functions with bounded denominators.
\end{proof}

If we assume Conjecture \ref{conj_nonsolvable_implies_noncongruence}, which says that every nonsolvable group $G$ is purely noncongruence, then the following would be a consequence of UBD:
\begin{conj}[Geometric UBD, Version A]\label{conj_geometric_UBD_A}
$$\text{Every Galois cover of $\Tate(q)/B(\Qbar,q)$ unramified away from $O$ is solvable.}$$
\end{conj}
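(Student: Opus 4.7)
The plan is to derive this conjecture by contradiction, combining the reformulation of UBD in Theorem \ref{thm_UBDB=UBD} with Conjecture \ref{conj_nonsolvable_implies_noncongruence} to exclude nonsolvable finite geometric Galois groups.

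First I would translate a geometrically connected Galois cover of $\Tate(q)^\circ/B(\Qbar,q)$ unramified away from $O$ into a $G$-structure. By Theorem \ref{finite_generation}, the geometric fundamental group of $\Tate(q)^\circ$ is $\widehat{F_2}$, so any such cover with Galois group $G$ corresponds, via Proposition \ref{prop_Teichmuller_moduli_2}(3), to a $G$-structure $[\varphi]$ on $\Tate(q)/B(\Qbar,q)$, with $G$ finite and $2$-generated (and of order invertible on $B(\Qbar,q)$, which is automatic in characteristic zero). Since $\mM(G)$ is finite \'etale over $\mM(1)_{\ZZ[1/N]}$ with geometric fiber $\Hom^\surext(F_2,G)$ and monodromy given by the natural $\SL_2(\ZZ)$-action, such a $G$-structure exists on $\Tate(q)/B(\Qbar,q)$ if and only if the image of the monodromy morphism $\Tate(q)_* : \pi_1(\Spec B(\Qbar,q)) \to \widehat{\SL_2(\ZZ)}$ is contained in $\ol{\Gamma_{[\varphi]}}$.

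Next I would apply Theorem \ref{thm_UBDB=UBD}: assuming UBD, the image of $\Tate(q)_*$ equals the congruence kernel $K := \bigcap_{N \ge 1}\ol{\Gamma(N)}$. Since a finite-index subgroup $\Gamma \le \SL_2(\ZZ)$ is congruence if and only if $\ol{\Gamma}$ contains $K$ (as $\ol{\Gamma}$ then arises as the preimage of an open subgroup of $\SL_2(\widehat{\ZZ})$ under $\widehat{\SL_2(\ZZ)}\twoheadrightarrow\SL_2(\widehat{\ZZ})$), it follows that $\Gamma_{[\varphi]}$ must be congruence under UBD. But if $G$ were nonsolvable, then Conjecture \ref{conj_nonsolvable_implies_noncongruence} would assert that $\Gamma_{[\varphi]}$ is noncongruence for every surjection $\varphi : F_2 \twoheadrightarrow G$, directly contradicting the preceding step; hence $G$ must be solvable.

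The principal obstacle is that Conjecture \ref{conj_nonsolvable_implies_noncongruence} itself remains open in full generality --- the author establishes it only for extensions of $S_n$, $A_n$, $\PSL_2(\FF_p)$, and minimal finite simple nonabelian groups via Corollary \ref{cor_noncongruence_summary} --- so the plan yields only a conditional derivation of the geometric UBD conjecture relative to Conjecture \ref{conj_nonsolvable_implies_noncongruence}. A secondary subtlety lies in the word ``Galois cover'': the argument handles geometrically connected covers directly, while a general connected Galois cover has Galois group an extension of an arithmetic piece (a quotient of $\pi_1(\Spec B(\Qbar,q))$) by a geometric piece $G_0$, and Harbater's inverse Galois theorem over $\Frac B(\Qbar,q)$ permits arbitrary arithmetic contributions. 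The conjecture should therefore be read as a statement about the geometric Galois group $G_0$ --- equivalently, as a statement about geometrically connected covers --- in which case the plan above completes the argument.
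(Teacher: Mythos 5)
Your proposal is correct and follows essentially the same logic the paper intends: the statement is derived conditionally from UBD together with Conjecture~\ref{conj_nonsolvable_implies_noncongruence}, by identifying nonsolvable Galois covers with $G$-structures whose stabilizers $\Gamma_{[\varphi]}$ would (by the conjecture) be noncongruence, and then invoking the equivalence between UBD and the nonexistence of noncongruence $\Gamma$-structures on $\Tate(q)/B(\Qbar,q)$. The one cosmetic difference is that you route the argument through Theorem~\ref{thm_UBDB=UBD} (image of $\pi_1(\Spec B(\Qbar,q))$ equals the congruence kernel), which in the paper's linear order appears \emph{after} the conjecture, whereas the paper's introductory discussion routes directly through Theorem~\ref{thm_level_structures_and_UBD}; since Theorem~\ref{thm_UBDB=UBD} is proved from Theorem~\ref{thm_level_structures_and_UBD} without appealing to the geometric conjecture, there is no circularity and the two routes are effectively the same argument. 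Your observation about the word ``Galois cover'' is also apt: a literal reading would allow pullbacks of covers of $\Spec B(\Qbar,q)$ (with arbitrary Galois group by Harbater's theorem), and the conjecture must be understood as constraining the geometric Galois group, i.e.\ geometrically connected covers --- this is consistent with the paper's gloss that ``nonsolvable Galois covers of $\Tate(q)$ must have defining coefficients which are $q$-series with unbounded denominators,'' and it is worth making explicit.
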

Essentially, this says that nonsolvable Galois covers of $\Tate(q)$ must have defining coefficients which are $q$-series with unbounded denominators. Conversely, Theorem \ref{thm_level_structures_and_UBD} tells us that if every Galois cover of $\Tate(q)/B(\Qbar,q)$ is solvable, then for any nonsolvable $G$-structure $[\varphi]$, no modular form primitive for $\Gamma_{[\varphi]}$ can have bounded denominators.

\sgap

\sgap

To get a geometric statement equivalent to UBD which requires no conjectures, we may consider another viewpoint. In light of Theorem \ref{thm_level_structures_and_UBD}, we see that UBD is equivalent to the nonexistence of noncongruence level structures on $\Tate(q)/B(\Qbar,q)$. We can express this as follows.

\gap

Let $t_* : \Spec B(\Qbar,q)\rightarrow\mM(1)_{\Qbar}$ be the morphism corresponding to the Tate curve $\Tate(q)$ over $B(\Qbar,q)$. Note that a $\Gamma$-structure is defined over $\Tate(q)/B(\Qbar,q)$ precisely if
$$T := t_*(\pi_1(\Spec B(\Qbar,q))) \subseteq\gamma\ol{\Gamma}\gamma^{-1}$$
for some $\gamma\in\SL_2(\ZZ)$, where $\ol{\Gamma}$ denotes the closure of $\Gamma$ inside $\widehat{\SL_2(\ZZ)}$. On the other hand, we know that every $\Gamma(n)$-structure is defined on $\Tate(q)/B(\Qbar,q)$, so we must have

\begin{equation}\label{eq_UBD_B}
T \subseteq\bigcap_{n\ge 1}\ol{\Gamma(n)}
\end{equation}
This last group is precisely the \emph{congruence kernel} :
$$ \bigcap_{n\ge 1}\ol{\Gamma(n)} = \ker\left(\widehat{\SL_2(\ZZ)}\longrightarrow\SL_2(\widehat{\ZZ})\right)$$
\begin{remark*} It is a result of Mel'nikov \cite{Mel76} that this kernel is isomorphic to $\widehat{F_\omega}$, the free profinite group on countably many generators.
\end{remark*}

Note that we have an exact sequence:

\begin{equation}\label{eq_congruence_kernel}
1\longrightarrow\bigcap_{n\ge 1}\ol{\Gamma(n)}\longrightarrow \underbrace{\widehat{\SL_2(\ZZ)}}_{\pi_1(\mM(1)_{\Qbar})}\longrightarrow\SL_2(\widehat{\ZZ})\longrightarrow 1
\end{equation}

\sgap

Returning to our discussion of UBD, note that as the continuous image of one profinite group inside another, $T$ must be closed in $\widehat{\SL_2(\ZZ)}$. Since closed subgroups of profinite groups are intersections of open subgroups, the inclusion of (\ref{eq_UBD_B}) is strict if and only if there exists an open subgroup $\ol{\Gamma}$ of $\widehat{\SL_2(\ZZ)}$ containing $T$, but not containing $\bigcap_{n\ge 1}\ol{\Gamma(n)}$, but such a subgroup must correspond to a noncongruence subgroup $\Gamma$ of $\SL_2(\ZZ)$. Thus, the above discussion shows that a noncongruence $\Gamma$-structure on $\Tate(q)/B(\Qbar,q)$ exists if and only if $T = \ker\left(\widehat{\SL_2(\ZZ)}\longrightarrow\SL_2(\widehat{\ZZ})\right)$. From this, we get another interpretation of UBD:

\begin{conj}[Geometric UBD, Version B]\label{conj_geometric_UBD_B} In the above notation, the following sequence is exact
\begin{equation*}
\pi_1(\Spec B(\Qbar,q))\stackrel{t_*}{\longrightarrow}\widehat{\SL_2(\ZZ)}\longrightarrow\SL_2(\widehat{\ZZ})\longrightarrow 1
\end{equation*}
\end{conj}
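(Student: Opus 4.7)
The plan is to reduce the exactness of the displayed sequence to the single equality $T = K$, where $T := \Tate(q)_*(\pi_1(\Spec B(\Qbar,q)))$ and $K := \bigcap_{n\ge 1}\ol{\Gamma(n)}$ is the congruence kernel, and then to identify this with UBD via Theorem \ref{thm_level_structures_and_UBD}. The surjectivity $\widehat{\SL_2(\ZZ)}\twoheadrightarrow\SL_2(\widehat{\ZZ})$ is automatic from the classical sequence (\ref{eq_congruence_kernel}), so exactness of the full three-term sequence is equivalent to $T = K$. The inclusion $T\subseteq K$ is immediate from Corollary \ref{cor_tate_level_N}: since every $\Gamma(n)$-structure is defined on $\Tate(q)$ over $B(\Qbar,q)$, the map $\Tate(q)_*$ factors through $\ol{\Gamma(n)}$ for every $n$.

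I next translate $T = K$ into the language of finite-index subgroups. Since $T$ is closed in the profinite group $\widehat{\SL_2(\ZZ)}$, it is the intersection of the open subgroups containing it, so $T = K$ holds if and only if every open $\ol{\Gamma}\supseteq T$ also contains $K$, i.e., every finite-index $\Gamma\le\SL_2(\ZZ)$ with $T\subseteq\ol{\Gamma}$ is congruence. By Corollary \ref{cor_G_to_Gamma} and the Galois correspondence, the condition $T\subseteq\ol{\Gamma}$ is precisely the assertion that the finite \'etale cover of $\mM(1)_{\Qbar}$ classifying $\Gamma$-structures splits over $\Spec B(\Qbar,q)$; for any $(G,[\varphi],K_0)$-interpretation of $\Gamma$ this is the same as saying that $\Tate(q)/B(\Qbar,q)$ admits a $\Gamma$-structure.

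Combining this with Theorem \ref{thm_level_structures_and_UBD}, Corollary \ref{cor_primitive_bounded_generates_bounded}, and the weight-$0$ reduction of Proposition \ref{prop_reduce_to_weight_0} (together with the fact that weight-$0$ primitivity forces $-I\in\Gamma$ since $M_0(\Gamma,\Qbar) = M_0(\pm\Gamma,\Qbar)$), UBD becomes equivalent to the statement that no noncongruence $\Gamma\ni -I$ admits a $\Gamma$-structure on $\Tate(q)/B(\Qbar,q)$. Feeding this back into the translation above, the forward direction (UBD $\Rightarrow$ exactness) is straightforward: if UBD fails, we get a noncongruence $\Gamma\ni -I$ with $T\subseteq\ol{\Gamma}$, and then $\ol{\Gamma}\not\supseteq K$ forces $T\ne K$.

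The main obstacle is the reverse direction, because paragraph two only produces a noncongruence $\Gamma$ containing $T$ which \emph{a priori} need not contain $-I$ (cf.\ Remark \ref{remark_pcong_does_not_imply_cong}). To extract a noncongruence $\Gamma$ with $-I\in\Gamma$, I would introduce the closed subgroup $T' := \ol{T\cdot\langle -I\rangle}$, which since $-I$ is central coincides set-theoretically with $T\cup (-T)$. Because $-I\not\equiv I \bmod 3$ we have $-I\notin K$, so the cosets $K$ and $-K$ of $K$ inside $\langle K,-I\rangle$ are disjoint; hence $T'\cap K = T$, and therefore $K\not\subseteq T'$ whenever $T\ne K$. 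On the other hand $T'$ is by construction the intersection of all open $\ol{\Gamma}\subseteq\widehat{\SL_2(\ZZ)}$ with $T\subseteq\ol{\Gamma}$ and $-I\in\Gamma$; if each such $\Gamma$ were congruence then every such $\ol{\Gamma}$ would contain $K$, forcing $K\subseteq T'$, a contradiction. Thus whenever $T\ne K$ there always exists a noncongruence $\Gamma\ni -I$ with $T\subseteq\ol{\Gamma}$, which yields the desired counterexample to UBD and closes the equivalence.
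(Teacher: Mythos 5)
Your proposal is correct, and its skeleton is the same as the paper's: exactness is reduced to the single equality $T = K$ with $K$ the congruence kernel of (\ref{eq_congruence_kernel}), the inclusion $T\subseteq K$ comes from Corollary \ref{cor_tate_level_N}, the equality is tested against open subgroups (closed $=$ intersection of opens, and $\ol{\Gamma}\supseteq K$ exactly when $\Gamma$ is congruence), and the resulting statement ``no noncongruence $\Gamma$-structure on $\Tate(q)/B(\Qbar,q)$'' is matched with UBD via Theorem \ref{thm_level_structures_and_UBD}, Corollary \ref{cor_primitive_bounded_generates_bounded} and Proposition \ref{prop_reduce_to_weight_0}. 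Where you go beyond the paper's two-line proof of Theorem \ref{thm_UBDB=UBD} is in the direction ``not exact $\Rightarrow$ UBD fails'': the paper leaves implicit the possibility (cf.\ Remark \ref{remark_pcong_does_not_imply_cong}) that the noncongruence $\Gamma$ witnessing $T\subsetneq K$ fails to contain $-I$ while $\pm\Gamma$ is congruence, in which case Theorem \ref{thm_level_structures_and_UBD}(a) alone produces only bounded-denominator \emph{congruence} functions. Your device $T' := \ol{T\cdot\langle -I\rangle} = T\cup(-I)T$ fixes this cleanly: since $-I\notin K$ and $T\subseteq K$ one gets $T'\cap K = T$, so $T\ne K$ forces $K\not\subseteq T'$, and writing $T'$ as the intersection of the open $\ol{\Gamma}$ with $T\subseteq\ol{\Gamma}$ and $-I\in\Gamma$ yields a \emph{noncongruence} such $\Gamma$ containing $-I$, whence a primitive weight-$0$ counterexample to UBD. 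All the small verifications ($-I\notin K$ since $-I\not\equiv I\bmod 3$; weight-$0$ primitivity forcing $-I\in\Gamma$; conjugation-invariance of noncongruence in the forward direction) check out, so your argument is a valid, slightly more careful rendering of the paper's proof rather than a different one.
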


As promised, we have:

\begin{thm}\label{thm_UBDB=UBD} Conjecture \ref{conj_geometric_UBD_B} is equivalent to the UBD Conjecture (\ref{conj_UBD}).
\end{thm}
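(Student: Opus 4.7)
The plan is to reduce UBD to a statement purely about the closed subgroup $T := t_*(\pi_1(\Spec B(\Qbar,q))) \subseteq \widehat{\SL_2(\ZZ)}$, and then show the resulting statement is equivalent to $T$ equaling the congruence kernel $K := \ker(\widehat{\SL_2(\ZZ)} \to \SL_2(\widehat{\ZZ}))$. Combined with the containment $T \subseteq K$ (which is already noted in the excerpt via Cor \ref{cor_tate_level_N}, since all $\Gamma(n)$-structures are defined on $\Tate(q)/B(\Qbar,q)$), this is exactly exactness of the sequence in Conj \ref{conj_geometric_UBD_B}.

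First I would reduce UBD to a statement about modular functions on groups containing $-I$. By Prop \ref{prop_reduce_to_weight_0}, the full UBD conjecture follows from $\UBD(\Gamma, 0)$ for all finite index $\Gamma \le \SL_2(\ZZ)$. Since $-I$ acts trivially on $\hH$, we have $M_0(\Gamma, \Qbar) = M_0(\pm\Gamma, \Qbar)$, so $\UBD(\Gamma, 0)$ is vacuous unless $\Gamma = \pm\Gamma$. Combining Cor \ref{cor_primitive_bounded_generates_bounded} with the weight-$0$ instance of Thm \ref{thm_level_structures_and_UBD} gives, for such $\Gamma$, that $\UBD(\Gamma, 0)$ is equivalent to the nonexistence of a $\Gamma$-structure on $\Tate(q)/B(\Qbar, q)$. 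Translating into subgroups: a $\Gamma$-structure on $\Tate(q)$ over $B(\Qbar, q)$ exists iff $T \subseteq \gamma\ol{\Gamma}\gamma^{-1}$ for some $\gamma \in \widehat{\SL_2(\ZZ)}$, and since containing $-I$ (a central element) and not containing the normal subgroup $K$ are both invariant under conjugation, the open subgroups of this form are exactly the open $U \le \widehat{\SL_2(\ZZ)}$ with $-I \in U$ and $K \not\subseteq U$. Thus UBD is equivalent to the assertion $(\star)$: for every open $U \le \widehat{\SL_2(\ZZ)}$ with $-I \in U$ and $K \not\subseteq U$, we have $T \not\subseteq U$.

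Next I would prove $(\star)$ is equivalent to $T = K$. The forward direction is immediate. For the converse, I argue the contrapositive: assuming $T \subsetneq K$, I will produce an open $U$ violating $(\star)$. The key observation is that $-I \notin K$, since the image of $-I$ in $\SL_2(\ZZ/n\ZZ)$ is nontrivial for any $n \ge 3$. Set $T' := T \cup (-I)T$ and $K' := K \cup (-I)K$; these are closed subgroups of $\widehat{\SL_2(\ZZ)}$ containing $-I$, with $T' \subseteq K'$. The main step is the claim that $T \subsetneq K$ implies $T' \subsetneq K'$. If $-I \in T$ then $T' = T \subsetneq K \subseteq K'$. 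If $-I \notin T$, suppose for contradiction $T' = K'$; then for every $k \in K$ either $k \in T$ or $(-I)k \in T \subseteq K$, and the latter forces $-I \in K$, contradicting $-I \notin K$; hence $K \subseteq T$, again contradicting $T \subsetneq K$. Given $T' \subsetneq K'$, closedness of $T'$ in the profinite group $\widehat{\SL_2(\ZZ)}$ yields an open subgroup $U \supseteq T'$ with $K' \not\subseteq U$. Then $-I \in T' \subseteq U$ (so in particular $T \subseteq U$), and if $K \subseteq U$ then combined with $-I \in U$ we would have $K' = K \cup (-I)K \subseteq U$, a contradiction; so $K \not\subseteq U$. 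This $U$ violates $(\star)$, completing the proof.

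The main obstacle is the bookkeeping around $-I$: the UBD conjecture is intrinsically sensitive to whether $\Gamma$ contains $-I$ (via the weight-parity phenomenon of Remark \ref{remark_pcong_does_not_imply_cong}), whereas $T$ and $K$ are subgroups of $\widehat{\SL_2(\ZZ)}$ carrying no a priori relation to $-I$, and notably $-I \notin K$. The passage to $T'$ and $K'$ is precisely the device that bridges this gap while preserving strict inclusion.
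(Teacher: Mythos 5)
Your proof is correct and follows the same overall route the paper sketches: Proposition \ref{prop_reduce_to_weight_0} and Theorem \ref{thm_level_structures_and_UBD} translate UBD into a statement about $\Gamma$-structures on $\Tate(q)/B(\Qbar,q)$, which becomes a group-theoretic statement about $T := t_*(\pi_1(\Spec B(\Qbar,q)))$ inside $\widehat{\SL_2(\ZZ)}$, and closedness of $T$ in a profinite group finishes. What you add is the $\pm I$ bookkeeping, which the paper leaves implicit. Because the two-sided equivalence in Theorem \ref{thm_level_structures_and_UBD} (and the very existence of primitive weight-zero forms) requires $-I\in\Gamma$, what UBD actually hands you directly is only your statement $(\star)$: no open $U$ with $-I\in U$ and $K:=\ker\big(\widehat{\SL_2(\ZZ)}\rightarrow\SL_2(\widehat{\ZZ})\big)\not\subseteq U$ may contain $T$. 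The paper's brief proof passes from ``$T$ closed and $T\subsetneq K$'' to ``some open $U\supseteq T$ misses $K$'' without checking that such a $U$ can be chosen to contain $-I$. Your observation that $-I\notin K$, combined with the passage to $T':=T\cup(-I)T$ and $K':=K\cup(-I)K$ and the verification that strict inclusion is preserved, is exactly the device that repairs this step. So the strategy matches the paper's, but your writeup makes explicit a detail the paper glosses over, and that is a genuine improvement worth keeping.
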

\begin{proof} The discussion above shows that the exactness of the sequence in Conjecture \ref{conj_geometric_UBD_B} is equivalent to the nonexistence of noncongruence $\Gamma$-structures on $\Tate(q)$ over $B(\Qbar,q)$. This is equivalent to the UBD conjecture by Theorem \ref{thm_level_structures_and_UBD} and Proposition \ref{prop_reduce_to_weight_0}.
\end{proof}


\sgap

We end with a final remark. The Inverse Galois Problem for a field $K$ asks if every finite group can be realized as a Galois group over $K$. This would certainly be the case if $\Gal(\ol{K}/K)$ had $\widehat{F_\omega}$ as a quotient. In light of the above discussion and the result of Mel'nikov \cite{Mel76}, we find that the UBD conjecture would imply the Inverse Galois Problem for $\Frac\;B(\Qbar,q)$, but by a beautiful result of Harbater \cite{Har84}, the Inverse Galois Problem actually holds for such fields.


\gap

\begin{appendix}
\section{Proof of Theorem \ref{thm_outer_representation}}\label{proof_thm_outer_representation} 


\sgap

Let $\eE^\circ$ be the universal elliptic curve over $\mM(1)_{\Qbar}$ with the identity section removed, and let $\eE_{x_0}^\circ$ be a geometric fiber above some geometric point $x_0\in\mM(1)_{\Qbar}$ corresponding to an elliptic curve $E_0/\Qbar$, then $\eE_{x_0} = E_0$, and there is an exact sequence
$$1\rightarrow\pi_1(E_0^\circ)\rightarrow\pi_1(\eE^\circ)\rightarrow\pi_1(\mM(1)_{\Qbar})\rightarrow 1,$$
from which we get an outer representation
\begin{equation*}
\widehat{\SL_2(\ZZ)}\cong\pi_1(\mM(1)_{\Qbar})\stackrel{\rho_{\Qbar}}{\longrightarrow}\Out(\pi_1(E_0^\circ)) \cong \Out(\widehat{F_2})
\end{equation*}
On the other hand, there is a classical exact sequence
$$1\rightarrow\Inn(F_2)\rightarrow\Aut(F_2)\rightarrow\GL_2(\ZZ)\rightarrow 1$$
identifying $\GL_2(\ZZ)$ with $\Out(F_2)$. Then Theorem \ref{thm_outer_representation} states 

\begin{thm*}[Theorem \ref{thm_outer_representation}]  The representation $\rho_{\Qbar}$ is induced by the natural (outer) action of $\SL_2(\ZZ)\subset\Out(F_2)$ on $F_2$ sitting as a discrete dense subgroup of $\widehat{F_2}$.
\end{thm*}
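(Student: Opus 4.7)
The plan is to reduce the étale statement over $\Qbar$ to its topological counterpart over $\CC$ via the Riemann existence theorem for stacks (Theorem \ref{thm_stacky_RET}), and then perform the computation in the familiar analytic setting. First I would set up the analytic model: by \S\ref{subsection_cms_are_modular_curves}, $\mM(1)_{\CC}^{\an}\cong[\hH/\SL_2(\ZZ)]$, and the universal elliptic curve analytifies to $[E/\SL_2(\ZZ)]$, where $E\subset\hH\times\CC/\ZZ^2$ is the standard family $\tau\mapsto\CC/(\ZZ+\tau\ZZ)$ with $\SL_2(\ZZ)$ acting by $(\tau,z)\mapsto(\gamma\tau,\,z/(c\tau+d))$. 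Removing the identity section gives $\eE^{\circ,\an}\cong[E^\circ/\SL_2(\ZZ)]$. Since $\hH$ is contractible and $E^\circ\to\hH$ is a topological fiber bundle with fiber the punctured torus $E_{\tau_0}^\circ$, the inclusion of a fiber induces an isomorphism $\pi_1^\tp(E^\circ)\cong\pi_1^\tp(E_{\tau_0}^\circ)=F_2$, so one obtains the orbifold homotopy sequence
$$1\longrightarrow F_2\longrightarrow\pi_1^\tp(\eE^{\circ,\an})\longrightarrow\SL_2(\ZZ)\longrightarrow 1,$$
and thus a topological outer representation $\rho_\tp^\an:\SL_2(\ZZ)\to\Out(F_2)$.

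Next I would identify $\rho_\tp^\an$ with the tautological inclusion. The action of $\gamma\in\SL_2(\ZZ)$ on $F_2$ (well-defined modulo $\Inn(F_2)$) is computed by biholomorphically transporting a loop in $E_{\tau_0}^\circ$ through the action of $\gamma$ to $E_{\gamma\tau_0}^\circ$ and then back along any path in $\hH$; the result is path-independent up to $\Inn(F_2)$ because $\hH$ is simply connected. Abelianizing, $F_2^\ab=H_1(E_{\tau_0},\ZZ)=\ZZ\oplus\tau_0\ZZ$, and the induced action on this lattice is exactly the tautological $\SL_2(\ZZ)$-action (this is essentially the content of the change-of-basis formula for the lattice under $\tau\mapsto\gamma\tau$). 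Since $\Out(F_2)\to\GL_2(\ZZ)$ is an isomorphism via abelianization, this identifies $\rho_\tp^\an$ with $\rho_\tp$.

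For the third step I would transport this topological computation to the étale/algebraic setting. Riemann existence for stacks (Theorem \ref{thm_stacky_RET}) gives an equivalence of Galois categories $\cC_{\mM(1)_\CC}\simeq\cC_{\mM(1)_\CC^{\an}}$, and likewise for $\eE_\CC^\circ$, compatibly with the forgetful map and fiber functors. In particular, the profinite completion of the topological homotopy sequence recovers the étale homotopy sequence, so $\rho_\CC=\widehat{\rho_\tp^\an}=\widehat{\rho_\tp}$. Finally, both $\pi_1(\mM(1)_{\Qbar})\to\pi_1(\mM(1)_\CC)$ and $\pi_1(E_0^\circ\times_{\Qbar}\CC)\to\pi_1(E_0^\circ)$ are isomorphisms (invariance of the geometric fundamental group under extension of algebraically closed base fields in characteristic zero, applicable to Noetherian DM stacks), so $\rho_{\Qbar}$ agrees with $\rho_\CC$ under the canonical identifications, completing the proof.

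The main obstacle will be the second step, and specifically the bookkeeping of conventions: depending on whether one writes the $\SL_2(\ZZ)$-action on $\hH$ as a left or right action, and whether one identifies $H_1(E_{\tau_0})$ with $\ZZ^2$ via the basis $(1,\tau_0)$ or its dual, the outer representation comes out as $\gamma\mapsto\gamma$, $\gamma\mapsto\gamma^{-1}$, $\gamma^T$, or $\gamma^{-T}$. Since the theorem only claims the two representations are \emph{isomorphic}, any such ambiguity can be absorbed by adjusting the chosen isomorphism $\Out(F_2)\cong\GL_2(\ZZ)$; nevertheless, this requires careful sign-tracking to verify that the resulting automorphism preserves the determinant-$1$ condition so that the image lands in $\SL_2(\ZZ)\subset\GL_2(\ZZ)$ as asserted.
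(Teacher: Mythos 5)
Your proof is correct, but it takes a genuinely different route from the paper. The paper deliberately avoids computing fundamental groups of analytic \emph{stacks}: instead it passes to an honest scheme, the punctured $j$-line $B := \AA^1_\CC\setminus\{0,1728\}$ with an explicit Weierstrass family $\EE/B$, invokes Kodaira's classical theorem that the homological invariant of an elliptic fibration ``belongs to'' the functional invariant (so the outer monodromy of $\pi_1^\tp(B)$ on $\pi_1^\tp(\EE^\circ_{j_0})$ factors through $\SL_2(\ZZ)$ tautologically), shows this map $\pi_1^\tp(B)\to\SL_2(\ZZ)$ is surjective by a small lifting argument through $\PSL_2(\ZZ)$, and finally proves that $\pi_1(B_{\Qbar})\twoheadrightarrow\pi_1(\mM(1)_{\Qbar})$ so the computation transfers to the stack. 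You instead work directly with the analytic stack $[\hH/\SL_2(\ZZ)]$ and the universal orbifold family $[E^\circ/\SL_2(\ZZ)]$, read off the orbifold homotopy sequence from contractibility of $\hH$, and compute the outer action by hand via the lattice change-of-basis $z\mapsto z/(c\tau+d)$, then recover the \'etale statement via stacky Riemann existence and base change of algebraically closed fields. Your route is more self-contained (no Kodaira, no auxiliary scheme, no surjectivity step) but requires comfort with homotopy exact sequences for analytic stacks; the paper's route trades that for classical elliptic-surface theory. Both are sound, and you correctly flagged the one genuine hazard in your route --- the sign/transpose ambiguity coming from left-vs-right action and basis-vs-dual-basis conventions, which does affect the literal matrix ($\gamma$ vs $\gamma^{-1}$ vs $\gamma^T$ vs $\gamma^{-T}$) but not the isomorphism class of the outer representation, since all four are related by automorphisms of $\SL_2(\ZZ)$. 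One point worth making more explicit: when you say ``the profinite completion of the topological homotopy sequence recovers the \'etale homotopy sequence,'' left-exactness of profinite completion is not automatic; you are implicitly using that the kernel $F_2$ is finitely generated (and good), which the paper also uses tacitly, so this is a shared rather than a new gap.
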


Our strategy is first to consider the ``universal elliptic curve'' $\EE$ over the punctured $j$-line $B := \AA^1_\CC\setminus\{0,1728\}$. Let $j_0\in B$, then we can compute the monodromy action of $\pi_1(B,j_0)$ on $\pi_1(\EE^\circ_{j_0})$ by computing its homological invariant, and using results of Kodaira to see that $\pi_1(B)$ acts ``through $\SL_2(\ZZ)$''. Then, we show that the map $B\stackrel{\EE/B}{\longrightarrow}\mM(1)_{\Qbar}$ induces a surjection on (\'{e}tale) fundamental groups $\pi_1(B_{\Qbar})\twoheadrightarrow\pi_1(\mM(1)_{\Qbar})$, which proves the theorem. 

\subsubsection*{Topological monodromy}

Let $\ol{S}$ be a compact Riemann surface and $S := \ol{S} - \{a_1,\ldots,a_r\}$ where $a_i\in\ol{S}$, and $E/S$ be a complex analytic elliptic fibration\footnote{by this we mean that $E\rightarrow S$ is a surjective holomorphic map with every fiber a smooth connected curve of genus 1} with holomorphic ``identity'' section $e : S\rightarrow E$, and another section $g : S\rightarrow E$. Any such elliptic fibration with section $e$ is projective, hence algebraic. Let $E^\circ := E - e(S)$. Let $s\in S$ be a point. We have the split homotopy exact sequence of topological fundamental groups

$$\xymatrix{
1\ar[r] & \pi_1^\tp(E_s^\circ,g(s))\ar[r] &  \pi_1^\tp(E^\circ,g(s))\ar[r] & \pi_1^\tp(S,s)\ar[r]\ar@/^-1pc/[l]_{g_*} & 1
}$$

Let $\gamma$ be a loop in $S$ based at $s$, and let $\alpha\in\pi_1^\tp(E_{s}^\circ,g(s))$, then we may ``transport'' $\alpha$ along $g\circ\gamma$ in a unique way up to homotopy, such that at $\gamma(1)$ we have the loop $\alpha^\gamma := g(\gamma)\alpha g(\gamma)^{-1}\in\pi_1^\tp(E^\circ_{s},g(s))$. Thus, we see that this monodromy action is precisely the action of $\pi_1^\tp(S,s)$ on $\pi_1^\tp(E^\circ_s,g(s))$ by conjugation inside $\pi_1^\tp(E^\circ,g(s))$ via $g_*$.

\gap

In the same way we have a monodromy action of $\pi_1^\tp(S,s)$ acting on $H^1(E^\circ_s,\ZZ)$, which is compatible with the natural abelianization map $F_2\cong\pi_1^\tp(E^\circ_s,g(s))\rightiso H^1(E^\circ_s,\ZZ)\cong\ZZ^2$ sending homotopy classes of loops to their homology classes. The inclusion $E^\circ_s\hookrightarrow E_s$ induces an isomorphism on homology. Thus, if we fix a basis $\alpha,\beta$ for $\pi_1^\tp(E^\circ,g(s))$, whose homology classes give a positively oriented basis for $H^1(E^\circ_s,\ZZ)$ and $H^1(E_s,\ZZ)$, we may identify $\pi_1^\tp(E^\circ_s,g(s))$ with the free group $F_2$ on generators $\alpha,\beta$, and the homology groups with $\ZZ^2$ with generators the homology classes of $\alpha,\beta$. Let $\rho : \pi_1^\tp(S,s)\rightarrow\Aut\big(\pi_1^\tp(E^\circ_s,g(s))\big)\cong \Aut(F_2)$ be the representation induced by $g_*$. We have a commutative diagram

\begin{equation}\label{eq_actions}
\xymatrix{
\pi_1^\tp(S,s)\ar[r]^{\rho} & \Aut(F_2)\ar@{->>}[d]\ar[r]^{\ab\qquad} & \Aut(H^1(E^\circ_s,\ZZ))\ar[d]^\sim\ar[r]^\sim & \Aut(H^1(E_s,\ZZ))\ar[dl]^\sim \\
 & \Out(F_2)\ar[r]^\sim & \GL_2(\ZZ)
}\end{equation}

The composition of the three maps on the top row is classically referred to as the \emph{homological invariant} of the elliptic fibration $E/S$, which is orientation preserving and hence the image in $\Aut(H^1(E_s,\ZZ))\cong\GL_2(\ZZ)$ is contained in $\SL_2(\ZZ)$. Thus, we may think of the homological invariant as a map
\begin{equation}\label{eq_homological_invariant}
h : \pi_1^\tp(S,s)\stackrel{\rho}{\longrightarrow}\Aut^+(F_2)\stackrel{u}{\longrightarrow}\SL_2(\ZZ)
\end{equation}
where $\Aut^+(F_2)$ is the preimage of $\SL_2(\ZZ)\subset\Out(F_2)$, and $u$ is just ``quotient-by-$\Inn(F_2)$''. This $h$ is only defined up to conjugation in $\SL_2(\ZZ)$ (since it depends on the choice of positively oriented basis for $H^1(E_s,\ZZ)$). In other words, the homological invariant is just the outer action associated to the monodromy action of $\pi_1^\tp(S,s)$ on $\pi_1(E^\circ_s,g(s))$ via $g_*$.

\sgap



Let $\hH^\circ$ denote the upper half plane with all elliptic points (ie, $\PSL_2(\ZZ)$-orbits of $i$ and $e^{2\pi i/3}$) removed. The $j$-invariant map $\hH^\circ\rightarrow\AA^1_\CC\setminus\{0,1728\}$ is then a Galois unramified cover with Galois group $\PSL_2(\ZZ)$. Let $B := \AA^1_\CC\setminus\{0,1728\}$, then we have a natural surjection $\pi_1^\tp(B)\twoheadrightarrow\PSL_2(\ZZ)$. For any elliptic fibration $E/S$ with no fibers of $j$-invariant 0 or 1728, the $j$-invariant gives a map $j : S\rightarrow B$ (sometimes called the \emph{functional invariant} of $E/S$), and hence a map
\begin{equation}\label{eq_functional_invariant}
\pi_1^\tp(S)\stackrel{j_*}{\longrightarrow}\pi_1^\tp(B)\twoheadrightarrow\PSL_2(\ZZ)
\end{equation}

\begin{thm}\label{thm_h_belongs_to_j} (Kodaira) The homological invariant ``$h$'' (\ref{eq_homological_invariant}) ``\emph{belongs}'' to $j_*$. Ie, fixing a base point $s\in S$, the following diagram commutes up to conjugacy in $\SL_2(\ZZ)$.
$$\xymatrix{
 & & \SL_2(\ZZ)\ar[d] \\
\pi_1^\tp(S,s)\ar[rru]^{h = u\circ\rho}\ar[r]_{j_*} & \pi_1^\tp(B,j(s))\ar@{->>}[r] & \PSL_2(\ZZ) 
}$$
\end{thm}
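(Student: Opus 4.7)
The plan is to exploit the classical fact that the upper half plane $\hH$ is a fine moduli space for complex elliptic curves equipped with an oriented $\ZZ$-basis of $H_1$: the point $\tau\in\hH$ classifies $(E_\tau := \CC/(\ZZ+\tau\ZZ),\,[1],\,[\tau])$. Under the $j$-invariant map $\hH\to\AA^1_\CC$, the preimage of $B = \AA^1_\CC-\{0,1728\}$ is precisely $\hH^\circ$, and $\hH^\circ\to B$ is the universal cover with deck group $\PSL_2(\ZZ)$. The idea is to trivialize the monodromy of $E/S$ on the universal cover of $S$, whence a classifying map to $\hH^\circ$ that is $\pi_1(S)$-equivariant in a way that reads off both $h$ and $j_*$ simultaneously.

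Concretely, let $\pi : \widetilde{S}\to S$ be the topological universal cover and let $\widetilde{E} := E\times_S\widetilde{S}$. Since $\widetilde{S}$ is simply connected, I can fix an oriented $\ZZ$-basis of $H_1(\widetilde{E}_{s_0})$ at a base point and parallel-transport it continuously to obtain a varying oriented basis for every fiber. By the moduli interpretation of $\hH$ this yields a holomorphic classifying map $\widetilde{\tau} : \widetilde{S}\to\hH$ satisfying $j_\hH\circ\widetilde{\tau} = j_{E/S}\circ\pi$; since $E/S$ has no fibers of $j$-invariant $0$ or $1728$, the map factors through $\hH^\circ$, giving a commutative square over $j : S\to B$ with covering maps $\pi$ on the left and $\hH^\circ\to B$ on the right.

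The final step is to read off the monodromy. For $\gamma\in\pi_1^\tp(S,s)$, the corresponding deck transformation on $\widetilde{S}$ (also written $\gamma$) satisfies $\pi\circ\gamma = \pi$, so the two transported bases of $H_1(\widetilde{E}_{\tilde{s}})$ and $H_1(\widetilde{E}_{\gamma\tilde{s}})$ are both bases of the same lattice $H_1(E_{\pi(\tilde{s})})$ with the same orientation; hence they differ by a matrix $M_\gamma\in\SL_2(\ZZ)$. By unwinding definitions, $\gamma\mapsto M_\gamma$ is precisely the homological invariant $h = u\circ\rho$ of (\ref{eq_homological_invariant}), and $\widetilde{\tau}$ is equivariant in the sense $\widetilde{\tau}(\gamma\tilde{s}) = M_\gamma\cdot\widetilde{\tau}(\tilde{s})$ for the standard $\SL_2(\ZZ)$-action on $\hH^\circ$. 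Projecting to $B$ identifies the image of $\pi_1^\tp(S,s)$ in the deck group $\PSL_2(\ZZ) = \Deck(\hH^\circ/B)$ with the image of $h$ modulo $\pm I$, which is exactly the composition $\pi_1^\tp(S,s)\xrightarrow{j_*}\pi_1^\tp(B,j(s))\twoheadrightarrow\PSL_2(\ZZ)$. Therefore $h$ is a lift of $j_*$ (modulo the projection), as claimed.

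The main technical care is in the orientation bookkeeping: one must verify that continuous parallel transport of an oriented basis genuinely lands $M_\gamma$ in $\SL_2(\ZZ)\subset\GL_2(\ZZ)$ (not merely in $\GL_2(\ZZ)$), and that a different initial choice of basis at $s_0$ replaces $h$ and $j_*$ by simultaneously conjugate maps, accounting for the ``up to conjugacy'' in the statement. These are routine but need consistent conventions for the $\SL_2(\ZZ)$-action on $\hH$ versus on $H_1$. Everything else is formal, so the proof really consists of producing the equivariant map $\widetilde{\tau}$ and tracing through this single diagram.
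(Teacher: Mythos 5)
The paper offers no argument here; it simply cites Theorem 7.2 of Kodaira's \emph{On compact analytic surfaces, II}. You instead supply a direct proof via the period map: trivialize the $H_1$-local system over the universal cover $\widetilde{S}$, produce a holomorphic classifying map $\widetilde{\tau}:\widetilde{S}\to\hH^\circ$ lifting $j\circ\pi$ from the moduli interpretation of $\hH$, and read both $h$ and $j_*$ off the $\pi_1^\tp(S,s)$-equivariance of $\widetilde{\tau}$. This is essentially the mechanism underlying Kodaira's result, and it is worth having spelled out, given that the paper provides only a reference.

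One factual slip to correct: $\hH^\circ\to B$ is \emph{not} the universal cover of $B = \AA^1_\CC\setminus\{0,1728\}$. Since $\pi_1^\tp(B)\cong F_2$ while the deck group $\PSL_2(\ZZ)\cong\ZZ/2\ZZ*\ZZ/3\ZZ$ is a proper quotient with torsion, the surjection $F_2\twoheadrightarrow\PSL_2(\ZZ)$ has nontrivial kernel and $\hH^\circ$ is far from simply connected (it is $\hH$ with an infinite discrete set of orbits removed). This is exactly why the right-hand arrow $\pi_1^\tp(B)\twoheadrightarrow\PSL_2(\ZZ)$ in the theorem's diagram is a surjection and not an isomorphism. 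Fortunately the slip does not damage the argument: you use only that $\hH^\circ\to B$ is a Galois covering with deck group $\PSL_2(\ZZ)$, and that $\widetilde{S}$ itself is simply connected, which is what guarantees the lift $\widetilde{\tau}$ of $j\circ\pi$ along $\hH^\circ\to B$ exists and is determined by a single basepoint choice. With that correction, the factorization of $\widetilde{\tau}$ through $\hH^\circ$, the equivariance $\widetilde{\tau}(\gamma\cdot\tilde{s})=h(\gamma)\cdot\widetilde{\tau}(\tilde{s})$, the identification with the composite $\pi_1^\tp(S,s)\to\pi_1^\tp(B,j(s))\twoheadrightarrow\PSL_2(\ZZ)$, and the absorption of ``up to conjugacy'' by the initial choice of $H_1$-basis all go through as you describe; the sign and orientation conventions you flag are indeed the only remaining checks.
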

\begin{proof} See Theorem 7.2 of \cite{KoCAS2} (also the discussion before Definition 8.1). 
\end{proof}

As an application of Theorem \ref{thm_h_belongs_to_j} where $S = B$, we get

\begin{prop}\label{prop_homological_invariant_on_universal_elliptic_curve} Let $\EE$ be the elliptic curve over $B := \AA^1_\CC\setminus\{0,1728\}$ with coordinate $j$ given by
\begin{equation}\label{eq_UEC}
y^2 + xy = x^3 - \frac{36}{j-1728}x - \frac{1}{j-1728}
\end{equation}
Fix a base point $j_0\in B$, and let $\EE^\circ := \EE - O$, then $\EE^\circ$ admits a section $g$. The homological invariant $h : \pi_1(B,j_0)\longrightarrow \SL_2(\ZZ)$ is surjective and agrees with the outer representation of $\pi_1(B,j_0)$ on $\pi_1(\EE^\circ_{j_0},g(j_0))$ obtained from the homotopy exact sequence.
\end{prop}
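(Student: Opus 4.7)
The proposition decomposes into three claims: existence of a section $g$, agreement of $h$ with the outer representation coming from the homotopy exact sequence, and surjectivity of $h$ onto $\SL_2(\ZZ)$. The second is essentially tautological from the definition of $h$ in \eqref{eq_homological_invariant} as the composition $u \circ \rho$, where $\rho$ is precisely the monodromy representation on $\pi_1^\tp(\EE^\circ_{j_0})$ obtained from the splitting by $g$, and $u$ is the quotient $\Aut^+(F_2) \twoheadrightarrow \SL_2(\ZZ) = \Out^+(F_2)$. So the real work lies in constructing $g$ and establishing surjectivity.

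For the section, I argue topologically: since $B$ is homotopy equivalent to a wedge of two circles and the fibers of $\EE^\circ \to B$ are connected punctured tori, the obstruction to a continuous section vanishes for dimensional reasons. Alternatively, an explicit $\CC(j)$-rational point of $\EE$ different from $O$ yields an algebraic section; either version suffices for the rest of the argument.

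For surjectivity, the key input is Theorem \ref{thm_h_belongs_to_j} (Kodaira), applied to $\EE \to B$. Because the $j$-invariant of $\EE$ is by construction the coordinate $j$ on $B$, the functional invariant $j_\ast : \pi_1^\tp(B, j_0) \to \pi_1^\tp(B, j_0)$ is the identity, and Kodaira's theorem identifies
$$\pi_1^\tp(B, j_0) \xrightarrow{\;h\;} \SL_2(\ZZ) \twoheadrightarrow \PSL_2(\ZZ)$$
with the natural surjection arising from the Galois covering $\hH^\circ \to B$, up to conjugation in $\SL_2(\ZZ)$. Hence the image of $h$ maps onto $\PSL_2(\ZZ)$, so it is either all of $\SL_2(\ZZ)$ or an index-two subgroup not containing $-I$.

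To rule out the latter, I would compute the local monodromy $h(\gamma) \in \SL_2(\ZZ)$ for a small loop $\gamma$ around $j = 1728$. A direct computation with the given Weierstrass equation (reducing to a minimal model after the substitution $x \mapsto u^2 x$, $y \mapsto u^3 y$ with $u = j - 1728$) shows that the Kodaira fiber type of $\EE$ at $j = 1728$ is $III^\ast$, so $h(\gamma)$ has order $4$ and $h(\gamma)^2 = -I$; thus $-I$ lies in the image of $h$. The principal technical hurdle is this discriminant calculation and the associated Kodaira-type identification, though the same conclusion can be obtained more conceptually by recognizing that $\EE/B$ defines a classifying map $B \to \mM(1)_\CC^\an = [\hH/\SL_2(\ZZ)]$ and that the induced map on orbifold fundamental groups lifts $\pi_1^\tp(B) \twoheadrightarrow \PSL_2(\ZZ)$ all the way to $\SL_2(\ZZ) = \pi_1^\tp(\mM(1)_\CC^\an)$.
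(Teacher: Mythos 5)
Your proof is correct, and its key steps are the same as the paper's; the main difference is in how you finish the surjectivity argument. Both you and the paper invoke Theorem~\ref{thm_h_belongs_to_j} to conclude that $\text{im}(h)$ surjects onto $\PSL_2(\ZZ)$, and both reduce the remaining step to showing $-I\in\text{im}(h)$. At that point your strategies diverge. You propose to compute the Kodaira fiber type of $\EE$ at $j=1728$: with $c_4 = j/(j-1728)$ and $\Delta = j^2/(j-1728)^3$ one finds, after the scaling you indicate, that $\bigl(v(c_4),v(c_6),v(\Delta)\bigr)=(3,5,9)$, so the fiber type is indeed $III^*$ and the local monodromy has order $4$ with square $-I$. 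That is correct but is more work than the paper needs. The paper instead observes that an order-$2$ element of $\PSL_2(\ZZ)$ lifts to $\SL_2(\ZZ)$ only as $\pm\left[\begin{smallmatrix}0&1\\-1&0\end{smallmatrix}\right]$, both of which square to $-I$; so \emph{any} subgroup of $\SL_2(\ZZ)$ surjecting onto $\PSL_2(\ZZ)$ contains $-I$ and hence is all of $\SL_2(\ZZ)$. This purely group-theoretic finish makes the discriminant computation unnecessary. For the section, the paper writes down an explicit $\CC(j)$-rational point $g=(-1/36,y_0)$, while you give a topological obstruction argument (valid, since $B$ has the homotopy type of a graph so $H^2(B,-)=0$ and the punctured-torus fiber is aspherical); either works, and you correctly note the algebraic alternative. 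The "agreement with the outer representation" step is, as you say, built into the definition of $h$ as $u\circ\rho$ in \eqref{eq_homological_invariant}, so there is nothing left to check once a section exists.
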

\begin{proof} For the section, we may set $g = \left(-\frac{1}{36},y_0\right)$, where $y_0$ is a root of $y^2 - \frac{1}{36}y + \frac{1}{36}$. Using standard formulas we may calculate that the $j$-invariant of the fiber $\EE_j$ over any $j\in B$ is $j$. Thus, in the situation of Theorem \ref{thm_h_belongs_to_j}, the map $\pi_1^\tp(B,j_0)\longrightarrow\PSL_2(\ZZ)$ is surjective. 
Then, the commutativity of the diagram in Theorem \ref{thm_h_belongs_to_j} tells us that the homological invariant $h : \pi_1(B)\longrightarrow\SL_2(\ZZ)$ must have image a subgroup of $\SL_2(\ZZ)$ which surjects onto $\PSL_2(\ZZ)$. Thus, the image must contain either $\spmatrix{0}{1}{-1}{0}$ or $\spmatrix{0}{-1}{1}{0}$. Whichever it contains, the image must contain $-I$, but a subgroup of $\SL_2(\ZZ)$ containing $-I$ that surjects onto $\PSL_2(\ZZ)$ must be all of $\SL_2(\ZZ)$.

\end{proof}



\subsubsection*{Algebraic monodromy}

If we view $\EE/B$ as an algebraic curve over the Riemann surface $B$, the homotopy exact sequence of \'{e}tale fundamental groups becomes
$$\xymatrix{
1\ar[r] & \widehat{\pi_1^\tp(\EE^\circ_{j_0},g(j_0))}\ar[r] & \widehat{\pi_1^\tp(\EE^\circ,g(j_0))}\ar[r] & \widehat{\pi_1^\tp(B,j_0)}\ar[r]\ar@/^1pc/[l]^{g_*} & 1
}$$
where the maps are just the profinite completions of the maps in the corresponding exact sequence of topological fundamental groups. Since everything is defined over $\Qbar$, the exact sequence above holds upon descending to $\Qbar$. From now on everything will be considered over $\Qbar$, so $\hH/\Gamma$ will refer to a $\Qbar$-model of the Riemann surface $\hH/\Gamma$. Note that because it admits a map to the $j$-line ramified only above $j = 0,1728$ and $i\infty$, such a $\Qbar$-model exists by Belyi's theorem.

\sgap

Proposition \ref{prop_homological_invariant_on_universal_elliptic_curve} showed us that the outer monodromy action of $\pi_1(B,j_0)$ on $\pi_1(\EE^\circ_{j_0},g(j_0))$ is through $\SL_2(\ZZ)$ acting on a basis for $\pi_1^\tp(\EE^\circ_{j_0},g(j_0))\subset \widehat{\pi_1^\tp(\EE^\circ_{j_0},g(j_0))}$. Thus, to prove Theorem \ref{thm_outer_representation}, it will suffice to show that the map $(f_{\EE})_* : \pi_1(B)\rightarrow\pi_1(\mM(1)_{\Qbar})$ induced by $\EE/B$ is surjective. 



\begin{prop}\label{prop_monodromy} Let $f_\EE : B\rightarrow \mM(1)$ be the morphism corresponding to the elliptic curve $\EE/B$ of Proposition \ref{prop_homological_invariant_on_universal_elliptic_curve}. Then $(f_\EE)_* : \pi_1(B)\rightarrow\pi_1(\mM(1))$ is surjective.
\end{prop}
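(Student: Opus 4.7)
The plan is to descend to the analytic category, reduce to a topological statement, and then identify the induced monodromy with the homological invariant already computed in Proposition~\ref{prop_homological_invariant_on_universal_elliptic_curve}. Specifically, since both $B$ and $\mM(1)$ are of finite type over $\Qbar\subset\CC$, I first base-change to $\CC$ and invoke the Riemann existence theorem for stacks (Theorem~\ref{thm_stacky_RET}) together with the identification $\mM(1)_\CC^\an=[\hH/\SL_2(\ZZ)]$ established in \S\ref{subsection_cms_are_modular_curves}. Because $\hH$ is contractible, $\pi_1^\tp(\mM(1)_\CC^\an)=\SL_2(\ZZ)$, and so $\pi_1^\et(\mM(1)_\CC)$ is its profinite completion $\widehat{\SL_2(\ZZ)}$. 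Similarly $\pi_1^\et(B_\CC)=\widehat{\pi_1^\tp(B)}$, and profinite completion preserves surjections; thus it suffices to show that the topological monodromy map
$$m:\pi_1^\tp(B,j_0)\longrightarrow\pi_1^\tp(\mM(1)_\CC^\an)=\SL_2(\ZZ)$$
induced by $f_\EE$ is surjective.

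The key observation is that the monodromy action of $\SL_2(\ZZ)=\pi_1^\tp(\mM(1)_\CC^\an)$ on the first homology $H_1(\eE_{x_0},\ZZ)\cong\ZZ^2$ of the universal elliptic fiber is just the tautological inclusion $\SL_2(\ZZ)\hookrightarrow\GL_2(\ZZ)=\Aut(\ZZ^2)$. This follows directly from the concrete description $\mM(1)_\CC^\an=[\hH/\SL_2(\ZZ)]$: the pullback of the universal elliptic curve to $\hH$ is $\CC\times\hH/\!\sim$ with $(z,\tau)\sim(z+1,\tau)\sim(z+\tau,\tau)$, on which $\SL_2(\ZZ)$ acts by $\spmatrix{a}{b}{c}{d}\cdot(z,\tau)=\bigl(z/(c\tau+d),(a\tau+b)/(c\tau+d)\bigr)$. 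Reading the induced action on $H_1(E_\tau)=\ZZ\tau+\ZZ$ through the basis $(\tau,1)$ recovers the standard matrix action, and in particular shows that $\SL_2(\ZZ)\hookrightarrow\GL_2(\ZZ)$ is injective. Crucially, this uses only the abelian invariant $H_1$ and not the full $\pi_1$ of the fiber, avoiding any circular appeal to Theorem~\ref{thm_outer_representation}.

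By functoriality of monodromy applied to $f_\EE:B\to\mM(1)_\CC^\an$, the composition $\pi_1^\tp(B)\xrightarrow{m}\SL_2(\ZZ)\hookrightarrow\GL_2(\ZZ)$ coincides with the monodromy representation of $\pi_1^\tp(B)$ on $H_1(\EE_{j_0},\ZZ)$, which is precisely the homological invariant $h$ of the elliptic fibration $\EE/B$ appearing in the commutative diagram~(\ref{eq_actions}). But Proposition~\ref{prop_homological_invariant_on_universal_elliptic_curve} shows that $h$ surjects onto $\SL_2(\ZZ)\subset\GL_2(\ZZ)$. Since $\SL_2(\ZZ)\hookrightarrow\GL_2(\ZZ)$ is injective, this forces $m$ itself to be surjective, and profinite completion then yields the desired surjectivity of $(f_\EE)_*$.

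The main obstacle is the careful verification, in the second paragraph, that the identification $\pi_1^\tp(\mM(1)_\CC^\an)\cong\SL_2(\ZZ)$ coming from the quotient presentation is \emph{compatible} with the $\SL_2(\ZZ)$-action on $H_1$ of fibers of the universal curve; once this is pinned down concretely via the explicit Weierstrass-style computation sketched above, the rest of the argument is formal. The remaining technical point, that pulling back and taking topological monodromy is compatible with the classifying-map description of $f_\EE$, follows from standard Galois-categorical yoga for covering stacks.
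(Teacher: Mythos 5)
Your proposal is correct, and it takes a genuinely different route from the paper's. You pass through the Riemann Existence Theorem for stacks to reduce to a surjectivity statement for topological fundamental groups, explicitly compute the monodromy of $\SL_2(\ZZ)=\pi_1^\tp([\hH/\SL_2(\ZZ)])$ on $H_1$ of the universal fiber via the lattice/Weierstrass presentation to see it is the tautological inclusion $\iota:\SL_2(\ZZ)\hookrightarrow\GL_2(\ZZ)$, and then feed this into the functoriality of monodromy together with the surjectivity of the homological invariant from Proposition~\ref{prop_homological_invariant_on_universal_elliptic_curve}. Since $\iota\circ m=h$, $h$ has image $\SL_2(\ZZ)$, and $\iota$ is injective, you conclude $m$ is surjective; profinite completion then gives what you want. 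The logic is sound, and you are right that your verification uses only the abelianized (homological) invariant directly on the quotient-stack presentation, so there is no circularity with Theorem~\ref{thm_outer_representation}. The one place you wave your hands --- compatibility of analytification with induced $\pi_1$ maps --- is routine Galois-categorical bookkeeping as you say. The paper's actual proof is softer and avoids all monodromy computation: it uses the general criterion that $a_*$ is surjective on $\pi_1$ iff pullbacks of connected finite \'{e}tale covers stay connected, then observes (via the homeomorphism $|\mM(1)|\cong|M(1)|=\AA^1_j$ from Proposition~\ref{prop_basic_results_cms}(4)) that $f_\EE$ is an open immersion on underlying topological spaces, so the pullback of an irreducible cover (after rigidifying by a $\Gamma(n)$-structure so that the cover becomes a scheme) is an open subscheme of an irreducible scheme, hence connected. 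The paper's route is shorter and needs no uniformization; yours is more explicit, makes heavier use of Proposition~\ref{prop_homological_invariant_on_universal_elliptic_curve}, and requires pinning down the identification $\pi_1^\tp(\mM(1)^\an_\CC)\cong\SL_2(\ZZ)$ together with its action on $H_1$, which you do correctly. Both are valid.
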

\begin{proof} In general, given a map of schemes/stacks $a : X\rightarrow Y$, the induced map $a_*$ of \'{e}tale fundamental groups is surjective if and only if for every connected finite \'{e}tale cover $C/Y$, the pullback $a^*C$ is also connected. Let $\mM\rightarrow \mM(1)$ be a connected finite \'{e}tale cover, then for some $n\ge 3$, let $\mM'$ be a connected component of the stack $\mM\times_{\mM(1)}\mM(n)$, so that $\mM'$ is a connected scheme finite \'{e}tale over both $\mM$ and $\mM(n)$, so $\mM'$ is smooth, hence irreducible. Since we are working over $\Qbar$, by Proposition \ref{prop_basic_results_cms}(4), $f_\EE$ is, at the level of topological spaces, an inclusion of an open subset, and hence the underlying topological space of $f_\EE^*\mM'$ can be identified with an open subset of the topological space of $\mM'$, so $f_\EE^*\mM'$ is also irreducible, hence connected, which implies the connectedness of $f_\EE^*\mM$.
\end{proof}

\section{Tables of noncongruence modular curves}\label{appendix_tables}

In \S\ref{section_examples}, we discussed a number of examples of noncongruence modular curves which appeared as components of $\mM(G)_{\Qbar}$ for various $G$. In this appendix we list some additional examples. For a description of what the data in each column represents, see \S\ref{section_first_examples}.

\subsection{Noncongruence components of $\mM(G)_{\Qbar}$ for groups of order $\le 255$}\label{section_NC_1-255}

Using code written in GAP\footnote{\url{www.gap-system.org}}, we have computed data for every component of $\mM(G)_{\Qbar}$ as $G$ ranges over all nonabelian 2-generated finite groups of order $\le 255$. Of the 2036 such groups, 218 were purely noncongruence (c.f. Definition \ref{def_purity}), and the other 1818 were purely congruence. The table in this section lists data for the components of $\mM(G)_{\Qbar}$ for the 218 purely noncongruence groups.

\sgap

Compared to the tables in \S\ref{section_examples}, there are two additional columns, ``$i$'' and ``SL''. Here, $i$ is the index of the group $G$ in GAP's Small Groups Library. The data in the columns ``Size'' and $i$ together completely determine the isomorphism class of $G$. If $G$ has order $n$, then $G$ can be accessed in GAP using the command
$$\texttt{SmallGroup(n,i)};$$

The data here is ordered first by Size, then by $i$, then by genus, and then $d$.

\sgap

The field ``SL'' stands for ``Solvable Length'' of $G$. If $G' =: G^{(1)}$ denotes the commutator subgroup of $G$, then the solvable length of $G$ is the least element of the set $\{n\in\NN : G^{(n)} = 1\}$. If $G$ is solvable with solvable length $n$, then the derived series
$$G\rhd G' \rhd G'' \rhd \cdots \rhd G^{(n)} = 1$$
is the shortest descending normal series with all successive quotients abelian. If $G$ is not solvable, then its solvable length is $\infty$. Thus, we may think of the solvable length of $G$ as a measure of how nonabelian $G$ is.

\sgap

The column ``$G$'' is a formatted version of GAP's \texttt{StructureDescription(G)}. Here, $C_n, A_n, S_n, D_n, Q_n$ and $QD_n$ refer to the cyclic, alternating, symmetric, dihedral, quaternion, and quasidihedral group of order $n$. The conjunctions ``$\times$'', ``$\rtimes$'', ``$\cdot$'' denote ``direct product'', ``semidirect product'', and ``non-split extension''. In the latter two cases, the normal subgroup is always on the left side of the conjunction. In some cases, for two \texttt{StructureDescription}'s $A,B$, the syntax ``$A = B$'' denotes two equivalent ways of describing the group (e.g., the third row of the table shows ``$C_{2}\cdot S_{4}=\SL_{2}(\FF_{3})\cdot C_{2}$''). This is for convenience only, and by no means represents \emph{all} the ways of describing the group. Also note that nonisomorphic groups may have the same \texttt{StructureDescription}.

\begin{remark} Note that every group in the list has derived length $\ge 3$. Furthermore, each of the 1818 purely congruence $G$ had solvable length $\le 3$. If we take the solvable length to be a measure of nonabelian-ness, then this supports the notion that being noncongruence is connected to being ``sufficiently nonabelian''.

\end{remark}

\begin{center}\scriptsize

\end{center}

\end{appendix}

\bibliography{references}

\end{document}